\title{A Discretization Scheme for BSDEs with Random Time Horizon}
\author{
    Frank T. Seifried\thanks{University of Trier, Department IV -- Mathematics, Universit\"{a}tsring 19, 54296 Trier, Germany. M.\ W\"{u}rschmidt gratefully acknowledges financial support from the German Research Foundation (DFG) within the Research Training Group 2126: Algorithmic Optimization.} 
    \and 
    Maximilian W\"{u}rschmidt\footnotemark[1]
}
\date{}
\begin{document}
\allowdisplaybreaks
\maketitle

\begin{abstract}
    We analyze a natural extension of the backward Euler approximation for a class of BSDEs with Lipschitz generators and random (unbounded) time horizons. We derive strong error bounds in terms of the underlying stepsize; the distance between the continuous terminal time and a discrete-time approximation; the distance between the terminal condition and a respective approximation; and an integrated distance depending on an approximation of the time component of the generator - all are scaled by the exponential of the maximal terminal time. As application we consider decoupled FBSDEs on bounded domains. We use an Euler-Maruyama scheme to approximate the diffusion and further refine our error bounds to only depend on the distance of the exit times. 
    
    \bigskip
    \noindent\textbf{Mathematics Subject Classification (2020)}: 60H10, 60G40, 65N75 

    \bigskip
    \noindent\textbf{Keywords}: BSDEs with Random Time Horizon, BSDEs on Bounded Domains, Exponential Moments of Exit Times, Unbounded Stopping Times
\end{abstract}

\section{Introduction}

Backward stochastic differential equations with random (unbounded) terminal time condition were first investigated in the seminal works \cite{PardouxRandomTimeBSDE, Peng1991}, and further developed in \cite{briand2003lp, BriandHuHomogenization, confortola2008quadratic, lin2020second, papapantoleon2018existence, papapantoleon2023stability, royer2004bsdes, toldo2006stability}.
In this note we analyze a time-discrete approximation scheme which naturally extends the backward implicit Euler method on bounded time horizons. We establish error bounds in terms of the underlying stepsize and externally given discrete-time guesses of the BSDE terminal condition, terminal time and time-dependency on the generator. 

%
To the best of our knowledge in literature a discrete-time scheme for a BSDE with random unbounded time horizon is not investigated. There are several works concerned with the strong rate of approximation for constant time horizons, see e.g. \cite{bender2007forward, bender2008time, bouchard2008discrete, bouchard2004discrete, gobetLabart2007, gobet2024numerical, zhang2004numericalScheme}. Furthermore we refer to \cite{Bouchard_strong_BSDE_on_domain} where the convergence rate is established for BSDEs with random (bounded) time horizon. 
In addition and for completeness we mention \cite[Section 3.2]{matoussi2024_ErgodicDeep} where the authors 
pursue an extension of \cite{Bouchard_strong_BSDE_on_domain} to the case of a random unbounded time horizon. Their analysis proceeds via an iterative scheme while certain challenges specific to the unbounded case do not appear to be the focus of their discussion.

%
Our main results are error bounds for a general BSDE approximation scheme. The framework within we analyze the random unbounded time horizon reformulates the iteratively defined scheme (from \cite{Bouchard_strong_BSDE_on_domain} as well as \cite{bouchard2004discrete, zhang2004numericalScheme}) into a fixed-point problem in a $L^2$ sequence space. For a family of equidistant discrete-time grids we analyze the deviation of our discrete-time fixed-point scheme (satisfying \eqref{def:FixpointScheme}) from the continuous time solution of the BSDE \eqref{def:BSDE_general}. We establish two types of error bounds:
\begin{itemize}
    \item Theorem~\ref{thm:BSDE_MainResult_no_SUPREMUM} is a bound of the maximal $L^2$-distance considering each discrete-time individually. 
    Exclusively using path properties of the BSDE we derive Corollary~\ref{corollary:BSDE_Rate_Classic} and Corollary~\ref{corollary:BoundedMartingaleIntegrand} as pendant to the error bounds from the bounded setting, see e.g. \cite{bender2008time, bouchard2004discrete, Bouchard_strong_BSDE_on_domain, zhang2004numericalScheme}. 
    \item Theorem~\ref{thm:BSDE_MainResult} is a bound of the $L^2$-distance of the maximal deviation, i.e. the expected distance taking into account the entire discrete-time grid at once. Assuming the solution of the BSDE satisfies Kolmogorov's tightness criterion for sufficiently large moments we derive Theorem~\ref{thm:Kolmogorov_BSDE_Error_Bound}: A bound of maximal distance between the continuous solution and the constantly interpolated discrete-time scheme taking into account $[0,\infty)$ under a single expectation. 
\end{itemize}
All error bounds consist of a rate in terms of the stepsize and $L^p$-distances of the terminal conditions, the terminal times and a difference that arises from a discrete-time approximation of the BSDE generator; all expressions are scaled by an exponential (with positive scalar) of the maximal terminal time. Since the discrete terminal time is depending on the stepsize we require existence of such exponential moments. In the case of random unbounded time horizons some constants require additional attention to ensure they are uniform boundedness for all admissible stepsizes. This is highlighted in Corollary~\ref{corollary:MainResults_AfterHoelder} where merely error bounds with a rate remain.

%
We apply our main results to a decoupled FBSDE on a bounded domain where the forward diffusion is approximated by a classical Euler-Maruyama scheme. An illustrative presentation (of Theorem~\ref{thm:BSDE_MainResult}) is Theorem~\ref{thm:FBSDE_ErrorBound} where the general error bound is refined to the rate of convergence $\tfrac{1}{8}$ and additional error terms consisting of $L^p$-distances between continuous and discrete exit times. 
To our knowledge \cite{BGG_forward_exit} is the only work that considers general (unbounded) exit time convergence rates. While we present this application under the assumption of existence of the necessary exponential moments, we also provide a sufficient possibility to enforce this. The latter methodology gives rise to a systematic error that remains to be fully understood; its analysis is deferred to future work beyond the scope of this paper.

%
Our discrete-time BSDE approximation scheme is a natural extension of its random bounded time horizon counterpart, see \cite{Bouchard_strong_BSDE_on_domain}. The fundamental difference is that the unbounded time horizon transforms the scheme into a countable infinite sequence whereas a bound on the time horizons dictates a maximum number of discrete-times. Despite this we accomplish the same regularity properties and particularly our proposed discrete-time approximation is \textit{adapted}. With this in mind we emphasize that our analysis provides a theoretical presentation of intrinsic limitations and serves as surrogate tool in the convergence analysis of potential numerical algorithms which for instance rely on forward simulation circumventing the need to use conditional expectations. Such an approach is e.g. carried out in \cite{gao2023convergence, han2020convergence, Knochenhauer2021ConvergenceRates} where the error bounds from auxiliary schemes established in \cite{bender2008time, gobetLabart2007, zhang2004numericalScheme} are used.

%
We confine our analysis to BSDEs with generators that are not depending on the martingale integrand. The technical reasoning for this is twofold: On the one hand it remains unknown whether existence of the scheme can be achieved while preserving the required regularity to establish the error bounds. On the other hand (assuming the scheme is well-defined) a discrete-time counterpart of a \textit{Zhang-$L^2$-regularity} error term (see \cite[Section 3]{zhang2004numericalScheme}) arises when attempting to establish Proposition~\ref{prop:AdaptedGronwall}.

%
Our framework remains widely applicable despite the restriction to generators which are not depending on the martingale integrand. For instance in multilevel Monte-Carlo approximations for exit times, see e.g. \cite{giles2018multilevel, gobet2010stopped, higham2013mean}. In utility optimization with random time horizon, see e.g. \cite{aurand2023epstein, jeanblanc2015utility}. Concerning counterparty risk, see e.g. \cite{crepey2015bsdes}. In applications pertaining semilinear second-order elliptic PDEs. Completely deterministic applications such as solving incompressible Navier-Stokes  equations, see e.g. \cite{lejay2020forward}. Furthermore with a direct link to the probabilistic setting including insurance mathematics, see e.g. \cite{frey2022deep, kremsner2020deep} or stock repurchase pricing, see e.g. \cite{hamdouche2022policy}.

%
\textbf{Outline: } In Section~\ref{sec:BSDE_ErrorBoundMain} we introduce the discrete-time approximation scheme for BSDEs with random terminal time. We state the required assumptions and present our main result Theorem~\ref{thm:BSDE_MainResult_no_SUPREMUM} and Theorem~\ref{thm:BSDE_MainResult} which are error bounds regarding the discrete-time grid, together with the immediate Corollary~\ref{corollary:MainResults_AfterHoelder} which highlights the stepsize-uniform nature of our analysis. 
In Section~\ref{sec:BSDE_StrongErrorBound} we present the error bounds regarding deviations on the continuous time line while considering a constant interpolation of the discrete-time approximation in Corollaries~\ref{corollary:BSDE_Rate_Classic} and~\ref{corollary:BoundedMartingaleIntegrand} as classic strong error bounds. Moreover as main result of Section~\ref{sec:BSDE_StrongErrorBound} we present Theorem~\ref{thm:Kolmogorov_BSDE_Error_Bound} the uniform-in-time strong error bound where the supremum considered under the expectation.
The proof of our main results (of Section~\ref{sec:BSDE_ErrorBoundMain}) rely on a continuous and adapted interpolation of the discrete-time scheme. The main component is presented in Section~\ref{sec:proof_AdaptedGronwall}. The proofs rely on integrability conditions which are derived in Sections~\ref{sec:Existence}-\ref{Sec:RegularityInterpolation}. To be precise: Section~\ref{sec:Existence} establishes unique existence of our proposed scheme and in Section~\ref{sec:Majorant_DiscreteTime} we derive a majorant for our scheme which yields the required integrability. This is based on a pathwise discrete Gronwall lemma which is presented in the Supplements, see Appendix~\ref{sec:generalSupplementsAndPathwiseGronwall}.
In Section~\ref{Sec:RegularityInterpolation} this integrability is extended for an adapted interpolation on which the main proofs are based. 
The assumption on uniform integrability of exponential exits times is discussed in Section~\ref{sec.ExponentialMoments} where with stand-alone proofs we present a sufficient possibility to ensure existence of the necessary moments. 
The application to the Markovian setting can be found in Section~\ref{sec:Application_EM} where we present the illustrative result Theorem~\ref{thm:FBSDE_ErrorBound} as a special case of Theorem~\ref{thm:MarkovianFBSDE_StrongBound}; both are concerned with the uniform-in-time strong error bound. Furthermore we derive Theorem~\ref{thm:MarkovianFBSDE_ClassicBound} as pendant to the usually considered classic strong error bound. 
The corresponding proof is based on a typical adapted interpolation. The corresponding intermediate results are presented in Section~\ref{sec:Hoelder_Bounds_Euler}. Those results are based on the Hölder regularity properties of the Euler-Maruyama interpolation. Auxiliary results are presented in the supplements. We refer to Appendix~\ref{sec:HoelderCoeff} for a result on maximal deviation between two stopping times for a general process satisfying Kolmogorov's criterion and to Appendix~\ref{sec:StrongEulerBounds_BGG} for classical strong bounds of the Euler-Maruyama interpolation.

\section{BSDE Discretization Scheme and Discrete-Time Error Bounds}\label{sec:BSDE_ErrorBoundMain}
Throughout the paper let $(\Omega, \F, \mathfrak{F}, \prob)$ denote a filtered probability space and suppose the filtration is generated by a $d\in\nat$ dimensional Brownian motion $W$ augmented by all $\prob$-nullsets. 
For a terminal time $\tau$ satisfying $\exp(m \tau)\in L^1(\F_{\tau})$ for some $m>0$ and terminal condition $\xi\in L^p (\F_{\tau})$ for some $p \geq 4$ we consider BSDEs 
\begin{equation}\label{def:BSDE_general}
	Y_t = \xi + \int_{t\wedge\tau}^{\tau} f(s,Y_s) \de s - \int_{t\wedge\tau}^{\tau} Z_s \de W_s \qquad t \geq 0
\end{equation}
For a given stepsize $h>0$ let $\unboundedTimeGrid\defined\{nh\ |\ n \in\nat_0\}$ denote the equidistant discretization of $[0,\infty)$. 
Moreover let $\StoppDiscrete$ denote a finite $\unboundedTimeGrid$-valued stopping time; corresponding to a $\unboundedTimeGrid$-valued approximation of the continuous terminal time $\tau$. 
For a general discrete-time terminal condition $\bar{\xi}\in L^p (\F_{\StoppDiscrete})$ and a discrete-time approximation of the generator $f$ of the BSDE \eqref{def:BSDE_general} denoted by $\fd{h}\colon \unboundedTimeGrid\times\R\to \R$ we define the approximating sequence\footnote{
    We emphasize that this scheme is not to be understood iteratively but as a sequence composed of the terminal condition and a solution to a fixed-point problem. For details about the existence see Section~\ref{sec:Existence}. 
}
\begin{equation}\label{def:FixpointScheme}
	\Y_{t} \defined \1_{\{t \geq \StoppDiscrete\}} \bar{\xi} 
	+ \1_{\{t < \StoppDiscrete\}} \big( \E_{t} [\Y_{t+h}] + h \fd{\unboundedTimeGrid} (t, \Y_{t})\big) \quad t\in\unboundedTimeGrid 
\end{equation}

Our main results Theorem~\ref{thm:BSDE_MainResult_no_SUPREMUM} and Theorem~\ref{thm:BSDE_MainResult} are postulated under the assumption of unique existence of a solution $(Y,Z)$ satisfying standard regularity conditions. Subsequently we impose the relevant assumptions for our proofs;
\begin{bsde_assumption}\label{bsde:assumption}
    Let $f\colon [0,\infty)\times\Omega\times\R \to \R$ be Lipschitz in $y$ with Lipschitz constant $\lipschitz{f} > 0$. Moreover assume that $t\mapsto f(t,Y_t)$ is progressive and satisfies 
    $\int_0^{\tau} f(s,0)^p \de s \in L^1$. \close
\end{bsde_assumption}

\begin{stepsize_condition}\label{stepsize:assumption}
    Assume the family of equidistant discrete-time grids $(\unboundedTimeGrid_{h})_{h\in(0,h_0]}$ has maximal stepsize 
    $        h_0 < \min \{ \lipschitz{f},\ \tfrac{1}{12\lipschitz{f}} \}$. \close
\end{stepsize_condition}

Next is the decisive condition to ensure existence of a solution to \eqref{def:FixpointScheme} satisfying the required regularity properties.
The principal steps where this assumption is invoked are the characterization of a specific $L^2$-sequence space (see Proposition~\ref{prop:Characterization}) and the \textit{stepsize uniform} $\mathcal{S}^q$-integrability (see Proposition~\ref{prop:FixPointScheme_Lq_Integrable}).
\footnote{
    Assumption~\hyperref[exp:assumption]{(R-q)} is due to the consideration of (unbounded) random time horizons.
    In Section~\ref{sec.ExponentialMoments} we discuss the moment condition for the Brownian motion. 
    In Section~\ref{sec:Application_EM} we apply the bounds to Markovian BSDEs. See Remark~\ref{remark:Temporal_CutOff} for the discussion of sufficiency at cost of potentially losing expressivity of the error bound.    
}
\begin{exp_assumption}\label{exp:assumption}
    Let $q \geq 2$ be a given parameter. 
    Assume that there is $\rho>0$ such that
    \begin{equation}
        \rho > 4q\ \lipschitz{f} 
        \quad\text{and}\quad \exp(\rho\tau) \in L^1 
        \quad\text{and}\quad \sup_{h\in (0,h_0]}\exp(\rho\StoppDiscrete) \in L^1 \closeEqn
    \end{equation}
\end{exp_assumption}

Furthermore we impose the discrete-time approximations of the BSDE generator and terminal time to posses the same regularity conditions; and that the respective families (collecting all admissible stepsizes) are bounded in $L^p$.\footnote{
    In \cite[Theorem 2]{bender2007forward} a similar assumption is imposed. 
}
\begin{discrete_generator_assumption}\label{discrete_f:assumption}
    %
    The discrete-time approximation $\fd{\unboundedTimeGrid} \colon \unboundedTimeGrid\times\Omega\times\R\to \R$ is Lipschitz continuous in $y$ with the same $\lipschitz{f}$ as $f$ (see
    Assumption \hyperref[bsde:assumption]{(F)}) 
    , for each $t\in\unboundedTimeGrid$ and $\F_t$-measurable $A$ it holds that $\fd{\unboundedTimeGrid}(t,A)$ is $\F_t$-measurable and
    \begin{equation}
        \E \Big[ \sup_{h\in(0,h_0]} \sup_{t\in\unboundedTimeGrid\cap [0, \StoppMax] } \big| \fd{\unboundedTimeGrid}( t,0) \big|^{p} \Big] < \infty \closeEqn 
    \end{equation}
\end{discrete_generator_assumption}

\begin{discrete_terminal_assumption}\label{discrete_xi:assumption}
    The discrete-time terminal condition $\bar{\xi}$ is $\F_{\StoppDiscrete}$-measurable and satisfies
    \begin{equation}
        \E \Big[ \sup_{h\in(0,h_0]} |\bar{\xi}|^p \Big] < \infty \closeEqn
    \end{equation}
\end{discrete_terminal_assumption}

With this we state our first main result: A uniform bound of the distances between the solution of the BSDE and our proposed scheme along the discrete-time grid. Such quantities typically are analyzed in order to establish error bounds and convergence results, see e.g. \cite{bender2008time, Bouchard_strong_BSDE_on_domain, bouchard2004discrete, zhang2004numericalScheme}. 
\begin{theorem}[Classic Bound]\label{thm:BSDE_MainResult_no_SUPREMUM}
    Assume that the BSDE \eqref{def:BSDE_general} has a solution\footnote{
		We use the notation $ \mathcal{S}^p \defined \{ Y\ |\ \E [ \sup_{ s\geq 0 } |Y_s|^p ] < \infty \} $ and $\mathcal{H}^p \defined \{ Z \ |\ \E \big[ (\int_{0}^{\infty} \|Z\|^2\de s)^\frac{p}{2} \big] < \infty \}$.
	} 
    $(Y,Z)\in \mathcal{S}^4\times\mathcal{H}^4$.
    Assume \hyperref[stepsize:assumption]{(h)}, \hyperref[exp:assumption]{(R-q)} with $q=4$,  \hyperref[discrete_f:assumption]{($\unboundedTimeGrid$-F)} with $p>8$ and \hyperref[discrete_xi:assumption]{($\unboundedTimeGrid$-T)} with $p=8$. 
    Then there is $\theGronwallV \in L^2$ (not depending on $h$; see \eqref{def:GronwallVanisher}) and a constant $C>0$ such that 
    \begin{equation}
        \sup_{t\in\unboundedTimeGrid} \E \big[ | Y_t - \Y_t |^2 \big] \leq C h
        + \E \bigg[ \exp\big( 3 \lipschitz{f} (\tau\vee\StoppDiscrete)\big) \Big( \big|\xi-\bar{\xi}\big|^2 + 2 \theGronwallV |\tau-\StoppDiscrete| + \tfrac{3}{2\lipschitz{f}}  \int_{0}^{\tau} \Delta F_s^2 \de s \Big)\! \bigg]
    \end{equation}
    where $\Delta F_s \defined f(s,Y_s) - \fd{\unboundedTimeGrid} (\phi(s),Y_s)$ for $s\geq 0$.\footnote{
        We use $\phi\colon [0,\infty)\to\unboundedTimeGrid, s\mapsto\argmax\{t\in\unboundedTimeGrid\ | \ t\leq s \}$ to denote the discrete time directly before $s\geq 0$.
    } 
    \close
\end{theorem}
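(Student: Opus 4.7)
The plan is to compare the continuous BSDE solution $Y$ against an adapted continuous interpolation $\bar Y$ of the discrete scheme $\Y$, so that both processes live on the same probability space and can be treated with It\^o calculus on a common time axis. First I would construct $\bar Y$ by combining the piecewise-constant values $\Y_{nh}$ with a martingale component obtained from the martingale representation theorem on each interval $[nh,(n+1)h]$ together with a piecewise-constant drift $\fd{\unboundedTimeGrid}(nh,\Y_{nh})$; the resulting $\bar Y$ is adapted, agrees with $\Y$ on $\unboundedTimeGrid$, and solves a BSDE on $[0,\StoppDiscrete]$ with a step-generator and terminal condition $\bar\xi$. The $\mathcal{S}^q$-integrability of $\bar Y$, uniformly in $h$, is exactly what the earlier sections of the paper establish via \hyperref[exp:assumption]{(R-$q$)}.

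Next I would apply It\^o's formula to the process $e^{3\lipschitz{f}\, t}\,|Y_t-\bar Y_t|^2$ stopped at $\tau\vee\StoppDiscrete$. The exponential weight $3\lipschitz{f}$ is chosen so that, after applying Young's inequality $2ab\leq \lambda a^2+\lambda^{-1}b^2$ to the Lipschitz cross-term, a strictly positive margin of $\lipschitz{f}$ remains to absorb the quadratic error, while the reciprocal $\lambda^{-1}$ produces the constant $\tfrac{3}{2\lipschitz{f}}$ in front of the generator mismatch. Taking expectations yields four contributions: the terminal mismatch $\E[e^{3\lipschitz{f}(\tau\vee\StoppDiscrete)}|\xi-\bar\xi|^2]$; the generator mismatch $\tfrac{3}{2\lipschitz{f}}\int_0^\tau \Delta F_s^2\,\de s$; a residual $Ch$-term from replacing $\fd{\unboundedTimeGrid}(\phi(s),\bar Y_{\phi(s)})$ by $\fd{\unboundedTimeGrid}(\phi(s),\bar Y_s)$ within each interval $[nh,(n+1)h)$, which is controlled via the path-regularity of $\bar Y$ implied by $(Y,Z)\in\mathcal{S}^4\times\mathcal{H}^4$; and a term living on the symmetric event $\{\tau\neq\StoppDiscrete\}$ where one process has terminated while the other still evolves.

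The hard part is this last contribution, since on $\{\tau<\StoppDiscrete\}$ or $\{\StoppDiscrete<\tau\}$ the two processes follow different dynamics on $[\tau\wedge\StoppDiscrete,\tau\vee\StoppDiscrete]$ and a straightforward Gronwall comparison fails. This is where I would invoke Proposition~\ref{prop:AdaptedGronwall}: the excess drift on this random interval is dominated pointwise by a combination of $|f(s,0)|$, $|\fd{\unboundedTimeGrid}(\phi(s),0)|$, $|Y_s|$, $|\bar Y_s|$, $|\xi|$ and $|\bar\xi|$, and after Cauchy-Schwarz the integrand against $|\tau-\StoppDiscrete|$ is majorized by a single $L^2$-random variable $\theGronwallV$. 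This majorant is built from the $\mathcal{S}^4$-norms of $Y$ and $\bar Y$, the $L^p$-norms of the terminal conditions, and $\sup_{h\in(0,h_0]}\exp(\rho\StoppDiscrete)$, all finite by \hyperref[exp:assumption]{(R-$4$)}, \hyperref[discrete_f:assumption]{($\unboundedTimeGrid$-F)} and \hyperref[discrete_xi:assumption]{($\unboundedTimeGrid$-T)}, and crucially it is $h$-independent. Combining the four contributions with the adapted Gronwall bound and noting that the right-hand side is independent of $t\in\unboundedTimeGrid$ yields the stated inequality.
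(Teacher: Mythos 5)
Your proposal is correct and follows essentially the same route as the paper: construct the adapted continuous interpolation via martingale representation, run an It\^o/Gronwall argument on the squared difference with exponential weight $3\lipschitz{f}$ (which is exactly what Proposition~\ref{prop:AdaptedGronwall} with $\alpha_1=\tfrac13$, $\alpha_2=\tfrac23\lipschitz{f}$ packages, decomposed into the same four contributions as Lemmas~\ref{lemma:AdaptedGronwallInt_1}--\ref{lemma:AdaptedGronwallInt_4}), and then bound the $h$-dependent remainder by $Ch$ via Cauchy--Schwarz, $Z\in\mathcal{H}^4$ and the exponential moment assumption. The only cosmetic differences are that the paper introduces the exponential through a stochastic Gronwall lemma applied to the conditional inequality rather than weighting the square directly, and that the path-regularity of the interpolation in the $Ch$-term is controlled by the scheme's own data ($\theGronwallR$ and $\Zc^\phi$) rather than by $(Y,Z)$ alone.
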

\begin{proof}
    Let $t\in\unboundedTimeGrid$. From Proposition~\ref{prop:AdaptedGronwall} we obtain a pathwise bound of the distance between $Y$ and $\Y$ along $\unboundedTimeGrid$.\footnote{
        The result is an adapted bound of the difference of the process $Y$ and a continuous, adapted interpolation of $\Y$. The bound consists of a closed martingale; holds on $[0,\infty)$ and is derived using a stochastic Gronwall lemma. For ease of presentation here we apply Proposition~\ref{prop:AdaptedGronwall} with the choice $\alpha_1=\tfrac{1}{3}$ and $ \alpha_2=\tfrac{2}{3}\lipschitz{f}$.
    }
    Specifically there are $\theGronwallR, \theGronwallV \in L^2$ (not depending on $h$; see \eqref{def:theGronwallR} and \eqref{def:GronwallVanisher}); 
    \begin{align}\label{eqn:Proof_MainResult_Eq1}
        \big( Y_t - \Y_{t} \big)^2 & \leq \E_{\ell} \Big[ \exp\big( 3\lipschitz{f} (\StoppMax) \big) \big( |\xi-\bar{\xi}|^2 + 2 \theGronwallV |\tau-\StoppDiscrete| \big)  \Big] \notag \\
        & \hspace*{1.0cm} + 12\lipschitz{f} h\ \E_{\ell} \bigg[ \exp\big( 3\lipschitz{f} (\StoppMax) \big) \Big( h\theGronwallR + \int_{0}^{\tau} \| Z_s\|^2 \de s \Big) \bigg] \notag \\
        & \hspace*{2.0cm} + \tfrac{3}{2\lipschitz{f}} \E_{\ell} \Big[ \exp\big( 3\lipschitz{f} (\StoppMax) \big) \int_{0}^{\tau} \big( f(s,Y_s) - \fd{\unboundedTimeGrid} (\phi(s),Y_s) \big)^2 \de s \Big] \hspace*{0.5cm}
    \end{align}
    With the Cauchy-Schwarz inequality, Assumption~\hyperref[exp:assumption]{(R-q)} and since $\theGronwallR\in L^2$ and $Z\in\mathcal{H}^4$ we have
    \begin{align} 
        & 12\lipschitz{f} \E \bigg[ \exp\big( 8\lipschitz{f} (\StoppMax) \big) \Big( h\theGronwallR + \int_{0}^{\tau} \| Z_s\|^2 \de s \Big) \bigg] \notag \\
        & \hspace*{0.5cm} \leq 12\lipschitz{f} \sup_{h\in (0,h_0]} \E  \Big[ \exp\big( 16\lipschitz{f} (\StoppMax) \big) \Big]^\frac{1}{2}\ \sqrt{2} 
         \bigg(  h_0 \E \big[ \theGronwallR^2 \big]^\frac{1}{2} + \E \Big[ \Big( \int_{0}^{\tau} \| Z_s\|^2 \de s \Big)^2 \Big]^\frac{1}{2} \bigg) \notag \\
        & \hspace*{0.5cm} \defined C < \infty \qedhere
    \end{align}
\end{proof}
The next result is the main result of this paper. It is a stronger version of the preceding result: A uniform bound on the maximally expected deviation between the solution of the BSDE and our discrete-time approximation scheme \eqref{def:FixpointScheme}.\footnote{
    The backbone of our proof is a stochastic Gronwall lemma for random time horizons. This enables us to consider to consider the entire discrete-time grid under the expectation. In contrast the classical error bounds (for bounded time horizons) rely on a deterministic Gronwall lemma, which in turn requires performing the estimation with the expectation. 
    Theorem~\ref{thm:BSDE_MainResult_no_SUPREMUM} is a byproduct of the proof of Theorem~\ref{thm:BSDE_MainResult}.
}

\begin{theorem}[Strong Bound]\label{thm:BSDE_MainResult}
    Assume the terminal condition satisfies $\xi\in L^8$ and the BSDE \eqref{def:BSDE_general} has a solution $(Y,Z)\in \mathcal{S}^8\times\mathcal{H}^8$. 
    Assume \hyperref[stepsize:assumption]{(h)}, \hyperref[exp:assumption]{(R-q)} with $q=8$, 
    \hyperref[discrete_f:assumption]{($\unboundedTimeGrid$-F)} with $p>16$ and \hyperref[discrete_xi:assumption]{($\unboundedTimeGrid$-T)} with $p=16$. 
    Then there is $\theGronwallV \in L^4$ and a constant $C>0$ (neither depending on $h$) such that
    \begin{align}
        \E \Big[ \sup_{ t \in\unboundedTimeGrid } \big| Y_t - \Y_t \big|^2  \Big] & \leq C h + C \E \Big[ \exp\big( 6\lipschitz{f} (\StoppMax) \big) |\xi-\bar{\xi}|^4 \Big]^\frac{1}{2} \\
        & \hspace*{1.0cm} + C \E \Big[ \exp\big( 6\lipschitz{f} (\StoppMax) \big) \theGronwallV^2 |\tau-\StoppDiscrete|^2  \Big]^\frac{1}{2} \\
        & \hspace*{2.0cm} + C \E \bigg[ \exp\big( 6\lipschitz{f} (\StoppMax) \big) \Big( \int_{0}^{\tau} \Delta F_s^2 \de s \Big)^2 \bigg]^\frac{1}{2}
    \end{align}
    where $\Delta F_s \defined f(s,Y_s) - \fd{\unboundedTimeGrid} (\phi(s),Y_s)$ for $s\geq 0$ (using the same notation as in Theorem~\ref{thm:BSDE_MainResult_no_SUPREMUM}). \close
\end{theorem}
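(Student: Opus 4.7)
The plan is to reuse the pathwise inequality produced by Proposition~\ref{prop:AdaptedGronwall} --- the very same bound that underlies the proof of Theorem~\ref{thm:BSDE_MainResult_no_SUPREMUM} --- but, instead of fixing a single $t\in\unboundedTimeGrid$ and taking expectations, to observe that its right-hand side is a sum of conditional expectations of three fixed, non-negative $\F_\infty$-measurable random variables. Writing
\begin{align*}
    \Xi_1 & \defined \exp(3\lipschitz{f}(\tau\vee\StoppDiscrete)) \big(|\xi-\bar\xi|^2 + 2\theGronwallV|\tau-\StoppDiscrete|\big),\\
    \Xi_2 & \defined \exp(3\lipschitz{f}(\tau\vee\StoppDiscrete)) \Big(h\theGronwallR + \int_0^{\tau}\|Z_s\|^2\de s\Big),\\
    \Xi_3 & \defined \exp(3\lipschitz{f}(\tau\vee\StoppDiscrete)) \int_0^{\tau}\Delta F_s^2\de s,
\end{align*}
inequality~\eqref{eqn:Proof_MainResult_Eq1} reads $(Y_t-\Y_t)^2 \leq \E_t[\Xi_1] + 12\lipschitz{f}h\,\E_t[\Xi_2] + \tfrac{3}{2\lipschitz{f}}\E_t[\Xi_3]$ for every $t\in\unboundedTimeGrid$. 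Crucially the $\Xi_i$ do not depend on $t$, so each $(\E_t[\Xi_i])_{t\geq 0}$ is a non-negative closed martingale.

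The key step is then to take the supremum in $t\in\unboundedTimeGrid$ inside the expectation and apply Doob's $L^2$ maximal inequality to each of the three martingales, yielding $\E[\sup_t \E_t[\Xi_i]] \leq \E[(\sup_t \E_t[\Xi_i])^2]^{1/2} \leq 2\,\E[\Xi_i^2]^{1/2}$. After summation I obtain
\begin{equation*}
    \E\Big[\sup_{t\in\unboundedTimeGrid}(Y_t-\Y_t)^2\Big] \leq 2\,\E[\Xi_1^2]^{1/2} + 24\lipschitz{f}h\,\E[\Xi_2^2]^{1/2} + \tfrac{3}{\lipschitz{f}}\,\E[\Xi_3^2]^{1/2}.
\end{equation*}
Squaring the $\Xi_i$ doubles the exponent of the exponential weight to $6\lipschitz{f}(\tau\vee\StoppDiscrete)$, exactly the weight appearing in the statement.

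In the third step I would bound each $\E[\Xi_i^2]^{1/2}$. For $\Xi_1$ and $\Xi_3$, the elementary inequality $(a+b)^2 \leq 2(a^2+b^2)$ directly reproduces the terminal-condition term, the $\theGronwallV|\tau-\StoppDiscrete|$ term, and the generator-approximation term listed in the statement (up to absolute constants). For the rate contribution $h\,\E[\Xi_2^2]^{1/2}$, I would apply Cauchy-Schwarz to separate the exponential from the bracketed factor: Assumption~\hyperref[exp:assumption]{(R-q)} with $q=8$ gives $\rho>32\lipschitz{f}$, so $\E[\exp(12\lipschitz{f}(\tau\vee\StoppDiscrete))]$ is finite and, by the supremum formulation of~(R-q), uniformly bounded in $h\in(0,h_0]$. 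The remaining factor is controlled by the $L^4$-norm of $\theGronwallR$ together with $Z\in\mathcal{H}^8$, giving an $h$-independent constant $C$ and thus the advertised $Ch$ contribution.

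The principal obstacle is the verification of $L^2$-integrability of each $\Xi_i$, without which Doob's inequality does not apply. This is precisely what forces the moment exponents to double compared with Theorem~\ref{thm:BSDE_MainResult_no_SUPREMUM}: terminal condition in $L^8$, $(Y,Z)\in\mathcal{S}^8\times\mathcal{H}^8$, $\theGronwallV\in L^4$, Assumption~\hyperref[exp:assumption]{(R-q)} with $q=8$, together with the $L^{16}$ bounds on the discrete-time data. Once these integrability requirements are secured, the remainder of the proof reduces to systematic applications of Cauchy-Schwarz in order to match the three error components displayed in the statement.
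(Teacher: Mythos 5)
Your proposal is correct and follows essentially the same route as the paper's proof: invoke Proposition~\ref{prop:AdaptedGronwall} to get the pathwise bound \eqref{eqn:Proof_MainResult_Eq1}, observe that its right-hand side consists of closed martingales $\E_t[\Xi_i]$ with $t$-independent terminal values, apply Jensen/Cauchy--Schwarz followed by Doob's $L^2$-inequality to pass the supremum inside the expectation (which doubles the exponential scalar to $6\lipschitz{f}$), and then control the rate term via Cauchy--Schwarz using $\theGronwallR\in L^4$, $Z\in\mathcal{H}^8$ and Assumption~\hyperref[exp:assumption]{(R-q)} with $q=8$. Your remark that the $L^2$-integrability of the $\Xi_i$ is exactly what forces the doubled moment assumptions matches the paper's reasoning precisely.
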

\begin{proof}
    As in the proof of Theorem~\ref{thm:BSDE_MainResult_no_SUPREMUM} we invoke Proposition~\ref{prop:AdaptedGronwall} (with $\alpha_1=\tfrac{1}{3}, \alpha_2=\tfrac{2}{3}\lipschitz{f}$) to obtain \eqref{eqn:Proof_MainResult_Eq1}; where under the stronger integrability assumptions of this result we know that $\theGronwallR, \theGronwallV \in L^4$. 
    Using the Jensen and Doob $L^2$-inequality\footnote{
        For any non-negative $A\in L^2$ it holds that
        $\E [ \sup_{t\geq 0} \E_t [A] ]^2 \leq \E [ \sup_{t\geq 0} \E_t [A]^2 ] \leq 4 \E [ A^2 ]$
    } 
    we conclude
    \begin{align}
        \E \Big[ \sup_{ t \in\unboundedTimeGrid } \big| Y_t - \Y_t \big|  \Big] & 
        \leq 2 \E \Big[ \exp\big( 6\lipschitz{f} (\StoppMax) \big) \big( |\xi-\bar{\xi}|^2 + 2 \theGronwallV |\tau-\StoppDiscrete| \big)^2  \Big]^\frac{1}{2} \\
        & \hspace*{0.5cm} + 24\lipschitz{f} h\ \E \bigg[ \exp\big( 6\lipschitz{f} (\StoppMax) \big) \Big( h\theGronwallR + \int_{0}^{\tau} \| Z_s\|^2 \de s \Big)^2 \bigg]^\frac{1}{2} \notag \\
        & \hspace*{1.0cm} + \tfrac{3}{\lipschitz{f}} \E \bigg[ \exp\big( 6\lipschitz{f} (\StoppMax) \big) \Big( \int_{0}^{\tau} \big( f(s,Y_s) - \fd{\unboundedTimeGrid} (\phi(s),Y_s) \big)^2 \de s \Big)^2 \bigg]^\frac{1}{2}
    \end{align}
    we bound the expectation of the second line in the latter estimate similarly to the proof of Theorem~\ref{thm:BSDE_MainResult_no_SUPREMUM}. With the Cauchy-Schwarz inequality and an elementary bound we have
    \begin{align}
        &\E \bigg[ \exp\big( 6\lipschitz{f} (\StoppMax) \big) \Big( h\theGronwallR + \int_{0}^{\tau} \| Z_s\|^2 \de s \Big)^2 \bigg]^2 \\
        & \hspace*{1.0cm} \leq \sup_{h\in(0,h_0]} \E \Big[ \exp\big( 12\lipschitz{f} (\StoppMax) \big) \Big]\ 2^3\ \E \bigg[ h_0^4 \theGronwallR^4 + \Big( \int_{0}^{\tau} \| Z_s\|^2 \de s \Big)^4 \bigg] \\
        & \hspace*{1.0cm} \defined C_1 < \infty 
    \end{align}
    where we use Assumption~\hyperref[exp:assumption]{(R-q)}, the fact that $\theGronwallR \in L^4$ (due to the stronger assumptions; see Lemma~\ref{lemma:FRemainder}) and $Z\in\mathcal{H}^8$. Choose a suitably increased constant $C>0$ to conclude the claim.
\end{proof}

\begin{remark}
    Our proofs admit straightforward modifications to holds under weaker versions of Assumptions \hyperref[exp:assumption]{(R-q)}, \hyperref[discrete_f:assumption]{($\unboundedTimeGrid$-F)} and \hyperref[discrete_xi:assumption]{($\unboundedTimeGrid$-T)}. Specifically we may consider the supremum w.r.t. the admissible stepsizes outside of the expectations and derive the same error bound structure; with the difference that $\theGronwallR_h$ and $\theGronwallV_h$ are then depending on the particular discrete-time grid. Our choice of presentation with the stronger assumptions is due to the fact that in order to establish (either type of) uniform existence of positive exponential moments of the discrete terminal times, the methodology we use naturally yields that the family is bounded in $L^1$, see Section~\ref{sec.ExponentialMoments} and specifically Lemma~\ref{lemma:FreidlinTypeBound}. \close    
\end{remark}

We conclude the section with a corollary that highlights the error bound in the sense of a possible rate, i.e. we uniformly bound every dependence on the stepsize of the underlying discrete-time grid which does not vanish as $h\to 0$. 

\begin{corollary}[Uniform Bounds]\label{corollary:MainResults_AfterHoelder}
    Assume the setting of Theorem ~\ref{thm:BSDE_MainResult_no_SUPREMUM} then there is a constant $K_1>0$ (not depending on $h$) such that
    \begin{equation}
        \sup_{\ell\in\unboundedTimeGrid} \E \Big[ \big| Y_{\ell} - \Y_{\ell} \big|^2 \Big] \leq K_1 \Bigg( h +  \E \big[\ |\xi-\bar{\xi}|^4 \big]^\frac{1}{2} + \E \big[\ |\tau-\StoppDiscrete|^4 \big]^\frac{1}{4} 
        + \E \bigg[ \Big(\int_{0}^{\tau} \Delta F_s^2 \de s\Big)^2 \bigg]^\frac{1}{2} \Bigg)
    \end{equation}
    Moreover if we assume the stronger setting of Theorem~\ref{thm:BSDE_MainResult} then there is a constant $K_2>0$ (not depending on $h$) such that
    \begin{equation}
        \E \Big[ \sup_{\ell\in\unboundedTimeGrid} \big| Y_\ell - \Y_{\ell} \big|^2 \Big] \leq K_2 \Bigg( h +  \E \big[\ |\xi-\bar{\xi}|^8 \big]^\frac{1}{4} + \E \big[\ |\tau-\StoppDiscrete|^8 \big]^\frac{1}{8} 
        + \E \bigg[ \Big(\int_{0}^{\tau} \Delta F_s^2 \de s\Big)^4 \Big]^\frac{1}{4} \Bigg) \closeEqn
    \end{equation}
\end{corollary}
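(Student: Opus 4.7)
The plan is to derive both claims by applying Hölder's inequality to the bounds of Theorem~\ref{thm:BSDE_MainResult_no_SUPREMUM} and Theorem~\ref{thm:BSDE_MainResult}, in each case separating the exponential factor $\exp(c\lipschitz{f}\StoppMax)$ from the vanishing quantities and then absorbing the exponential moments into a universal constant via Assumption~\hyperref[exp:assumption]{(R-q)}. The constants $K_1, K_2$ will simply collect the $h$-uniform upper bounds of those exponential moments together with the $h$-independent $L^p$-norms of $\theGronwallV$ and of the processes $Y, Z$ appearing in the preparatory lemmas.

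For the first claim I would start from the bound of Theorem~\ref{thm:BSDE_MainResult_no_SUPREMUM} and split its right-hand side term by term. The terms $\E[\exp(3\lipschitz{f}\StoppMax)|\xi-\bar{\xi}|^2]$ and $\E[\exp(3\lipschitz{f}\StoppMax)\int_0^{\tau}\Delta F_s^2\,\de s]$ each have two factors, so Cauchy--Schwarz yields e.g.
\[
\E[\exp(3\lipschitz{f}\StoppMax)|\xi-\bar{\xi}|^2] \leq \E[\exp(6\lipschitz{f}\StoppMax)]^{1/2}\,\E[|\xi-\bar{\xi}|^4]^{1/2}.
\]
For the three-factor middle term I would invoke the generalized three-term Hölder inequality with exponents $(4,2,4)$, matching $\theGronwallV\in L^2$ on the middle slot and the target moment $|\tau-\StoppDiscrete|^4$ on the third, producing
\[
\E[\exp(3\lipschitz{f}\StoppMax)\theGronwallV|\tau-\StoppDiscrete|] \leq \E[\exp(12\lipschitz{f}\StoppMax)]^{1/4}\,\E[\theGronwallV^2]^{1/2}\,\E[|\tau-\StoppDiscrete|^4]^{1/4}.
\]
Under Assumption~\hyperref[exp:assumption]{(R-q)} with $q=4$ (i.e.\ $\rho > 16\lipschitz{f}$), the moments $\E[\exp(6\lipschitz{f}\StoppMax)]$ and $\E[\exp(12\lipschitz{f}\StoppMax)]$ are finite uniformly in $h\in(0,h_0]$, and $\E[\theGronwallV^2]$ is independent of $h$ by construction; absorbing all these into $K_1$ gives the first claim.

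For the second claim I would apply the same strategy to the bound of Theorem~\ref{thm:BSDE_MainResult}, whose three non-trivial terms carry an outer power $1/2$ and squared inner factors. Cauchy--Schwarz on the two-factor terms produces
\[
\E[\exp(6\lipschitz{f}\StoppMax)|\xi-\bar{\xi}|^4]^{1/2} \leq \E[\exp(12\lipschitz{f}\StoppMax)]^{1/4}\,\E[|\xi-\bar{\xi}|^8]^{1/4},
\]
and analogously with $(\int_0^{\tau}\Delta F_s^2\,\de s)^2$ in place of $|\xi-\bar{\xi}|^4$. For the three-factor term the same exponents $(4,2,4)$ in the generalized Hölder inequality, composed with the outer square root, yield
\[
\E[\exp(6\lipschitz{f}\StoppMax)\theGronwallV^2|\tau-\StoppDiscrete|^2]^{1/2} \leq \E[\exp(24\lipschitz{f}\StoppMax)]^{1/8}\,\E[\theGronwallV^4]^{1/4}\,\E[|\tau-\StoppDiscrete|^8]^{1/8}.
\]
Here $\theGronwallV\in L^4$ holds under the stronger hypotheses of Theorem~\ref{thm:BSDE_MainResult}, and all exponential moments up to $\E[\exp(24\lipschitz{f}\StoppMax)]$ are uniformly finite in $h$ under Assumption~\hyperref[exp:assumption]{(R-q)} with $q=8$, since this imposes $\rho > 32\lipschitz{f}$.

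The only real obstacle is bookkeeping: checking that each chosen Hölder exponent keeps the exponential moment strictly inside the range permitted by (R-q), that the $L^p$-exponent on $\theGronwallV$ matches what is guaranteed by the supporting lemma cited in Theorems~\ref{thm:BSDE_MainResult_no_SUPREMUM} and~\ref{thm:BSDE_MainResult}, and that the resulting powers on the distances $|\xi-\bar{\xi}|$, $|\tau-\StoppDiscrete|$ and $\int_0^{\tau}\Delta F_s^2\,\de s$ match those displayed in the statement. No new estimate beyond the two main theorems and the standing assumptions is needed.
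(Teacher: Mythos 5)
Your proposal is correct and follows essentially the same route as the paper's proof: term-by-term application of Cauchy--Schwarz (and the generalized H\"older inequality with exponents $(2,4,4)$ on the $\theGronwallV$-term) to separate the exponential of $\tau\vee\StoppDiscrete$, followed by absorbing the uniformly-in-$h$ finite exponential moments and the $h$-independent norms of $\theGronwallV$ into the constants $K_1, K_2$. The only cosmetic difference is that the paper treats both cases at once via a parameter $m\in\{1,2\}$, whereas you handle them separately; the exponents and moment requirements you identify coincide exactly with those in the paper.
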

\begin{proof}
    Using a general exponential scalar $m$ we derive both results simultaneously; $m=1$ corresponds to estimating the bound of Theorem~\ref{thm:BSDE_MainResult_no_SUPREMUM} and $m=2$ to Theorem~\ref{thm:BSDE_MainResult}. 
    
    Since $\theGronwallV \in L^{2m}$ (not depending on $h$; see \eqref{def:GronwallVanisher}) with the Cauchy-Schwarz inequality we have
    \begin{equation}
        \E \Big[ \exp \big( 3m \lipschitz{f} (\StoppMax)\big) \theGronwallV^m |\tau-\StoppDiscrete|^m \Big] \leq \E \big[ \theGronwallV^{2m} \big]^\frac{1}{2} \E \Big[ \exp\big( 12 m\lipschitz{f} (\StoppMax)\big) \Big]^\frac{1}{4} \E \big[ |\tau-\StoppDiscrete|^{4m} \big]^\frac{1}{4}
    \end{equation}
    Using Assumption~\hyperref[exp:assumption]{(R-q)} allows us to set 
    \begin{equation}
        C_1^{(m)} \defined \E \big[ \theGronwallV^{2m} \big]^\frac{1}{2} \E \Big[ \sup_{h\in (0,h_0]} \exp\big( 12 m \lipschitz{f} (\StoppMax)\big) \Big]^\frac{1}{4} < \infty
    \end{equation}
    Analogously we have
    \begin{equation}
        \E \Big[ \exp\big( 3m \lipschitz{f} (\StoppMax)\big)\ \big|\xi-\bar{\xi}\big|^{2q} \Big] \leq \E \Big[ \exp\big( 6m \lipschitz{f} (\StoppMax)\big) \Big]^\frac{1}{2} \E \Big[ \big|\xi-\bar{\xi}\big|^{4m} \Big]^\frac{1}{2}
    \end{equation}
    and set
    \begin{equation}
        C_2^{(m)} \defined \E \Big[ \sup_{h\in (0,h_0]} \exp\big( 6m \lipschitz{f} (\StoppMax)\big) \Big]^\frac{1}{2} < \infty 
    \end{equation}
    Once more we repeat the Cauchy-Schwarz argument to bound the $\Delta F$ integral
    \begin{equation}
        \E \bigg[ \exp\big( 3m \lipschitz{f} (\StoppMax) \big) \Big( \int_{0}^{\tau}\! \Delta F_s^2 \de s \Big)^m \bigg]\! \leq \E \Big[ \exp\big( 6m \lipschitz{f} (\StoppMax)\big) \Big]^\frac{1}{2} \E \bigg[ \Big( \int_{0}^{\tau}\! \Delta F_s^2 \de s \Big)^{2m} \bigg]^\frac{1}{2}
    \end{equation}
    Thus we define the overall constants $K_{m} \defined C \max \{ C_1^{(m)}, C_2^{(m)}, 1\}$ where $C>0$ denotes the constant from Theorem~\ref{thm:BSDE_MainResult_no_SUPREMUM} resp. Theorem~\ref{thm:BSDE_MainResult} and the proof is complete.
\end{proof}

\section{Strong BSDE Approximation Error Bounds}\label{sec:BSDE_StrongErrorBound}

Subsequently we apply Theorem \ref{thm:BSDE_MainResult_no_SUPREMUM} and Theorem \ref{thm:BSDE_MainResult} to derive corollaries providing bounds of 
\begin{equation}\label{def:BSDE_Errors}
    \mathcal{E}^1 (Y) \defined \sup_{\ell\in\unboundedTimeGrid} \E \Big[ \sup_{t\in[\ell,\ell+h]} \big| Y_t - \Y_{\ell} \big|^2 \Big] 
    \quad\text{and}\quad
    \mathcal{E}^2 (Y) \defined \E \Big[ \sup_{\ell\in\unboundedTimeGrid} \sup_{t\in[\ell,\ell+h]} \big| Y_t - \Y_{\ell} \big|^2 \Big]
\end{equation}
In literature concerning the (random) bounded time horizon estimates of $\mathcal{E}^1 (Y)$ are established, see e.g. \cite{bender2008time, Bouchard_strong_BSDE_on_domain, bouchard2004discrete, gobetLabart2007, zhang2004numericalScheme}.
We are in position the derive bounds for the uniform-in-time $L^2$ error $\mathcal{E}^2 (Y)$ since the core of our analysis is an adapted Gronwall bound (see Proposition~\ref{prop:AdaptedGronwall}), which enables us to bound the maximal error on the entire discrete-time grid at once -- as presented in Theorem\ref{thm:BSDE_MainResult}. Together with a Kolmogorov's tightness criterion for the BSDE regarded as an It\=o process we derive a bound for the second error $\mathcal{E}^2 (Y)$, see Theorem~\ref{thm:Kolmogorov_BSDE_Error_Bound}.
Evidently $\mathcal{E}^1 (Y) \leq \mathcal{E}^2 (Y)$. Nevertheless the estimate of $\mathcal{E}^2 (Y)$ requires stronger regularity assumptions. Thus without imposing Kolmogorov's tightness criterion we present two possibilities to extend Theorem~\ref{thm:BSDE_MainResult_no_SUPREMUM} in order to bound $\mathcal{E}^1 (Y)$. 
\begin{corollary}\label{corollary:BSDE_Rate_Classic}
    Assume the setting of Theorem~\ref{thm:BSDE_MainResult_no_SUPREMUM}. Moreover assume \hyperref[bsde:assumption]{(F)} and that the martingale integrand $Z$ satisfies the integrability condition
    \begin{equation}\label{eqn:MainResult_AssumptionMartingaleIntegrand}
        \E \Big[ \int_{0}^{\tau} \|Z_s\|^4 \de s \Big] < \infty
    \end{equation}
    Then there is a constant $C>0$ (not depending on $h$) such that
    \begin{equation}
        \mathcal{E}^1 (Y) \leq C h^\frac{1}{2}
        + C \E \Big[ \exp\big( 3 \lipschitz{f} (\tau\vee\StoppDiscrete)\big) \Big( \big|\xi-\bar{\xi}\big|^2 + \theGronwallV |\tau-\StoppDiscrete| +  \int_{0}^{\tau} \Delta F_s^2 \de s \Big)\! \bigg]
        \closeEqn
    \end{equation}
\end{corollary}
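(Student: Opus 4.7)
The plan is to split $|Y_t - \Y_\ell|^2 \leq 2|Y_t - Y_\ell|^2 + 2|Y_\ell - \Y_\ell|^2$ so that
\[ \mathcal{E}^1 (Y) \leq 2 \sup_{\ell\in\unboundedTimeGrid} \E\Big[\sup_{t\in[\ell,\ell+h]} |Y_t - Y_\ell|^2\Big] + 2 \sup_{\ell\in\unboundedTimeGrid} \E\big[|Y_\ell - \Y_\ell|^2\big]. \]
The second summand is immediately controlled by Theorem~\ref{thm:BSDE_MainResult_no_SUPREMUM}; its $Ch$ contribution is absorbed into $Ch^{1/2}$ since Assumption~\hyperref[stepsize:assumption]{(h)} forces $h \leq h_0 < 1$, and the constants $2$ and $\tfrac{3}{2\lipschitz{f}}$ appearing in the theorem can be folded into a common prefactor $C$.

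The path-regularity term is the substantive piece. Using the BSDE~\eqref{def:BSDE_general}, for $t\in[\ell,\ell+h]$ I would write
\[ Y_t - Y_\ell = -\int_{\ell\wedge\tau}^{t\wedge\tau} f(s,Y_s) \de s + \int_{\ell\wedge\tau}^{t\wedge\tau} Z_s \de W_s, \]
and split into drift and martingale. For the drift, pathwise Cauchy--Schwarz in time gives $\sup_{t}\big|\int_{\ell\wedge\tau}^{t\wedge\tau} f(s,Y_s) \de s\big|^2 \leq h\int_0^\tau f(s,Y_s)^2 \de s$, which taken in expectation is finite uniformly in $\ell$ thanks to Assumption~\hyperref[bsde:assumption]{(F)} (using $(a+b)^2\leq 2a^2+2b^2$ together with the bound $\int_0^\tau f(s,0)^2\de s \leq \tau^{1/2}(\int_0^\tau f(s,0)^4\de s)^{1/2}$ from H\"older), $Y\in\mathcal{S}^4$, and the moments of $\tau$ provided by Assumption~\hyperref[exp:assumption]{(R-q)}. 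This yields an $O(h)$ contribution, which dominates into $O(h^{1/2})$.

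For the martingale part, Burkholder--Davis--Gundy gives
\[ \E\Big[\sup_{t\in[\ell,\ell+h]} \Big|\int_{\ell\wedge\tau}^{t\wedge\tau} Z_s \de W_s\Big|^2\Big] \leq C_{\mathrm{BDG}}\, \E\Big[\int_{\ell}^{\ell+h} \|Z_s\|^2 \1_{\{s<\tau\}} \de s\Big]. \]
A further pathwise Cauchy--Schwarz application in time (with the trivial factor $1$) promotes the exponent on $Z$ from $2$ to $4$ and extracts a clean $h^{1/2}$ prefactor, producing the bound $C_{\mathrm{BDG}}\, h^{1/2}\, \E\big[(\int_0^\tau \|Z_s\|^4 \de s)^{1/2}\big]$. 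Jensen's inequality together with the standing hypothesis~\eqref{eqn:MainResult_AssumptionMartingaleIntegrand} renders this expectation finite, giving the advertised rate.

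The principal difficulty to keep in mind is uniformity in $\ell$ across the \emph{infinite} grid $\unboundedTimeGrid$: any bound involving only $\int_\ell^{\ell+h}\cdots$ must be enlarged to $\int_0^\tau\cdots$ to yield an $\ell$-free constant, which is precisely why the assumption~\eqref{eqn:MainResult_AssumptionMartingaleIntegrand} and the exponential moments of $\tau$ both enter the argument. Collecting the drift bound $O(h)$, the martingale bound $O(h^{1/2})$, the Theorem~\ref{thm:BSDE_MainResult_no_SUPREMUM} estimate, and absorbing all numerical constants into a single $C>0$ completes the proof.
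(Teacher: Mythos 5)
Your proposal is correct and follows essentially the same route as the paper: the same split of $\mathcal{E}^1(Y)$ into a path-regularity term and the discrete-time error from Theorem~\ref{thm:BSDE_MainResult_no_SUPREMUM}, the same drift/martingale decomposition with pathwise Cauchy--Schwarz for the drift, and the same Doob/BDG plus Cauchy--Schwarz--Jensen promotion of $\|Z\|^2$ to $\|Z\|^4$ (the paper's Remark~\ref{remark:ItoJensenBound}) to extract the $h^{1/2}$ rate via the assumption~\eqref{eqn:MainResult_AssumptionMartingaleIntegrand}. Your extra H\"older step reducing $\int_0^\tau f(s,0)^2\,\de s$ to the $p\geq 4$ integrability of Assumption (F) is a minor refinement the paper glosses over, but the argument is otherwise identical.
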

\begin{proof}
    Using an elementary bound we have
    \begin{equation}
        \mathcal{E}^1 (Y)  = \sup_{\ell\in\unboundedTimeGrid} \E \Big[ \sup_{t\in[\ell,\ell+h]} \big| Y_t - \Y_{\ell} \big|^2 \Big] \leq 2 \sup_{\ell\in\unboundedTimeGrid} \E \Big[ \sup_{t\in[\ell,\ell+h]} \big| Y_t - Y_{\ell} \big|^2 \Big] 
        + 2 \sup_{\ell\in\unboundedTimeGrid} \E \Big[ \big| Y_\ell - \Y_{\ell} \big|^2 \Big]
    \end{equation}
    Hence we invoke Theorem~\ref{thm:BSDE_MainResult_no_SUPREMUM} and the proof is complete if we show there is $C_1>0$ such that
    \begin{equation}\label{eqn:Yregularity}
        \E \Big[ \sup_{t\in[\ell,\ell+h]} \big| Y_t- Y_\ell \big|^2 \Big] \leq C_1 h^\frac{1}{2} \qquad\text{for all } \ell\in\unboundedTimeGrid . 
    \end{equation}
    For this let $\ell\in\unboundedTimeGrid$ and $t\in[\ell,\ell+h)$ be fixed. Using the BSDE \eqref{def:BSDE_general} and Cauchy-Schwarz we have
    \begin{equation}
        \big| Y_t - Y_{\ell} \big|^2 \leq 2(t-\ell) \int_{\ell}^{t} \big| f (s,Y_s) \big|^2 \de s + 2\ \Big| \int_{\ell}^{t} Z_s \de W_s \Big|^2 
    \end{equation}
    and since $f$ is Lipschitz
    together with Remark~\ref{remark:ItoJensenBound} we conclude
    \begin{align}
        \E \Big[ \sup_{t\in[\ell,\ell+h]} \big| Y_t- Y_\ell \big|^2 \Big] & \leq 4 h \E \Big[\int_{\ell\wedge\tau}^{(\ell+h)\wedge\tau} f(s,0)^2 + \lipschitz{f}^2 Y_s^2 \de s \Big] + 4 h^\frac{1}{2} \E \Big[ \int_{\ell}^{\ell+h} \| Z_s \|^4 \de s \Big]^\frac{1}{2} \\
        & \leq 4 h \E \Big[ \int_{0}^{\tau} f(s,0)^2 \de s + h \lipschitz{f}^2 \sup_{r\geq 0} Y_r^2 \Big] + 4 h^\frac{1}{2} \E \Big[ \int_{0}^{\tau} \|Z_s\|^4 \de s \Big]^\frac{1}{2}   
    \end{align}
    By assumption we have $Y\in\mathcal{S}^2$ as well as $f(\cdot,0)\in\mathcal{H}^{2}$ and $Z$ satisfies the stronger integrability condition \eqref{eqn:MainResult_AssumptionMartingaleIntegrand}, whence there is $C_1>0$ (not depending on $\ell$ nor $t$) such that \eqref{eqn:Yregularity} holds.
\end{proof}

We emphasize the rate provided by Remark~\ref{remark:ItoJensenBound} is not necessary optimal. Nonetheless due to the presence of the $L^1$-distance of the terminal times in Theorem~\ref{thm:BSDE_MainResult_no_SUPREMUM} in order to effectively improve the overall bound this distance needs to be taken into account.
\begin{remark}\label{remark:ItoJensenBound}
    Let $H\in\mathcal{H}^p$ for some $p\geq 4$. Combine Doob's $L^2$-inequality with It\=o's isometry followed by the Cauchy-Schwarz and Jensen inequality to conclude that for any $\ell\geq 0$;
    \begin{equation}
        \E \bigg[ \sup_{t\in[\ell,\ell+h]} \Big|\int_{\ell}^{t} H_s \de W_s \Big|^2 \bigg] \leq 4 \E \Big[ \int_{\ell}^{\ell+h} \big\|H_s\big\|^2 \de s \Big] \leq 4 h^\frac{1}{2}\ \E \Big[ \int_{\ell}^{\ell+h} \big\|H_s\big\|^4 \de s \Big]^\frac{1}{2}. \closeEqn
    \end{equation}
\end{remark}

Next is a result describing to what extend the error bound from Corollary~\ref{corollary:BSDE_Rate_Classic} improves if the estimate of martingale integrand of the solution of the BSDE \eqref{def:BSDE_general} as described in Remark~\ref{remark:ItoJensenBound} can be improved. This is e.g. the case when $Z$ is bounded, see \cite{cheridito2014bsdes} for a general study on such BSDEs, or consider the Markovian case where the time horizon is the first exit of a diffusion from a sufficient regular domain. The latter is further discussed in Section~\ref{sec:Application_EM}.
\begin{corollary}\label{corollary:BoundedMartingaleIntegrand}
    Assume the setting of Theorem~\ref{thm:BSDE_MainResult_no_SUPREMUM}. If there is a constant $C_Z>0$ such that
    \begin{equation}
        \E \Big[ \int_{\ell}^{\ell+h} \big\|H_s\big\|^4 \de s \Big] \leq C_Z h \quad\text{for all } \ell\in\unboundedTimeGrid
    \end{equation}
    then there is a constant $C>0$ (not depending on $h$) such that
    \begin{equation}
        \mathcal{E}^1 (Y) \leq C h
        + C \E \Big[ \exp\big( 3 \lipschitz{f} (\tau\vee\StoppDiscrete)\big) \Big( \big|\xi-\bar{\xi}\big|^2 + \theGronwallV |\tau-\StoppDiscrete| +  \int_{0}^{\tau} \Delta F_s^2 \de s \Big)\! \bigg]
        \closeEqn
    \end{equation}
\end{corollary}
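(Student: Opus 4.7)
The plan is to follow the same splitting strategy as in the proof of Corollary~\ref{corollary:BSDE_Rate_Classic}. Namely, I would start from the elementary bound
\begin{equation*}
\mathcal{E}^1 (Y) \leq 2 \sup_{\ell\in\unboundedTimeGrid} \E \Big[ \sup_{t\in[\ell,\ell+h]} \big| Y_t - Y_{\ell} \big|^2 \Big] + 2 \sup_{\ell\in\unboundedTimeGrid} \E \Big[ \big| Y_\ell - \Y_{\ell} \big|^2 \Big],
\end{equation*}
and then apply Theorem~\ref{thm:BSDE_MainResult_no_SUPREMUM} to the second summand, which already produces the $Ch$ term and the three expectations involving $\xi - \bar\xi$, $\tau - \StoppDiscrete$ and $\Delta F_s$. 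The task is therefore reduced to showing that the first summand is of order $h$, as opposed to the order $h^{1/2}$ obtained in Corollary~\ref{corollary:BSDE_Rate_Classic}.

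Fix $\ell\in\unboundedTimeGrid$ and $t\in[\ell,\ell+h]$, and use \eqref{def:BSDE_general} to write $Y_t - Y_\ell = -\int_{\ell\wedge\tau}^{t\wedge\tau} f(s,Y_s)\de s + \int_{\ell\wedge\tau}^{t\wedge\tau} Z_s \de W_s$. Splitting the square into the drift and martingale contributions, I would estimate the drift piece by Cauchy-Schwarz, the Lipschitz bound on $f$ together with $|f(s,Y_s)|^2 \leq 2 f(s,0)^2 + 2 \lipschitz{f}^2 Y_s^2$, and the integrability provided by Assumption~\hyperref[bsde:assumption]{(F)} and $Y\in\mathcal{S}^4\subset\mathcal{S}^2$. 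This yields a contribution of order $h^2$ uniformly in $\ell$.

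For the martingale piece, rather than appealing to the second (weaker) inequality in Remark~\ref{remark:ItoJensenBound}, I would invoke the first one: Doob's $L^2$-inequality and It\=o's isometry give
\begin{equation*}
\E \bigg[ \sup_{t\in[\ell,\ell+h]} \Big|\int_{\ell}^{t} Z_s \de W_s \Big|^2 \bigg] \leq 4\, \E \Big[ \int_{\ell}^{\ell+h} \|Z_s\|^2 \de s \Big].
\end{equation*}
An application of Jensen's inequality to the inner integral followed by the hypothesis $\E \big[ \int_{\ell}^{\ell+h} \|Z_s\|^4 \de s \big] \leq C_Z h$ then yields $\E \big[ \int_{\ell}^{\ell+h} \|Z_s\|^2 \de s \big] \leq h^{1/2} (C_Z h)^{1/2} = \sqrt{C_Z}\, h$, so that the martingale contribution is of order $h$ uniformly in $\ell\in\unboundedTimeGrid$.

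Combining both contributions gives $\sup_{\ell\in\unboundedTimeGrid} \E \big[ \sup_{t\in[\ell,\ell+h]} |Y_t - Y_\ell|^2 \big] \leq C_1 h$ for some $C_1>0$ independent of $h$, which together with Theorem~\ref{thm:BSDE_MainResult_no_SUPREMUM} produces the claimed bound after absorbing constants. There is no substantial obstacle here, since the whole point of the stronger fourth-moment hypothesis on $Z$ is precisely to replace the $h^{1/2}$ arising from the Cauchy-Schwarz step in Remark~\ref{remark:ItoJensenBound} by a full $h$; the only point requiring care is to keep the constants independent of $\ell$, which is automatic because the hypothesis is uniform in $\ell\in\unboundedTimeGrid$.
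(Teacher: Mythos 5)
Your proposal is correct and follows exactly the route the paper intends: the corollary is stated without a separate proof precisely because it is the proof of Corollary~\ref{corollary:BSDE_Rate_Classic} with the Cauchy--Schwarz step of Remark~\ref{remark:ItoJensenBound} upgraded by the hypothesis $\E[\int_{\ell}^{\ell+h}\|Z_s\|^4\,\de s]\leq C_Z h$, turning the martingale contribution from $O(h^{1/2})$ into $O(h)$. One minor imprecision: the drift term is only $O(h)$, not $O(h^2)$, since $f(\cdot,0)$ is not pointwise bounded and the piece $4h\,\E[\int_0^\tau f(s,0)^2\,\de s]$ scales linearly in $h$ — but $O(h)$ is all that is needed, so the conclusion stands.
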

The next result is an application of our second main result Theorem~\ref{thm:BSDE_MainResult}. We derive a bound for $\mathcal{E}^2 (Y)$ (see \eqref{def:BSDE_Errors}) which takes into account the maximal deviation along the entire time grid. 
\begin{theorem}[Uniform-in-Time Bound]\label{thm:Kolmogorov_BSDE_Error_Bound}
    Assume the setting of Theorem~\ref{thm:BSDE_MainResult}. If furthermore the process $Y$ satisfies Kolmogorov's tightness criterion with a constant rate, i.e. there is $\alpha > 0$ such that for all $p\geq 2$ it holds that
    \begin{equation}\label{eqn:KolmogorovCriterion_BSDE_Corollary}
        \E \big[ |Y_s - Y_t|^p ] \leq C_p |s-t|^{\alpha p} \quad \text{for all } |s-t| \leq 1
    \end{equation}
    Then for every $\varepsilon > 0$ there is a constant $C>0$ (not depending on $h$) such that
    \begin{align}
        \mathcal{E}^2 (Y) & \leq C h^{2(\alpha - \varepsilon)} + C h + C \E \Big[ \exp\big( 6\lipschitz{f} (\StoppMax) \big) |\xi-\bar{\xi}|^4 \Big]^\frac{1}{2} \\
        & \hspace*{1.0cm} + C \E \Big[ \exp\big( 6\lipschitz{f} (\StoppMax) \big) \theGronwallV^2 |\tau-\StoppDiscrete|^2  \Big]^\frac{1}{2} \\
        & \hspace*{2.0cm} + C \E \bigg[ \exp\big( 6\lipschitz{f} (\StoppMax) \big) \Big( \int_{0}^{\tau} \Delta F_s^2 \de s \Big)^2 \bigg]^\frac{1}{2} . \closeEqn
    \end{align}
\end{theorem}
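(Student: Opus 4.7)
The approach is to reduce to Theorem~\ref{thm:BSDE_MainResult} via the triangle inequality
\begin{equation}
\sup_{\ell \in \unboundedTimeGrid} \sup_{t \in [\ell, \ell+h]} |Y_t - \Y_\ell|^2 \leq 2 \sup_{\ell \in \unboundedTimeGrid} |Y_\ell - \Y_\ell|^2 + 2 \sup_{\ell \in \unboundedTimeGrid} \sup_{t \in [\ell, \ell+h]} |Y_t - Y_\ell|^2.
\end{equation}
The first summand is bounded directly by Theorem~\ref{thm:BSDE_MainResult}, which reproduces the four error contributions in the conclusion. The remaining task is to bound the pure $Y$-modulus term by $C h^{2(\alpha - \varepsilon)}$ by exploiting the tightness criterion \eqref{eqn:KolmogorovCriterion_BSDE_Corollary}.

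Because $Y$ is constant after $\tau$, only indices $\ell \leq \tau$ contribute to the second summand, so (using $h \leq h_0 < 1$)
\begin{equation}
\sup_{\ell \in \unboundedTimeGrid} \sup_{t \in [\ell, \ell+h]} |Y_t - Y_\ell|^2 \leq \sum_{k = 0}^{\infty} \1_{\{k \leq \tau\}}\ \omega_h\big(Y;[k,k+2]\big)^2,
\end{equation}
where $\omega_h(Y;I) := \sup\{|Y_s - Y_t|\ \colon\ s,t \in I,\,|s-t|\leq h\}$ and the interval length $2$ accommodates pairs straddling adjacent unit cells. The translation invariance of \eqref{eqn:KolmogorovCriterion_BSDE_Corollary} allows me to apply the classical Kolmogorov-Chentsov argument uniformly in $k$: for any $p \geq 2$ with $\alpha p > 1$ and any $\beta < \alpha - 1/p$ there is $\tilde C_{p,\beta}$, independent of $k$, such that $\E\big[\omega_h(Y;[k,k+2])^p\big] \leq \tilde C_{p,\beta}\ h^{\beta p}$. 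Hölder's inequality with conjugate exponents $p/2$ and $p/(p-2)$ then decouples the indicator from the modulus,
\begin{equation}
\E\big[\1_{\{k \leq \tau\}}\ \omega_h(Y;[k,k+2])^2\big] \leq \prob(k \leq \tau)^{1-2/p}\ \tilde C_{p,\beta}^{2/p}\ h^{2\beta},
\end{equation}
and Markov's inequality together with Assumption~\hyperref[exp:assumption]{(R-q)} yields $\prob(k \leq \tau) \leq e^{-\rho k} \E[e^{\rho \tau}]$, so summation in $k$ produces a finite geometric factor. Choosing $p$ large enough that $\alpha - 1/p > \alpha - \varepsilon$ and $\beta$ correspondingly close to $\alpha - \varepsilon$ delivers the claimed rate $C h^{2(\alpha - \varepsilon)}$.

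The step I expect to be the main obstacle is precisely the unbounded horizon: $\omega_h(Y;[0,\infty))$ is not directly controlled by Kolmogorov-Chentsov, so the delicate point is to trade the decay of $\prob(k \leq \tau)$ against the power $1-2/p$ coming from Hölder's inequality in order to absorb the infinite sum over $k$ without degrading the rate, while simultaneously pushing the Hölder exponent $\alpha - 1/p$ arbitrarily close to $\alpha$ by taking $p$ large. Once both degrees of freedom are balanced and the series is summable via the exponential moment of $\tau$, no further interaction with the scheme $\Y$ is required beyond Theorem~\ref{thm:BSDE_MainResult}.
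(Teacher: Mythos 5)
Your proposal is correct, and the overall architecture coincides with the paper's: split $\mathcal{E}^2(Y)$ by the elementary inequality into the discrete-grid error (handled by Theorem~\ref{thm:BSDE_MainResult}, which produces the four error contributions) and the pure modulus-of-continuity term $\E[\sup_\ell\sup_{t\in[\ell,\ell+h]}|Y_t-Y_\ell|^2]$, then bound the latter by $Ch^{2(\alpha-\varepsilon)}$ using the tightness criterion and the exponential moment of $\tau$. Where you genuinely diverge is in how that second term is controlled. The paper observes that the supremum is dominated by the oscillation over pairs $s<t\leq\tau+h$ with $|s-t|\leq h$ and then cites \cite[Corollary 1.3]{Seifried2024HoelderMoments}, a black-box result on $L^p$-moments of random H\"older coefficients up to a random time possessing a positive exponential moment. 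You instead give a self-contained localization argument: cover the relevant grid cells by unit intervals $[k,k+2]$, apply the classical Kolmogorov--Chentsov chaining estimate on each (with constants independent of $k$ by translation invariance of \eqref{eqn:KolmogorovCriterion_BSDE_Corollary}), decouple the indicator $\1_{\{k\leq\tau\}}$ from the modulus via H\"older with exponents $p/2$ and $p/(p-2)$, and sum the resulting geometric series furnished by Markov's inequality and $\exp(\rho\tau)\in L^1$; taking $p$ large recovers the exponent $\alpha-\varepsilon$. The trade-off is transparent: your route is longer but elementary and makes explicit exactly how the exponential tail of $\tau$ absorbs the unbounded horizon, while the paper's route is shorter and reuses machinery that it also needs elsewhere (e.g.\ in Section~\ref{sec:Hoelder_Bounds_Euler} and Appendix~\ref{sec:HoelderCoeff}). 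The only point to tidy in a full write-up is that the hypothesis \eqref{eqn:KolmogorovCriterion_BSDE_Corollary} is stated for $|s-t|\leq 1$ while your cells have length $2$; since you only need the modulus at scale $h\leq h_0<1$, restricting the chaining to increments of length at most $1$ (or splitting $[k,k+2]$ at its midpoint) disposes of this without affecting the rate.
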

\begin{proof}
    With an elementary bound we have 
    \begin{equation}
        \mathcal{E}^2 (Y) 
        = \E \Big[ \sup_{\ell\in\unboundedTimeGrid} \sup_{t\in[\ell,\ell+h]} \big| Y_t - \Y_{\ell} \big|^2 \Big] 
        \leq 2 \E \Big[ \sup_{\ell\in\unboundedTimeGrid} \sup_{t\in[\ell,\ell+h]} \big| Y_t - Y_{\ell} \big|^2 \Big]
        + 2 \E \Big[ \sup_{\ell\in\unboundedTimeGrid} \big| Y_\ell - \Y_{\ell} \big|^2 \Big]
    \end{equation}
    where the second summand can be bounded using Theorem~\ref{thm:BSDE_MainResult}. Regarding the first notice
    \begin{equation}
        \E \Big[ \sup_{\ell\in\unboundedTimeGrid} \sup_{t\in[\ell,\ell+h]} \big| Y_t - Y_{\ell} \big|^2 \Big] 
        \leq \E \Big[ \sup_{ \ell\in\unboundedTimeGrid } \sup_{ \substack{ s,t\in[\ell,\ell+h] \\ s<t\leq\tau+h } } \big| Y_t - Y_{s} \big|^2 \Big]
    \end{equation}
    Since $Y$ satisfies Kolmogorov's criterion \eqref{eqn:KolmogorovCriterion_BSDE_Corollary} with constant ratio and $\tau +h$ has a positive exponential moment, we can invoke \cite[Corollary 1.3]{Seifried2024HoelderMoments} (a result that exploits the existence of the moments of random Hölder coefficients) and obtain $C>0$ (not depending on $h$) such that
    \[
        \E \Big[ \sup_{ \ell\in\unboundedTimeGrid } \sup_{ \substack{ s,t\in[\ell,\ell+h] \\ s<t\leq\tau+h } } \big| Y_t - Y_{\ell} \big|^2 \Big] \leq C h^{2(\alpha - \varepsilon)}. \qedhere
    \]
\end{proof}

\begin{remark}
    Condition \eqref{eqn:KolmogorovCriterion_BSDE_Corollary}, i.e. to enforce Kolmororov's criterion for all $p\geq 2$, is stronger than necessary but offers improved insight. The result \cite[Corollary 1.3]{Seifried2024HoelderMoments} merely requires that for a given $\varepsilon >0$ we can fix $\delta > 1$ such that there is a pair $(\alpha,\beta)$ with $\alpha > \tfrac{\delta}{\varepsilon} \vee 2$  and $\beta >0$ satisfying 
    \begin{equation}
        \E \big[ |Y_s-Y_t|^\alpha \big] \leq C_\alpha |s-t|^{1+\beta} \quad \text{for all } |s-t| \leq 1. \closeEqn
    \end{equation}
\end{remark}

In Section~\ref{sec:Application_EM} we apply this result to Markovian BSDEs on bounded domains which is a case where (under sufficient regularity assumtions) the condition \eqref{eqn:KolmogorovCriterion_BSDE_Corollary} is satisfied. For a general discussion of Kolmogorov's tightness criterion for It\=o Processes we refer to \cite[Section 2]{Seifried2024HoelderMoments}. 

\section{A Conditional Bound of the Difference Process}\label{sec:proof_AdaptedGronwall}

Here we present the fundamental component for our main results Theorem~\ref{thm:BSDE_MainResult_no_SUPREMUM} and~\ref{thm:BSDE_MainResult}: A closed martingale (stopped at $\StoppMax$) that serves as a majorant for the difference of $Y$ and a continuous, adapted interpolation of $\Y$, which is typically used to analyze strong errors, see e.g. \cite{Bouchard_strong_BSDE_on_domain, bouchard2004discrete}. Our proof is based on a stochastic Gronwall lemma for random times, see \cite{korea_gronwall} or \cite[Appendix A]{Schlegel2024ProbabilisticShape}.\footnote{
    The bound is originally established in \cite{korea_gronwall} under minimalistic assumptions. Under the standing assumptions of this article we are allowed to use the version from \cite[Appendix A]{Schlegel2024ProbabilisticShape} which requires stronger integrability.
}
By the martingale representation theorem for each $t\in\unboundedTimeGrid$ there is a unique $\Zc^t\in\mathcal{H}^2$ such that
\begin{equation}\label{def:MartingaleRepresentation_ForEach_t}
    \Y_{t+h} - \E_t\big[\Y_{t+h}\big] = \int_{t}^{t+h} \Zc_{s}^{t} \de W_s.
\end{equation}
For convenience we introduce the notation $\phi\colon [0,\infty)\to\unboundedTimeGrid, s\mapsto\argmax\{t\in\unboundedTimeGrid\ | \ t\leq s \}$ and with this the discrete-time backward approximation is continuously interpolated by\footnote{
    Notice by It\=o's formula for each $t\in\unboundedTimeGrid$ the martingale integrand responsible for the adapted interpolation between consecutive discrete-times of the scheme $\Y$ satisfies
    $ \E_{t} [ (W_{t+h} - W_{t}) \Y_{t+h} ] = \E_{t} [ \int_{t}^{t+h} \Zc_s^{t} \de s ] $.
    When the generator $f$ depends on the martingale integrand, it is suitable to extend the scheme by $h^{-1}$ times said term, see e.g. \cite{Bouchard_strong_BSDE_on_domain, bouchard2004discrete} for the case of (random) bounded time horizon. 
    Regarding the identity: Consider the martingale $\de \mathbb{M} \defined \Zc^t \de W$. Then $\E_t [ W_{t+h} \mathbb{M}_{t+h} - W_{t} \mathbb{M}_{t} ] =  \E_t [\int_{t}^{t+h} \de \langle W, \mathbb{M}\rangle_s ] $ and $\Y_{t+h} - \E_t[\Y_{t+h}] = \mathbb{M}_{t+h}-\mathbb{M}_{t}$. 
}
\begin{equation}\label{def:YcAdapted}
    \Yc_t \defined \bar{\xi} + \int_{t\wedge\StoppDiscrete}^{\StoppDiscrete} \fd{\unboundedTimeGrid} \big( \phi(s), \Y_{\phi(s)} \big) \de s - \int_{t\wedge\StoppDiscrete}^{\StoppDiscrete} \Zc_{s}^{\phi(s)} \de W_s \qquad t\geq 0
\end{equation} 
\begin{proposition}[Adapted Interpolation]\label{prop:InterpolationRegularity}
    Assume \hyperref[stepsize:assumption]{(h)}, \hyperref[exp:assumption]{(R-q)} with $q\defined 4$, \hyperref[discrete_f:assumption]{($\unboundedTimeGrid$-F)} with $p>2q$ and \hyperref[discrete_xi:assumption]{($\unboundedTimeGrid$-T)} with $p=2q$.
    Then it holds that $\Yc\in\mathcal{S}^4$. Particularly the family of interpolations (in the sense of collecting all admissible stepsizes) is uniformly in $\mathcal{S}^4$, i.e.
    \begin{equation}
        \sup_{ h\in (0,h_0] } \sup_{t\geq 0} \Yc_{t}^4 \in L^1
    \end{equation}
    Moreover (for each $h\in (0,h_0]$) the concatenation of martingale integrands $\Zc^\phi$ is in $\mathcal{H}^{2}$. \close
\end{proposition}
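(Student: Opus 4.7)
The plan is to identify $\Yc$ as an It\^o process on $[0,\StoppDiscrete]$ coinciding with the discrete scheme $\Y$ at every grid point and reducing to $\bar{\xi}$ on $[\StoppDiscrete,\infty)$; concretely, on each sub-interval $[\ell h,(\ell+1)h]$ the interpolation admits the forward representation
$$\Yc_t = \Y_{\ell h} - (t-\ell h)\,\fd{\unboundedTimeGrid}(\ell h,\Y_{\ell h}) + \int_{\ell h}^t \Zc_s^{\ell h}\, dW_s.$$
With this identification in hand, both claims reduce to standard BSDE-style a priori estimates, powered by the $h$-independent pathwise discrete-time majorant from Section~\ref{sec:Majorant_DiscreteTime}.

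First I would verify the consistency $\Yc_{\ell h}=\Y_{\ell h}$ by backward induction from $\StoppDiscrete$ (where $\Yc_{\StoppDiscrete}=\bar{\xi}=\Y_{\StoppDiscrete}$): rewriting $\Yc_{\ell h} = \Yc_{(\ell+1)h} + h\,\fd{\unboundedTimeGrid}(\ell h,\Y_{\ell h}) - \int_{\ell h}^{(\ell+1)h}\Zc_s^{\ell h}\,dW_s$ and inserting the scheme relation $\Y_{\ell h}-h\,\fd{\unboundedTimeGrid}(\ell h,\Y_{\ell h})=\E_{\ell h}[\Y_{(\ell+1)h}]$ together with the martingale representation~\eqref{def:MartingaleRepresentation_ForEach_t} recovers $\Y_{\ell h}$. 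The $\mathcal{H}^2$-property of $\Zc^\phi$ for fixed $h$ then follows from applying It\^o's formula to $|\Yc|^2$ on $[0,\StoppDiscrete]$,
$$\int_0^{\StoppDiscrete}\|\Zc_s^{\phi(s)}\|^2\,ds = |\bar{\xi}|^2 - |\Yc_0|^2 + 2\int_0^{\StoppDiscrete}\Yc_s\,\fd{\unboundedTimeGrid}(\phi(s),\Y_{\phi(s)})\,ds - 2\int_0^{\StoppDiscrete}\Yc_s\,\Zc_s^{\phi(s)}\,dW_s,$$
taking expectations after a standard localization, and applying Young's inequality; the Lipschitz assumption on $\fd{\unboundedTimeGrid}$ combined with the majorant of Section~\ref{sec:Majorant_DiscreteTime} and the polynomial moments of $\StoppDiscrete$ inherited from~\hyperref[exp:assumption]{(R-4)} render the right-hand side finite.

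For the $\mathcal{S}^4$-bound I would estimate pathwise, using that $|\Yc_t|=|\bar{\xi}|$ for $t\ge\StoppDiscrete$,
$$\sup_{t\ge 0}|\Yc_t| \leq |\bar{\xi}| + \int_0^{\StoppDiscrete}|\fd{\unboundedTimeGrid}(\phi(s),\Y_{\phi(s)})|\,ds + 2\sup_{u\le\StoppDiscrete}\Big|\int_0^u\Zc_s^{\phi(s)}\,dW_s\Big|.$$
Raising to the fourth power, taking expectations, and applying Burkholder--Davis--Gundy to the martingale term leaves three contributions: $\E[\sup_h|\bar{\xi}|^4]$, bounded by~\hyperref[discrete_xi:assumption]{($\unboundedTimeGrid$-T)} with $p=8$; $\E[\StoppDiscrete^4\,(\sup_h\sup_\ell|\fd{\unboundedTimeGrid}(\ell h,0)|^4+\sup_h\sup_\ell|\Y_{\ell h}|^4)]$, controlled by Cauchy--Schwarz combined with~\hyperref[exp:assumption]{(R-4)}, with~\hyperref[discrete_f:assumption]{($\unboundedTimeGrid$-F)} for $p>8$, and with the discrete majorant; and the quadratic-variation moment $\E[(\int_0^{\StoppDiscrete}\|\Zc_s^{\phi(s)}\|^2\,ds)^2]$.

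The principal obstacle is this last term, which couples back to $\sup|\Yc|$. To close the estimate I would square the It\^o identity from the $\mathcal{H}^2$-step, take expectations, and use It\^o isometry to rewrite $\E[(\int_0^{\StoppDiscrete}\Yc_s\,\Zc_s^{\phi(s)}\,dW_s)^2] = \E[\int_0^{\StoppDiscrete}|\Yc_s|^2\,\|\Zc_s^{\phi(s)}\|^2\,ds]$, which by Cauchy--Schwarz is at most $\E[\sup|\Yc|^4]^{1/2}\,\E[(\int\|\Zc^\phi\|^2\,ds)^2]^{1/2}$. Young's inequality with a small parameter then permits absorbing both the $\varepsilon\,\E[\sup|\Yc|^4]$ piece from the It\^o-squared estimate and the $\varepsilon\,\E[(\int\|\Zc^\phi\|^2\,ds)^2]$ piece from the BDG estimate back into the left-hand sides of the two coupled inequalities, yielding finite bounds on both simultaneously. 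Uniformity in $h$ of the resulting constants follows because every remaining right-hand-side term is dominated by an $h$-independent random variable through the supremum-over-$h$ formulations of~\hyperref[exp:assumption]{(R-4)},~\hyperref[discrete_f:assumption]{($\unboundedTimeGrid$-F)} and~\hyperref[discrete_xi:assumption]{($\unboundedTimeGrid$-T)}, together with the pathwise discrete majorant from Section~\ref{sec:Majorant_DiscreteTime}.
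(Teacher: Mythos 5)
Your setup (the forward representation on each grid interval, the consistency $\Yc_{\ell h}=\Y_{\ell h}$, and the reduction of the drift contribution to $\StoppDiscrete\cdot(\sup_{\ell}|\fd{\unboundedTimeGrid}(\ell,0)|+\lipschitz{f}\sup_{\ell}|\Y_{\ell}|)$ controlled by the discrete majorant) is sound and consistent with the paper. The gap is in the final step, where you close the $\mathcal{S}^4$-estimate through a coupled absorption between $Y_4\defined\E[\sup_t|\Yc_t|^4]$ and $Z_2\defined\E[(\int_0^{\StoppDiscrete}\|\Zc^{\phi}_s\|^2\de s)^2]$. Your two inequalities have the form $Y_4\le B+C_1Z_2$ (with $C_1$ the Burkholder--Davis--Gundy constant, not at your disposal) and, after It\=o isometry and Cauchy--Schwarz, $Z_2\le A+cY_4^{1/2}Z_2^{1/2}$. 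In the second inequality Young's inequality forces a choice: either the $Z_2$-coefficient on the right is below $1$ (so it absorbs), in which case the $Y_4$-coefficient is of order $c^2$ and hence large; or the $Y_4$-coefficient is small, in which case the $Z_2$-coefficient exceeds $1$ and cannot be absorbed into the left-hand side of that same inequality. The only usable outcome is $Z_2\le A'+KY_4$ with $K$ large, and substituting into $Y_4\le B+C_1Z_2$ gives $Y_4\le B'+C_1KY_4$ with $C_1K\ge 1$; no weighting of the two inequalities repairs this, since one would need simultaneously $\lambda<1/C_1$ and $\lambda>K$. So the loop does not close, and there is no free parameter anywhere in it: the $\varepsilon$ you invoke cannot appear in front of the BDG term $C_1Z_2$ at all. (A secondary issue: justifying the global It\=o identity on the unbounded random interval $[0,\StoppDiscrete]$ and the finiteness of $\E[\int_0^{\StoppDiscrete}|\Yc_s|^2\de s]$ already presupposes control of $\Yc$ at non-grid times, which the discrete majorant alone does not give; the paper works interval-by-interval via Lemma~\ref{lemma:Single_Step_Interpolation} precisely to avoid this.)

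The paper's proof of Proposition~\ref{prop:InterpolationRegularity_q_Version} avoids BDG for the supremum of $\Yc$ entirely, and this is the idea you are missing. Because the generator does not depend on the martingale integrand, one can bound $\Yc_t^2$ for \emph{every} $t\ge 0$ by a conditional expectation of grid-point quantities: It\=o's formula on the single interval $[\phi(t),\phi(t)+h]$ plus a stochastic Gronwall lemma give
\begin{equation}
	\Yc_t^2 \le \exp(h)\,\E_t\big[\Y_{\phi(t)+h}^2\big] + \exp(h)\,2\lipschitz{f}^2h\,\Y_{\phi(t)}^2 + \exp(h)\,2h\,\1_{\{\phi(t)<\StoppDiscrete\}}\fd{\unboundedTimeGrid}(\phi(t),0)^2 ,
\end{equation}
so that $\sup_{t\ge 0}\Yc_t^4$ is dominated by $\sup_{t\ge 0}\E_t[G]^2$ plus grid-supremum terms, where $G\defined\sup_{h}\sup_{s\in\unboundedTimeGrid}\Y_s^2$ is the $h$-independent integrable majorant from Proposition~\ref{prop:FixPointScheme_Lq_Integrable}; Doob's $L^2$-maximal inequality applied to the closed martingale $t\mapsto\E_t[G]$ then yields the uniform $\mathcal{S}^4$-bound, and the $\mathcal{H}^2$-property of $\Zc^\phi$ is obtained separately by summing the one-step bounds of Remark~\ref{remark:Majorant_MartRep_CompleteStep}, never through a coupled estimate with $\sup_t|\Yc_t|$. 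Your argument can be repaired by replacing the BDG step with this Doob/conditional-expectation route.
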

This is a direct consequence of Proposition~\ref{prop:InterpolationRegularity_q_Version} (with $q=4$); the proof can be found in Section~\ref{Sec:RegularityInterpolation}. Beforehand in Section~\ref{sec:Existence} we show existence of a solution to the discrete-time approximation scheme \eqref{def:FixpointScheme} and afterwards in Section~\ref{sec:Majorant_DiscreteTime} we derive a closed martingale that majorizes $\Y$. This majorant is then extended to the interpolation $\Yc$ and with this we derive Proposition~\ref{prop:InterpolationRegularity}.
\begin{proposition}[Adapted Gronwall]\label{prop:AdaptedGronwall}
    Assume that the terminal condition of the BSDE \eqref{def:BSDE_general} satisfies $\xi\in L^4$ and there is a solution $Y\in\mathcal{S}^4$.
    Assume \hyperref[stepsize:assumption]{(h)}, \hyperref[exp:assumption]{(R-q)} with $q\defined 4$, \hyperref[discrete_f:assumption]{($\unboundedTimeGrid$-F)} with $p>2q$ and \hyperref[discrete_xi:assumption]{($\unboundedTimeGrid$-T)} with $p=2q$.
    Then there are random variables $\theGronwallV, \theGronwallR \in L^2$ (not depending on $h$; see \eqref{def:GronwallVanisher} and \eqref{def:theGronwallR}) such that for any choice\footnote{
    	Provided that $\exp \big( ((2+\alpha_1)\lipschitz{f}+\alpha_2) (\StoppMax) \big) \in L^2$. 
    } 
    $\alpha_1\geq\tfrac{1}{3}$, $\alpha_2>0$ it holds that
    \begin{align}
		\big(Y_t - \Yc_t \big)^2 & \leq \E_t \Big[ \exp\big( ((2+\alpha_1)\lipschitz{f}+\alpha_2) (\StoppMax) \big) \big( |\xi-\bar{\xi}|^2 + 2 \theGronwallV |\tau-\StoppDiscrete| \big) \Big] \\
		& \hspace*{1.0cm} + \tfrac{4\lipschitz{f}}{\alpha_1} h\ \E_t \bigg[ \exp\big( ((2+\alpha_1)\lipschitz{f}+\alpha_2) (\StoppMax)\big)  \Big( h\theGronwallR + \int_{0}^{\tau} \| Z_s \|^2 \de s \Big) \bigg] \\
		& \hspace*{2.0cm} + \tfrac{1}{\alpha_2} \E_t \Big[ \exp\big( ((2+\alpha_1)\lipschitz{f}+\alpha_2) (\StoppMax) \big) \int_{0}^{\tau} \Delta F_s^2 \de s \Big] 
    \end{align}
    for all $t\geq 0$ where $\Delta F_s \defined f(s, Y_s) - \fd{\unboundedTimeGrid} (\phi(s),Y_s)$ for $s\geq 0$.

    If additionally the terminal condition of the BSDE \eqref{def:BSDE_general} satisfies $\xi\in L^8$ and the solution satisfies $Y\in\mathcal{S}^8$, and moreover the stronger integrability \hyperref[exp:assumption]{(R-q)} with $q\defined 8$, \hyperref[discrete_f:assumption]{($\unboundedTimeGrid$-F)} with $p>2q$ and \hyperref[discrete_xi:assumption]{($\unboundedTimeGrid$-T)} with $p=2q$ holds then $\theGronwallR, \theGronwallV \in L^4$. \close
\end{proposition}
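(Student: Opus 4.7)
The plan is to apply Itô's formula to $(Y-\Yc)^2$ on the random interval $[t,\StoppMax]$ and then invoke the stochastic Gronwall lemma for random time horizons of \cite{korea_gronwall, Schlegel2024ProbabilisticShape}. First I would start from
\[
    (Y_t-\Yc_t)^2 = (\xi-\bar{\xi})^2 - 2\int_t^{\StoppMax}(Y_s-\Yc_s)\,d(Y-\Yc)_s - \int_t^{\StoppMax}d\langle Y-\Yc\rangle_s,
\]
discard the non-negative quadratic-variation term, and split the remaining integral according to whether $s$ lies in the common evolution region $[t,\tau\wedge\StoppDiscrete]$, where both $Y$ and $\Yc$ are genuinely driven, or in the mismatch region $[\tau\wedge\StoppDiscrete,\StoppMax]$, where exactly one of the two processes is already frozen at its terminal value.

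On the common region the drift of $Y-\Yc$ equals $\fd{\unboundedTimeGrid}(\phi(s),\Y_{\phi(s)})-f(s,Y_s)$, which I would decompose as
\[
    \Delta F_s + \bigl[\fd{\unboundedTimeGrid}(\phi(s),Y_s)-\fd{\unboundedTimeGrid}(\phi(s),\Yc_s)\bigr] + \bigl[\fd{\unboundedTimeGrid}(\phi(s),\Yc_s)-\fd{\unboundedTimeGrid}(\phi(s),\Y_{\phi(s)})\bigr].
\]
The middle piece is absorbed by $\lipschitz{f}$-Lipschitz continuity into $2\lipschitz{f}(Y_s-\Yc_s)^2$; the last piece is treated by Young's inequality with parameter $\alpha_1$ combined with Lipschitz, producing $\alpha_1\lipschitz{f}(Y_s-\Yc_s)^2+(\lipschitz{f}/\alpha_1)(\Yc_s-\Y_{\phi(s)})^2$; and $\Delta F_s$ is handled by Young with parameter $\alpha_2$. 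On the mismatch region only one of the two processes still evolves; Young's inequality turns the corresponding cross term into $\gamma(Y_s-\Yc_s)^2$ plus a pathwise random variable whose $s$-integral over that interval (of length $|\tau-\StoppDiscrete|$) yields $2\theGronwallV|\tau-\StoppDiscrete|$, with $\theGronwallV\in L^2$ built from pathwise suprema of $|f(\cdot,Y_\cdot)|^2$ and $|\fd{\unboundedTimeGrid}(\phi(\cdot),\Y_{\phi(\cdot)})|^2$ whose $L^2$-integrability follows from Assumption~\hyperref[bsde:assumption]{(F)} and Proposition~\ref{prop:InterpolationRegularity}.

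The pivotal step is the treatment of $(\Yc_s-\Y_{\phi(s)})^2$. Expanding via the interpolation SDE \eqref{def:YcAdapted} gives
\[
    (\Yc_s-\Y_{\phi(s)})^2 \leq 2h\int_{\phi(s)}^{s\wedge\StoppDiscrete}|\fd{\unboundedTimeGrid}(\phi(u),\Y_{\phi(u)})|^2\,du + 2\bigg|\int_{\phi(s)}^{s\wedge\StoppDiscrete}\Zc_u^{\phi(u)}\,dW_u\bigg|^2.
\]
After integrating in $s$ and using \eqref{def:MartingaleRepresentation_ForEach_t} to compare $\Zc^\phi$ to the true $Z$ through the martingale representation of $Y_{t+h}-\E_t[Y_{t+h}]$ on each cell, the second term yields the contribution $h\int_0^\tau\|Z_s\|^2\,ds$, while the first, together with the leftover bookkeeping, is absorbed into $h^2\theGronwallR$ with $\theGronwallR\in L^2$ uniformly in $h$. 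Collecting everything leads to a pathwise inequality of the form
\[
    (Y_t-\Yc_t)^2 \leq A + \gamma\int_t^{\StoppMax}(Y_s-\Yc_s)^2\,ds + R + (N_{\StoppMax}-N_t),
\]
with $\gamma=(2+\alpha_1)\lipschitz{f}+\alpha_2$, an $\F_{\StoppMax}$-measurable driver $R$ collecting all error terms, and a local martingale $N$. Since the constraint $\alpha_1\geq\tfrac{1}{3}$ combined with Assumption~\hyperref[exp:assumption]{(R-q)} (taken with $q=4$, $\rho>16\lipschitz{f}$) guarantees $\exp(\gamma\StoppMax)\in L^2$, the stochastic Gronwall lemma for random times closes the estimate into the stated conditional bound. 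The strengthened conclusion $\theGronwallR,\theGronwallV\in L^4$ is then obtained by rerunning the same construction under Assumption~\hyperref[exp:assumption]{(R-q)} with $q=8$ and the enhanced $\mathcal{S}^8$-regularity supplied by Proposition~\ref{prop:InterpolationRegularity} at level $q=8$.

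The main obstacle is the interpolation-regularity step: producing the clean decomposition $h\int_0^\tau\|Z_s\|^2\,ds + h^2\theGronwallR$ without appealing to a Zhang-type $L^2$-regularity result relies crucially on~\eqref{def:MartingaleRepresentation_ForEach_t} together with the tower property to identify $\Zc^t$ cell-by-cell with the conditional $L^2$-projection of $Z$. A secondary technical challenge is the bookkeeping on the mismatch region $[\tau\wedge\StoppDiscrete,\StoppMax]$, where the absence of a $\Delta F$-type compensator forces the introduction of $\theGronwallV$ to absorb the uncompensated drift contribution.
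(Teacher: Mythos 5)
Your overall architecture matches the paper's: It\^o on $\Delta^2=(Y-\Yc)^2$ up to $\zeta=\StoppMax$, a four-way split of the drift (Lipschitz absorption, the mismatch region giving $2\theGronwallV|\tau-\StoppDiscrete|$, the interpolation error $\Yc_s-\Y_{\phi(s)}$, and $\Delta F$ via Young with $\alpha_2$), followed by the stochastic Gronwall lemma for random horizons. However, there is a genuine gap in your treatment of the martingale part of the interpolation error, and it occurs precisely at the step you yourself flag as pivotal. You propose to (i) discard the quadratic-variation term $\int_t^{\zeta}\|Z_s-\Zc_s^{\phi(s)}\|^2\,\de s$ at the outset and (ii) convert $\E_\ell\bigl[\,\bigl|\int_{\ell}^{s}\Zc^{\ell}_u\,\de W_u\bigr|^2\bigr]$ into $h\int_0^\tau\|Z_s\|^2\,\de s$ by ``identifying $\Zc^t$ cell-by-cell with the conditional $L^2$-projection of $Z$'' via \eqref{def:MartingaleRepresentation_ForEach_t}. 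This identification is false: \eqref{def:MartingaleRepresentation_ForEach_t} represents $\Y_{t+h}-\E_t[\Y_{t+h}]$, i.e.\ the \emph{scheme}, not $Y_{t+h}-\E_t[Y_{t+h}]$, so $\Zc^t$ is the martingale integrand of the approximation and carries no a priori domination by the true $Z$. There is consequently no route to the clean bound $h\int_0^\tau\|Z_s\|^2\,\de s$ without first relating $\Zc^{\phi}$ to $Z$, and the only relation available is the trivial $\|\Zc^{\phi}\|^2\le 2\|Z\|^2+2\|Z-\Zc^{\phi}\|^2$.

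The paper's resolution is exactly the term you threw away: the It\^o identity is kept with $\E_t\bigl[\int_{t\wedge\zeta}^{\zeta}\|Z_r-\Zc_r^{\phi(r)}\|^2\,\de r\bigr]$ on the \emph{left}-hand side, and the residual $\tfrac{4\lipschitz{f}}{\alpha_1}h\,\E_t\bigl[\int\|Z_r-\Zc_r^{\phi(r)}\|^2\,\de r\bigr]$ produced by Lemma~\ref{lemma:FRemainder} is then absorbed there, using that $1-\tfrac{4\lipschitz{f}}{\alpha_1}h\ge 1-12\lipschitz{f}h_0\ge 0$ by Assumption~\hyperref[stepsize:assumption]{(h)} together with the constraint $\alpha_1\ge\tfrac13$ (this is, incidentally, the only place where that constraint and that part of the stepsize condition are used, and your proposal gives no role to either). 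Without retaining the quadratic variation, your pathwise Gronwall inequality contains an uncancelled $h\,\E_t[\int\|Z-\Zc^{\phi}\|^2]$ that does not appear in the asserted bound and cannot be controlled by the stated hypotheses. The remaining ingredients of your plan (the construction of $\theGronwallV$ and $\theGronwallR$, the $L^4$ upgrade under the $q=8$ assumptions, the conditional Fubini/It\^o-isometry bookkeeping on each cell) are consistent with Lemmas~\ref{lemma:FVanisher} and~\ref{lemma:FRemainder}, so the proof is repairable, but as written the central estimate does not close.
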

\begin{proof} 
    On $[0,\StoppMax]$ we define difference process $\Delta \defined Y-\Yc$;
    \begin{equation}
        \Delta_t = \xi-\bar{\xi} + \int_{t\wedge\zeta}^{\zeta} \1_{\{s\leq\tau\}} f(s,Y_s) - \1_{\{s\leq\StoppDiscrete\}} \fd{\unboundedTimeGrid} (\phi(s),\Y_{\phi(s)}) \de s - \int_{t\wedge\zeta}^{\zeta} Z_s - \Zc_s^{\phi(s)} \de W_s 
    \end{equation}
    where we use the notation $\zeta\defined\StoppMax$ and the fact that 
    $\1_{\{\cdot\geq\tau\}} Z + \1_{\{\cdot\geq\StoppDiscrete\}} Z^{\phi} = 0$.
    By It\=o's formula\footnote{For a process $\Delta$ satisfying $\de \Delta_t = F_t\de t + \de \M_t$ it holds that $\de \Delta_t^2 = 2 \Delta_tF_t\de t + 2 \Delta_t \de \M_t + \de\langle\M\rangle_t$.} on $\{t\leq\zeta\}$ one checks that
	\begin{multline}
		\Delta_\zeta^2 - \Delta_t^2 = 2 \int_{t}^{\zeta} \Delta_s \big(\1_{\{s<\tau\}} f(s,Y_s) - \1_{\{s<\StoppDiscrete\}} \fd{\unboundedTimeGrid} (\phi(s),\Y_{\phi(s)}) \big) \de s \\
		+ \int_{t}^{\zeta} \big\| Z_s - \Zc_s^t \big\|^2 \de s
		-2\int_{t}^{\zeta} \Delta_s \big(Z_s - \Zc_s^t \big) \de W_s
	\end{multline}
	where due to Proposition~\ref{prop:InterpolationRegularity} we know that $\Delta(Z-\Zc^\phi)\de W$ is an $L^2$-martingale. Thus\footnote{
        We give a brief motivation for the split regarding the arguments of $\fd{\unboundedTimeGrid}$: The first integral provides the difference process on $[0,\StoppMax]$. The second is a non-constant prolongation of $Y,\Yc$ onto $[\tau\wedge\StoppDiscrete, \StoppMax]$. The third compensates the replacement of $\Y$ by $\Yc$ on $[0,\StoppDiscrete]$. The last is the deviation between discrete-times regarding the intrinsic time dependency of $f$ resp. $\fd{\unboundedTimeGrid}$ along trajectories of $Y$ on $[0,\tau]$. {In the same order Lemma~\ref{lemma:AdaptedGronwallInt_1} - \ref{lemma:AdaptedGronwallInt_4} provide bounds.}
    }
    \begin{align}\label{eqn:ProofBackwardGronwall1}
		& \Delta_t^2 + \E_t \Big[ \int_{t\wedge\zeta}^{\zeta} \big\| Z_r - \Zc_r^{\phi(r)} \big\|^2 \de r \Big] \notag \\ 
        & \hspace*{1.0cm} = \E_t \big[ (\xi-\bar{\xi} )^2 \big] \notag \\
        & \hspace*{2.0cm} - \E_t \bigg[ 2 \int_{t\wedge\zeta}^{\zeta} \Delta_s \big( \fd{\unboundedTimeGrid} (\phi(s),Y_s) - \fd{\unboundedTimeGrid} (\phi(s),\Yc_s) \big) \de s  \notag \\
        & \hspace*{3.0cm} - 2 \int_{t\vee(\tau\wedge\StoppDiscrete)}^{\StoppMax} \Delta_s \big( \1_{\{s>\tau\}} \fd{\unboundedTimeGrid} (\phi(s),\xi) - \1_{\{s>\StoppDiscrete \}} \fd{\unboundedTimeGrid} (\phi(s),\bar{\xi}) \big) \de s \notag \\
        & \hspace*{4.0cm} +2 \int_{t\wedge\StoppDiscrete}^{\StoppDiscrete} \Delta_s \big( \fd{\unboundedTimeGrid} (\phi(s), \Yc_s) - \fd{\unboundedTimeGrid} (\phi(s), \Y_{\phi(s)}) \big) \de s \notag \\
        & \hspace*{5.0cm} + 2 \int_{t\wedge\tau}^{\tau} \Delta_s \big( f(s, Y_s) - \fd{\unboundedTimeGrid} (\phi(s),Y_s) \big) \de s \bigg].
    \end{align} 
    From Lemma~\ref{lemma:AdaptedGronwallInt_1}, Lemma~\ref{lemma:FVanisher}, Lemma~\ref{lemma:FRemainder} (applied with $\alpha_1\geq \tfrac{1}{3}$) and Lemma~\ref{lemma:AdaptedGronwallInt_4} (with $\alpha_2>0$) we know that there are $\theGronwallV,\theGronwallR\in L^2$ (see \eqref{def:GronwallVanisher} and \eqref{def:theGronwallR}) satisfying   
    \begin{align}\label{eqn:AdaptedGronwallRawBound}
		& \Delta_{t}^{2} + \big( 1 - \tfrac{4\lipschitz{f}}{\alpha_1} h \big) \E_t \Big[ \int_{t\wedge\zeta}^{\zeta} \big\| Z_r - \Zc_r^{\phi(r)} \big\|^2 \de r \Big] \leq \big( 2\lipschitz{f} + \alpha_1 \lipschitz{f} + \alpha_2 \big) \E_{t} \Big[ \int_{t\wedge\zeta}^{\zeta} \Delta_s^2 \de s \Big] \notag \\ 
        & \hspace*{3.0cm} + \E_t \big[ |\xi-\bar{\xi}|^2 \big] + 2 \E_t \Big[ \theGronwallV \big|\tau-\StoppDiscrete\big| \Big] + \tfrac{4\lipschitz{f}}{\alpha_1} h\ \E_t \Big[ h\theGronwallR + \int_{0}^{\tau} \| Z_s \|^2 \de s \Big] \notag \\
        & \hspace*{4.0cm} + \tfrac{1}{\alpha_2} \E_t \Big[ \int_{t\wedge\tau}^{\tau} \big( f(s, Y_s) - \fd{\unboundedTimeGrid} (\phi(s),Y_s) \big)^2 \de s \Big] .
    \end{align}
    where due to Assumption~\hyperref[stepsize:assumption]{(h)} we have $1-\tfrac{4\lipschitz{f}}{\alpha_1}h \geq 0$.
    We set (note there is no dependency on $t$)
    \begin{multline}\label{def:GronwallMasterUpperBound}
    	\theGronwallMaster (\alpha_1, \alpha_2) \defined |\xi-\bar{\xi}|^2 + 2 \theGronwallV \big|\tau-\StoppDiscrete \big| + \tfrac{4\lipschitz{f}^2}{\alpha_1} h\ \Big( h\theGronwallR + \int_{0}^{\tau} \| Z_s \|^2 \de s \Big) \\
    	+ \tfrac{1}{\alpha_2} \int_{0}^{\tau} \big( f(s, Y_s) - \fd{\unboundedTimeGrid} (\phi(s),Y_s) \big)^2 \de s
    \end{multline}
    and we have $\theGronwallMaster (\alpha_1, \alpha_2) \in L^2$. Since $t\geq 0$ was fixed in the derivation of \eqref{eqn:AdaptedGronwallRawBound} we established
	\begin{equation}
		\Delta_{t}^{2} \leq \E_t \big[ \theGronwallMaster (\alpha_1,\alpha_2) \big] + ( 2\lipschitz{f} +\alpha_1\lipschitz{f} + \alpha_2) \E_{t} \Big[ \int_{t\wedge\zeta}^{\zeta} \Delta_s^2 \de s \Big] \quad\text{for all } t\geq 0 
	\end{equation}    
    Whence with a stochastic Gronwall lemma, see e.g. \cite[Appendix A]{Schlegel2024ProbabilisticShape}, we conclude
    \[
        \Delta_t^2 \leq \E_t \Big[ \exp\big( ( 2\lipschitz{f} +\alpha_1 \lipschitz{f} + \alpha_2)\ \zeta \big)\ \theGronwallMaster (\alpha_1,\alpha_2) \Big]\quad\text{for all }t\geq 0. \qedhere
    \]
\end{proof}
\begin{lemma}\label{lemma:AdaptedGronwallInt_1}
 	Assume \hyperref[discrete_f:assumption]{($\unboundedTimeGrid$-F)}. With the Lipschitz continuity of $\fd{\unboundedTimeGrid}$ it holds that
 	\begin{equation}
 		\Big| \int_{t\wedge\zeta}^{\zeta}  \Delta_s \big( \fd{\unboundedTimeGrid} (\phi(s),Y_s) - \fd{\unboundedTimeGrid} (\phi(s),\Yc_s) \big) \de s \Big| \leq \lipschitz{f}  \int_{t\wedge\zeta}^{\zeta} \Delta_s^2 \de s \quad\text{for all }t\geq 0 . \closeEqn
 	\end{equation}
\end{lemma}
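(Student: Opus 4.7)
The statement is an immediate pointwise estimate, so my plan is to prove it by bringing the absolute value inside the integral and applying the Lipschitz bound on $\fd{\unboundedTimeGrid}(\phi(s),\cdot)$ at each $s$.

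First I would invoke the standard inequality $\bigl|\int_{t\wedge\zeta}^{\zeta} g_s\,\de s\bigr| \leq \int_{t\wedge\zeta}^{\zeta} |g_s|\,\de s$, which is permissible because the integrand is adapted and (by Proposition~\ref{prop:InterpolationRegularity} together with the standing integrability on $Y$) Lebesgue-integrable on $[0,\zeta]$. This reduces the task to a pointwise bound on $|\Delta_s \bigl(\fd{\unboundedTimeGrid}(\phi(s),Y_s) - \fd{\unboundedTimeGrid}(\phi(s),\Yc_s)\bigr)|$.

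Next, from Assumption~\hyperref[discrete_f:assumption]{($\unboundedTimeGrid$-F)}, the map $y\mapsto \fd{\unboundedTimeGrid}(\phi(s),y)$ is Lipschitz with constant $\lipschitz{f}$ for every fixed $s$, hence
\[
    \bigl|\fd{\unboundedTimeGrid}(\phi(s),Y_s) - \fd{\unboundedTimeGrid}(\phi(s),\Yc_s)\bigr| \leq \lipschitz{f} |Y_s - \Yc_s| = \lipschitz{f} |\Delta_s|,
\]
using that $\Delta_s = Y_s - \Yc_s$ by definition. Multiplying by $|\Delta_s|$ gives the pointwise bound $\lipschitz{f}\,\Delta_s^2$.

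There is no real obstacle — the factorization $|\Delta_s|\cdot|Y_s-\Yc_s| = \Delta_s^2$ is immediate once the Lipschitz estimate is in place, and integrating over $[t\wedge\zeta,\zeta]$ yields the claimed bound. The only point worth highlighting is that the factor $2$ from the original integrand in~\eqref{eqn:ProofBackwardGronwall1} is \emph{not} absorbed here; it is retained separately and accounts for the $2\lipschitz{f}$ coefficient appearing in~\eqref{eqn:AdaptedGronwallRawBound}.
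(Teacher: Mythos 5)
Your proof is correct and is precisely the argument the paper intends: the lemma is stated as an immediate consequence of the Lipschitz continuity of $\fd{\unboundedTimeGrid}$ in $y$, and pulling the absolute value inside the integral, bounding $|\fd{\unboundedTimeGrid}(\phi(s),Y_s)-\fd{\unboundedTimeGrid}(\phi(s),\Yc_s)|\leq \lipschitz{f}|\Delta_s|$ and multiplying by $|\Delta_s|$ is exactly what is needed. Your remark that the factor $2$ from \eqref{eqn:ProofBackwardGronwall1} is kept outside and produces the $2\lipschitz{f}$ in \eqref{eqn:AdaptedGronwallRawBound} is also consistent with the paper.
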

\begin{lemma}\label{lemma:FVanisher}
    Assume that the terminal condition of the BSDE \eqref{def:BSDE_general} satisfies $\xi\in L^4$ and the solution satisfies $Y\in\mathcal{S}^4$.
    Assume \hyperref[stepsize:assumption]{(h)}, \hyperref[exp:assumption]{(R-q)} with $q\defined 4$, \hyperref[discrete_f:assumption]{($\unboundedTimeGrid$-F)} with $p>2q$ and \hyperref[discrete_xi:assumption]{($\unboundedTimeGrid$-T)} with $p=2q$.
    Then there is a random variable $\theGronwallV \in L^2 $ (not depending on $h$; see \eqref{def:GronwallVanisher}) such that
    \begin{equation}
        2 \Big|\int_{t\vee(\tau\wedge\StoppDiscrete)}^{\StoppMax} \Delta_s \big( \1_{\{s>\tau\}} \fd{\unboundedTimeGrid} (\phi(s),\xi) - \1_{\{s>\StoppDiscrete\}} \fd{\unboundedTimeGrid} (\phi(s),\bar{\xi}) \big) \de s\Big| \leq 2 \theGronwallV \big|\tau-\StoppDiscrete \big| \quad\text{for all }t\geq 0 
    \end{equation}
    If additionally the terminal condition of the BSDE \eqref{def:BSDE_general} satisfies $\xi\in L^8$ and there solution $Y\in\mathcal{S}^8$, and moreover Assumption~\hyperref[exp:assumption]{(R-q)} holds with $q=8$ as well as
    Assumptions \hyperref[discrete_f:assumption]{($\unboundedTimeGrid$-F)} with $p>2q$ and \hyperref[discrete_xi:assumption]{($\unboundedTimeGrid$-T)} with $p=2q$ then $\theGronwallV \in L^4$. \close
\end{lemma}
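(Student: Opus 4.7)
The plan is to exploit the fact that the integration interval $[t\vee(\tau\wedge\StoppDiscrete),\StoppMax]$ has Lebesgue length at most $|\tau-\StoppDiscrete|$, while on this interval exactly one of the two indicators $\1_{\{s>\tau\}}$, $\1_{\{s>\StoppDiscrete\}}$ is almost everywhere active. I would split according to the order of $\tau$ and $\StoppDiscrete$: when $\tau\leq\StoppDiscrete$ only the term $\fd{\unboundedTimeGrid}(\phi(s),\xi)$ survives and, since $s\geq\tau$, we have $Y_s = \xi$ and hence $\Delta_s = \xi - \Yc_s$; when $\StoppDiscrete<\tau$ only $\fd{\unboundedTimeGrid}(\phi(s),\bar{\xi})$ survives and $\Yc_s = \bar{\xi}$, so $\Delta_s = Y_s - \bar{\xi}$.

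Next I would build an $h$-independent random dominator of the integrand. Lipschitz continuity of $\fd{\unboundedTimeGrid}(\phi(s),\cdot)$ from Assumption~\hyperref[discrete_f:assumption]{($\unboundedTimeGrid$-F)} yields
\begin{equation*}
    \bigl|\fd{\unboundedTimeGrid}(\phi(s),y)\bigr| \leq \sup_{h\in(0,h_0]}\sup_{r\geq 0}\bigl|\fd{\unboundedTimeGrid}(\phi(r),0)\bigr| + \lipschitz{f}|y|, \qquad y\in\{\xi,\bar{\xi}\},
\end{equation*}
and the case analysis gives $|\Delta_s|\leq \sup_{r\geq 0}|Y_r| + |\xi| + \sup_{h\in(0,h_0]}|\bar{\xi}| + \sup_{h\in(0,h_0]}\sup_{r\geq 0}|\Yc_r|$. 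Multiplying these two bounds and absorbing the resulting constants into a single random variable, I would set
\begin{equation}\label{def:GronwallVanisher}
    \theGronwallV \defined \Bigl(\sup_{r\geq 0}|Y_r| + |\xi| + \sup_{h}|\bar{\xi}| + \sup_{h,r}|\Yc_r|\Bigr)\Bigl(\sup_{h,s}\bigl|\fd{\unboundedTimeGrid}(\phi(s),0)\bigr| + \lipschitz{f}\bigl(|\xi| + \sup_{h}|\bar{\xi}|\bigr)\Bigr),
\end{equation}
which carries no $h$-dependence. Dominating the integrand pointwise by $2\theGronwallV$ and using that the length of the integration domain is at most $|\tau-\StoppDiscrete|$ delivers the claimed inequality.

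It remains to verify the integrability of $\theGronwallV$ via Cauchy--Schwarz. Under the baseline hypotheses: $\sup_r|Y_r|\in L^4$ since $Y\in\mathcal{S}^4$; $|\xi|\in L^4$ because $p\geq 4$; $\sup_h|\bar{\xi}|\in L^8$ by \hyperref[discrete_xi:assumption]{($\unboundedTimeGrid$-T)} with $p=8$; $\sup_{h,r}|\Yc_r|\in L^4$ by Proposition~\ref{prop:InterpolationRegularity}; and $\sup_{h,s}|\fd{\unboundedTimeGrid}(\phi(s),0)|\in L^p$ for some $p>8$ by \hyperref[discrete_f:assumption]{($\unboundedTimeGrid$-F)}. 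All factors therefore lie in $L^4$, so Cauchy--Schwarz gives $\theGronwallV\in L^2$. Under the strengthened hypotheses ($q=8$, $p=16$ resp.\ $p>16$, $Y\in\mathcal{S}^8$) each factor improves to $L^8$, hence $\theGronwallV\in L^4$. The main delicate point is the $h$-uniform integrability of $\sup_{r\geq 0}|\Yc_r|$, which is exactly the content of Proposition~\ref{prop:InterpolationRegularity}; everything else is straightforward bookkeeping.
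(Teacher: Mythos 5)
Your proof is correct and follows essentially the same route as the paper: bound $|\Delta_s|$ on the integration region using that $Y$ is constant after $\tau$ and $\Yc$ after $\StoppDiscrete$, dominate the generator terms via Lipschitz continuity at $0$, take suprema over the admissible stepsizes to obtain an $h$-independent $\theGronwallV$, and conclude $\theGronwallV\in L^2$ (resp.\ $L^4$) by Cauchy--Schwarz from $Y\in\mathcal{S}^4$, $\xi\in L^4$, Proposition~\ref{prop:InterpolationRegularity} and Assumptions \hyperref[discrete_f:assumption]{($\unboundedTimeGrid$-F)}, \hyperref[discrete_xi:assumption]{($\unboundedTimeGrid$-T)}. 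One small notational point: the supremum of $|\fd{\unboundedTimeGrid}(\phi(r),0)|$ in your dominator must be taken over $r\in[0,\StoppMax]$ (as in the paper), not over all $r\geq 0$, since Assumption~\hyperref[discrete_f:assumption]{($\unboundedTimeGrid$-F)} only controls the former; this suffices because $\phi(s)\leq\StoppMax$ for every $s$ in the integration domain.
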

\begin{proof}
    With the Lipschitz continuity of $f$ for stopping times $\Stopp_1\leq \Stopp_2$ and $A\in\F_{\Stopp_2}$ we have
    \begin{equation}
        \int_{\Stopp_1}^{\Stopp_2} \big|\fd{\unboundedTimeGrid} (\phi(s),A)\big| \de s \leq \Big(\sup_{\ell\in\unboundedTimeGrid\cap[0,\Stopp_2]} \big|\fd{\unboundedTimeGrid} (\phi(\ell),0)\big| + \lipschitz{f} |A| \Big) \big(\Stopp_2-\Stopp_1\big).
    \end{equation}
    Using an elementary bound\footnote{$ab+cd\leq(a+c)(b+d)$ for non-negative scalars.} we compute
    \begin{align}\label{def:GronwallVanisher_h_dependent}
        & \Big|\int_{t\vee(\tau\wedge\StoppDiscrete)}^{\StoppMax} \Delta_s \big( \1_{\{s>\tau\}} \fd{\unboundedTimeGrid} (\phi(s),\xi) - \1_{\{s>\StoppDiscrete \}} \fd{\unboundedTimeGrid} (\phi(s),\bar{\xi}) \big) \de s \Big| \notag \\
        & \hspace*{1.0cm} \leq \Big(\big|\xi\big| + \sup_{r\in[\tau\wedge\StoppDiscrete,\StoppMax ]}\big|\Yc_{r}\big|\Big) \int_{\tau\wedge\StoppDiscrete}^{\StoppMax}  \big|\fd{\unboundedTimeGrid} (\phi(s),\xi)\big| \de s\notag \\
        & \hspace*{5.5cm} + \Big(\sup_{r\in[\tau\wedge\StoppDiscrete,\StoppMax ]}\big|Y_r\big|+\big|\bar{\xi}\big|\Big) \int_{\tau\wedge\StoppDiscrete}^{\StoppMax}\big| \fd{\unboundedTimeGrid} (\phi(s),\bar{\xi}) \big| \de s\notag \\
        & \hspace*{1.0cm} \leq 2(1+\lipschitz{f}) \Big(\big|\xi\big| + \big|\bar{\xi}\big| + \sup_{r\in[\tau\wedge\StoppDiscrete,\StoppMax ]}\big|\Yc_{r}\big| + \sup_{r\in[\tau\wedge\StoppDiscrete,\StoppMax ]}\big|Y_{r}\big|\Big) \dots\notag \\
        & \hspace*{5.5cm} \dots \Big(\big|\xi\big| + \big|\bar{\xi}\big| + \sup_{r\in[0,\StoppMax ]} \big|\fd{\unboundedTimeGrid} (\phi(r),0)\big| \Big) \ \big|\tau-\StoppDiscrete \big| \notag \\
        & \hspace*{1.0cm} \defined \theGronwallV (h) |\tau-\StoppDiscrete |
    \end{align}
    At this point we highlight all dependencies on the stepsize $h$ in order to show that $(\theGronwallV (h))_{h\in(0,h_0]}$ is bounded in $L^2$. For this note that by definition
    \begin{multline}\label{eqn:bound_GronwallVanisher_by_SUP}
        \theGronwallV (h) \leq \sup_{\Delta\in (0,h_0]} \theGronwallV (\Delta) 
        = \sup_{\Delta\in (0,h_0]} 2(1+\lipschitz{f}) \Big( |\xi | + \big|\bar{\xi}(\Delta)\big| + \sup_{r\in[\tau\wedge\StoppDiscrete(\Delta),\StoppMax(\Delta) ]}\big|\Yc_{r}(\Delta)\big| \\
        + \sup_{r\in[\tau\wedge\StoppDiscrete(\Delta),\StoppMax(\Delta) ]} |Y_{r}|\Big)
        \Big( |\xi | + \big|\bar{\xi}(\Delta)\big| + \sup_{r\in[0,\StoppMax(\Delta) ]} \big| \fd{\Delta} (\phi(r),0)\big| \Big)
    \end{multline}
    By Assumption~\hyperref[discrete_f:assumption]{($\unboundedTimeGrid$-F)}, Assumption~\hyperref[discrete_xi:assumption]{($\unboundedTimeGrid$-T)} and Proposition~\ref{prop:InterpolationRegularity} we know that
    \begin{equation}
        \sup_{\Delta\in (0,h_0]} \sup_{r\in[0,\StoppMax(\Delta) ]} \big| \fd{\Delta} (\phi(r),0)\big| \ + \ 
        \sup_{\Delta\in (0,h_0]} |\bar{\xi}| \ + \ 
        \sup_{\Delta\in (0,h_0]} \sup_{r\geq 0} \big|\Yc_r(\Delta)\big| \in L^4
    \end{equation}
    Moreover by assumption we have $\xi\in L^4$ and $Y\in\mathcal{S}^4$; neither depends on $\unboundedTimeGrid$. Thus combine those integrability properties with \eqref{eqn:bound_GronwallVanisher_by_SUP} and use the Cauchy-Schwarz inequality to conclude
    \begin{equation}\label{def:GronwallVanisher}
        \theGronwallV \defined \sup_{h\in(0,h_0]} \theGronwallV (h) \in L^2
    \end{equation}
    The claim now follows from \eqref{def:GronwallVanisher_h_dependent} since the same bound holds for $\theGronwallV$ (see \eqref{eqn:bound_GronwallVanisher_by_SUP}) .

    The additional part of the statement is obtained in the same way. To be precise from Assumption \hyperref[discrete_f:assumption]{($\unboundedTimeGrid$-F)} and \hyperref[discrete_xi:assumption]{($\unboundedTimeGrid$-T)} with together with Proposition~\ref{prop:InterpolationRegularity_q_Version} (now applied with $q=8$) we obtain 
    \begin{equation}
        \sup_{\Delta\in (0,h_0]} \sup_{r\in[0,\StoppMax(\Delta) ]} \big|f_\Delta (\phi(r),0)\big| \ + \ 
        \sup_{\Delta\in (0,h_0]} |\bar{\xi}| \ + \ 
        \sup_{\Delta\in (0,h_0]} \sup_{r\geq 0} \big|\Yc_r(\Delta)\big| \in L^8
    \end{equation}
    Combine this with the additional assumption on the BSDE \eqref{def:BSDE_general} which ensures the terminal condition satisfies $\xi\in L^8$ and the solution $Y$ is in $\mathcal{S}^8$ to concule $\theGronwallV \in L^4$. 
\end{proof}
\begin{lemma}\label{lemma:FRemainder}
    Assume \hyperref[stepsize:assumption]{(h)}, \hyperref[exp:assumption]{(R-q)} with $q=4$ and \hyperref[discrete_f:assumption]{($\unboundedTimeGrid$-F)} with $p>2q$. 
	Then there is $\theGronwallR \in L^2$ (not depending on $h$; see \eqref{def:theGronwallR}) such that for any $\alpha >0$ it holds that
    \begin{multline}
        \bigg| \E_{t} \Big[ \int_{t\wedge\StoppDiscrete}^{\StoppDiscrete} \Delta_s \big( \fd{\unboundedTimeGrid} (\phi(s), \Yc_s) - \fd{\unboundedTimeGrid} (\phi(s), \Y_{\phi(s)}) \big) \de s \Big] \bigg| 
        \leq \tfrac{\lipschitz{f}\alpha}{2} \E_t \Big[ \int_{t\wedge\zeta}^{\zeta} \Delta_s^2 \de s \Big] \\ 
        + \tfrac{2\lipschitz{f}}{\alpha} h\ \E_t \Big[ \int_{t\wedge\StoppDiscrete}^{\StoppDiscrete} \big\|Z_r - \Zc_{r}^{\phi(r)} \big\|^2 \de r \Big] 
        + \tfrac{2\lipschitz{f}}{\alpha} h \ 
        \E_t \Big[ h\theGronwallR + \int_{0}^{\tau} \big\| Z_{r} \big\|^2 \de r \Big] \quad\text{ for all } t\geq 0. 
    \end{multline}
    If additionally \hyperref[exp:assumption]{(R-q)} holds for $q=8$ and \hyperref[discrete_f:assumption]{($\unboundedTimeGrid$-F)} holds for $p>2q$ then $\theGronwallR \in L^4$. \close
\end{lemma}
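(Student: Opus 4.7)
The plan is to apply the Lipschitz continuity of $\fd{\unboundedTimeGrid}$ in $y$ from Assumption \hyperref[discrete_f:assumption]{($\unboundedTimeGrid$-F)}, pass absolute values inside via $|\E_t[\cdot]| \leq \E_t[|\cdot|]$, and then apply Young's inequality $|ab| \leq \tfrac{\alpha}{2} a^2 + \tfrac{1}{2\alpha} b^2$ with $a = \sqrt{\lipschitz{f}}\,|\Delta_s|$ and $b = \sqrt{\lipschitz{f}}\,|\Yc_s - \Y_{\phi(s)}|$ to obtain pathwise
\[
    \lipschitz{f}\,|\Delta_s| \cdot |\Yc_s - \Y_{\phi(s)}| \leq \tfrac{\lipschitz{f}\alpha}{2} \Delta_s^2 + \tfrac{\lipschitz{f}}{2\alpha} |\Yc_s - \Y_{\phi(s)}|^2.
\]
After integration and extending the $\Delta_s^2$-integral from $[t\wedge\StoppDiscrete,\StoppDiscrete]$ to $[t\wedge\zeta,\zeta]$, the first summand of the claim already appears. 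It therefore remains to bound $\tfrac{\lipschitz{f}}{2\alpha} \E_t\bigl[\int_{t\wedge\StoppDiscrete}^{\StoppDiscrete} |\Yc_s - \Y_{\phi(s)}|^2 \de s\bigr]$ by the last two summands.

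Using $\Yc_{\phi(s)} = \Y_{\phi(s)}$ and $\phi(r) = \phi(s)$ for $r \in [\phi(s), s]$ (same grid cell), the definition \eqref{def:YcAdapted} yields the local representation
\[
    \Yc_s - \Y_{\phi(s)} = -(s-\phi(s)) \fd{\unboundedTimeGrid}(\phi(s), \Y_{\phi(s)}) + \int_{\phi(s)}^{s} \Zc_r^{\phi(s)} \de W_r
\]
on $[0, \StoppDiscrete]$. An $(a+b)^2 \leq 2a^2 + 2b^2$-split combined with $(s-\phi(s))^2 \leq h^2$ produces a drift contribution bounded by $h^2 |\fd{\unboundedTimeGrid}(\phi(s), \Y_{\phi(s)})|^2$ and a Brownian-integral contribution. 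For the latter, a grid-cell by grid-cell Fubini plus conditional It\^o-isometry argument --- available on cells with $kh \geq t$ via the tower property $\E_t = \E_t[\E_{kh}[\cdot]]$, since $\{kh < \StoppDiscrete\} = \{\StoppDiscrete \geq (k+1)h\} \in \F_{kh}$ because $\StoppDiscrete$ is $\unboundedTimeGrid$-valued, and handled on the at-most-one cell containing $t$ by the direct decomposition $\int_{\phi(s)}^{s} \Zc_r^{\phi(s)} \de W_r = \int_{\phi(s)}^{t} \Zc_r^{\phi(s)} \de W_r + \int_{t}^{s} \Zc_r^{\phi(s)} \de W_r$ together with a conditional Doob $L^2$-bound on the $\F_t$-measurable head --- yields
\[
    \E_t\bigg[\int_{t\wedge\StoppDiscrete}^{\StoppDiscrete} \Big|\int_{\phi(s)}^s \Zc_r^{\phi(s)} \de W_r\Big|^2 \de s\bigg] \leq h\, \E_t\bigg[\int_{t\wedge\StoppDiscrete}^{\StoppDiscrete} \|\Zc_r^{\phi(r)}\|^2 \de r\bigg].
\]
Writing $\Zc_r^{\phi(r)} = Z_r - (Z_r - \Zc_r^{\phi(r)})$, applying $\|a+b\|^2 \leq 2\|a\|^2 + 2\|b\|^2$, and using $Z_r \equiv 0$ for $r > \tau$ separates this into the $\|Z_r - \Zc_r^{\phi(r)}\|^2$- and $\int_0^{\tau} \|Z_r\|^2 \de r$-contributions, each carrying the claimed prefactor $\tfrac{2\lipschitz{f}}{\alpha}\,h$.

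For the drift contribution I would mimic the sup-over-$h$ template from the proof of Lemma~\ref{lemma:FVanisher} and define
\[
    \theGronwallR \defined C \sup_{h \in (0, h_0]} \StoppDiscrete(h) \bigg(\sup_{k\colon kh \leq \StoppDiscrete(h)} |\fd{h}(kh, 0)|^2 + \lipschitz{f}^2 \sup_{s \in [0, \StoppDiscrete(h)]} |\Yc_s(h)|^2\bigg)
\]
for a suitable constant $C > 0$, so that $h^2 \E_t\bigl[\int_0^{\StoppDiscrete} |\fd{\unboundedTimeGrid}(\phi(s), \Y_{\phi(s)})|^2 \de s\bigr] \leq h^2 \E_t[\theGronwallR]$ uniformly in $h$. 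The integrability $\theGronwallR \in L^2$ then follows from H\"older's inequality on combining: $\sup_h \StoppDiscrete(h) \in L^p$ for every $p \geq 1$ via $\sup_h \exp(\rho \StoppDiscrete) \in L^1$ from Assumption \hyperref[exp:assumption]{(R-q)}, $\sup_h \sup_k |\fd{h}(kh,0)| \in L^p$ with $p > 8$ from Assumption \hyperref[discrete_f:assumption]{($\unboundedTimeGrid$-F)}, and $\sup_h \sup_s |\Yc_s(h)| \in L^4$ from Proposition~\ref{prop:InterpolationRegularity}. The upgrade to $\theGronwallR \in L^4$ under the $q=8$ variant follows identically by invoking Proposition~\ref{prop:InterpolationRegularity_q_Version} with $q=8$, which delivers $\sup_h \sup_s |\Yc_s(h)| \in L^8$.

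I expect the main obstacle to be the careful handling of the single grid cell containing $t$ in the Brownian-integral step: a naive estimate would destroy the $\int_{t\wedge\StoppDiscrete}^{\StoppDiscrete}$-structure on the right-hand side (degrading it to $\int_0^{\StoppDiscrete}$) or introduce an unwanted $|\int_{\phi(t)}^t \Zc_r^{\phi(t)} \de W_r|^2$-term, whereas preserving the claimed shape requires splitting at $t$ and invoking a conditional Doob $L^2$-bound so that the $\F_t$-measurable head is absorbed into a constant multiple of the $\|\Zc^\phi\|^2$-integral; combined with the need to keep $\theGronwallR$ genuinely $h$-independent while tracking the two factors of $h$ (one from Young's inequality, one from It\^o's isometry), this accounts for most of the delicate bookkeeping in the proof.
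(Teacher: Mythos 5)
Your proposal follows essentially the same route as the paper's proof: Young's inequality with the free parameter $\alpha$ to peel off $\tfrac{\lipschitz{f}\alpha}{2}\E_t[\int\Delta_s^2\,\de s]$, the local representation of $\Yc_s-\Y_{\phi(s)}$ as drift plus stochastic integral, a cell-wise conditional Fubini/It\=o-isometry argument producing the factor $h$, the split $\Zc^{\phi}=Z-(Z-\Zc^{\phi})$, and a stepsize-uniform $\theGronwallR$ of the form $\StoppDiscrete\,\big(\sup|\fd{\unboundedTimeGrid}(\cdot,0)|^2+\lipschitz{f}^2\sup|\cdot|^2\big)$ whose $L^2$- resp.\ $L^4$-integrability comes from \hyperref[discrete_f:assumption]{($\unboundedTimeGrid$-F)}, \hyperref[exp:assumption]{(R-q)} and the uniform $\mathcal{S}^q$-bounds. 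The only immaterial deviations are that the paper builds $\theGronwallR$ from $\sup_{s\in\unboundedTimeGrid}\Y_s^2$ via Proposition~\ref{prop:FixPointScheme_Lq_Integrable} rather than from $\sup_s|\Yc_s|^2$ via Proposition~\ref{prop:InterpolationRegularity}, and that the partial grid cell containing $t$ --- which you propose to handle by splitting the stochastic integral at $t$ --- is dealt with in the paper simply by writing its conditional Fubini decomposition over the full cells $\ell\geq t$ only.
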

\begin{proof}
	Let $t\geq 0$ be fixed. We derive a bound of the integrated difference of $\Y$ and $\Yc$. 
	Observe
	\begin{equation}
		\Yc_{s} - \Y_{\phi(s)} = \big(\phi(s)\wedge\StoppDiscrete-s\wedge\StoppDiscrete\big) \fd{\unboundedTimeGrid} \big(\phi(s), \Y_{\phi(s)} \big) + \int_{\phi(s)}^{s} \Zc_{r}^{\phi(s)}  \de W_r \quad\text{for all } s\geq 0.
	\end{equation}
	With a conditional version of Fubini's theorem (see e.g. \cite[Proposition 2.4.6]{Ethier2009MarkovProcesses}) we compute
	\begin{align}
		\E_t \Big[ \int_{t\wedge\StoppDiscrete}^{\StoppDiscrete} \big| \Yc_s - \Y_{\phi(s)} \big|^2 \de s \Big] 
		& = \sum_{\ell\in\unboundedTimeGrid} \1_{ \{t\leq\ell\} } \E_t \Big[ \1_{ \{\ell < \StoppDiscrete \} } \int_{\ell}^{\ell+h} \E_{\ell} \big[ | \Yc_s - \Y_\ell |^2 \big] \de s \Big] \\
		& \leq 2 \sum_{\ell\in\unboundedTimeGrid} \1_{ \{t\leq\ell\} } \E_t \Big[ \1_{ \{\ell < \StoppDiscrete \} } \int_{\ell}^{\ell+h} (s-\ell)^2\ \E_\ell \big[ \fd{\unboundedTimeGrid} ( \ell, \Y_{\ell})^2 \big] \de s \\
		& \hspace*{3.0cm} + \1_{ \{\ell < \StoppDiscrete \} } \int_{\ell}^{\ell+h} \E_{\ell} \Big[ \big|\int_{\ell}^{s} \Zc_{r}^{\ell} \de W_r \big|^2 \Big] \de s \Big] \\
		& \leq 2 \E_t \Big[ \sum_{\ell\in\unboundedTimeGrid\cap[t,\StoppDiscrete)}\!\! h^3 \fd{\unboundedTimeGrid} \big( \phi(\ell),\Y_{\ell} \big)^2 + \int_{\ell}^{\ell+h} \E_{\ell} \Big[ \big| \int_{\ell}^{s} \Zc_{r}^{\ell} \de W_r \big|^2 \Big] \de s \Big].
	\end{align}
	Regarding the second part of the preceding bound, using conditional versions of It\=o's isometry and Fubini's theorem on $\{\ell<\StoppDiscrete\}$ we have
    \begin{equation}
		\int_{\ell}^{\ell+h} \E_{\ell} \Big[ \big|\int_{\ell}^{s} \Zc_{r}^{\ell} \de W_r \big|^2 \Big] \de s 
        = \int_{\ell}^{\ell+h} \E_{\ell} \Big[ \int_{\ell}^{s} \big\| \Zc_{r}^{\ell} \big\|^2 \de r \Big] \de s 
        \leq h \E_{\ell} \Big[ \int_{\ell}^{\ell+h} \big\| \Zc_{r}^{\ell} \big\|^2 \de r \Big] .
	\end{equation}
	Moreover, with the Lipschitz continuity of $\fd{\unboundedTimeGrid}$ we have
	\begin{align}\label{def:theGronwallR_h_dependent}
		h^3\ \E_t \Big[ \sum_{\ell\in\unboundedTimeGrid\cap [t,\StoppDiscrete)} \fd{\unboundedTimeGrid} (\phi(\ell), \Y_{\ell})^2 \Big] & 
        \leq 2 h^3\ \E_t \Big[ \sum_{ \ell\in\unboundedTimeGrid\cap [t,\StoppDiscrete)} \fd{\unboundedTimeGrid} (\phi(\ell),0)^2 + \lipschitz{f}^2 \Y_{\ell}^2\ \Big] \notag \\
		& \leq 2h^2\ \E_t \Big[ \StoppDiscrete\ \sup_{s\in\unboundedTimeGrid\cap [0,\StoppDiscrete)} \fd{\unboundedTimeGrid} (\phi(s),0)^2 + \StoppDiscrete\ \lipschitz{f}^2\ \sup_{s\in\unboundedTimeGrid\cap [0,\StoppDiscrete)} \Y_{s}^2\ \Big]\notag \\
		& \defined 2 h^2\  \E_{t} \big[ \theGronwallR (h) \big]
	\end{align} 
    where due to Assumption~\hyperref[discrete_f:assumption]{($\unboundedTimeGrid$-F)}, Proposition~\ref{prop:FixPointScheme_Lq_Integrable} (applied with $q=4$ resp. $q=8$ for the \textit{additional part}) and the existence of a positive exponential moment of $\StoppDiscrete$ we obtain\footnote{
        To be precise: We highlight the dependency on the underlying stepsize of all involved processes. We have
        \begin{equation}
            \E \big[ \sup_{h\in(0,h_0]} \StoppDiscrete(h)^r \big]^\frac{1}{2} \Big( \E \big[ \sup_{h\in(0,h_0]} \sup_{s\in\unboundedTimeGrid\cap[0,\StoppDiscrete(h))} f_h(\phi(s), 0)^{2r} \big]^\frac{1}{2} 
            + \lipschitz{f}^2 \E \big[ \sup_{h\in(0,h_0]} \sup_{s\in\unboundedTimeGrid\cap [0,\StoppDiscrete(h)) } \Y_s(h)^{2r} \big]^\frac{1}{2} \Big) < \infty 
        \end{equation}
        where the bound is finite due to Assumption~\hyperref[discrete_f:assumption]{($\unboundedTimeGrid$-F)}, Proposition~\ref{prop:FixPointScheme_Lq_Integrable} (with $q=4$ resp. $q=8$) and the combination of Lemma~\ref{lemma:A_exponentialMoment_ForAll_PowerMoments_SUPREMUM_VERSION} with Assumption~\hyperref[exp:assumption]{(R-q)}. From an application of the Cauchy-Schwarz inequality (case $r=2$) we obtain $\theGronwallR \in L^2$ resp. (case $r=4$) $\theGronwallR \in L^4$.
    }
    \begin{equation}\label{def:theGronwallR}
        \theGronwallR \defined \sup_{h\in (0,h_0]} \theGronwallR (h) \in L^2 \qquad \big(\text{resp. } \theGronwallR \in L^4 \big)
    \end{equation}
	Combine this with the estimates from above (using monotone convergence) to conclude
	\begin{align}
		\E_{t} \Big[ \int_{t\wedge\StoppDiscrete}^{\StoppDiscrete} \big| \Yc_s - \Y_{\phi(s)} \big|^2 \de s \Big] 
		& \leq 4h^2 \E_t \big[ \theGronwallR \big]  + 2h \E_t \Big[ \int_{t\wedge\StoppDiscrete}^{\StoppDiscrete} \big\| \Zc_{r}^{\phi(r)} \big\|^2 \de r \Big] \\
		& \leq 4h^2 \E_t \big[ \theGronwallR \big] + 4h \E_t \Big[ \int_{t\wedge\StoppDiscrete}^{\StoppDiscrete} \big\| Z_{r} \big\|^2 \de r + \int_{t\wedge\StoppDiscrete}^{\StoppDiscrete} \big\|Z_r - \Zc_{r}^{\phi(r)} \big\|^2 \de r \Big]
	\end{align}
	Now let $\alpha >0$. With an elementary bound\footnote{
		For any $a,b\geq 0$ and $\alpha>0$ it holds that $2ab\leq \alpha a^2 + \alpha^{-1} b^2$.
	}
	and the Lipschitz continuity of $\fd{\unboundedTimeGrid}$ we compute 
	\begin{align}
		& \bigg| \E_{t} \Big[ \int_{t\wedge\StoppDiscrete}^{\StoppDiscrete} \Delta_s \big( \fd{\unboundedTimeGrid} (\phi(s), \Yc_s) - \fd{\unboundedTimeGrid} (\phi(s), \Y_{\phi(s)}) \big) \de s \Big] \bigg| \\
		& \hspace*{1.0cm} \leq \tfrac{\lipschitz{f}\alpha}{2} \E_t \Big[ \int_{t\wedge\zeta}^{\zeta} \Delta_s^2 \de s \Big] + \tfrac{\lipschitz{f}}{2\alpha} \E_{t} \Big[ \int_{t\wedge\StoppDiscrete}^{\StoppDiscrete} \big| \Yc_s - \Y_{\phi(s)} \big|^2 \de s \Big] \\
		& \hspace*{1.0cm} \leq \tfrac{\lipschitz{f}\alpha}{2} \E_t \Big[ \int_{t\wedge\zeta}^{\zeta} \Delta_s^2 \de s \Big] + \tfrac{2\lipschitz{f}}{\alpha} h\ \E_t \Big[ \int_{t\wedge\StoppDiscrete}^{\StoppDiscrete} \big\|Z_r - \Zc_{r}^{\phi(r)} \big\|^2 \de r \Big] \\
        & \hspace*{2.0cm} + \tfrac{2\lipschitz{f}}{\alpha} h \ 
		\E_t \Big[ h\theGronwallR + \int_{t\wedge\StoppDiscrete}^{\StoppDiscrete} \big\| Z_{r} \big\|^2 \de r \Big]
	\end{align}
	We complete the proof by further bounding the integral of the (continuous time) martingale integrand $Z$ up to $[0,\infty)$.
\end{proof}
\begin{lemma}\label{lemma:AdaptedGronwallInt_4}    
    With an elementary bound for any $\alpha>0$ it holds that 
    \begin{multline}
        \Big|\int_{t\wedge\tau}^{\tau} \Delta_s \big( f(s, Y_s) - \fd{\unboundedTimeGrid} (\phi(s),Y_s) \big) \de s \Big| \leq \tfrac{\alpha}{2} \int_{t\wedge\tau}^{\tau} \Delta_s^2 \de s\\ + 
        \tfrac{1}{2\alpha}\int_{t\wedge\tau}^{\tau} \big( f(s, Y_s) - \fd{\unboundedTimeGrid} (\phi(s),Y_s) \big)^2 \de s \quad\text{for all } t\geq 0. \closeEqn
    \end{multline}
\end{lemma}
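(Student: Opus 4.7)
The statement is essentially an application of the weighted Young inequality $|ab| \leq \tfrac{\alpha}{2} a^2 + \tfrac{1}{2\alpha} b^2$, with the weight $\alpha>0$ acting as a free parameter that will later be tuned to absorb the $\Delta_s^2$ term into a stochastic Gronwall estimate. The plan is to bring the absolute value inside the integral, apply the pointwise Young inequality to the integrand, and then split the integral into its two parts.

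More precisely, first I would apply the standard integral triangle inequality to get
\begin{equation*}
\Big|\int_{t\wedge\tau}^{\tau} \Delta_s \big( f(s, Y_s) - \fd{\unboundedTimeGrid} (\phi(s),Y_s) \big) \de s \Big| \leq \int_{t\wedge\tau}^{\tau} \big|\Delta_s\big| \cdot \big| f(s, Y_s) - \fd{\unboundedTimeGrid} (\phi(s),Y_s) \big| \de s.
\end{equation*}
Next I would invoke Young's inequality pointwise for each $s\geq 0$ with the choice $a = |\Delta_s|$, $b = |f(s,Y_s) - \fd{\unboundedTimeGrid}(\phi(s), Y_s)|$ and weight $\alpha>0$, yielding
\begin{equation*}
\big|\Delta_s\big| \cdot \big| f(s, Y_s) - \fd{\unboundedTimeGrid} (\phi(s),Y_s) \big| \leq \tfrac{\alpha}{2} \Delta_s^2 + \tfrac{1}{2\alpha} \big( f(s, Y_s) - \fd{\unboundedTimeGrid} (\phi(s),Y_s) \big)^2.
\end{equation*}
Integrating this inequality over $[t\wedge\tau, \tau]$ and combining with the triangle inequality above produces the claimed bound.

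There is no real obstacle; the estimate is entirely deterministic (pathwise) and requires no integrability assumptions, no filtration properties, and no stopping time arguments. The only thing to keep in mind is that the estimate must hold on every $\omega$ (or almost every $\omega$) so that it can subsequently be taken under the conditional expectation in equation \eqref{eqn:ProofBackwardGronwall1}; this is automatic since Young's inequality is pointwise.
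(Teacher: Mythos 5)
Your argument is correct and is exactly the paper's intended (unwritten) proof: the integral triangle inequality followed by the pointwise weighted Young inequality $|ab|\leq\tfrac{\alpha}{2}a^2+\tfrac{1}{2\alpha}b^2$, which is precisely the ``elementary bound'' the lemma refers to. Nothing further is needed.
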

\section{Existence of Discrete-Time Approximation}\label{sec:Existence}

Let $p \geq 4$ and $\Bar{\xi} \in L^p (\F_{\StoppDiscrete})$. Let $h\in(0,h_0]$ be fixed. Consider the collection of sequences
\begin{equation}\label{def:SequenceSpace_V_DiscreteTerminal}
    \mathcal{V} (\StoppDiscrete) \defined \big\{ (R_t)_{t\in\unboundedTimeGrid} \ \big| \ R_t \in L^2 (\F_t),\ \1_{\{t > \StoppDiscrete\}} R_t = 0\ \text{for all}\ t\in\unboundedTimeGrid\  \big\}
\end{equation}
and set
$ \|R\|_{\StoppDiscrete} \defined \sum_{t\in\unboundedTimeGrid} \varphi (t) \| \1_{\{ t \leq \StoppDiscrete\} } R_t \|_{L^2}$ 
where 
$\varphi\colon\unboundedTimeGrid\to [1,\infty);\ t\mapsto (1-3\lipschitz{f} h)^{-\frac{t}{h}}. $
Subsequently we analyze the operator $T\colon (\mathcal{V}(\StoppDiscrete),\|\cdot\|_{\StoppDiscrete}) \to (\mathcal{V}(\StoppDiscrete), \|\cdot\|_{\StoppDiscrete})$ defined as
\begin{equation}\label{def:FixedPointOperator}
    T(R) \defined \big(\ \1_{\{t<\StoppDiscrete\}} \E_t [R_{t+h}] +\1_{\{t<\StoppDiscrete\}} h \fd{\unboundedTimeGrid} (t, R_t ) + \1_{ \{ t = \StoppDiscrete \} } \Bar{\xi} \ \big)_{t\in\unboundedTimeGrid}
\end{equation}
Below in Lemma~\ref{lemma:CompletenessV} we show that $(\mathcal{V}(\StoppDiscrete), \|\cdot\|_{\StoppDiscrete})$ is a Banach space. Proposition~\ref{prop:Characterization} provides a characterization of the norm and with this we conclude that $T$ is a contraction, see Lemma~\ref{lemma:T_Contraction}.
By Banach's fixed-point theorem there is a unique $\Y^* \in (\mathcal{V}(\StoppDiscrete), \|\cdot\|_{\StoppDiscrete})$ such that $\Y^* = T(\Y^*)$. Thus a solution of the discrete-time BSDE approximation scheme \eqref{def:FixpointScheme} is given by the composition
\begin{equation}
    \Y_t = \1_{ \{ t\geq\StoppDiscrete \} } \Bar{\xi} + \1_{ \{ t<\StoppDiscrete \} } \Y^*_t = \1_{ \{ t\geq\StoppDiscrete \} } \Bar{\xi} + \Y^*_t .
\end{equation}
\begin{lemma}\label{lemma:CompletenessV}
    The space $(\mathcal{V}(\StoppDiscrete), \|\cdot\|_{\StoppDiscrete})$ is Banach. \close
\end{lemma}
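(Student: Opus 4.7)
The plan is to treat $\mathcal{V}(\StoppDiscrete)$ as a weighted $\ell^1(\unboundedTimeGrid;L^2)$-type sequence space and to exploit the fact that $L^2(\F_t)$ is Banach at every level $t\in\unboundedTimeGrid$. The weight $\varphi$ plays essentially no role for completeness beyond the bound $\varphi(t)\geq 1$; its real purpose is to enforce the contraction in Lemma~\ref{lemma:T_Contraction}.

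First I would verify that $\|\cdot\|_{\StoppDiscrete}$ is a genuine norm on $\mathcal{V}(\StoppDiscrete)$. Absolute homogeneity and the triangle inequality follow pointwise in $t$ from the corresponding $L^2$-properties combined with the (sub)linearity of the counting sum. For definiteness, $\|R\|_{\StoppDiscrete}=0$ forces $\|\1_{\{t\leq\StoppDiscrete\}}R_t\|_{L^2}=0$ for every $t\in\unboundedTimeGrid$; combined with the defining vanishing $\1_{\{t>\StoppDiscrete\}}R_t=0$ of elements of $\mathcal{V}(\StoppDiscrete)$, this yields $R_t=0$ a.s. for every $t$.

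Next I would establish completeness. Let $(R^n)_n$ be $\|\cdot\|_{\StoppDiscrete}$-Cauchy. Since $\varphi(t)\geq 1$, for each fixed $t\in\unboundedTimeGrid$ the sequence $(R^n_t)_n$ is Cauchy in $L^2(\F_t)$ and hence converges to some $R_t\in L^2(\F_t)$. The identity $\1_{\{t>\StoppDiscrete\}}R_t=0$ passes to the $L^2$-limit, so $R=(R_t)_{t\in\unboundedTimeGrid}$ has the correct structural properties; the finiteness $\|R\|_{\StoppDiscrete}<\infty$ will be a byproduct of the norm-convergence step. To establish the latter, given $\varepsilon>0$ choose $N$ with $\|R^n-R^m\|_{\StoppDiscrete}<\varepsilon$ for all $n,m\geq N$. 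For each $t\in\unboundedTimeGrid$ the summand $\varphi(t)\|\1_{\{t\leq\StoppDiscrete\}}(R^n_t-R^m_t)\|_{L^2}$ converges as $m\to\infty$ to $\varphi(t)\|\1_{\{t\leq\StoppDiscrete\}}(R^n_t-R_t)\|_{L^2}$, so Fatou's lemma applied to the counting measure on $\unboundedTimeGrid$ gives
\[
\|R^n-R\|_{\StoppDiscrete}\ \leq\ \liminf_{m\to\infty}\|R^n-R^m\|_{\StoppDiscrete}\ \leq\ \varepsilon
\]
for every $n\geq N$. This proves $R^n\to R$ in $\|\cdot\|_{\StoppDiscrete}$, and the bound $\|R\|_{\StoppDiscrete}\leq\|R^N\|_{\StoppDiscrete}+\varepsilon$ confirms $R\in\mathcal{V}(\StoppDiscrete)$.

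The main obstacle is purely bookkeeping rather than analytical: one must carefully carry the indicator $\1_{\{t\leq\StoppDiscrete\}}$ through the limit passage and argue that summand-wise $L^2$-convergence together with the uniform Cauchy bound legitimizes the Fatou step. No deep estimate is required beyond completeness of $L^2(\F_t)$ at each $t$.
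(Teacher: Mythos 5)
Your proposal is correct and follows essentially the same route as the paper: the paper phrases completeness abstractly as a weighted $\ell^1$-direct sum of the Banach spaces $L^2(\F_t)$ restricted to $\{t\leq\StoppDiscrete\}$, but its actual argument (in a footnote) is exactly your componentwise-limit-plus-Fatou construction, and the norm verification is identical. No gaps.
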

\begin{proof}
    \step{1} We show that $(\mathcal{V}(\StoppDiscrete), \|\cdot\|_{\StoppDiscrete})$ is a normed space.

    The linear space properties are inherited component-wise from $L^q$ to $\mathcal{V}$. Furthermore for $R,S \in \mathcal{V}(\StoppDiscrete)$ and $\alpha \in \R$ we observe that 
    $\1_{\{t\leq\StoppDiscrete\}} (\alpha R+S)_t = \alpha \1_{\{t\leq\StoppDiscrete\}} R_t + \1_{\{t\leq\StoppDiscrete\}} S_t = 0$.
    Regarding the norm we observe that homogeneity and the triangle inequality carry over from the $L^2$-norm to $\|\cdot\|_{\StoppDiscrete}$. We verify positive definiteness; Let $R\in\mathcal{V}(\StoppDiscrete)$ such that $\|R\|_{\StoppDiscrete} = 0$. For each $t\in\unboundedTimeGrid$ from the $L^2$-norm we obtain $R_t = 0$ on $\{t \leq \StoppDiscrete \}$, thus
    $R_t = \1_{\{t\leq\StoppDiscrete\}} R_t = 0$.
    
    \step{2} We show completeness.

    For each $t\in\unboundedTimeGrid$ consider $\de \Q \defined \1_{\{t\leq\StoppDiscrete\}} \de \prob$ and set
    $E_t \defined L^{2} \big( \Omega\cap\{t\leq\StoppDiscrete\}, \F_{t}\cap\{t\leq\StoppDiscrete\}, \Q \big)$. Then $(E_t)_{t\in\unboundedTimeGrid}$ is a sequence of Banach spaces. Particularly we can express
    \begin{equation}
        \big( \mathcal{V}(\StoppDiscrete), \|\cdot\|_{\StoppDiscrete} \big) = \Big\{ (X_t)_{t\in\unboundedTimeGrid} \in \prod_{t\in\unboundedTimeGrid} E_n \ \Big|\ \sum_{t\in\unboundedTimeGrid} \varphi(t) \| X_t \|_{E_t} < \infty  \Big\}
    \end{equation}
    and thus completeness of the spaces $E_t$ for all $t\in\unboundedTimeGrid$ is inherited to the product space.\footnote{
        Given a Cauchy sequence in the product space one can construct its limit via the component-wise limits. To be precise: Let $X = (X^n)_{n\in\nat} \subset \mathcal{V}(\StoppDiscrete)$ denote a $\|\cdot\|_{\StoppDiscrete}$ Cauchy sequence. For each $t\in\unboundedTimeGrid$ the respective component has a limit in $L^2 (\F_t)$, i.e. there is $X_t^\infty$ such that $X_t^n \to X_t^\infty$ as $n\to \infty$ in $L^2 (\F_t)$. By setting $X^\infty \defined (X_t^\infty)_{t\in\unboundedTimeGrid}$ we obtain the corresponding limit in $(\mathcal{V}(\StoppDiscrete), \|\cdot\|_{\StoppDiscrete})$. The convergence is a consequence of Fatou's lemma;
        \begin{equation}
            \| X^\infty\! -\! X^n \|_{\StoppDiscrete} =\! \sum_{t\in\unboundedTimeGrid}\! \varphi(t)\! \lim_{m\to\infty}\! \| X_t^m\! -\! X_t^n \|_{L^2}\! 
            \leq \liminf_{m\to\infty} \sum_{t\in\unboundedTimeGrid}\! \varphi(t) \| X_t^m\! -\! X_t^n \|_{L^2}\! 
            \leq\!\! \sup_{m,n > n_0}\!\! \| X^m\! -\! X^n \|_{\StoppDiscrete} <\! \varepsilon
        \end{equation}
    } 
\end{proof}
\begin{proposition}[Characterization of $\|\cdot\|_{\StoppDiscrete}$]\label{prop:Characterization}
    Assume \hyperref[stepsize:assumption]{(h)} and \hyperref[exp:assumption]{(R-q)} with $q=4$. Let $p\geq 4$ and $R=(R_t)_t$ be an adapted sequence in $L^p$ with $\sup_{t\in\unboundedTimeGrid} \|\1_{ \{ t\leq\StoppDiscrete \}}R_t\|_{L^p} <\infty$ then the \textit{cut-off} process $(\1_{\{t\leq\StoppDiscrete\}} R_t)_{t\in\unboundedTimeGrid}$ is contained in $(\mathcal{V}(\StoppDiscrete),\|\cdot\|_{\StoppDiscrete})$. 
    Moreover any $S\in(\mathcal{V}(\StoppDiscrete),\|\cdot\|_{\StoppDiscrete})$ necessarily satisfies 
    \begin{equation}
        \Big( \tfrac{1}{1-3\lipschitz{f} h} \Big)^\frac{t}{h} \1_{ \{t\leq\StoppDiscrete\}} \ S_t^2 \to 0 \quad\text{as } t\to\infty \quad\text{in } L^1. \closeEqn
    \end{equation}
\end{proposition}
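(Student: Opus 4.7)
The plan is to exploit the interplay between the geometric growth of the weight $\varphi(t)=(1-3\lipschitz{f} h)^{-t/h}$ and the exponential tail decay of $\StoppDiscrete$ supplied by Assumption \hyperref[exp:assumption]{(R-4)}. The first claim reduces to a summability check, while the second is a direct consequence of the fact that summable series have vanishing general terms.

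For the first claim, adaptedness, $L^2$-integrability and the vanishing condition on $\{t>\StoppDiscrete\}$ are immediate for $(\1_{\{t\leq\StoppDiscrete\}} R_t)_{t\in\unboundedTimeGrid}$; the substantive task is the convergence of the series defining $\|\cdot\|_{\StoppDiscrete}$. I would apply Hölder's inequality with exponents $p/2$ and $p/(p-2)$ to obtain
\[
\|\1_{\{t\leq\StoppDiscrete\}} R_t\|_{L^2}^2 \leq \|\1_{\{t\leq\StoppDiscrete\}} R_t\|_{L^p}^2 \,\prob(t\leq\StoppDiscrete)^{(p-2)/p},
\]
and then Markov's inequality combined with \hyperref[exp:assumption]{(R-4)} gives $\prob(t\leq\StoppDiscrete)\leq C_\rho e^{-\rho t}$ with $C_\rho\defined \sup_{h\in(0,h_0]}\E[\exp(\rho\StoppDiscrete)]<\infty$. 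For the weight, the elementary estimate $-\log(1-x)\leq x/(1-x)$ on $(0,1)$ together with $3\lipschitz{f} h_0\leq 1/4$ from \hyperref[stepsize:assumption]{(h)} yields $\varphi(t)\leq e^{4\lipschitz{f} t}$ uniformly in $h\in(0,h_0]$. Combining the three estimates, each summand of $\|\cdot\|_{\StoppDiscrete}$ is dominated by a constant multiple of $\exp\!\big(-\big(\tfrac{\rho(p-2)}{2p}-4\lipschitz{f}\big)t\big)$, and the resulting geometric series converges.

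For the second claim, $\|S\|_{\StoppDiscrete}<\infty$ forces $\varphi(t)\|\1_{\{t\leq\StoppDiscrete\}}S_t\|_{L^2}\to 0$ as $t\to\infty$. Squaring and using $\varphi(t)\geq 1$ gives
\[
\E\big[\varphi(t)\,\1_{\{t\leq\StoppDiscrete\}} S_t^2\big] \leq \varphi(t)^2\,\E[\1_{\{t\leq\StoppDiscrete\}} S_t^2] = \big(\varphi(t)\|\1_{\{t\leq\StoppDiscrete\}}S_t\|_{L^2}\big)^2 \to 0,
\]
which is precisely the asserted $L^1$-convergence.

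The only delicate point is verifying the numerical margin in the first part: the decay rate $\rho(p-2)/(2p)$ obtained from Hölder and Markov must strictly exceed the growth rate $4\lipschitz{f}$ of $\varphi$. For $p=4$ this reduces to $\rho/4>4\lipschitz{f}$, which is precisely the content of $\rho>4q\lipschitz{f}$ with $q=4$ in \hyperref[exp:assumption]{(R-4)}; larger $p$ only enlarges the gap. All other steps are routine bookkeeping around Hölder, Markov and the logarithm inequality.
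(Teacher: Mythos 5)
Your argument is correct and follows essentially the same route as the paper: Hölder/Cauchy--Schwarz to isolate the tail probability $\prob(t\leq\StoppDiscrete)$, Markov's inequality with the exponential moment from \hyperref[exp:assumption]{(R-q)}, the elementary bound $\varphi(t)\leq e^{4\lipschitz{f}t}$ from \hyperref[stepsize:assumption]{(h)}, and a convergent geometric series, with the second claim obtained from $\varphi\geq 1$ exactly as in the paper. Your numerical check $\rho/4>4\lipschitz{f}$ for $p=4$ matches the paper's use of $\rho>16\lipschitz{f}$, so nothing is missing.
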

\begin{proof}
    Let $t\in\unboundedTimeGrid$. With the Cauchy-Schwarz inequality we compute
    \begin{equation}\label{eqn:ProofExistenceNorm_EQ1}
        \big\| \1_{ \{ t\leq\StoppDiscrete \} } R_t \big\|_{L^2} \leq \prob \big[ t \leq \StoppDiscrete \big]^\frac{1}{4}\ \E \big[ \1_{ \{ t\leq\StoppDiscrete \}} R_t^4 \big]^\frac{1}{4} \leq \prob \big[ t \leq \StoppDiscrete \big]^\frac{1}{4}\ \sup_{s\in\unboundedTimeGrid} \big\|\1_{ \{ s\leq\StoppDiscrete \}}R_s\big\|_{L^4} .
    \end{equation}
    Let $\rho > 16\lipschitz{f}$ denote the scalar from Assumption~\hyperref[exp:assumption]{(R-q)}. With the Markov inequality we have
    $\prob [ t \leq \StoppDiscrete ] \leq \exp(-\rho t) \E [ \exp(\rho\StoppDiscrete) ] < \infty$.
    Thus there is $C\in [0,\infty)$ (not depending on $t$); 
    \begin{equation}
        \big\| \1_{ \{ t\leq\StoppDiscrete \} } R_t \big\|_{L^2} \leq \E \big[ \exp({\rho}\StoppDiscrete) \big]^\frac{1}{4} \sup_{s\in\unboundedTimeGrid} \big\|\1_{ \{ s\leq\StoppDiscrete \}}R_s\big\|_{L^4} \exp \big( -\tfrac{1}{4} {\rho} t \big) \defined C \exp \big( -\tfrac{1}{4} {\rho} t \big)
    \end{equation}
    Using Assumption~\hyperref[stepsize:assumption]{(h)} and an elementary bound\footnote{
        By Assumption~\hyperref[stepsize:assumption]{(h)} we have $h_0 \leq \tfrac{1}{4} \tfrac{1}{3\lipschitz{f}}$. 
        Together with $1+x\leq\exp(x)$ yields
        $\varphi (t) \leq \exp ( 4\lipschitz{f} t )$
        for $t\in\unboundedTimeGrid$.
    }
    we conclude that the geometric limit exists;
    \begin{align}
        \| R \|_{\StoppDiscrete} & \leq C \sum_{t\in\unboundedTimeGrid} \varphi(t) \exp \big( -\tfrac{1}{4} {\rho} t \big) \leq C \sum_{t\in\unboundedTimeGrid} \exp \Big( \big(4\lipschitz{f} -\tfrac{1}{4} {\rho} \big) t \Big) < \infty
    \end{align}
    Regarding the second part of the statement: let $S\in (\mathcal{V}(\StoppDiscrete), \|\cdot\|_{\StoppDiscrete})$ and notice by definition of the norm necessarily it holds that
    $\varphi(t) \ \| \1_{\{ t \leq \StoppDiscrete\} } S_t \|_{L^2} \to 0$ as $t\to \infty$
    whence the respective squares also are a null-sequence. The claim now is due to $\varphi (\cdot) \leq \varphi(\cdot)^2$;
    \begin{align}
        \Big( \tfrac{1}{1-3\lipschitz{f} h} \Big)^\frac{t}{h}\ \E \big[ \1_{ \{t \leq \StoppDiscrete\}} \ S_t^2 \big] & 
        = \big\| \varphi(t) \1_{\{ t \leq \StoppDiscrete\} } S_t^2 \big\|_{L^1}
        \leq \big\| \varphi(t) \1_{\{ t \leq \StoppDiscrete\} } S_t \big\|_{L^2}^2   \qedhere
    \end{align}
\end{proof}
\begin{lemma}[Contraction]\label{lemma:T_Contraction}
    Assume \hyperref[stepsize:assumption]{(h)}, \hyperref[exp:assumption]{(R-q)} with $q=4$, \hyperref[discrete_f:assumption]{($\unboundedTimeGrid$-F)} and \hyperref[discrete_xi:assumption]{($\unboundedTimeGrid$-T)} (both with $p=4$). 
    Then the operator $T$ defined in \eqref{def:FixedPointOperator} is a contraction (w.r.t $\|\cdot\|_{\StoppDiscrete}$). \close
\end{lemma}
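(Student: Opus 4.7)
The plan is to estimate $\|T(R) - T(S)\|_{\StoppDiscrete}$ term by term along $\unboundedTimeGrid$ and then exploit the geometric weight $\varphi$ via an index shift. Fix $R, S \in \mathcal{V}(\StoppDiscrete)$. Since $T(R)_t = T(S)_t = \bar{\xi}$ on $\{t = \StoppDiscrete\}$, both vanish on $\{t > \StoppDiscrete\}$, and both contain $\1_{\{t < \StoppDiscrete\}}$ as a prefactor on $\{t < \StoppDiscrete\}$, I would first write
\[
    \1_{\{t \leq \StoppDiscrete\}} \bigl(T(R)_t - T(S)_t\bigr) = \1_{\{t < \StoppDiscrete\}} \E_t\bigl[R_{t+h} - S_{t+h}\bigr] + h\, \1_{\{t < \StoppDiscrete\}} \bigl( \fd{\unboundedTimeGrid}(t, R_t) - \fd{\unboundedTimeGrid}(t, S_t) \bigr).
\]
Applying the triangle inequality in $L^2$, Lipschitz continuity of $\fd{\unboundedTimeGrid}$ (Assumption~\hyperref[discrete_f:assumption]{($\unboundedTimeGrid$-F)}) for the second summand, and conditional Jensen together with the identity $\{t < \StoppDiscrete\} = \{t+h \leq \StoppDiscrete\}$ (valid since $\StoppDiscrete$ is $\unboundedTimeGrid$-valued) for the first summand, one obtains the pointwise bound
\[
    \bigl\| \1_{\{t \leq \StoppDiscrete\}} (T(R)_t - T(S)_t) \bigr\|_{L^2} \leq \bigl\| \1_{\{t+h \leq \StoppDiscrete\}} (R_{t+h} - S_{t+h}) \bigr\|_{L^2} + h\lipschitz{f}\, \bigl\| \1_{\{t \leq \StoppDiscrete\}} (R_t - S_t) \bigr\|_{L^2}.
\]

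Next I would multiply by $\varphi(t)$ and sum over $t \in \unboundedTimeGrid$. The Lipschitz contribution is immediately bounded by $h\lipschitz{f} \|R - S\|_{\StoppDiscrete}$. For the conditional-expectation contribution, the defining identity $\varphi(t) = (1 - 3\lipschitz{f} h)\, \varphi(t+h)$ allows the shift $s = t + h$:
\[
    \sum_{t \in \unboundedTimeGrid} \varphi(t) \bigl\| \1_{\{t+h \leq \StoppDiscrete\}}(R_{t+h} - S_{t+h}) \bigr\|_{L^2} = (1 - 3\lipschitz{f} h) \sum_{s \in \unboundedTimeGrid \setminus \{0\}} \varphi(s) \bigl\| \1_{\{s \leq \StoppDiscrete\}}(R_s - S_s) \bigr\|_{L^2} \leq (1 - 3\lipschitz{f} h)\, \|R - S\|_{\StoppDiscrete}.
\]
Combining the two contributions yields
\[
    \|T(R) - T(S)\|_{\StoppDiscrete} \leq \bigl( (1 - 3\lipschitz{f} h) + h\lipschitz{f} \bigr) \|R - S\|_{\StoppDiscrete} = (1 - 2 h \lipschitz{f})\, \|R - S\|_{\StoppDiscrete},
\]
and Assumption~\hyperref[stepsize:assumption]{(h)} ensures $1 - 2h\lipschitz{f} \in (5/6, 1)$, giving a genuine contraction.

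There is no real obstacle: the weight $\varphi$ has been chosen so that the shifted term contributes the prefactor $1 - 3\lipschitz{f} h$, while the Lipschitz step contributes $h\lipschitz{f}$, leaving a residual contraction factor $h\lipschitz{f}$. The only side issue is to verify that $T$ is a self-map of $\mathcal{V}(\StoppDiscrete)$, i.e.\ that each component $T(R)_t$ is $\F_t$-measurable, vanishes on $\{t > \StoppDiscrete\}$, and lies in $L^2$; these are immediate from the stopping-time property of $\StoppDiscrete$, the Lipschitz bound $|\fd{\unboundedTimeGrid}(t,R_t)| \leq |\fd{\unboundedTimeGrid}(t,0)| + \lipschitz{f} |R_t|$, and Assumptions~\hyperref[discrete_f:assumption]{($\unboundedTimeGrid$-F)} and~\hyperref[discrete_xi:assumption]{($\unboundedTimeGrid$-T)}.
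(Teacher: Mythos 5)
Your contraction estimate is correct and is essentially identical to the paper's argument: the same decomposition of $T(R)_t - T(S)_t$, the same use of conditional Jensen together with $\{t<\StoppDiscrete\}=\{t+h\leq\StoppDiscrete\}$, and the same index shift exploiting $\varphi(t)=(1-3\lipschitz{f}h)\varphi(t+h)$, yielding the factor $1-2\lipschitz{f}h<1$ under Assumption~\hyperref[stepsize:assumption]{(h)}.

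The one genuine gap is in your final paragraph, where you dismiss the self-map property as immediate. Componentwise $\F_t$-measurability, vanishing on $\{t>\StoppDiscrete\}$ and $L^2$-membership are indeed easy, but they do not place $T(R)$ in the Banach space on which the fixed-point theorem is applied: you also need
$\|T(R)\|_{\StoppDiscrete}=\sum_{t\in\unboundedTimeGrid}\varphi(t)\,\|\1_{\{t\leq\StoppDiscrete\}}T(R)_t\|_{L^2}<\infty$,
and since $\varphi$ grows geometrically this is not automatic. This is precisely where Assumption~\hyperref[exp:assumption]{(R-q)} --- which your proof never invokes --- must enter: the exponential moment of $\StoppDiscrete$ gives $\prob[t\leq\StoppDiscrete]\leq e^{-\rho t}\,\E[e^{\rho\StoppDiscrete}]$ with $\rho>16\lipschitz{f}$, which dominates the growth $\varphi(t)\leq e^{4\lipschitz{f}t}$ and makes the weighted series summable; this is the content of Proposition~\ref{prop:Characterization}, applied to the sequences $(\1_{\{t\leq\StoppDiscrete\}}\fd{\unboundedTimeGrid}(t,0))_t$ and $(\1_{\{t=\StoppDiscrete\}}\bar{\xi})_t$ under \hyperref[discrete_f:assumption]{($\unboundedTimeGrid$-F)} and \hyperref[discrete_xi:assumption]{($\unboundedTimeGrid$-T)}. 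The paper closes the argument by showing $\|T(0)\|_{\StoppDiscrete}<\infty$ in this way and then combining it with your contraction bound, $\|T(R)\|_{\StoppDiscrete}\leq\|T(R)-T(0)\|_{\StoppDiscrete}+\|T(0)\|_{\StoppDiscrete}<\infty$. Without this step the operator need not map the (finite-norm) space into itself, so you should add it.
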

\begin{proof}
    Notice by definition $\1_{\{t\geq \StoppDiscrete\}} T (\cdot) = 0$. Hence if $\|T(0)\|_{\StoppDiscrete} < \infty $ and $T$ is a contraction (w.r.t. $\|\cdot\|_{\StoppDiscrete}$) then the range of $T$ is contained in $(\mathcal{V}(\StoppDiscrete),\|\cdot\|_{\StoppDiscrete})$.

    By \hyperref[discrete_f:assumption]{($\unboundedTimeGrid$-F)} and \hyperref[discrete_xi:assumption]{($\unboundedTimeGrid$-T)} we have $\sup_{t\in\unboundedTimeGrid} \1_{ \{t\leq\StoppDiscrete\} } \fd{\unboundedTimeGrid} (t,0) \in L^p$ and $\Bar{\xi}\in L^p$. Hence with Proposition~\ref{prop:Characterization}; 
    \begin{align}
        \big\|T(0)\big\|_{\StoppDiscrete} & = \sum_{t\in\unboundedTimeGrid} \varphi(t) \big\| \1_{ \{t<\StoppDiscrete\} } \fd{\unboundedTimeGrid} (t,0) + \1_{ \{t=\StoppDiscrete\} } \Bar{\xi} \big\|_{L^2} \\
        & \leq \sum_{t\in\unboundedTimeGrid} \varphi(t) \big\| \1_{ \{t\leq\StoppDiscrete\} } \fd{\unboundedTimeGrid} (t,0) \big\|_{L^2} +  \sum_{t\in\unboundedTimeGrid} \varphi(t) \big\| \1_{ \{t\leq\StoppDiscrete\} } \1_{ \{t = \StoppDiscrete\} } \Bar{\xi} \big\|_{L^2} \\
        & = \Big\| \big( \1_{ \{t\leq\StoppDiscrete\} } \fd{\unboundedTimeGrid} (t,0) \big)_{t\in\unboundedTimeGrid} \Big\|_{\StoppDiscrete} + \Big\| \big( \1_{ \{t =\StoppDiscrete\} } \Bar{\xi} \big)_{t\in\unboundedTimeGrid} \Big\|_{\StoppDiscrete} < \infty
    \end{align}
    where we make explicit use of $\Bar{\xi}\in\F_{\StoppDiscrete}$ to use that $(\1_{ \{t =\StoppDiscrete\} } \Bar{\xi})_{t\in\unboundedTimeGrid}$ is adapted as required and thus contained in $\mathcal{V}(\StoppDiscrete)$.
    Regarding contractivity let $R,S \in (\mathcal{V}(\StoppDiscrete),\|\cdot\|_{\StoppDiscrete})$. Let $t\in\unboundedTimeGrid$ and observe
    \begin{align}
        T(R)_t - T(S)_t 
        & = \E_{t} \big[ \1_{ \{ t+h \leq \StoppDiscrete \} } (R_{t+h} - S_{t+h}) \big] + \1_{ \{t<\StoppDiscrete\} } h \big( \fd{\unboundedTimeGrid} (t,R_t) - \fd{\unboundedTimeGrid} (t,S_t) \big)
    \end{align}
    Furthermore with the Lipschitz continuity of $f$ we obtain
    \begin{equation}
        \big\| \1_{ \{t<\StoppDiscrete\} } \big(\fd{\unboundedTimeGrid} (t,R_t) - \fd{\unboundedTimeGrid} (t,S_t) \big) \big\|_{L^2} \leq \lipschitz{f} \E \big[ \1_{ \{t\leq\StoppDiscrete\} } (R_t-S_t)^2 \big]^\frac{1}{2} = \lipschitz{f} \big\| \1_{ \{t\leq\StoppDiscrete\} } (R_t-S_t) \big\|_{L^2}
    \end{equation}
    With those observations at hand we can verify that $T$ is a contraction. We use the triangle inequality of $\|\cdot\|_{L^2}$, Jensen's inequality and the fact that $\varphi(t)=\varphi(t+h)(1-3\lipschitz{f}h)$ to compute
    \begin{align}
        \big\| T(R) - T(S) \big\|_{\StoppDiscrete} & = \sum_{t\in\unboundedTimeGrid} \varphi (t) \big\| \1_{ \{t\leq\StoppDiscrete\} } \big(T(R)_t-T(S)_t\big) \big\|_{L^2} \\
        & \leq \sum_{t\in\unboundedTimeGrid} \varphi (t) \Big( \big\| \1_{ \{t+h\leq\StoppDiscrete\} } (R_{t+h}-S_{t+h}) \big\|_{L^2} + \lipschitz{f} h \big\| \1_{ \{t\leq\StoppDiscrete\} } (R_t-S_t) \big\|_{L^2} \Big)\\
        & = (1-2\lipschitz{f}h) \| R-S \|_{\StoppDiscrete}. \qedhere
    \end{align}
\end{proof}
\section{Majorant of Discrete-Time Approximation}\label{sec:Majorant_DiscreteTime}

In this section we derive a majorant in terms of a closed martingale;
\begin{equation}\label{def:ClosedMartingal_Majorant_FixpointScheme}
    \1_{ \{t<\StoppDiscrete\} }\Y_t^2 \leq \E_t \bigg[ \exp( 4\lipschitz{f} \StoppDiscrete) \Big( \Bar{\xi}^2 + \tfrac{1}{ (1 - 3\lipschitz{f}h) \lipschitz{f} } \sum_{\ell\in\unboundedTimeGrid\cap[0,\StoppDiscrete)} h \fd{\unboundedTimeGrid} (\ell,0)^2 \Big)\bigg]\quad\text{for all } t\in\unboundedTimeGrid
\end{equation}
This is a direct consequence of a conditional bound derived in Proposition~\ref{prop:Majorant_FixpointScheme} below. With this majorant we derive a stronger integrability: The maximal degree of $L^q$-boundedness allowed by the parameters of the BSDE resp. the discrete-time approximation, see Proposition~\ref{prop:FixPointScheme_Lq_Integrable}.

\begin{proposition}[Majorant $\Y$]\label{prop:Majorant_FixpointScheme}
     Assume \hyperref[stepsize:assumption]{(h)}, \hyperref[exp:assumption]{(R-q)} with $q=4$, \hyperref[discrete_f:assumption]{($\unboundedTimeGrid$-F)} and \hyperref[discrete_xi:assumption]{($\unboundedTimeGrid$-T)} (both with $p=4$). 
    For each $t\in\unboundedTimeGrid$ on $\{t<\StoppDiscrete\}$ it holds that
    \begin{equation}
        \Y_t^2 \leq \E_t \Big[ \exp\big( 4\lipschitz{f}(\StoppDiscrete-t) \big) \Bar{\xi}^2
        + \tfrac{1}{(1 - 3\lipschitz{f}h) \lipschitz{f} }  \sum_{\ell\in\unboundedTimeGrid\cap[t,\StoppDiscrete)} \exp\big( 4\lipschitz{f}(\StoppDiscrete-\ell) \big) h \fd{\unboundedTimeGrid} (\ell,0)^2 \Big]. \closeEqn
    \end{equation}
\end{proposition}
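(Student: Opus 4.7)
The plan is to establish a pathwise one-step recursion for $\Y_t^2$ on $\{t<\StoppDiscrete\}$ and then iterate it backward from $\StoppDiscrete$ with the pathwise discrete Gronwall lemma of Appendix~\ref{sec:generalSupplementsAndPathwiseGronwall}, which accommodates the random number $(\StoppDiscrete-t)/h$ of iteration steps.

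\emph{One-step bound.} On $\{t<\StoppDiscrete\}$ the defining identity $\Y_t = \E_t[\Y_{t+h}] + h\fd{\unboundedTimeGrid}(t,\Y_t)$ expands to
\[
\Y_t^2 = \E_t[\Y_{t+h}]^2 + 2h\,\E_t[\Y_{t+h}]\,\fd{\unboundedTimeGrid}(t,\Y_t) + h^2 \fd{\unboundedTimeGrid}(t,\Y_t)^2.
\]
I would apply a weighted Young inequality (with weight tuned to $\lipschitz{f} h$) to the mixed term, pass $\E_t[\Y_{t+h}]^2 \leq \E_t[\Y_{t+h}^2]$ through Jensen, and split $\fd{\unboundedTimeGrid}(t,\Y_t)^2$ by combining the Lipschitz property~\hyperref[discrete_f:assumption]{($\unboundedTimeGrid$-F)} with a second Young step into a piece dominated by $\fd{\unboundedTimeGrid}(t,0)^2$ and a piece proportional to $\lipschitz{f}^2 \Y_t^2$. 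The stepsize condition $h < 1/(12\lipschitz{f})$ of Assumption~\hyperref[stepsize:assumption]{(h)} then leaves a strictly positive fraction of $\Y_t^2$ on the left after absorbing the $h^2\lipschitz{f}^2$ and $h\lipschitz{f}$ coefficients, producing a bound of the form
\[
\Y_t^2 \;\leq\; \exp(4\lipschitz{f} h)\, \E_t[\Y_{t+h}^2] \;+\; \tfrac{h}{(1-3\lipschitz{f} h)\,\lipschitz{f}}\, \fd{\unboundedTimeGrid}(t,0)^2
\]
on $\{t<\StoppDiscrete\}$, with the exponential prefactor bounding the $(1+\lipschitz{f} h)/(1-3\lipschitz{f} h)$-type growth factor that emerges naturally under this calibration.

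\emph{Iteration.} With this recursion and the terminal identity $\Y_\StoppDiscrete = \bar\xi$, the pathwise discrete Gronwall lemma of Appendix~\ref{sec:generalSupplementsAndPathwiseGronwall} applied along the finite chain $t, t+h, \ldots, \StoppDiscrete$ compounds the one-step multipliers into $\exp(4\lipschitz{f}(\StoppDiscrete-t))$ in front of $\bar\xi^2$ and into the claimed weights $\exp(4\lipschitz{f}(\StoppDiscrete-\ell))$ on each source term $h\,\fd{\unboundedTimeGrid}(\ell,0)^2$. Taking $\F_t$-conditional expectation (legitimised by the integrability established in Sections~\ref{sec:Existence}--\ref{sec:Majorant_DiscreteTime}) and invoking the tower property then yields precisely the asserted inequality.

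\emph{Main obstacle.} The delicate point is the tight calibration of constants in the one-step bound: the Young weights and the Lipschitz splitting must be chosen so that simultaneously the absorbed $\Y_t^2$ contribution stays within the narrow stepsize budget $h\lipschitz{f}<1/12$, the one-step growth factor is at most $\exp(4\lipschitz{f} h)$, and the driver coefficient collapses cleanly to $h/[(1-3\lipschitz{f} h)\,\lipschitz{f}]$; a careless choice inflates one of these three quantities and breaks the iterated form. Once this one-step bound is in place, the Gronwall iteration and the tower-property argument proceed in a direct fashion.
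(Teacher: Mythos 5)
Your overall architecture matches the paper's: a one-step conditional recursion for $\Y_t^2$ on $\{t<\StoppDiscrete\}$, fed into the pathwise discrete Gronwall lemma (Lemma~\ref{lemma:DiscreteGronwall}) and closed by the tower property. Your derivation of the one-step bound differs in method: you expand $\Y_t=\E_t[\Y_{t+h}]+h\fd{\unboundedTimeGrid}(t,\Y_t)$ algebraically and use Young, Jensen and the Lipschitz split, whereas the paper's Lemma~\ref{lemma:ConditionalMonotinicityFixedPointScheme} obtains the same inequality via It\=o's formula applied to the continuous interpolation $\Yc$ on $[t,t+h]$ together with the martingale representation \eqref{def:MartingaleRepresentation_ForEach_t}. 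Your route is more elementary and the constants do work out: writing $a=\lipschitz{f}h$ one arrives at a growth factor $\tfrac{1+2a}{1-a-2a^2}\leq\tfrac{1}{1-3a}\leq\exp(4\lipschitz{f}h)$ and a driver coefficient $\tfrac{h}{\lipschitz{f}}\tfrac{1+2a}{1-a-2a^2}\leq\tfrac{h}{(1-3\lipschitz{f}h)\lipschitz{f}}$ under $a<\tfrac{1}{12}$. What the paper's It\=o route buys additionally is the term $\E_t[\int\|\Zc_s^t\|^2\de s]$ on the left-hand side of \eqref{eqn:TheGronwallBound_BothWays}, which is reused in Remark~\ref{remark:Majorant_MartRep_CompleteStep} to control $\Zc^\phi$ in $\mathcal{H}^2$; your algebraic derivation does not produce this, but it is not needed for the present proposition.

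The genuine gap is in the iteration step. You apply the Gronwall lemma ``along the finite chain $t,t+h,\dots,\StoppDiscrete$,'' but $\StoppDiscrete$ is an unbounded stopping time, so the iteration does not terminate after a deterministic number of steps: after $I/h$ iterations one carries a remainder $\E_t\big[\1_{\{t+I<\StoppDiscrete\}}(1-3\lipschitz{f}h)^{-I/h}\Y_{t+I}^2\big]$ whose prefactor grows exponentially in $I$, and Lemma~\ref{lemma:DiscreteGronwall} explicitly requires as a hypothesis that this quantity vanishes as $I\to\infty$ (condition \eqref{eqn:AssumptionASConditionalConvergenceTerminal}). This is the decisive difficulty of the unbounded-horizon setting and it is not implied by mere integrability of $\Y_t$; the paper verifies it by invoking Proposition~\ref{prop:Characterization}, i.e.\ the fact that the fixed point lives in the weighted sequence space $(\mathcal{V}(\StoppDiscrete),\|\cdot\|_{\StoppDiscrete})$, whose norm forces $\varphi(t)\,\1_{\{t\leq\StoppDiscrete\}}\Y_t^2\to 0$ in $L^1$ with exactly the weight $\varphi(t)=(1-3\lipschitz{f}h)^{-t/h}$ needed to kill the remainder. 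Your proposal never addresses this; correspondingly, the ``main obstacle'' you identify (calibrating the Young weights) is real but routine, while the actual obstacle is the transversality condition just described.
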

\begin{proof}
    The majorant is obtained as consequence of a pathwise discrete Gronwall bound (see Lemma~\ref{lemma:DiscreteGronwall}). Subsequently we argue that the required preconditions are met.

    Let $t\in\unboundedTimeGrid$ be fixed.
    By definition (see \eqref{def:FixpointScheme} in the sense of Section~\ref{sec:Existence}) we have $\Y_t^2 \in L^1 (\F_t)$ as well as
    $\1_{ \{ t\geq \StoppDiscrete \} } \Y_t^2 = \1_{ \{ t\geq\StoppDiscrete \} } {\Bar{\xi}}^2$
    and by \hyperref[discrete_xi:assumption]{($\unboundedTimeGrid$-T)} we have $\Bar{\xi}\in L^4$;
    thus condition \eqref{lemma:pathwiseDiscreteGronwall_AssumptionConstantAfterStopp} is satisfied. From the properties of the space $(\mathcal{V}(\StoppDiscrete),\|\cdot\|_{\StoppDiscrete})$, see Proposition~\ref{prop:Characterization}, we know that $(\varphi(s) \1_{ \{s<\StoppDiscrete\} } \Y_s^2 )_{s\in\unboundedTimeGrid}$ is a $L^1$(-norm) null-sequence. Therefore by continuity of the conditional expectation we have
    \begin{equation}\label{eqn:ProofMajorant_FixpointScheme_EQ1}
        \lim_{I\to\infty} \Big( \frac{1}{1-3\lipschitz{f} h} \Big)^\frac{I+t}{h} \E_t \big[ \1_{ \{ t+I<\StoppDiscrete \} } \Y_{t+I}^2 \big] = 0 
    \end{equation}
    thus condition \eqref{eqn:AssumptionASConditionalConvergenceTerminal} is fulfilled. Furthermore Lemma~\ref{lemma:ConditionalMonotinicityFixedPointScheme} ensures that the conditional bound \eqref{lemma:pathwiseDiscreteGronwall_AssumptionConditionalBound} holds. To be precise the statment reads;
    \begin{equation}\label{eqn:ProofMajorant_FixpointScheme_EQ2}
        \big| \Y_{t}\big|^2 \leq \frac{1}{1 - 3\lipschitz{f} h} \E_{t}\big[ \Y_{t+h}^2\big] + \1_{\{t < \StoppDiscrete\}} \frac{h}{(1-3\lipschitz{f}h)\lipschitz{f}} \fd{\unboundedTimeGrid} (t,0)^2 
    \end{equation}
    From Assumption \hyperref[discrete_f:assumption]{($\unboundedTimeGrid$-F)} and the existence of a positive exponential moment of $\StoppDiscrete$ (see Assumption~\hyperref[exp:assumption]{(R-q)} and
    Lemma~\ref{lemma:A_exponentialMoment_ForAll_PowerMoments_SUPREMUM_VERSION}) using Hölder's inequality we have
    \begin{align}
        \E \Big[ \sum_{s\in\unboundedTimeGrid\cap [0,\StoppDiscrete) } h \fd{\unboundedTimeGrid} (s,0)^2 \Big] & \leq \E \Big[ \StoppDiscrete \sup_{s\in\unboundedTimeGrid\cap[0,\StoppDiscrete)} \fd{\unboundedTimeGrid} (s,0)^2 \Big] \\
        & \leq  \E \big[ \StoppDiscrete^2 \big]^\frac{1}{2} \E \Big[ \sup_{s\in\unboundedTimeGrid\cap[0,\StoppDiscrete)} \fd{\unboundedTimeGrid} (s,0)^4 \Big]^\frac{1}{2} < \infty.
    \end{align}
    Whence we are indeed in position to apply Lemma~\ref{lemma:DiscreteGronwall} and obtain 
    \begin{equation}
        \1_{ \{ t<\StoppDiscrete \} } \Y_{t}^2 \leq \E_{t} \bigg[ \Big( \frac{1}{1- 3\lipschitz{f}h} \Big)^\frac{\StoppDiscrete-t}{h} \Bar{\xi}^2 + \tfrac{1}{(1-3\lipschitz{f}h)\lipschitz{f}} \sum_{\ell\in\unboundedTimeGrid\cap[t,\StoppDiscrete)} \Big( \frac{1}{1 - 3\lipschitz{f}h } \Big)^\frac{\StoppDiscrete-\ell}{h} h \fd{\unboundedTimeGrid} (\ell,0)^2 \bigg]
    \end{equation}
    By Assumption~\hyperref[stepsize:assumption]{(h)} we have $3\lipschitz{f}h\leq \tfrac{1}{4}$ and with an elementary bound
    we bound the scalar;
    \begin{align}
        \Big( \frac{1}{ 1- 3\lipschitz{f}h } \Big)^\frac{\StoppDiscrete-\ell}{h} & \leq  \big(1 + 4\lipschitz{f}h \big)^\frac{\StoppDiscrete-\ell}{h} \leq \exp\big( 4\lipschitz{f} (\StoppDiscrete-\ell) \big) 
        \quad\text{for all } \ell \in \unboundedTimeGrid\cap[t,\StoppDiscrete). \qedhere
    \end{align}
\end{proof}
The next result is a property of the continuous interpolation. We emphasize that here we merely consider fixed $t\in\unboundedTimeGrid$ and its direct successor, not the entire concatenation as introduced in \eqref{def:YcAdapted}.
\begin{lemma}\label{lemma:Single_Step_Interpolation}
    Assume \hyperref[stepsize:assumption]{(h)}, \hyperref[exp:assumption]{(R-q)} with $q=4$, \hyperref[discrete_f:assumption]{($\unboundedTimeGrid$-F)} and \hyperref[discrete_xi:assumption]{($\unboundedTimeGrid$-T)} (both with $p=4$).
    For each $t\in\unboundedTimeGrid$ there is $\Zc^t \in \mathcal{H}^2$ such that
    \begin{equation}\label{eqn:lemmaSingleStepInterpolation_ExplicitRepresentation}
        \Yc_s = \Y_{t+h} + \int_{s}^{t+h} \fd{\unboundedTimeGrid} (t, \Y_t) \de t - \int_{s}^{t+h} \Zc_r^t \de W_r \quad\text{for } s\in[t,t+h]
    \end{equation}
    and particularly it holds that $(\Yc_s)_{s\in[t,t+h]} \in\mathcal{S}^2$. \close
\end{lemma}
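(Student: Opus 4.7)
The plan is that the statement follows almost directly from the Brownian martingale representation theorem applied on the interval $[t,t+h]$, combined with the fixed-point relation \eqref{def:FixpointScheme} that defines the discrete scheme.

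First I would verify that $\Y_{t+h}\in L^2$. This follows from Proposition~\ref{prop:Majorant_FixpointScheme}: the conditional majorant there bounds $\Y_{t+h}^2$ in terms of $\exp(4\lipschitz{f}\StoppDiscrete)\bar{\xi}^2$ plus a weighted sum of $\fd{\unboundedTimeGrid}(\ell,0)^2$-terms. A Cauchy--Schwarz split using the positive exponential moment of $\StoppDiscrete$ from Assumption~\hyperref[exp:assumption]{(R-q)} together with the $L^4$-bounds on $\bar{\xi}$ and $\sup_\ell|\fd{\unboundedTimeGrid}(\phi(\ell),0)|$ from Assumptions~\hyperref[discrete_xi:assumption]{($\unboundedTimeGrid$-T)} and~\hyperref[discrete_f:assumption]{($\unboundedTimeGrid$-F)} yields $\Y_{t+h}\in L^2$. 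Hence $\Y_{t+h}-\E_t[\Y_{t+h}]$ is a centered $\F_{t+h}$-measurable $L^2$ random variable, and the martingale representation theorem delivers a unique $\Zc^t\in\mathcal{H}^2([t,t+h])$ with
\[
    \Y_{t+h}-\E_t[\Y_{t+h}] = \int_t^{t+h}\Zc_r^t\,\de W_r,
\]
which is exactly \eqref{def:MartingaleRepresentation_ForEach_t}.

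Next I would verify \eqref{eqn:lemmaSingleStepInterpolation_ExplicitRepresentation} by comparing with \eqref{def:YcAdapted}. Since $\phi(r)=t$ for every $r\in[t,t+h)$, the integrands in \eqref{def:YcAdapted} restricted to this subinterval reduce to the constant $\fd{\unboundedTimeGrid}(t,\Y_t)$ and to $\Zc^t$, and writing $\Yc_s - \Yc_{t+h}$ gives a telescoping identity matching the claimed formula once we know $\Yc_{t+h}=\Y_{t+h}$. The endpoint identity $\Yc_{t+h}=\Y_{t+h}$ follows from plugging the scheme relation into \eqref{def:YcAdapted} at the successive grid point (or by a short backward induction stopping at $\StoppDiscrete$, where $\Yc_{\StoppDiscrete}=\bar{\xi}=\Y_{\StoppDiscrete}$). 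Correspondingly, at $s=t$ the scheme equation on $\{t<\StoppDiscrete\}$ gives $\Yc_t=\E_t[\Y_{t+h}]+h\,\fd{\unboundedTimeGrid}(t,\Y_t)=\Y_t$, so the right-hand side of \eqref{eqn:lemmaSingleStepInterpolation_ExplicitRepresentation} interpolates $\Y_t$ to $\Y_{t+h}$ as required; on $\{t\geq\StoppDiscrete\}$ one has $\Y_{t+h}-\E_t[\Y_{t+h}]=0$, forcing $\Zc^t=0$ and reducing both sides to the constant $\bar{\xi}$.

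For the $\mathcal{S}^2$ property, the representation gives
\[
    \sup_{s\in[t,t+h]}|\Yc_s|^2 \leq 3|\Y_{t+h}|^2 + 3h^2|\fd{\unboundedTimeGrid}(t,\Y_t)|^2 + 3\sup_{s\in[t,t+h]}\Big|\int_s^{t+h}\Zc_r^t\,\de W_r\Big|^2.
\]
Taking expectations and applying Doob's $L^2$-inequality together with It\^o's isometry bounds the last term by $4\,\E[\int_t^{t+h}\|\Zc_r^t\|^2\,\de r] = 4\,\E[(\Y_{t+h}-\E_t[\Y_{t+h}])^2] \leq 4\,\E[\Y_{t+h}^2]$. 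Using the Lipschitz bound $|\fd{\unboundedTimeGrid}(t,\Y_t)|\leq|\fd{\unboundedTimeGrid}(t,0)|+\lipschitz{f}|\Y_t|$ together with Step~1 then gives $\E[\sup_{s\in[t,t+h]}|\Yc_s|^2]<\infty$, i.e.\ $(\Yc_s)_{s\in[t,t+h]}\in\mathcal{S}^2$.

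The main obstacle is ensuring a clean treatment on the set $\{t\geq\StoppDiscrete\}$, where the fixed-point relation for $\Y_t$ need not hold literally; however, on this set the martingale integrand $\Zc^t$ vanishes identically and the formula collapses to $\Yc_s=\bar{\xi}$ consistently with \eqref{def:YcAdapted}, so the representation holds with no additional work. The rest is standard martingale calculus, and the $L^2$-integrability of $\Y_{t+h}$ coming from Proposition~\ref{prop:Majorant_FixpointScheme} is precisely what makes the argument go through under the stated assumptions.
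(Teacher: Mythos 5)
Your overall route is the paper's: apply the martingale representation theorem to $\Y_{t+h}$ on $[t,t+h]$, write the affine-plus-stochastic-integral formula for $\Yc$ on that interval, and control $\sup_{s\in[t,t+h]}\big|\int_s^{t+h}\Zc_r^t\,\de W_r\big|$ by splitting off the full integral and then applying a maximal inequality (the paper uses Burkholder--Davis--Gundy after the same split; note that $s\mapsto\int_s^{t+h}\Zc_r^t\,\de W_r$ is not a forward martingale, so the split is needed before Doob applies --- your write-up glosses over this, but it only affects the constant).

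There is, however, one genuine logical problem: you establish $\Y_{t+h}\in L^2$ by invoking Proposition~\ref{prop:Majorant_FixpointScheme}. That proposition is proved via Lemma~\ref{lemma:ConditionalMonotinicityFixedPointScheme}, whose proof in turn uses the single-step interpolation $(\Yc_s)_{s\in[t,t+h]}\in\mathcal{S}^2$ --- i.e., the very lemma you are proving. As written, your first step is circular. The repair is immediate and is what the paper does: the fixed-point construction of Section~\ref{sec:Existence} already places $\Y^*$ in $\mathcal{V}(\StoppDiscrete)$, whose components lie in $L^2(\F_t)$ by definition of that space, and $\Y_t=\1_{\{t\geq\StoppDiscrete\}}\bar{\xi}+\Y_t^*$ with $\bar{\xi}\in L^4\subset L^2$; hence $\Y_{t+h}\in L^2$ without any majorant. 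With that substitution the rest of your argument --- including the consistency check between the local representation and the global definition \eqref{def:YcAdapted}, in particular on $\{t\geq\StoppDiscrete\}$, which is actually spelled out more explicitly than in the paper's one-line ``by setting'' --- goes through.
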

\begin{proof}
    The construction in Section~\ref{sec:Existence} yields $\Y_t\in L^2$ for all $t\in\unboundedTimeGrid$. 
    Now fix $t\in\unboundedTimeGrid$. We apply the martingale representation theorem (cf. \eqref{def:MartingaleRepresentation_ForEach_t}) to $\Y_{t+h}$ and obtain a unique $\Zc^t\in H^2$ such that
    \begin{equation}
        \Y_{t+h} = \E_t \big[ \Y_{t+h} \big] + \int_{t}^{t+h} \Z_r^t \de W_r
    \end{equation}
    With this we can establish \eqref{eqn:lemmaSingleStepInterpolation_ExplicitRepresentation} by setting
    \begin{equation}
        \Yc_s \defined \Y_{t+h} + (t+h - s) \fd{\unboundedTimeGrid} (t, \Y_t) - \int_{s}^{t+h} \Zc_r^t \de W_r \quad\text{for } s\in[t,t+h]
    \end{equation}
    For the second part; Since $\Y_t, \Y_{t+h} \in L^2$ we have to consider the stochastic integral. We use the Burkholder-Davis-Gundy inequality together with $\Zc^t\in H^2$ and the It\=o isometry to compute
    \begin{align}
        \E \bigg[ \sup_{s\in[t,t+h]} \Big| \int_{s}^{t+h} \Zc_r^t \de W_r \Big|^2 \bigg] & 
        \leq 2 \E \bigg[ \Big| \int_{t}^{t+h} \Zc_r^t \de W_r \Big|^2 \bigg] + 2 \E \bigg[ \sup_{s\in[t,t+h]} \Big| \int_{t}^{s} \Zc_r^t \de W_r \Big|^2 \bigg] \\
        & \leq 2 (1+C_2) \E \Big[ \int_{t}^{t+h} \big| \Zc_r^t \big\|^2 \de r \Big] < \infty. \qedhere
    \end{align}
\end{proof}
\begin{lemma}\label{lemma:ConditionalMonotinicityFixedPointScheme}   
    Assume \hyperref[stepsize:assumption]{(h)}, \hyperref[exp:assumption]{(R-q)} with $q=4$, \hyperref[discrete_f:assumption]{($\unboundedTimeGrid$-F)} and \hyperref[discrete_xi:assumption]{($\unboundedTimeGrid$-T)} (both with $p=4$). 
    Then
    \begin{equation}
        \big| \Y_{t}\big|^2 \leq \frac{1}{1 - 3\lipschitz{f} h} \E_{t}\big[ \Y_{t+h}^2\big] + \1_{\{t < \StoppDiscrete\}} \frac{h}{(1-3\lipschitz{f}h)\lipschitz{f}} \fd{\unboundedTimeGrid} (t,0)^2 \quad\text{for all } t\in\unboundedTimeGrid. \closeEqn
    \end{equation}
\end{lemma}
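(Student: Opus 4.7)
The plan is to split on whether $t \geq \StoppDiscrete$ or $t < \StoppDiscrete$ and in each case derive the one-step bound directly from the defining identity \eqref{def:FixpointScheme}.

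On $\{t \geq \StoppDiscrete\}$ the scheme gives $\Y_t = \bar{\xi} = \Y_{t+h}$, so by the conditional Jensen inequality $\Y_t^2 = \E_t[\Y_{t+h}^2] - \E_t[\Y_{t+h}^2] + \Y_{t+h}^2 \leq \E_t[\Y_{t+h}^2]$; since $\tfrac{1}{1-3\lipschitz{f}h} \geq 1$ by Assumption~\hyperref[stepsize:assumption]{(h)} and the indicator on the right vanishes, the claim holds trivially.

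On $\{t < \StoppDiscrete\}$ the defining relation rewrites as $\Y_t - h\fd{\unboundedTimeGrid}(t,\Y_t) = \E_t[\Y_{t+h}]$. Squaring both sides and applying conditional Jensen to the right-hand side yields
\begin{equation*}
    \Y_t^2 - 2h\, \Y_t \fd{\unboundedTimeGrid}(t,\Y_t) + h^2 \fd{\unboundedTimeGrid}(t,\Y_t)^2 \leq \E_t[\Y_{t+h}^2].
\end{equation*}
Discarding the nonnegative square term and splitting $\fd{\unboundedTimeGrid}(t,\Y_t)$ via its Lipschitz property (Assumption~\hyperref[discrete_f:assumption]{($\unboundedTimeGrid$-F)}), followed by the weighted Young inequality $2h|\Y_t||\fd{\unboundedTimeGrid}(t,0)| \leq h\lipschitz{f}\Y_t^2 + \tfrac{h}{\lipschitz{f}}\fd{\unboundedTimeGrid}(t,0)^2$, gives
\begin{equation*}
    2h\, \Y_t \fd{\unboundedTimeGrid}(t,\Y_t) \leq 3h\lipschitz{f}\, \Y_t^2 + \tfrac{h}{\lipschitz{f}}\fd{\unboundedTimeGrid}(t,0)^2.
\end{equation*}
Rearranging $(1 - 3\lipschitz{f}h)\Y_t^2 \leq \E_t[\Y_{t+h}^2] + \tfrac{h}{\lipschitz{f}}\fd{\unboundedTimeGrid}(t,0)^2$ and dividing through by $1 - 3\lipschitz{f}h > 0$ (which is positive by Assumption~\hyperref[stepsize:assumption]{(h)}, since $h_0 < \tfrac{1}{12\lipschitz{f}}$) produces exactly the asserted bound.

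There is no serious obstacle here; the only point requiring care is the choice of Young coefficient so that the self-dependence $\lipschitz{f}|\Y_t|$ inside $|\fd{\unboundedTimeGrid}(t,\Y_t)|$ can be absorbed into $\Y_t^2$ with a total coefficient $3\lipschitz{f}h$ small enough (by the stepsize condition) that the resulting prefactor $1 - 3\lipschitz{f}h$ is strictly positive, matching the weight used in the definition of $\varphi$ and $\|\cdot\|_{\StoppDiscrete}$ earlier in the section.
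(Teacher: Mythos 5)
Your proof is correct, and it takes a genuinely different and more elementary route than the paper. The paper proves this lemma by introducing the one-step adapted interpolation $(\Yc_s)_{s\in[t,t+h]}$ from Lemma~\ref{lemma:Single_Step_Interpolation}, applying It\=o's formula to $\Yc^2$, using the martingale property of $\Yc\,\Zc^t\de W$ and a conditional Fubini argument, and only then performing essentially the same Lipschitz/Young absorption that you do. You instead square the defining identity $\Y_t - h\fd{\unboundedTimeGrid}(t,\Y_t) = \E_t[\Y_{t+h}]$ on $\{t<\StoppDiscrete\}$, discard the nonnegative term $h^2\fd{\unboundedTimeGrid}(t,\Y_t)^2$, apply conditional Jensen, and absorb $2h\Y_t\fd{\unboundedTimeGrid}(t,\Y_t)\leq 3\lipschitz{f}h\Y_t^2+\tfrac{h}{\lipschitz{f}}\fd{\unboundedTimeGrid}(t,0)^2$; dividing by $1-3\lipschitz{f}h>0$ gives exactly the stated bound, and the case $\{t\geq\StoppDiscrete\}$ is trivial since there $\1_{\{t\geq\StoppDiscrete\}}\Y_{t+h}=\1_{\{t\geq\StoppDiscrete\}}\bar{\xi}$ is $\F_t$-measurable, whence $\1_{\{t\geq\StoppDiscrete\}}\Y_t^2=\1_{\{t\geq\StoppDiscrete\}}\E_t[\Y_{t+h}^2]$ (your displayed chain for this case is garbled as written, but the underlying fact is right). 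Note that the two routes end up with slightly different intermediate recursions — the paper obtains $(1-\lipschitz{f}h)\Y_t^2\leq(1+2\lipschitz{f}h)\E_t[\Y_{t+h}^2]+\tfrac{h}{\lipschitz{f}}\fd{\unboundedTimeGrid}(t,0)^2$ and then uses $\tfrac{1+2a}{1-a}\leq\tfrac{1}{1-3a}$, while you put the full $3\lipschitz{f}h$ on the left — but both land on the same constant. What your shortcut loses is the byproduct of the It\=o computation: the paper's estimate \eqref{eqn:TheGronwallBound_BothWays} simultaneously controls $\E_t\big[\int_{t\wedge\StoppDiscrete}^{(t+h)\wedge\StoppDiscrete}\|\Zc_s^t\|^2\de s\big]$, which is recorded in Remark~\ref{remark:Majorant_MartRep_CompleteStep} and is needed later to show $\Zc^\phi\in\mathcal{H}^2$ in Proposition~\ref{prop:InterpolationRegularity_q_Version}. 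So for the lemma as stated your argument is a clean simplification, but the It\=o-based derivation would still be needed elsewhere in the paper.
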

\begin{proof}
    Throughout the proof let $t\in\unboundedTimeGrid$ be fixed. 
    We use the interpolation $(\Yc_s)_{s\in[t,t+h]}\in\mathcal{S}^2$ (see Lemma~\ref{lemma:Single_Step_Interpolation}) for a single step on the discrete-time grid $\unboundedTimeGrid$. By It\=o's formula we can express 
    \begin{equation}
		\Y_{t+h}^2-\Y_{t}^2 = \int_{t\wedge\StoppDiscrete}^{(t+h)\wedge\StoppDiscrete} 2 \Yc_{s} \fd{\unboundedTimeGrid} (t,\Y_t) + \big\|\Zc_s^t \big\|^2 \de s
		- 2 \int_{t\wedge\StoppDiscrete}^{(t+h)\wedge\StoppDiscrete} \Yc_{s} \Zc_s^t \de W_s
	\end{equation}
	and since $\Yc \Zc^{t}\de W$ is a martingale (on $[t,t+h]$) we obtain
    \begin{equation}
        \E_{t}\big[ \Y_{t+h}^2\big] - \Y_{t}^2 = \E_t \Big[ \int_{t\wedge\StoppDiscrete}^{(t+h)\wedge\StoppDiscrete} 2 \Yc_{s} \fd{\unboundedTimeGrid} (t,\Y_{t}) + \big\|\Zc_s^t \big\|^2 \de s \Big] .
    \end{equation}
    Using the definition of $\Yc$ we can express
    \begin{align}
        \int_{t\wedge\StoppDiscrete}^{(t+h)\wedge\StoppDiscrete} \Yc_{s} \fd{\unboundedTimeGrid} (t,\Y_{t})\de s & = \big( (t+h)\wedge\StoppDiscrete - t\wedge\StoppDiscrete\big)\ \Y_{t+h} \fd{\unboundedTimeGrid} (t,\Y_{t}) \\
        & \hspace*{1.0cm} + \int_{t\wedge\StoppDiscrete}^{(t+h)\wedge\StoppDiscrete} \big( (t+h)\wedge\StoppDiscrete - s\big) \de s\ \fd{\unboundedTimeGrid} (t,\Y_{t})^2 \\
        & \hspace*{2.0cm} - \int_{t\wedge\StoppDiscrete}^{(t+h)\wedge\StoppDiscrete} \int_s^{t+h} \Zc_{r}^{t} \de W_r \de s\ \fd{\unboundedTimeGrid} (t,\Y_{t}).
    \end{align}
    Since on $\{ t \geq \StoppDiscrete \}$ we have $\Zc^{t} = 0$, we use a conditional version of Fubini's theorem to compute
    \begin{align}
        \E_t \Big[ \int_{t\wedge\StoppDiscrete}^{(t+h)\wedge\StoppDiscrete} \int_s^{t+h} \Zc_{r}^{t} \de W_r \de s \Big] 
        & = \int_{t}^{(t+h)} \E_t \bigg[ \E_s \Big[ \int_s^{t+h} \Zc_{r}^{t} \de W_r \Big] \bigg] \de s = 0.
    \end{align}
    Thus combining all yields the identity 
    \begin{multline}
        \Y_{t}^2 + 2 \E_t \Big[ \int_{t\wedge\StoppDiscrete}^{(t+h)\wedge\StoppDiscrete} \big( (t+h)\wedge\StoppDiscrete - s\big) \de s\ \fd{\unboundedTimeGrid} (t,\Y_{t})^2 \Big] + \E_t \Big[ \int_{t\wedge\StoppDiscrete}^{(t+h)\wedge\StoppDiscrete} \big\|\Zc_s^t \big\|^2 \de s \Big] \\
        = \E_{t}\big[ \Y_{t+h}^2\big] - 2 \big( (t+h)\wedge\StoppDiscrete - t\wedge\StoppDiscrete\big)\ \E_{t} \big[ \Y_{t+h} \big] \fd{\unboundedTimeGrid} (t,\Y_{t})
    \end{multline}
    and particularly 
    \begin{equation}\label{proof:ConditionalBound_EQ1}
        \Y_{t}^2 + \E_t \Big[ \int_{t\wedge\StoppDiscrete}^{(t+h)\wedge\StoppDiscrete} \big\|\Zc_s^t \big\|^2 \de s \Big] \\
        \leq \E_{t}\big[ \Y_{t+h}^2\big] - 2 \big( (t+h)\wedge\StoppDiscrete - t\wedge\StoppDiscrete\big)\ \E_{t} \big[ \Y_{t+h} \big] \fd{\unboundedTimeGrid} (t,\Y_{t}) .
    \end{equation}
    With the Lipschitz continuity of $\fd{\unboundedTimeGrid}$, the fact that $2ab\leq 2\lipschitz{f} a^2 + \tfrac{1}{2\lipschitz{f}} b^2$ for any $a,b\geq 0$ and Jensen's inequality on $\{t < \StoppDiscrete\}$ we have
    \begin{align}
        2 h \Big| \ \E_{t} \big[ \Y_{t+h} \big] \fd{\unboundedTimeGrid} (t,\Y_{t}) \Big| & 
        \leq 2\lipschitz{f} h \E_{t} \big[ \Y_{t+h}^2 \big] + \lipschitz{f}h \Y_t^2 + \tfrac{1}{\lipschitz{f}} h \fd{\unboundedTimeGrid} \big( t, 0 \big)^2
    \end{align}
    Use this estimate to further bound \eqref{proof:ConditionalBound_EQ1} and rearrange to obtain
    \begin{multline}\label{eqn:TheGronwallBound_BothWays}
        \big( 1 - \1_{\{t < \StoppDiscrete\}} \lipschitz{f} h \big) \Y_{t}^2 
        + \E_t \Big[ \int_{t\wedge\StoppDiscrete}^{(t+h)\wedge\StoppDiscrete} \big\|\Zc_s^t \big\|^2 \de s \Big] \\
        \leq \big(1 + \1_{\{t < \StoppDiscrete\}} 2\lipschitz{f} h \big) \E_{t}\big[ \Y_{t+h}^2\big] + \1_{\{t < \StoppDiscrete\}} \tfrac{h}{\lipschitz{f}} \fd{\unboundedTimeGrid} (t,0)^2
    \end{multline}
    where the factor on the left is positive since by Assumption~\hyperref[stepsize:assumption]{(h)} we have $h_0 < \lipschitz{f}$. Furthermore by Assumption~\hyperref[stepsize:assumption]{(h)} we have $\lipschitz{f} h_0 < \tfrac{1}{3}$;
    thus with an elementary calculation\footnote{
        For $a>0$ it holds that $(1+2a)(1-3a) = 1 - a -6a^2 \leq 1-a$. Thus whenever $a < \tfrac{1}{3}$ it holds that $\tfrac{1+2a}{1-a} \leq \tfrac{1}{1-3a}$. 
    } 
    we obtain
    \[
        \frac{1 + \1_{\{t < \StoppDiscrete\}} 2\lipschitz{f} h}{1 - \1_{\{t < \StoppDiscrete\}} \lipschitz{f} h} \leq \frac{1 + 2\lipschitz{f} h}{1 - \lipschitz{f} h} \leq \frac{1}{1-3\lipschitz{f}h}. \qedhere
    \]
\end{proof}
\begin{remark}\label{remark:Majorant_MartRep_CompleteStep}
	The estimate \eqref{eqn:TheGronwallBound_BothWays} of the preceding proof particularly yields that
	\begin{equation}
		\E_t \Big[ \int_{t\wedge\StoppDiscrete}^{(t+h)\wedge\StoppDiscrete} \big\| \Zc_s^{t} \big\|^2 \de s \Big] \leq (1+2\lipschitz{f}) \E_t \big[ \Y_{t+h}^2 \big] + \tfrac{h}{\lipschitz{f}} \fd{\unboundedTimeGrid} (t,0)^2 \quad\text{for all } t\in\unboundedTimeGrid 
	\end{equation}
	Combined with Proposition~\ref{prop:Majorant_FixpointScheme} (under \hyperref[stepsize:assumption]{(h)}, \hyperref[exp:assumption]{(R-q)} with $q=4$, \hyperref[discrete_f:assumption]{($\unboundedTimeGrid$-F)} and \hyperref[discrete_xi:assumption]{($\unboundedTimeGrid$-T)}) it holds that 
	\begin{multline}
		\E_t \Big[ \int_{t\wedge\StoppDiscrete}^{(t+h)\wedge\StoppDiscrete} \big\| \Zc_s^{t} \big\|^2 \de s \Big] \leq 2 \E_t \Big[ \exp\big( 4\lipschitz{f} (\StoppDiscrete-t) \big) \Bar{\xi}^2 \\
		+ \tfrac{1}{ (1 - 3\lipschitz{f}h) \lipschitz{f} } h \sum_{\ell\in\unboundedTimeGrid\cap[t,\StoppDiscrete)} \exp\big( 4\lipschitz{f} (\StoppDiscrete-\ell) \big) \fd{\unboundedTimeGrid} (\ell,0)^2 \Big] + \tfrac{h}{\lipschitz{f}} \fd{\unboundedTimeGrid} (t,0)^2. \closeEqn
	\end{multline}
\end{remark}
\begin{proposition}[Uniform $\mathcal{S}^q$-Property of $\Y$]\label{prop:FixPointScheme_Lq_Integrable}
    Assume \hyperref[stepsize:assumption]{(h)}, \hyperref[exp:assumption]{(R-q)} with $q\geq 4$, \hyperref[discrete_f:assumption]{($\unboundedTimeGrid$-F)} with $p>2q$ and \hyperref[discrete_xi:assumption]{($\unboundedTimeGrid$-T)} with $p=2q$. 
    Then it holds that
    \begin{equation}
        \sup_{h\in (0,h_0] } \sup_{t\in\unboundedTimeGrid} |\Y_t|^q \in L^1 \closeEqn
    \end{equation}
\end{proposition}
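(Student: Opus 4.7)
The plan is to combine the pathwise conditional bound of Proposition~\ref{prop:Majorant_FixpointScheme} with Doob's $L^{q/2}$-maximal inequality applied to a stepsize-uniform majorant. First, I would coarsen the estimate in Proposition~\ref{prop:Majorant_FixpointScheme} to a form independent of $t$: using $\exp(4\lipschitz{f}(\StoppDiscrete-\ell))\leq\exp(4\lipschitz{f}\StoppDiscrete)$ for $\ell\geq 0$, the trivial count $\sum_{\ell\in\unboundedTimeGrid\cap[0,\StoppDiscrete)} h\leq\StoppDiscrete$, and Assumption~\hyperref[stepsize:assumption]{(h)} to handle the prefactor $(1-3\lipschitz{f}h)^{-1}$, one obtains, uniformly in $t\in\unboundedTimeGrid$,
\[
    \Y_t^2 \leq \E_t\bigl[\bar M_h\bigr], \qquad
    \bar M_h \defined \exp(4\lipschitz{f}\StoppDiscrete)\Bigl(\bar\xi^2 + C\,\StoppDiscrete\sup_{\ell\in\unboundedTimeGrid\cap[0,\StoppDiscrete)}\fd{\unboundedTimeGrid}(\ell,0)^2\Bigr),
\]
where $C$ depends only on $\lipschitz{f}$.

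Next I would replace $\bar M_h$ by an $h$-independent upper bound $\bar M$ by taking pathwise suprema of each factor over admissible stepsizes, using the stepsize-uniform integrability built into Assumptions~\hyperref[exp:assumption]{(R-q)}, \hyperref[discrete_f:assumption]{($\unboundedTimeGrid$-F)} and~\hyperref[discrete_xi:assumption]{($\unboundedTimeGrid$-T)}. Since then $\Y_t(h)^2\leq\E_t[\bar M]$ holds simultaneously for all $(h,t)$, Doob's $L^{q/2}$-maximal inequality (valid because $q/2\geq 2>1$) applied to the closed martingale $t\mapsto\E_t[\bar M]$ yields
\[
    \E\Bigl[\sup_{h\in(0,h_0]}\sup_{t\in\unboundedTimeGrid}|\Y_t|^q\Bigr]
    \leq \E\Bigl[\sup_{t\in\unboundedTimeGrid}\E_t[\bar M]^{q/2}\Bigr]
    \leq C_q\,\E\bigl[\bar M^{q/2}\bigr],
\]
reducing the claim to $\E[\bar M^{q/2}]<\infty$.

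To verify the latter, let $\zeta,\Xi,F$ denote the $h$-suprema of $\StoppDiscrete$, $|\bar\xi|$ and $\sup_{\ell\leq\StoppDiscrete}|\fd{\unboundedTimeGrid}(\ell,0)|$, respectively. An elementary bound gives $\bar M^{q/2}\leq C'\bigl(\exp(2q\lipschitz{f}\zeta)\Xi^q + \exp(2q\lipschitz{f}\zeta)\,\zeta^{q/2}F^q\bigr)$. For the first summand, Cauchy--Schwarz together with~\hyperref[exp:assumption]{(R-q)} ($\rho>4q\lipschitz{f}$ ensures $\E[\exp(4q\lipschitz{f}\zeta)]<\infty$) and~\hyperref[discrete_xi:assumption]{($\unboundedTimeGrid$-T)} (giving $\Xi\in L^{2q}$ since $p=2q$) suffices. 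For the second summand, I would split via a three-fold Hölder with exponents $a,b,c>1$ satisfying $1/a+1/b+1/c=1$, $2q\lipschitz{f}a<\rho$, and $qc\leq p$. Such a triple exists precisely because the strict inequalities $\rho>4q\lipschitz{f}$ and $p>2q$ force $1/a+1/c<1$, leaving room to choose $b<\infty$; the remaining polynomial moment $\E[\zeta^{qb/2}]<\infty$ is then supplied by Lemma~\ref{lemma:A_exponentialMoment_ForAll_PowerMoments_SUPREMUM_VERSION} from the exponential moment in~\hyperref[exp:assumption]{(R-q)}.

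The main obstacle is exactly this last balancing act. Assumption~\hyperref[discrete_xi:assumption]{($\unboundedTimeGrid$-T)} offers no slack ($p=2q$), so the full exponential weight must be paid to the terminal condition via Cauchy--Schwarz in the first summand; correspondingly, the strict inequalities $p>2q$ in~\hyperref[discrete_f:assumption]{($\unboundedTimeGrid$-F)} and $\rho>4q\lipschitz{f}$ in~\hyperref[exp:assumption]{(R-q)} are precisely the slack needed to absorb the additional polynomial factor $\StoppDiscrete^{q/2}$ that the generator sum in the majorant contributes.
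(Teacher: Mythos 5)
Your proposal is correct and follows essentially the same route as the paper: the $t$-uniform coarsening of Proposition~\ref{prop:Majorant_FixpointScheme}, the stepsize-uniform majorant, Doob's $L^{q/2}$-inequality for the closed martingale, and a Cauchy--Schwarz/H\"older split that spends the slack $\rho>4q\lipschitz{f}$ and $p>2q$ exactly where you say (the paper merely orders the H\"older applications differently, peeling off the full exponential first and then using a two-fold H\"older with $r=p/(2q)$ on $\StoppDiscrete^qF^{2q}$). The only point handled more explicitly in the paper is the event $\{t\geq\StoppDiscrete\}$, where the majorant of Proposition~\ref{prop:Majorant_FixpointScheme} does not directly apply and one adds back $\sup_h|\bar\xi|^q\in L^1$ separately; your blanket claim that $\Y_t^2\leq\E_t[\bar M]$ for all $(h,t)$ is still salvageable there since $\1_{\{t\geq\StoppDiscrete\}}\bar\xi^2$ is $\F_t$-measurable and dominated by $\E_t[\bar M]$.
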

\begin{proof}
    Let $h\in (0,h_0]$ be fixed. 
    There is a constant $K>0$ (not depending on $h$) such that the random variable in the bound of Proposition~\ref{prop:Majorant_FixpointScheme} satisfies
    \begin{multline}\label{eqn:Majorant_2_wiht_CONSTANT_K}
        \exp \big( 4\lipschitz{f} (\StoppDiscrete -t)  \big) \Bar{\xi}^2 + \tfrac{1}{ (1-3\lipschitz{f}h)\lipschitz{f} } \sum_{\ell\in\unboundedTimeGrid\cap[t,\StoppDiscrete)} h \exp \big( 4\lipschitz{f} (\StoppDiscrete -\ell ) \big) \fd{\unboundedTimeGrid} (\ell,0)^2 \\
        \leq \exp \big( 4\lipschitz{f} \StoppDiscrete \big) \Big( \Bar{\xi}^2 + K \sum_{\ell\in\unboundedTimeGrid\cap[0,\StoppDiscrete)} h \fd{\unboundedTimeGrid} (\ell,0)^2 \Big) \quad\text{for all } t\in\unboundedTimeGrid
    \end{multline}
    and thus we obtain the majorant in terms of a closed martingale (see also \eqref{def:ClosedMartingal_Majorant_FixpointScheme})   
    \begin{equation}
        \1_{ \{t<\StoppDiscrete\} }\Y_t^2 \leq \E_t \bigg[ \exp( 4\lipschitz{f} \StoppDiscrete) \Big( \Bar{\xi}^2 + K \sum_{\ell\in\unboundedTimeGrid\cap[0,\StoppDiscrete)} h \fd{\unboundedTimeGrid} (\ell, 0)^2 \Big)\bigg]\quad\text{for all } t\in\unboundedTimeGrid 
    \end{equation}
    Recall $\Y, \Bar{\xi}, \fd{\unboundedTimeGrid}$ and $\StoppDiscrete$ depend on the above fixed stepsize $h$.\footnote{
        For completeness notice $t = t(h) \in \unboundedTimeGrid$. However the contribution to the bound is purely through the conditioning and negligible by allowing the conditional expectation on the continuous time interval $[0,\infty)$.
    }
    For the sake of the subsequent argument we restate the preceding bound, now emphasizing the dependence on the stepsize;
    \begin{align}
        \1_{ \{t<\StoppDiscrete(h) \} }\Y_t(h)^2 
        & \leq \E_t \bigg[ \exp\big( 4\lipschitz{f} \StoppDiscrete(h) \big) \Big( \Bar{\xi}(h)^2 + K \!\! \sum_{\ell\in\unboundedTimeGrid\cap[0,\StoppDiscrete(h) )} h \fd{h} (\ell,0)^2 \Big)\bigg] \\
        & \leq \E_t \Big[ \sup_{ \Delta \in (0,h_0] } \exp\big( 4\lipschitz{f} \StoppDiscrete(\Delta) \big) \big( \Bar{\xi}(\Delta)^2 + K\StoppDiscrete(\Delta) \!\! \sup_{s\in\unboundedTimeGrid\cap[0,\StoppDiscrete(\Delta))}\!  \fd{\Delta} (s,0)^2 \big) \Big]
    \end{align}
    Using Doob's $L^\frac{q}{2}$-inequality we obtain
    \begin{align}\label{eqn:FixPointScheme_Lq_Integrable_EQ1}
        & \E \Big[ \sup_{h\in(0,h_0]} \sup_{t\in\unboundedTimeGrid} \1_{ \{t<\StoppDiscrete\} } |\Y_t(h)|^q \Big] \notag \\
        & \hspace*{0.5cm} \leq \E \bigg[ \sup_{t\geq 0} \E_t \Big[ \sup_{ \Delta \in (0,h_0] } \exp\big( 4\lipschitz{f} \StoppDiscrete(\Delta) \big) \big( \Bar{\xi}(\Delta)^2 + K\StoppDiscrete(\Delta) \!\! \sup_{s\in\unboundedTimeGrid\cap[0,\StoppDiscrete(\Delta))}  \fd{\Delta} (s,0)^2 \big) \Big]^\frac{q}{2} \bigg] \notag \\
        & \hspace*{0.5cm} \leq C_{\frac{q}{2}}\ \E \Big[ \sup_{ \Delta \in (0,h_0] } \exp\big( 2q \lipschitz{f} \StoppDiscrete(\Delta) \big) \big( \Bar{\xi}(\Delta)^2 + K\StoppDiscrete(\Delta) \!\! \sup_{s\in\unboundedTimeGrid\cap[0,\StoppDiscrete(\Delta))}  \fd{\Delta} (s,0)^2 \big)^\frac{q}{2} \Big] 
    \end{align}
    At this point we use the Cauchy-Schwarz inequality
    to conclude that
    \begin{multline}
        \E \Big[ \sup_{ \Delta \in (0,h_0] } \exp\big( 2q \lipschitz{f} \StoppDiscrete(\Delta) \big) \big( \Bar{\xi}(\Delta)^2 + K\StoppDiscrete(\Delta) \!\! \sup_{s\in\unboundedTimeGrid\cap[0,\StoppDiscrete(\Delta))}  \fd{\Delta} (s,0)^2 \big)^\frac{q}{2} \Big]^2 \\
        \leq \E \Big[ \sup_{ \Delta \in (0,h_0] } \exp\big( 4q \lipschitz{f} \StoppDiscrete(\Delta) \big) \Big]
        \E \Big[ \sup_{ \Delta \in (0,h_0] } \big( \Bar{\xi}(\Delta)^2 + K\StoppDiscrete(\Delta) \!\! \sup_{s\in\unboundedTimeGrid\cap[0,\StoppDiscrete(\Delta))}  \fd{\Delta} (s,0)^2 \big)^q \Big]
    \end{multline}
    where the first factor is finite by Assumption~\hyperref[exp:assumption]{(R-q)}. With an elementary bound we compute
    \begin{multline}
        \sup_{ \Delta \in (0,h_0] } \big( \Bar{\xi}(\Delta)^2 + K\StoppDiscrete(\Delta) \!\! \sup_{s\in\unboundedTimeGrid\cap[0,\StoppDiscrete(\Delta))}  \fd{\Delta} (s,0)^2 \big)^q \\
        \leq 2^{q-1} \sup_{ \Delta \in (0,h_0] } \big|\Bar{\xi}(\Delta)\big|^{2q} 
        + 2^{q-1} K^q \sup_{ \Delta \in (0,h_0] } \StoppDiscrete(\Delta)^q \!\! \sup_{s\in\unboundedTimeGrid\cap[0,\StoppDiscrete(\Delta))}  \big|\fd{\Delta}(s,0)\big|^{2q}
    \end{multline}
    By Assumption \hyperref[discrete_xi:assumption]{($\unboundedTimeGrid$-T)} we have
    $ \sup_{ \Delta \in (0,h_0] } |\Bar{\xi}(\Delta)|^{2q} \in L^1 $
    and applying Hölder's inequality to $r \defined \tfrac{p}{2q} > 1$ and $\Tilde{r}\defined \tfrac{r}{r-1}$ yields
    \begin{multline}
        \E \Big[ \sup_{ \Delta \in (0,h_0] } \StoppDiscrete(\Delta)^q \!\! \sup_{s\in\unboundedTimeGrid\cap[0,\StoppDiscrete(\Delta))}  \big|\fd{\Delta} (s,0)\big|^{2q} \Big] \\
        \leq \E \Big[ \sup_{ \Delta \in (0,h_0] } \StoppDiscrete(\Delta)^{q\Tilde{r}} \Big]^\frac{1}{\Tilde{r}} 
        \E \Big[ \sup_{ \Delta \in (0,h_0] } \sup_{s\in\unboundedTimeGrid\cap[0,\StoppDiscrete(\Delta))}  \big|\fd{\Delta} (s,0)\big|^{p} \Big]^\frac{1}{r} < \infty
    \end{multline}
    where the $q\Tilde{r}$-moment of $\StoppDiscrete$ is finite due the existence of a positive exponential moment (see Lemma~\ref{lemma:A_exponentialMoment_ForAll_PowerMoments_SUPREMUM_VERSION}) and the second factor is finite by Assumption \hyperref[discrete_f:assumption]{($\unboundedTimeGrid$-F)}. Thus \eqref{eqn:FixPointScheme_Lq_Integrable_EQ1} is indeed finite and invoking once more Assumption \hyperref[discrete_xi:assumption]{($\unboundedTimeGrid$-T)} we finally obtain the claim;
    \[
        \tfrac{1}{2^{q}} \E \Big[ \sup_{ h \in (0,h_0] } \sup_{t\in\unboundedTimeGrid} \big|\Y_t(h)\big|^q \Big]\! \leq\! \E \Big[ \sup_{ h \in (0,h_0] } \sup_{t\in\unboundedTimeGrid} \1_{ \{t<\StoppDiscrete\} } \big|\Y_t(h)\big|^q \Big]\! +\! \E \Big[ \sup_{ h \in (0,h_0] } \big| \Bar{\xi}(h)\big|^q \big]\! <\! \infty. \qedhere
    \]
\end{proof}
\section{Regularity of the Adapted Interpolation}\label{Sec:RegularityInterpolation}

This section establishes the properties of the continuous adapted interpolation process $\Yc$ (see \eqref{def:YcAdapted}) as presented in Proposition~\ref{prop:InterpolationRegularity}. 
We derive Proposition~\ref{prop:InterpolationRegularity} (where $q=4$) by establishing a result where Assumption~\hyperref[exp:assumption]{(R-q)} is imposed for a general $q\geq 4$. 

\begin{proposition}\label{prop:InterpolationRegularity_q_Version}
    Assume \hyperref[stepsize:assumption]{(h)}, \hyperref[exp:assumption]{(R-q)} with $q\geq 4$, \hyperref[discrete_f:assumption]{($\unboundedTimeGrid$-F)} with $p>2q$ and \hyperref[discrete_xi:assumption]{($\unboundedTimeGrid$-T)} with $p=2q$. 
    Then 
    \begin{equation}
        \sup_{ h\in (0,h_0] } \sup_{t\geq 0} |\Yc_{t}|^q \in L^1
    \end{equation}
    Moreover (for each $h\in (0,h_0]$) the concatenation of martingale integrands $\Zc^\phi$ is in $\mathcal{H}^{2}$. \close
\end{proposition}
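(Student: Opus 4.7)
The plan is to mimic the strategy used for the discrete scheme $\Y$ in Proposition~\ref{prop:FixPointScheme_Lq_Integrable}, now applied to the continuously-interpolated process $\Yc$ from \eqref{def:YcAdapted}. Two ingredients are needed: (i) $\Zc^{\phi}\in\mathcal{H}^{2}$ so that the stochastic integral in \eqref{def:YcAdapted} is a genuine martingale; (ii) a closed-martingale majorant of $|\Yc_t|^{2}$ that can be controlled in $L^{q/2}$.

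For (i) I would sum the single-step bound of Remark~\ref{remark:Majorant_MartRep_CompleteStep} across $\ell\in\unboundedTimeGrid$ (using monotone convergence, since $\sum_\ell \1_{\{\ell<\StoppDiscrete\}}\int_{\ell}^{(\ell+h)\wedge\StoppDiscrete}\|\Zc^{\ell}\|^{2}\de s=\int_{0}^{\StoppDiscrete}\|\Zc^{\phi(s)}\|^{2}\de s$) and verify finiteness of the resulting majorant $\E[\exp(4\lipschitz{f}\StoppDiscrete)(\bar\xi^{2}+C\StoppDiscrete\sup_{\ell}\fd_h(\phi(\ell),0)^{2})]+\E[\StoppDiscrete\sup|\fd_h(\cdot,0)|^{2}]$ via Cauchy-Schwarz using \hyperref[exp:assumption]{(R-q)}, \hyperref[discrete_f:assumption]{($\unboundedTimeGrid$-F)} and \hyperref[discrete_xi:assumption]{($\unboundedTimeGrid$-T)}, exactly as in the proof of Proposition~\ref{prop:FixPointScheme_Lq_Integrable}. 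For (ii) I would apply It\=o's formula to $|\Yc_t|^{2}$ using \eqref{def:YcAdapted}, take $\E_t$ (now legitimate thanks to (i)), and bound the cross term $2\Yc_s\fd_{\unboundedTimeGrid}(\phi(s),\Y_{\phi(s)})\leq\lipschitz{f}\Yc_s^{2}+\lipschitz{f}^{-1}\fd_{\unboundedTimeGrid}(\phi(s),\Y_{\phi(s)})^{2}$. A stochastic Gronwall lemma (as in Proposition~\ref{prop:AdaptedGronwall}) then yields
\[
|\Yc_t|^{2}\leq \E_t\Big[\exp(\lipschitz{f}\StoppDiscrete)\Big(|\bar\xi|^{2}+\lipschitz{f}^{-1}\int_{0}^{\StoppDiscrete}\fd_{\unboundedTimeGrid}(\phi(s),\Y_{\phi(s)})^{2}\de s\Big)\Big].
\]

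Since $q\geq 4$, Doob's $L^{q/2}$-inequality reduces the problem to estimating the $L^{q/2}$-norm of the random variable inside the conditional expectation. Using $\fd_{\unboundedTimeGrid}(\phi,\Y_{\phi})^{2}\leq 2\fd_{\unboundedTimeGrid}(\phi,0)^{2}+2\lipschitz{f}^{2}\Y_{\phi}^{2}$ together with the Jensen-type bound $(\int_{0}^{\StoppDiscrete}(\cdot)\de s)^{q/2}\leq\StoppDiscrete^{q/2-1}\int_{0}^{\StoppDiscrete}(\cdot)^{q/2}\de s$, the contributions from $\bar\xi$ and $\fd_{\unboundedTimeGrid}(\phi,0)$ are finite via Cauchy-Schwarz against \hyperref[exp:assumption]{(R-q)}, \hyperref[discrete_f:assumption]{($\unboundedTimeGrid$-F)} and \hyperref[discrete_xi:assumption]{($\unboundedTimeGrid$-T)} (same split as in the proof of Proposition~\ref{prop:FixPointScheme_Lq_Integrable}). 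The $\Y_{\phi}$-term is handled by invoking Proposition~\ref{prop:FixPointScheme_Lq_Integrable} to supply $\sup_{h\in(0,h_0]}\sup_{\ell\in\unboundedTimeGrid}|\Y_\ell|^{q}\in L^{1}$; the polynomial factors of $\StoppDiscrete$ produced by Jensen are absorbed into an only marginally enlarged exponential weight, which is admissible because \hyperref[exp:assumption]{(R-q)} provides the strict slack $\rho>4q\lipschitz{f}$. Uniformity in $h$ is obtained at the end by pulling $\sup_{h\in(0,h_0]}$ inside the conditional expectations by monotonicity, exactly as in Proposition~\ref{prop:FixPointScheme_Lq_Integrable}.

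The main obstacle is the last mentioned step: Proposition~\ref{prop:FixPointScheme_Lq_Integrable} at level $q$ delivers only $L^{q}$-integrability of $\sup_{\ell}|\Y_\ell|$ and no higher moment. Consequently, the splitting via Cauchy-Schwarz on a term of the form $\E[\exp(\tfrac{q}{2}\lipschitz{f}\StoppDiscrete)\StoppDiscrete^{q/2}\sup|\Y|^{q}]$ must be arranged so that $\sup|\Y|^{q}$ appears with power exactly one in the Hölder bound; the polynomial factor $\StoppDiscrete^{q/2}$ must be paired with the exponential term (dominating $\StoppDiscrete^{q/2}\leq C_\varepsilon\exp(\varepsilon\StoppDiscrete)$ for $\varepsilon$ small enough that $\tfrac{q}{2}\lipschitz{f}+\varepsilon$ is still below $\rho$). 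This is precisely the combinatorial balance the strict inequality $\rho>4q\lipschitz{f}$ affords, and it is the delicate point that needs careful bookkeeping throughout.
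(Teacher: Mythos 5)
Your part (i) (summing the single-step bound of Remark~\ref{remark:Majorant_MartRep_CompleteStep} to get $\Zc^{\phi}\in\mathcal{H}^{2}$) matches the paper's Step~3 and is fine. The problem is part (ii). First, applying It\=o's formula to $\Yc^2$ over the whole horizon $[t,\StoppDiscrete]$ and "taking $\E_t$" requires that $\int \Yc_s\Zc_s^{\phi(s)}\de W_s$ be a true martingale; knowing $\Zc^{\phi}\in\mathcal{H}^{2}$ is not enough for this — you need integrability of the product $\Yc\,\Zc^{\phi}$, hence a priori control of $\Yc$, which is exactly the conclusion you are proving. (A localization argument does not rescue this without a uniform-integrability input on $\Yc^2$, which is again circular.) The paper avoids this entirely by only ever invoking the \emph{single-step} martingale property from Lemma~\ref{lemma:Single_Step_Interpolation}, where $(\Yc_s)_{s\in[t,t+h]}\in\mathcal{S}^2$ is available from the explicit one-step representation.

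Second, and more fatally, your global Gronwall bound produces the term
\begin{equation}
  \E\Big[\exp\big(\tfrac{q}{2}\lipschitz{f}\StoppDiscrete\big)\,\StoppDiscrete^{q/2}\,\sup_{\ell\in\unboundedTimeGrid}|\Y_\ell|^{q}\Big],
\end{equation}
and you correctly observe that Proposition~\ref{prop:FixPointScheme_Lq_Integrable} supplies only $\sup_{h}\sup_{\ell}|\Y_\ell|^{q}\in L^{1}$. But then the "careful bookkeeping" you defer cannot be done: any H\"older split that keeps $\sup|\Y|^{q}$ at power exactly one forces the conjugate exponent to be $\infty$, i.e.\ requires the weight $\exp(\tfrac{q}{2}\lipschitz{f}\StoppDiscrete)\StoppDiscrete^{q/2}$ to be \emph{bounded}, which it is not; any split with a finite conjugate exponent requires $\sup|\Y|^{qr}\in L^{1}$ for some $r>1$, which is not available under the stated hypotheses. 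The slack $\rho>4q\lipschitz{f}$ only lets you absorb the polynomial $\StoppDiscrete^{q/2}$ into the exponential; it does not let you separate the exponential from $\sup|\Y|^{q}$. This is precisely why the paper's proof runs the Gronwall argument only over one interval $[\phi(t),\phi(t)+h]$: there the exponential weight degenerates to the constant $\exp(h)\le\exp(h_0)$ and the driver is the single evaluation $h\,\fd{\unboundedTimeGrid}(\phi(t),\Y_{\phi(t)})^{2}$ rather than an integral over $[0,\StoppDiscrete]$, so that after Doob's $L^{q/2}$-inequality the quantity $\sup_{t}\E_t[\sup_s\Y_s^2]^{q/2}$ is controlled by $\E[\sup_s|\Y_s|^{q}]$ at power one with no exponential weight at all. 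You should replace your global Gronwall step by this local one-step estimate (the paper's \eqref{eqn:Proof_Interpolation_L2_Bounded}); as written, your argument does not close.
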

\begin{proof}
    The construction of $\Y$ in Section~\ref{sec:Existence} yields an adapted sequence, thus continuity and adaptedness of $\Yc$ are immediate from the definition.
    In Section~\ref{sec:Majorant_DiscreteTime} we derived a majorant in terms of a closed martingale for $\Y$.\footnote{
        See \eqref{def:ClosedMartingal_Majorant_FixpointScheme} or Proposition~\ref{prop:Majorant_FixpointScheme}. Moreover we refer to  \eqref{eqn:Majorant_2_wiht_CONSTANT_K} in the proof of Proposition~\ref{prop:FixPointScheme_Lq_Integrable}.
    }
    Since $\Yc = \Y$ on $\unboundedTimeGrid$ we particularly have
    \begin{equation}\label{eqn:ClosedMartingale_Interpolation_Along_DiscreteTimeGrid}
    	\Yc_{\ell}^2 = \Y_{\ell}^2 \leq \E_{\ell} \Big[ \exp(4\lipschitz{f}\StoppDiscrete) \bar{\xi}^2 + \exp (4\lipschitz{f}\StoppDiscrete) \tfrac{1}{(1-3\lipschitz{f}h_0)\lipschitz{f}} \int_{0}^{\StoppDiscrete} \fd{\unboundedTimeGrid} ( \phi(s),0)^2 \de s \Big] \ \ \text{for all }\ell\in\unboundedTimeGrid
    \end{equation}
    Subsequently we use a stochastic Gronwall lemma to extend this to $[0,\infty)$. We bound $\Yc_t$ for any given $t\geq 0$ with the scheme $\Y$ at the surrounding discrete-times $\phi(t)$ and $\phi(t)+h$.
    
    \step{1} We derive a majorant for $\Yc^2$. 
    
    Let $t\geq 0$ be fixed. Now pick any $r\in[\phi(t),\phi(t)+h]$ and invoke It\=o's formula to obtain 
	\begin{equation}
		\Yc_{\phi(t)+h}^2-\Yc_{r}^2 = \int_{r\wedge\StoppDiscrete}^{(\phi(t)+h)\wedge\StoppDiscrete}\!\! 2\Yc_{s}\fd{\unboundedTimeGrid} (\phi(t),\Y_{\phi(t)}) + \big\|\Zc_s^{\phi(t)} \big\|^2 \de s
		-\int_{r\wedge\StoppDiscrete}^{(\phi(t)+h)\wedge\StoppDiscrete}\!\! 2\Yc_{s} \Zc_s^{\phi(t)} \de W_s
	\end{equation}
    Since $\Yc \Zc^{\phi(t)}\de W$ is a martingale (with constant horizon $\phi(t)+h$; see Lemma~\ref{lemma:Single_Step_Interpolation}) we have
	\begin{multline}
		\Yc_{r}^2 + \E_r\Big[\int_{r\wedge\StoppDiscrete}^{(\phi(t)+h)\wedge\StoppDiscrete} \big\|\Zc_s^{\phi(t)} \big\|^2 \de s \Big]  = \E_r \big[ \Y_{\phi(t)+h}^2 \big] + \E_r\Big[\int_{r\wedge\StoppDiscrete}^{(\phi(t)+h)\wedge\StoppDiscrete}\!\! 2 \Yc_{s} \fd{\unboundedTimeGrid} (\phi(t),\Y_{\phi(t)}) \de s \Big] \\
		\leq \E_r \Big[ \Y_{\phi(t)+h}^2 + \1_{ \{\phi(t)<\StoppDiscrete\}} h \fd{\unboundedTimeGrid} \big(\phi(t),\Y_{\phi(t)}\big)^2 \Big] + \E_r \Big[ \int_{r\wedge\StoppDiscrete}^{(\phi(t)+h)\wedge\StoppDiscrete} \Yc_{s}^2 \de s \Big] 
	\end{multline}
	From a stochastic Gronwall lemma (see e.g. \cite[Appendix A]{Schlegel2024ProbabilisticShape}) we obtain
	\begin{equation}
		\Yc_{r}^2 \leq \E_r \bigg[ \exp\big(\phi(t)+h -r\big) \Big( \Y_{\phi(t)+h}^2 + \1_{ \{\phi(t)<\StoppDiscrete\}} h \fd{\unboundedTimeGrid} \big(\phi(t),\Y_{\phi(t)}\big)^2 \Big) \bigg] \quad\text{for } r\in[\phi(t),\phi(t)+h].
	\end{equation}
    Especially for $t\geq 0$ (fixed above), and using the Lipschitz continuity of $\fd{\unboundedTimeGrid}$ we have
    %
    \begin{align}\label{eqn:Proof_Interpolation_L2_Bounded}
        \Yc_{t}^2 
        & \leq \exp(h) \E_{t} \Big[ \Y_{\phi(t)+h}^2 + \1_{ \{\phi(t)<\StoppDiscrete\}} h \fd{\unboundedTimeGrid} \big( \phi(t),\Y_{\phi(t)} \big)^2 \Big] \notag \\
        & = \exp(h) \E_{t} \big[ \Y_{\phi(t)+h}^2 \big] + \exp(h) 2\lipschitz{f}^2 h \Y_{\phi(t)}^2 + \exp(h) 2h \1_{ \{\phi(t)<\StoppDiscrete\}} \fd{\unboundedTimeGrid} \big( \phi(t), 0 \big)^2
    \end{align}
    and we have established the desired majorant. \close    

    \step{2} We establish $\Yc \in \mathcal{S}^q$ by showing the \textit{uniform} $\mathcal{S}^q$\textit{-property}; 
    \begin{equation}
        \sup_{h\in(0,h_0]} \sup_{t\geq 0} |\Yc_{t}(h)|^q \in L^1
    \end{equation}
    
    %
    Observe that from \eqref{eqn:Proof_Interpolation_L2_Bounded} with an elementary bound we obtain (with $c_q \defined 2^{q-2}$)
    \begin{align}
        \Yc_{t}^4 
        & \leq c_q \exp \big( \tfrac{q}{2} h\big) \Big( \E_{t} \Big[ \sup_{s\in\unboundedTimeGrid} \Y_{s}^2 \Big]^\frac{q}{2} + \lipschitz{f}^q h^\frac{q}{2} \sup_{s\in\unboundedTimeGrid} |\Y_{s}|^q + h^\frac{q}{2} \sup_{s\in\unboundedTimeGrid\cap [0,\StoppDiscrete)} \big| \fd{\unboundedTimeGrid} (s,0) \big|^q \Big) 
        \quad \text{for } t\geq 0
    \end{align}
    As in the proof of Proposition~\ref{prop:FixPointScheme_Lq_Integrable} we highlight the dependencies on the stepsize. Accordingly 
    \begin{multline}
        \sup_{h\in(0,h_0]} \sup_{t\geq 0} |\Yc_{t}(h)|^q \leq c_q \exp \big(\tfrac{q}{2}h_0\big) 
        \bigg(
            \sup_{t\geq 0} \E_{t} \Big[ \sup_{\Delta\in (0,h_0]} \sup_{s\in\unboundedTimeGrid} \Y_{s}^2 \Big]^\frac{q}{2} \\
            + \lipschitz{f}^q h_0^\frac{q}{2} \sup_{\Delta\in (0,h_0]} \sup_{s\in\unboundedTimeGrid} |\Y_{s}(\Delta)|^q
            + h_0^\frac{q}{2} \sup_{\Delta\in (0,h_0]} \sup_{s\in\unboundedTimeGrid\cap [0,\StoppDiscrete)} \big| \fd{\unboundedTimeGrid} (s,0) \big|^q
        \bigg)    
    \end{multline}
    From Proposition~\ref{prop:FixPointScheme_Lq_Integrable} (with $q\geq 4$) and Assumption~\hyperref[discrete_f:assumption]{($\unboundedTimeGrid$-F)} we know there is $C>0$ such that
    \begin{equation}
         \lipschitz{f}^q h_0^\frac{q}{2} \E \Big[ \sup_{\Delta\in (0,h_0]} \sup_{s\in\unboundedTimeGrid} | \Y_{s}(\Delta)|^q \Big]
            + h_0^\frac{q}{2} \Big[ \sup_{\Delta\in (0,h_0]} \sup_{s\in\unboundedTimeGrid\cap [0,\StoppDiscrete)} \big| \fd{\unboundedTimeGrid} (s,0)\big|^q \Big] \defined C < \infty
    \end{equation}
    Using this together with Doob's $L^\frac{q}{2}$-inequality (with constant $C_{\frac{q}{2}}$) we conclude
    \begin{align}
        \E \Big[ \sup_{h\in(0,h_0]} \sup_{t\geq 0} | \Yc_{t}(h)|^q \Big] & 
        \leq c_q \exp \big(\tfrac{q}{2}h_0\big) \E \bigg[ \sup_{t\geq 0} \E_{t} \Big[ \sup_{\Delta\in (0,h_0]} \sup_{s\in\unboundedTimeGrid} \Y_{s}^2 \Big]^\frac{q}{2} \bigg] 
        + c_q \exp \big(\tfrac{q}{2}h_0\big) C \\
        & \leq c_q \exp \big(\tfrac{q}{2}h_0\big) \bigg( C_{\frac{q}{2}} \E \Big[ \sup_{\Delta\in (0,h_0]} \sup_{s\in\unboundedTimeGrid} | \Y_{s}|^q \Big] 
        +  C \bigg) 
        < \infty \closeEqn
    \end{align}
    
    \step{3} We show that $\Zc^\phi \de W \in \mathcal{H}^2$ for each $h\in(0,h_0]$.
    
    From Remark~\ref{remark:Majorant_MartRep_CompleteStep} for each $\ell\in\unboundedTimeGrid$ we obtain
    \begin{equation}
    	\E_{\ell} \Big[ \int_{\ell \wedge \StoppDiscrete }^{ (\ell+h)\wedge\StoppDiscrete } \big\| \Zc_{s}^{\ell} \big\|^2 \de s \Big] 
        \leq 2 \E_{\ell} \bigg[ \exp (C\StoppDiscrete) \Big( \Bar{\xi}^2 + K \int_{0}^{\StoppDiscrete} f\big( \phi(s),0 \big)^2 \de s \Big) \bigg] + \tfrac{h}{\lipschitz{f}} \fd{\unboundedTimeGrid} (\ell,0)^2
    \end{equation}
    with $C \defined 4\lipschitz{f}, K\defined ((1-3\lipschitz{f}h_0)\lipschitz{f})^{-1}$. Recall $\1_{ \{\cdot\geq\StoppDiscrete\} } \Zc^\phi = 0 $ and use monotone convergence
    \begin{align}\label{eqn:ProofRegInterpol_EQ}
		\E \Big[ \int_{0}^{\infty} \big\| \Zc_s^{\phi(s)} \big\|^2 \de s \Big] & = \sum_{ \ell\in\unboundedTimeGrid } \E \bigg[ \1_{ \{ \ell\leq\StoppDiscrete\} } \E_t \Big[ \int_{\ell\wedge\StoppDiscrete}^{(\ell+h)\wedge\StoppDiscrete} \big\| \Zc_s^{t} \big\|^2 \de s \Big] \bigg] \notag \\
		& \leq 2 \sum_{\ell\in\unboundedTimeGrid} \E \Big[ \1_{ \{ \ell\leq\StoppDiscrete\} } \exp (C\StoppDiscrete) \Bar{\xi}^2 + \exp (C\StoppDiscrete) K\int_{0}^{\StoppDiscrete} \fd{\unboundedTimeGrid} \big( \phi(s),0 \big)^2 \de s \Big] \notag \\
		& \hspace*{1.0cm} + \tfrac{h}{\lipschitz{f}} \sum_{\ell\in\unboundedTimeGrid} \E \big[ \1_{ \{ \ell\leq\StoppDiscrete\} } \fd{\unboundedTimeGrid} (\ell,0)^2 \big] \notag \\
		& \leq 2 \tfrac{1}{h} \E \bigg[ \StoppDiscrete \Big( \exp(C\StoppDiscrete) \Bar{\xi}^2 + \exp(C\StoppDiscrete) K \int_{0}^{\StoppDiscrete} \fd{\unboundedTimeGrid} \big(\phi(s),0\big)^2 \de s  \Big) \bigg] \notag \\
		& \hspace*{1.0cm} + \tfrac{1}{\lipschitz{f}} \E \Big[ \StoppDiscrete\ \sup_{ s\in\unboundedTimeGrid\cap[0,\StoppDiscrete) } \fd{\unboundedTimeGrid} (s,0)^2 \Big] 
    \end{align}
    and with Hölder's inequality we check that the expectation in the bound is finite. For this notice 
	\begin{equation}
		\StoppDiscrete \exp(C\StoppDiscrete) \Big( \Bar{\xi}^2 + K \int_{0}^{\StoppDiscrete} \fd{\unboundedTimeGrid} \big(\phi(s),0\big)^2 \de s \Big) 
        \leq \exp(C\StoppDiscrete) \Big( \StoppDiscrete \Bar{\xi}^2 + \StoppDiscrete^2 K \sup_{s\in\unboundedTimeGrid\cap[0,\StoppDiscrete)} \fd{\unboundedTimeGrid} (s,0)^2 \Big)
	\end{equation}
	Now for any non-negative $A\in L^p$ with $p>4$ we use Hölder's inequality to compute
	\begin{equation}
		\E \big[ \exp(C\StoppDiscrete) \StoppDiscrete^k A^2 \big]^2 \leq \E \big[ \exp (2C\StoppDiscrete) \big] \E \big[ \StoppDiscrete^{2k} A^4 \big] \leq \E \big[ \exp (2C\StoppDiscrete) \big] \E \big[ \StoppDiscrete^{2k\Tilde{q}} \big]^\frac{1}{\Tilde{q}} \E \big[ A^p \big]^\frac{1}{q}
	\end{equation}
	for $q\defined\tfrac{p}{4}$, $\Tilde{q}\defined \tfrac{q}{q-1}$ and any $k\in\nat$. With Assumptions \hyperref[exp:assumption]{(R-q)}, \hyperref[stepsize:assumption]{(h)}, \hyperref[discrete_f:assumption]{($\unboundedTimeGrid$-F)} and \hyperref[discrete_xi:assumption]{($\unboundedTimeGrid$-T)} together with Lemma~\ref{lemma:A_exponentialMoment_ForAll_PowerMoments_SUPREMUM_VERSION}   
    we conclude all expectations in \eqref{eqn:ProofRegInterpol_EQ} are finite. Thus 
    $ \E [ \int_{0}^{\infty} \big\| \Zc_s^{\phi(s)} \big\|^2 \de s ] < \infty. $
\end{proof}
\section{Exponential Moments of Discrete Terminal Time}\label{sec.ExponentialMoments}

We discuss a possibility to ensure the \textit{stepsize-uniform} existence positive exponential moment as required by Assumption~\hyperref[exp:assumption]{(R-q)}. 
Let $D>0$ and $\alpha\in[0,\tfrac{1}{2})$ denote given parameters and consider the first exit of the Browninan motion from the $\infty$-norm ball with radius $D-\alpha$, i.e.
\begin{equation}\label{def:StoppDiam}
    \StoppDiam \defined \inf\big\{ t>0 \ \big|\ t\in\unboundedTimeGrid,  W_{t} \notin \mathbb{B}(D - h^\alpha) \big\} 
\end{equation}
This may be regarded as a guarantee that the Brownian paths remain bounded along $\unboundedTimeGrid$. In view of the BSDE approximation scheme \eqref{def:FixpointScheme} as discrete terminal time we then set $\StoppDiscrete\wedge\StoppDiam$.\footnote{
    Particularly $\StoppDiscrete = \infty$ is admissible whenever there is no suitable guess for a discrete-time approximation. 
}

We establish $L^1$-boundedness of $\{\exp (m \StoppDiam )\}_{h\in(0,h_0]}$ for $m$ strictly smaller than a constant only depending on $D>0$. To resolve this we use Freidlin-type bounds see \cite[Chapter III]{Freidlin1985} and \cite{BGG_forward_exit} for insightful utilization of this machinery. 
We start with \cite[Theorem 1.7 and Lemma 1.3]{Kazamaki1994}.
\begin{lemma}[$1$-dimensional]\label{lemma:ContinuousExpMoment_1D}
    Let $\tau_{a,b}$ denote the first exit of a Brownian motion from $(-a,b)$ for $a,b>0$ then
    $ \exp ( \tfrac{1}{2} m^2\ \tau_{a,b} ) \in L^1 $ for all $m<\tfrac{\pi}{a+b}\defined m_0$ and $\exp ( \tfrac{1}{2} m_0^2\ \tau_{a,b} ) \notin L^1. $ \close
\end{lemma}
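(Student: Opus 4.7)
The plan is to base everything on the shifted cosine
$u_m(x) \defined \cos\big(m(x - \tfrac{b-a}{2})\big)$,
which solves $\tfrac12 u_m''(x) = -\tfrac12 m^2 u_m(x)$ on all of $\R$. By It\^o's formula the process $M_t^m \defined \exp(\tfrac12 m^2 t) u_m(W_t)$ is a local martingale with stochastic integrand bounded on any deterministic time interval, hence a true martingale, and optional stopping at $\tau_{a,b}\wedge N$ yields the key identity
\[
u_m(0) = \E\big[ \exp\big(\tfrac12 m^2 (\tau_{a,b}\wedge N)\big)\, u_m(W_{\tau_{a,b}\wedge N}) \big].
\]

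For the integrability part, fix $m < m_0 \defined \tfrac{\pi}{a+b}$. The cosine argument, restricted to $[-a,b]$, ranges in $[-\tfrac{m(a+b)}{2}, \tfrac{m(a+b)}{2}] \subset (-\tfrac{\pi}{2}, \tfrac{\pi}{2})$, so $u_m$ is bounded below on $[-a,b]$ by $c_m \defined \cos(\tfrac{m(a+b)}{2}) > 0$. The identity then forces $c_m\, \E[\exp(\tfrac12 m^2 (\tau_{a,b}\wedge N))] \leq u_m(0)$, and monotone convergence as $N\to\infty$ gives $\E[\exp(\tfrac12 m^2 \tau_{a,b})] \leq u_m(0)/c_m < \infty$.

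For the sharpness part, take $m = m_0$; the shift $(b-a)/2$ is chosen precisely so that $u_{m_0}$ vanishes at both endpoints $\{-a,b\}$, is strictly positive on $(-a,b)$ (so $u_{m_0}(0)>0$), and satisfies $|u_{m_0}|\leq 1$. Since $\tau_{a,b} < \infty$ almost surely, the integrand in the optional stopping identity converges pointwise to $\exp(\tfrac12 m_0^2 \tau_{a,b})\, u_{m_0}(W_{\tau_{a,b}}) = 0$. If $\exp(\tfrac12 m_0^2 \tau_{a,b})$ were in $L^1$ it would dominate the integrand, and dominated convergence would force $u_{m_0}(0) = 0$, a contradiction.

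The whole argument reduces to a single It\^o plus optional stopping computation; the one delicate point is the choice of shift $(b-a)/2$, which aligns the two zeros of $u_{m_0}$ with the two endpoints and thereby pins down the exact threshold $m_0 = \pi/(a+b)$. This is the spectral content of the classical Freidlin/Cameron--Martin estimates, and the explicit closed-form computations with identification of $m_0$ as the sharp constant are collected in \cite{Kazamaki1994}.
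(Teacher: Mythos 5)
Your proof is correct. The paper does not actually prove this lemma -- it simply imports it from \cite[Theorem 1.7 and Lemma 1.3]{Kazamaki1994} -- whereas you give a complete, self-contained argument via the eigenfunction martingale $M_t^m = \exp(\tfrac12 m^2 t)\,u_m(W_t)$ with $u_m(x)=\cos(m(x-\tfrac{b-a}{2}))$. All the steps check out: the It\^o computation kills the drift because $\tfrac12 u_m''=-\tfrac12 m^2 u_m$; the integrand $e^{\frac12 m^2 t}u_m'(W_t)$ is bounded on $[0,N]$ so optional stopping at the bounded time $\tau_{a,b}\wedge N$ is legitimate; for $m<m_0$ the argument of the cosine stays in $(-\tfrac{\pi}{2},\tfrac{\pi}{2})$ on $[-a,b]$, giving the uniform lower bound $c_m>0$ and hence the $L^1$ conclusion by monotone convergence; and for $m=m_0$ the shift aligns the zeros of $u_{m_0}$ exactly with $\{-a,b\}$ while $u_{m_0}(0)=\cos(m_0\tfrac{b-a}{2})>0$ (since $|b-a|<a+b$), so the dominated-convergence contradiction goes through using a.s.\ finiteness of $\tau_{a,b}$. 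Your route has the advantage of making the threshold $m_0=\pi/(a+b)$ transparent as the first Dirichlet eigenvalue of $\tfrac12\partial_{xx}$ on $(-a,b)$, and of both directions (integrability below $m_0$ and failure at $m_0$) falling out of a single optional-stopping identity; the paper's approach buys brevity at the cost of deferring to an external reference.
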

The existence of exponential moments extends to the multi-dimensional case. Throughout this section we set
$\tau \defined \inf \{ t\geq 0\ |\ W_t \notin\mathbb{B} (D) \}$ and have the following immediate corollary.
\begin{corollary}[$d$-dimensional]\label{corollary:ContinuousExpMoment_DD}
    For the first exit of the $d$-dimensional Brownian motion from $\mathbb{B}(D)$ it holds that 
    $ \exp(m\tau)\in L^1$ for all $m < \tfrac{\pi^2}{8}\tfrac{d}{D^2} $. \close
\end{corollary}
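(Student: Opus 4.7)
The plan is to reduce the $d$-dimensional exit problem to independent one-dimensional ones, exploiting the product structure of $\mathbb{B}(D)$ (read, consistently with \eqref{def:StoppDiam} and with the appearance of the factor $d/D^2$, as the $\infty$-norm ball $(-D,D)^d$). Writing $W=(W^1,\dots,W^d)$ and setting $\tau_i \defined \inf\{t > 0 : |W^i_t| \geq D\}$, the first exit from $\mathbb{B}(D)$ is simply $\tau = \min_{i=1,\dots,d} \tau_i$. Because the coordinate Brownian motions are independent, the tails factorize:
\[
    \prob[\tau > s] = \prod_{i=1}^d \prob[\tau_i > s] \qquad \text{for all } s \geq 0.
\]

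Next I would plug in the sharp one-dimensional input. Lemma~\ref{lemma:ContinuousExpMoment_1D} applied with $a = b = D$ gives, for each fixed $\lambda < \tfrac{\pi^2}{8 D^2}$, that $\E[\exp(\lambda \tau_i)]\defined C_\lambda < \infty$, and hence by Markov's inequality $\prob[\tau_i > s] \leq C_\lambda \exp(-\lambda s)$. Combining this with the layer-cake identity
\[
    \E[\exp(m\tau)] = 1 + m \int_0^\infty e^{ms}\, \prob[\tau > s]\, \de s \leq 1 + m\, C_\lambda^d \int_0^\infty e^{(m - d\lambda) s}\, \de s,
\]
the right-hand side is finite whenever $m < d\lambda$. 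Letting $\lambda \uparrow \tfrac{\pi^2}{8 D^2}$ then covers every $m < \tfrac{d\pi^2}{8 D^2}$, which is the asserted range.

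There is no real obstacle here: the sharp one-dimensional exponential-moment threshold is already packaged in Lemma~\ref{lemma:ContinuousExpMoment_1D}, and the only conceptual step is the reduction via independence of coordinates, which requires the domain to be a product. The same argument would fail for the Euclidean ball, where the principal Dirichlet eigenvalue involves a Bessel zero rather than $d/D^2$, confirming that the intended convention is the $\infty$-norm ball.
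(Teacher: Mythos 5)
Your proof is correct and rests on the same reduction as the paper's: both exploit that $\mathbb{B}(D)$ is the $\infty$-norm ball, so that $\tau=\min_i\tau_i$ with independent coordinate exit times and $\prob[\tau>s]=\prob[\tau_{D,D}>s]^d$, and both feed in Lemma~\ref{lemma:ContinuousExpMoment_1D} with $a=b=D$. Where you differ is in how the factor $d$ is harvested. You apply Markov to each one-dimensional tail, $\prob[\tau_i>s]\leq C_\lambda e^{-\lambda s}$ for any $\lambda<\tfrac{\pi^2}{8D^2}$, multiply the $d$ tails to get decay rate $d\lambda$, and integrate via the layer-cake formula; letting $\lambda\uparrow\tfrac{\pi^2}{8D^2}$ this genuinely reaches every $m<\tfrac{\pi^2}{8}\tfrac{d}{D^2}$, which is the sharp threshold (the principal Dirichlet eigenvalue of $-\tfrac12\Delta$ on the cube). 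The paper instead substitutes in the layer-cake integral for $\E[\exp(m\tau)]$ and then invokes Jensen to pass to $\E[\exp(\tfrac{m}{d}\tau)]\leq\E[\exp(m\tau)]^{1/d}$; as printed, that chain only certifies finiteness of $\E[\exp(m'\tau)]$ for $m'<\tfrac{\pi^2}{8dD^2}$ (a factor $d^2$ short of the claim), and the intermediate bound $\prob[\exp(m\tau_{D,D})\geq\lambda]^d\leq\E[\exp(m\tau_{D,D})]^d\exp(-dm\lambda)$ is not a correct application of Markov's inequality. So your route is not just an alternative: it is the argument that actually delivers the stated constant, and you are also right that the convention must be the $\infty$-norm ball for the exponent $d/D^2$ to be correct.
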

\begin{proof}
    Let $\tau_{D,D}$ denote the first exit of a $1$-dimensional Brownian motion from $(-D,D)$. Then $\prob [\tau \geq \lambda] = \prob [ \tau_{D,D} \geq \lambda ]^d$ for any $\lambda\geq 0$. Fix $m>0$ and with the Markov inequality we have
    \begin{multline}
        \E \big[ \exp (m\tau) \big] 
        = \int_{0}^{\infty} \prob \big[ \tau \geq \tfrac{1}{m} \ln (\lambda) \big] \de \lambda 
        = \int_{0}^{\infty} \prob \big[ \tau_{D,D} \geq \tfrac{1}{m} \ln (\lambda) \big]^d \de \lambda \\
        = \int_{0}^{\infty} \prob \big[ \exp(m\tau_{D,D}) \geq \lambda \big]^d \de \lambda 
        \leq \E \big[ \exp(m\tau_{D,D}) \big]^d \int_{0}^{\infty} \exp(-dm\lambda) \de \lambda
    \end{multline}
    Thus with Jensen's inequality and the explicit value of the deterministic integral we obtain
    \begin{equation}
        \E \big[ \exp (\tfrac{m}{d}\tau) \big] \leq \E \big[ \exp(m\tau) \big]^\frac{1}{d} \leq \tfrac{1}{dm} \E \big[ \exp (m\tau_{D,D}) \big]
    \end{equation}
    and the upper bound is finite whenever $m<\tfrac{\pi^2}{8D^2}$ due to Lemma~\ref{lemma:ContinuousExpMoment_1D} (with $a=b=D$). 
\end{proof}
The main result of this section is Proposition~\ref{prop:StoppDiam_expMoments}. Recall the dependency $\StoppDiam=\StoppDiam (h)$ through the discrete-time grid $\unboundedTimeGrid$ and the radius $D-h^\alpha$. We require the following assumption.
\begin{stepsize_condition_exp}\label{stepsize_exp:assumption}
    We consider $h\in(0,h_0]$ with maximal stepsize $h_0 < \tfrac{1}{2} \tfrac{D^2}{d}$
    where we further impose that the Gaussian tail satisfies 
    $ \prob [ N \notin \mathbb{B} ( h_0^{\alpha-\frac{1}{2}} ) ] \leq \tfrac{2}{5}$ for $N\sim \mathcal{N} (0,1)$. \close
\end{stepsize_condition_exp}
The proof is via a geometric series limit, this is enabled due to a deterministic bound of the $\F_{\tau}$-conditional $p$-moments of the discrete-time delay $\StoppDiam\vee\tau-\tau$ (see Lemma~\ref{lemma:FreidlinTypeBound}). 
\begin{proposition}\label{prop:StoppDiam_expMoments}
    Assume \hyperref[stepsize_exp:assumption]{(h')}. Then
    \begin{equation}
        \sup_{h\in(0,h_0]} \exp (m \StoppDiam ) \in L^ 1\quad \text{for all } m<\tfrac{d}{8D^2}. \closeEqn
    \end{equation}
\end{proposition}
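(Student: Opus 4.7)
The plan is to compare $\StoppDiam$ with the continuous exit time $\tau \defined \inf\{t\geq 0 : W_t \notin \mathbb{B}(D)\}$ via the decomposition
\[
    \StoppDiam \leq \tau + \delta, \qquad \delta \defined (\StoppDiam \vee \tau) - \tau,
\]
which yields $\exp(m\StoppDiam) \leq \exp(m\tau)\exp(m\delta)$. The factor $\exp(m\tau)$ is purely continuous and is already controlled by Corollary~\ref{corollary:ContinuousExpMoment_DD}; the discretization-dependent factor $\exp(m\delta)$ will be controlled by conditioning on $\F_\tau$ and using the strong Markov property of $W$ at $\tau$ to invoke Lemma~\ref{lemma:FreidlinTypeBound}.

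The crucial step is a uniform-in-$h$ bound of the form $\E[\exp(m\delta)\mid\F_\tau] \leq K$. Heuristically: at time $\tau$ the Brownian motion has just exited $\mathbb{B}(D)$, and for $\delta$ to exceed one grid step $W$ must return inside the slightly smaller ball $\mathbb{B}(D - h^\alpha)$. Since $\alpha < \tfrac{1}{2}$ a typical increment of order $\sqrt{h}$ is much smaller than the buffer $h^\alpha$, and Assumption~\hyperref[stepsize_exp:assumption]{(h')} quantifies this by bounding the single-step return probability by $\tfrac{2}{5}$. Consequently $\delta/h$ is stochastically dominated (conditionally on $\F_\tau$) by a geometric random variable with success probability at least $\tfrac{3}{5}$, and I expect Lemma~\ref{lemma:FreidlinTypeBound} to encode this as a deterministic moment bound $\E[\delta^p \mid \F_\tau] \leq p!\,(\kappa h)^p$ for a constant $\kappa = \kappa(d, D, \alpha)$ independent of $h$. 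Termwise summation of the Taylor series of $\exp(m\delta)$ then yields the geometric estimate $\E[\exp(m\delta)\mid\F_\tau] \leq (1 - m\kappa h)^{-1}$, bounded uniformly on $h \in (0, h_0]$ provided $m\kappa h_0 < 1$, which is precisely what the standing restriction $h_0 < D^2/(2d)$ is designed to enforce.

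Combining the two ingredients via the tower property gives $\E[\exp(m\StoppDiam)] \leq K\,\E[\exp(m\tau)]$, and the right-hand side is finite for all $m < \tfrac{\pi^2 d}{8 D^2}$ by Corollary~\ref{corollary:ContinuousExpMoment_DD}. Since $\tfrac{d}{8D^2} < \tfrac{\pi^2 d}{8D^2}$, the exponent in the proposition falls strictly within the admissible range and leaves slack for absorbing the constant $\kappa$ via H\"older if necessary. The uniformity in $h$ of the final bound is immediate because $\tau$ is intrinsic to $W$ and the envelope $K$ does not depend on $h$; pulling the supremum inside the expectation is handled by observing the monotonicity $h \mapsto \StoppDiam(h)$ implied by the nested balls $\mathbb{B}(D-h^\alpha) \subset \mathbb{B}(D-h_0^\alpha)$ for $h \leq h_0$, combined with monotone convergence or a pointwise domination by the $h_0$-version.

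The main obstacle I anticipate lies in the quantitative Freidlin-type bound itself: tracking the constant $\kappa$ through the conditional moment calculation so that the geometric series converges exactly at the threshold $\tfrac{d}{8D^2}$. This will likely require a careful one-step transition estimate exploiting the explicit Gaussian tail bound from (h') for the specific component that is driving the exit at $\tau$, rather than a crude union bound over all $d$ coordinates. Everything else --- the strong Markov property at $\tau$, the application of Corollary~\ref{corollary:ContinuousExpMoment_DD}, and the monotone-convergence argument for the supremum --- should then be routine.
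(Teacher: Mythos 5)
Your overall architecture --- split $\StoppDiam \leq \tau + \delta$ with $\delta \defined (\StoppDiam\vee\tau)-\tau$, condition on $\F_\tau$, control the conditional moments of $\delta$ via Lemma~\ref{lemma:FreidlinTypeBound}, sum the exponential series geometrically and pair the result with Corollary~\ref{corollary:ContinuousExpMoment_DD} --- is exactly the paper's proof. But your quantitative reading of the key lemma is wrong, and it is precisely the point that produces the threshold $m<\tfrac{d}{8D^2}$. Lemma~\ref{lemma:FreidlinTypeBound} gives $\E[\delta^p\mid\F_\tau]\leq p!\,(8D^2/d)^p$ uniformly in $h\in(0,h_0]$, \emph{not} $p!\,(\kappa h)^p$. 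The overshoot $\delta$ is not of order $h$: after $\tau$ the walk is detected outside $\mathbb{B}(D-h^\alpha)$ at the next grid point with conditional probability at least $1-\varphi(h_0)\geq\tfrac35$, but on the complementary event it has re-entered the smaller ball and, by the strong Markov worst-case bound used in the lemma, needs an additional time of order $D^2/d$ (the expected exit time from a ball of radius $\approx D$), not of order $h$. Hence the series $\sum_p (m\cdot 8D^2/d)^p$ converges exactly for $m<d/(8D^2)$, which is the threshold in the statement; your criterion ``$m\kappa h_0<1$'' would instead yield exponential moments of every order as $h\to 0$, which is false. The role of $h_0<\tfrac12\tfrac{D^2}{d}$ in (h') is only to absorb the $+2h_0$ terms into the constant $8\tfrac{D^2}{d}$ inside the proof of Lemma~\ref{lemma:FreidlinTypeBound}, not to make the series converge.

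A second, smaller issue: the monotonicity of $h\mapsto\StoppDiam(h)$ you invoke to pass the supremum inside does not hold. As $h$ decreases the grid refines (exits are detected earlier) while the ball $\mathbb{B}(D-h^\alpha)$ grows (exits occur later), and the grids $\{nh\}$ are not nested for general $h\leq h_0$, so neither effect dominates. The uniformity in $h$ comes instead from the fact that the Freidlin bound carries the supremum over $h$ and that $\tau$ does not depend on $h$; the paper passes the supremum through the series termwise and concludes $\E[\sup_{h\in(0,h_0]}\exp(m\StoppDiam)]\leq C_m\,\E[\exp(m\tau)]$ directly.
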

\begin{proof}
    Observe that with monotone convergence for any constant $m>0$ we have
    \begin{align}\label{eqn:proofStoppDiamExpMoment_1}
        \E \big[\exp(m\StoppDiam)\big] 
        & \leq \E \Big[ \E \big[ \exp(m (\StoppDiam\vee\tau-\tau)\ \big|\ \F_{\tau} \big] \ \exp(m\tau) \Big]\notag \\
        & = \E \Big[\ \sum_{p=0}^{\infty} \tfrac{1}{p!} m^p \E \big[ (\StoppDiam\vee\tau - \tau)^p\ \big|\ \F_{\tau} \big]\ \exp(m\tau) \Big].
    \end{align}
    From Lemma~\ref{lemma:FreidlinTypeBound} (for $t=0$) we know that
    \begin{equation}
        \sup_{h\in (0,h_0]} \E \Big[ \big(\StoppDiam\vee\tau - \tau\big)^p\ \Big|\ \F_{\tau} \Big] \leq p!\ \big(8\tfrac{D^2}{d}\big)^p\quad\text{for all } p\in\nat_0.
    \end{equation}
    Let $m < \tfrac{d}{8D^2}$ be fixed. Then we obtain the geometric limit 
    \begin{equation}
        \sup_{h\in (0,h_0]}\ \sum_{p=0}^{\infty} \tfrac{1}{p!} m^p \E \big[ (\StoppDiam\vee\tau - \tau)^p\ \big|\ \F_{\tau} \big] \leq \sum_{p=0}^{\infty} \big(m\ \tfrac{8D^2}{d} \big)^p = \frac{1}{1-m\ \tfrac{8D^2}{d}} \defined C_m < \infty.
    \end{equation}
    From Corollary~\ref{corollary:ContinuousExpMoment_DD} we know that $\exp(m\tau)\in L^1$ (since by choice $m < \tfrac{\pi^2}{8}\tfrac{d}{D^2}$). Combine this with the geometric bound to further estimate 
    \eqref{eqn:proofStoppDiamExpMoment_1} and obtain the assertion;
    \[
        \E \Big[ \sup_{h\in (0,h_0]}\ \exp(m\StoppDiam)\Big] \leq C_m \E \big[ \exp(m\tau) \big] < \infty . \qedhere
    \]
\end{proof}
We conclude with the conditional bound on the $p$-moments. Notice in the proof of Proposition~\ref{prop:StoppDiam_expMoments} we only require $t=0$. Nonetheless we derive the bound uniformly on $\{t\vee \tau \ |\ t\geq 0 \}$, which we encountered as the minimal family to establish the result. The overall idea is inspired by \cite[Section 2.3]{BGG_forward_exit} where in a general setting for finite stopping times a similar bound is established.

\textit{Sketch of idea}: For $p=1$ we use the strong Markov property of the Brownian motion to exploit the gap $\mathbb{B}(D-h^{\alpha})\subset \mathbb{B}(D)$ between the domains triggering the exit times, and a conditional version of a \textit{good}-$\lambda$-\textit{inequality}, see e.g. \cite{Jacka1989GoodLambda}. Via induction using a Freidlin-type inequality, see e.g. \cite[Chapter 3]{Freidlin1985}, this is extended for $p\in\nat$.
\begin{lemma}\label{lemma:FreidlinTypeBound}
    Assume \hyperref[stepsize_exp:assumption]{(h')}. Then for each $p\in\nat_0$ the $p$-moments satisfy
    \begin{equation}
        \sup_{t\geq 0,\ h\in (0,h_0]}\ \E \Big[ \big((\StoppDiam\vee t\vee\tau) - (t\vee\tau)\big)^p\ \Big|\ \F_{t\vee\tau} \Big] \leq p!\ \big(8\tfrac{D^2}{d}\big)^p. \closeEqn
    \end{equation}
\end{lemma}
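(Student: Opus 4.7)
The plan is to argue by induction on $p \in \nat_0$, splitting the work into an algebraic inductive step and an analytic base case where all the geometric content resides. Write $s \defined t \vee \tau$, $X \defined \StoppDiam \vee s - s$, and $C \defined 8 D^2 / d$; the case $p = 0$ is trivial.

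To run the induction cleanly I actually prove the slightly stronger uniform bound
\[
    \sup_{\sigma \geq \tau,\ h \in (0, h_0]} \E \big[ (\StoppDiam \vee \sigma - \sigma)^p \,\big|\, \F_\sigma \big] \leq p! \, C^p,
\]
where the supremum ranges over all stopping times $\sigma$ dominating $\tau$; the stated lemma follows by specialising to $\sigma = t \vee \tau$. For the \emph{inductive step} $p \to p+1$, observe that $s + u$ is again a stopping time dominating $\tau$ for every deterministic $u \geq 0$, so the strengthened induction hypothesis applies at $\sigma = s + u$ and delivers
\[
    \E \big[ (X - u)^k \,\big|\, \F_{s+u} \big] \leq k! \, C^k \quad \text{on } \{X > u\} = \{\StoppDiam > s + u\}, \qquad k \leq p.
\]
Combining the layer-cake identity $X \cdot X^p = \int_0^\infty X^p \1_{\{X > u\}} \de u$, the binomial expansion $X^p = (u + (X-u))^p$ on $\{X > u\}$, the tower property, and the elementary calculation $\int_0^\infty u^{p-k} \prob[X > u \,|\, \F_s] \de u = \frac{1}{p-k+1} \E[X^{p-k+1} \,|\, \F_s]$ yields
\[
    \E \big[ X^{p+1} \,\big|\, \F_s \big] \leq \tfrac{1}{p+1} \E \big[ X^{p+1} \,\big|\, \F_s \big] + \sum_{k=1}^{p} \binom{p}{k} \, k! \, (p-k)! \, C^{p+1}.
\]
The identity $\binom{p}{k} \, k! \, (p-k)! = p!$ collapses the sum to $p \cdot p! \, C^{p+1}$, and absorbing the first right-hand term gives $\E[X^{p+1} \,|\, \F_s] \leq (p+1)! \, C^{p+1}$.

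The crux is the \emph{base case} $p = 1$: establish $\E[X \,|\, \F_\sigma] \leq 8 D^2/d$ uniformly. On $\{\StoppDiam > \sigma\}$ the strong Markov property at $\tau$ identifies a coordinate $i^*$ with $|W_{\tau, i^*}| = D$; the post-$\tau$ increment along $i^*$ is a centred Gaussian of variance $\phi(\tau) + h - \tau \leq h$, and Assumption \hyperref[stepsize_exp:assumption]{(h')} --- specifically $\prob[|N| \leq h_0^{\alpha - 1/2}] \geq 3/5$ with $\alpha < 1/2$ --- forces its absolute value to be $\leq h^\alpha$ with conditional probability $\geq 3/5$. On this success event $|W_{\phi(\tau) + h}|_\infty \geq D - h^\alpha$ and hence $\StoppDiam \leq \phi(\tau) + h$. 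A conditional good-$\lambda$-inequality then iterates this one-shot bound: on failure, the shifted Brownian motion re-exits $\mathbb{B}(D)$ in expected time $\leq D^2/d$ (using $\E[\tau_{D,D}] = D^2$ from Lemma~\ref{lemma:ContinuousExpMoment_1D} together with the min-over-coordinates structure of the max-norm exit), after which a fresh Gaussian kick again succeeds with probability $\geq 3/5$. Summing the resulting geometric series together with the bound $h_0 < D^2/(2d)$ from Assumption \hyperref[stepsize_exp:assumption]{(h')} produces the explicit constant $8 D^2/d$.

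The main obstacle is the base case, where the three length scales $h$, $h^\alpha$, and $D$ must be controlled uniformly in $h \in (0, h_0]$. Assumption \hyperref[stepsize_exp:assumption]{(h')} is engineered precisely so that both the Gaussian kick probability and the expected interior exit time are uniform in $h$; once these ingredients are secured, the inductive step is purely combinatorial.
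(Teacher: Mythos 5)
Your proposal is correct in substance and shares the paper's overall architecture: an induction on $p$ whose entire analytic content sits in the base case, which you prove with exactly the paper's ingredients (strong Markov property exploiting the gap $\mathbb{B}(D-h^\alpha)\subset\mathbb{B}(D)$, the Gaussian-kick probability $\geq\tfrac{3}{5}$ from \hyperref[stepsize_exp:assumption]{(h')}, a conditional good-$\lambda$ iteration, and a martingale/exit-time bound of order $D^2/d$). Where you diverge is the inductive step. The paper writes $\tfrac{1}{p+1}X^{p+1}=\int_0^\infty \1_{\{\StoppDiam>s\geq t\vee\tau\}}(\StoppDiam\vee s-s)^p\,\de s$ and applies the $p$-th bound \emph{inside} the integral at the deterministic time $s$; since $s\geq\tau$ on the relevant event, $s=s\vee\tau$, so the hypothesis is only ever needed on the family $\{t\vee\tau\,|\,t\geq 0\}$, and the step closes in one line as $p!\,C^p\cdot\E[X\,|\,\F_{t\vee\tau}]\leq (p+1)!\,C^{p+1}$ with no self-referential term. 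Your binomial/layer-cake variant instead conditions at $\sigma=(t\vee\tau)+u$, which is \emph{not} of the form $t'\vee\tau$, so you are forced to strengthen the induction to all stopping times $\sigma\geq\tau$ — a heavier burden on the base case (the paper explicitly remarks that $\{t\vee\tau\}$ is the minimal family it needs). Moreover your $k=0$ term produces $\tfrac{1}{p+1}\E[X^{p+1}\,|\,\F_s]$ on the right, and the absorption step is only valid if you know a priori that $\E[X^{p+1}\,|\,\F_s]<\infty$; this cannot be borrowed from Proposition~\ref{prop:StoppDiam_expMoments} without circularity, so you would need a separate (possibly $h$-dependent) argument that $\StoppDiam$ has finite moments, e.g.\ via a one-step escape probability bounded below. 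Both issues are fixable, but the paper's formulation of the inductive step sidesteps them entirely, which is precisely why the lemma is stated with the supremum over $t\geq 0$ rather than over stopping times.
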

We introduce notation to express the first time on $\unboundedTimeGrid$ after a stopping time $\Stopp$ as well as for the first continuous exit from $\mathbb{B}(D)$, i.e. 
\begin{equation}\label{def:+_operator_AND_next_cont_exit}
    \Stopp^+ \defined \inf \big\{ t\in\unboundedTimeGrid \ \big|\ \Stopp \leq t \big\} \quad\text{and}\quad \zeta (\Stopp) \defined  \inf \big\{ t\geq \Stopp \ \big|\ W_t \notin \mathbb{B} (D) \big\}.
\end{equation}

\begin{proof}[Proof of Lemma~\ref{lemma:FreidlinTypeBound}]
    The proof is via induction. 

    \step{1} We show that
    \begin{equation}\label{eqn:ConditionalFirstMomentBound}
        \E \big[ (\StoppDiam\vee t\vee\tau)-(t\vee\tau) \ \Big|\ \F_{t\vee\tau} \big] \leq 8\tfrac{D^2}{d} \quad \text{for all } t\geq 0, h\in (0,h_0].
    \end{equation}

    Let $t\geq 0$ and $h\in(0,h_0]$ be fixed. We consider $\zeta (t\vee\tau)$ the first exit after $t\vee\tau$. Observe\footnote{
        By definition (see \eqref{def:+_operator_AND_next_cont_exit}) it holds that $s^+ - s =0, s\in\unboundedTimeGrid$. Thus $s^+ < s + h$ for all $s\geq 0$.
    }
    \begin{equation}
        \{\StoppDiam - \zeta (t\vee\tau) \geq h\} \subseteq \{\StoppDiam - \zeta (t\vee\tau)^+ > 0\} \subseteq \{ W_{\zeta(t\vee\tau)^+}\in\mathbb{B}(D-h^\alpha) \}.
    \end{equation}
    With the strong Markov property of the Brownian motion we have
    \begin{align}\label{def:varphi_h}
        \prob \big[ \StoppDiam - \zeta (t\vee\tau) \geq h \ \big|\ \F_{\zeta (t\vee\tau)} \big] & \leq \prob \big[ x + \widetilde{W}_s \in\mathbb{B}(D-h^\alpha) \big]\Big|_{s=\zeta (t\vee\tau)^+-\zeta (t\vee\tau), x = W_{\zeta (t\vee\tau)}} \\
        & \leq \prob \big[ \widetilde{W}_h \notin \mathbb{B} (h^\alpha)  \big] \defined \varphi (h).
    \end{align}
    Recall $\alpha <\tfrac{1}{2}$ and notice $\varphi (h) \leq \varphi (h_0)$. Let $c\geq 0$ and use the strong Markov property; 
    \begin{align}
        \prob \big[ \StoppDiam - \zeta (t\vee\tau)^+ \geq c \big|  \F_{\zeta (t\vee\tau)^+} \big] 
        & = \prob \Big[ \inf \big\{  t\in\unboundedTimeGrid  \big|  x + \widetilde{W}_t \notin \mathbb{B} (D-h^\alpha) \big\} \geq c \Big]\bigg|_{x = W_{\zeta(t\vee\tau)^+} } \\
        & \leq \sup_{y\in\R^d} \prob \Big[ \inf \big\{ t\in\unboundedTimeGrid \big| y + \widetilde{W}_t \notin \mathbb{B} (D-h^\alpha) \big\} \geq c \Big]\\
        & = \prob \Big[ \inf \big\{  t\in\unboundedTimeGrid \big| \widetilde{W}_t \notin \mathbb{B} (D-h^\alpha) \big\} \geq c \Big] \\
        & = \prob \big[ \StoppDiam \geq c \big]
    \end{align}
    Combine both strong Markov property observations to conclude that
    \begin{align}
        & \prob \big[ \StoppDiam - \zeta (t\vee\tau) \geq c + h \ \big|\ \F_{\zeta (t\vee\tau)} \big] \\ 
        & \hspace*{1.0cm} \leq \E \Big[ \1_{ \{ W_{ \zeta(\tau\vee\tau)^+} \in\mathbb{B}(D-h^\alpha) \} }
        \prob \big[ \StoppDiam - \zeta (t\vee\tau)^+ \geq c \big|  \F_{\zeta (t\vee\tau)^+} \big]
        \ \Big| \ \F_{\zeta(t\vee\tau)} \Big] \\
        & \hspace*{1.0cm} \leq \prob \big[ W_{ \zeta(\tau\vee\tau)^+} \in\mathbb{B}(D-h^\alpha) \ \big| \ \F_{\zeta(t\vee\tau)} \big]  \ \prob \big[ \StoppDiam \geq c \big] \\
        & \hspace*{1.0cm} \leq \varphi (h_0) \prob \big[ \StoppDiam \geq c \big]
    \end{align}
    For any $\lambda\geq 0$ we have that
    \begin{align}
        & \prob \big[ \StoppDiam-(t\vee\tau) \geq \lambda\ \big|\ \F_{t\vee\tau} \big] \\
        & \hspace*{1.0cm} \leq \prob \big[ \StoppDiam-\zeta(t\vee\tau) \geq \tfrac{\lambda}{2} + h \big|\ \F_{t\vee\tau} \big] 
        + \prob \big[ \zeta(t\vee\tau)-t\vee\tau \geq \tfrac{\lambda}{2} - h \big|\ \F_{t\vee\tau} \big] \\
        & \hspace*{1.0cm} \leq \varphi(h_0) \prob \Big[ 2 \StoppDiam \geq \lambda \Big] 
        + \prob \Big[ 2\big( \zeta(t\vee\tau)-t\vee\tau + h\big) \geq \lambda\ \Big|\ \F_{t\vee\tau} \Big] .
    \end{align}
    Using an elementary expansion (see  Lemma~\ref{lemma:RepresentationConditionalExpectation}) and 
    the fact that $( \| W_t\|^2 - dt )_{t\geq 0}$ is a martingale which is bounded on $[t\vee\tau, \zeta(t\vee\tau)]$ we conclude
    \begin{align}\label{eqn:LemmaConditionalFirstMomentBound_1}
        \E \big[ (\StoppDiam\vee t\vee\tau)-(t\vee\tau) \ \Big|\ \F_{t\vee\tau} \big] & = \int_{0}^{\infty} \prob \big[ \StoppDiam-(t\vee\tau) \geq \lambda\ \big|\ \F_{t\vee\tau} \big] \de \lambda \notag \\
        & \leq 2 \varphi (h_0) \E \big[ \StoppDiam \big] + 2 \E \big[ \zeta (t\vee\tau) - t\vee\tau + h \ \big|\ \F_{t\vee\tau}\big] \notag \\
        & = 2 \varphi (h_0) \E \big[ \StoppDiam \big] + \tfrac{2}{d} \E \Big[ \|W_{\zeta (t\vee\tau)}\|^2 - \|W_{t\vee\tau}\|^2 \big|\ \F_{t\vee\tau}\Big] + 2 h \notag \\
        & = 2 \varphi (h_0) \E \big[ \StoppDiam \big] + 2 \tfrac{D^2}{d} - 2 \tfrac{1}{d} \|W_{t\vee\tau}\|^2 + 2h
    \end{align}
    Particularly for $t = 0$ (using that $\tau$ is an exit time) we obtain
    \begin{align}
        \E \big[\StoppDiam\big] \leq \E \big[ (\StoppDiam\vee\tau)-\tau \big] + \E \big[ \tau \big] \leq 2 \varphi (h_0) \E \big[ \StoppDiam \big] + 2h + \tfrac{D^2}{d}.
    \end{align}
    Rearrange and further bound \eqref{eqn:LemmaConditionalFirstMomentBound_1};
    \begin{align}
        \E \big[ (\StoppDiam\vee t\vee\tau)-(t\vee\tau) \ \Big|\ \F_{t\vee\tau} \big] \leq 2 \varphi (h_0) \tfrac{1}{1-2\varphi(h_0)} \big( \tfrac{D^2}{d} + 2h_0 \big) + 2 \tfrac{D^2}{d} - W_{t\vee\tau}^2 + 2h_0
    \end{align}
    We conclude the bound \eqref{eqn:ConditionalFirstMomentBound} since by Assumption~\hyperref[stepsize_exp:assumption]{(h')} on the stepsize\footnote{
        Assumption \hyperref[stepsize_exp:assumption]{(h')} ensures that $2h_0\leq \tfrac{D^2}{d}$ and $\varphi(h_0) = \prob [N\notin\mathbb{B}(h_0^{\alpha-\frac{1}{2}})]\leq \tfrac{2}{5}$ where $N\sim\mathcal{N}(0,1)$.
    }  
    \begin{align}
        2 \varphi (h_0) \tfrac{1}{1-2\varphi(h_0)} \big( \tfrac{D^2}{d} + 2h_0 \big) + 2 \tfrac{D^2}{d} + 2h_0 \leq \Big( 1 + \tfrac{\varphi (h_0)}{1-2\varphi(h_0)} \Big)\ 2\big( \tfrac{D^2}{d} + 2h_0 \big) \leq 8\tfrac{D^2}{d} \closeEqn
    \end{align}
    
    \step{2} Whenever the claim is satisfied for $p\in\nat$ then also for $p+1$. 
    
    Let $t\geq 0$ and $h\in(0,h_0]$ be fixed. Using a conditional version of Fubini's theorem we compute
    \begin{multline}
        \tfrac{1}{p+1} \E \Big[ \big(\StoppDiam\vee t\vee\tau - t\vee\tau \big)^{p+1}\ \Big|\ \F_{t\vee\tau} \Big] = \E \Big[ \int_{0}^{\infty} \1_{\{\StoppDiam > s \geq t\vee\tau\}} \big( \StoppDiam\vee t\vee\tau - s)^p\de s\ \Big|\ \F_{t\vee\tau} \Big] \\
        = \int_{0}^{\infty} \E \Big[  \1_{\{\StoppDiam > s \geq t\vee\tau\}} \E \big[ ( \StoppDiam\vee s - s)^p\ \big|\ \F_{s} \big] \ \Big|\ \F_{t\vee\tau} \Big] \de s .
    \end{multline} 
    With an elementary property\footnote{
     	Let $\Stopp_1, \Stopp_2$ be a.s. finite $(\Omega, \F, (\F_t)_t)$-stopping times. Then 
     	\begin{equation}
     		\E \big[ 1_{\{\Stopp_1<\Stopp_2\} }X \ \big|\ \F_{\Stopp_1\vee\Stopp_2} \big] = \E \big[ 1_{\{\Stopp_1<\Stopp_2 \} }X\ \big|\ \F_{\Stopp_2} \big]\quad\text{for all } X\in L^2 (\Omega,\F,\prob).
     	\end{equation}
        Let $X\in L^2 (\Omega,\F,\prob)$. Set $A\defined\{\Stopp_1<\Stopp_2\}$ and define $\de\Q\defined\1_A\de\prob$. Then $\1_A X\in L^2(\Omega\cap A,\F\cap A,\Q)$ with the trace sigma algebra $\F\cap A\defined\{A\cap B\ |\ B\in\F\}$. 
        One checks that $\F_{\Stopp_1\vee\Stopp_2}\cap A = \F_{\Stopp_2}\cap A$. Consequently $ \E^\prob[\1_A X\ |\ \F_{\Stopp_1\vee\Stopp_2}] \in L^2(\F_{\Stopp_1\vee\Stopp_2}\cap A,\Q)=L^2(\F_{\Stopp_2}\cap A,\Q)$ 
        and thus $\E^\prob[ \1_A X\ |\ \F_{\Stopp_1\vee\Stopp_2}] \in L^2(\Omega, \F_{\Stopp_2},\prob)$. \label{footnote:ConditionalExpectationMaxStoppTimes}
    }
    we can apply the bound for $p$ on $\{s\geq t\vee\tau\}$ to obtain 
    \begin{align}
        \1_{\{s\geq t\vee\tau\}} \E \big[ ( \StoppDiam\vee s - s)^p\ \big|\ \F_{s} \big] = \1_{\{s\geq t\vee\tau\}} \E \big[ ( \StoppDiam\vee s\vee\tau - s\vee\tau)^p\ \big|\ \F_{s\vee\tau} \big] \leq p!\ \big(8\tfrac{D^2}{d}\big)^p
    \end{align}
    which together with \eqref{eqn:ConditionalFirstMomentBound} and Lemma~\ref{lemma:RepresentationConditionalExpectation} allows us to further estimate the $(p+1)$-th moment;
    \begin{align}
        \E \Big[ \big(\StoppDiam\vee t\vee\tau - t\vee\tau \big)^{p+1}\ \Big|\ \F_{\tau} \Big] & \leq (p+1)!\ \big(8\tfrac{D^2}{d}\big)^p \  \int_{0}^{\infty} \prob \big[ \StoppDiam > s \geq t\vee\tau\ \big|\ \F_{\tau} \big] \de s \\
        & = (p+1)!\ \big(8\tfrac{D^2}{d}\big)^p \ \E \big[ \StoppDiam\vee t\vee\tau - t\vee\tau \big|\ \F_{\tau} \big] \\
        & \leq (p+1)!\ \big(8\tfrac{D^2}{d}\big)^{p+1}. \qedhere
    \end{align}
\end{proof}

\section{Application to decoupled Markovian FBSDE}\label{sec:Application_EM}

Let $\domain\subset\R^d$ be a bounded domain. Fix $x\in\domain$ and consider the decoupled FBSDE
\begin{align}\label{eqnFBSDE}
	X_t^{x} & = x + \int_0^t \mu(X_s^{x})\de s + \int_{0}^t \sigma (X_s^{x}) \de W_s  \notag \\
	Y_t^{x} & = g(X_\tau^{x}) + \int_{t\wedge\tau}^{\tau} f(X_s^{x},Y_s^{x}) \de s - \int_{t\wedge\tau}^{\tau} Z_s^{x} \de W_s \qquad t \geq 0
\end{align}
where $\tau^{x} \defined \inf \{ t\geq 0 \, |\, X_t^{x} \notin \domain \}$. The forward equation is approximated with a classical Euler-Maruyama scheme: For $\X_{0} \defined x$ consider (subsequently the starting point is omitted);
\begin{equation}\label{def:EulerMaruyamaSequence}
	\X_{t+h} \defined \X_{t} + \mu\big( \X_{t} \big) h + \sigma (\X_{t}) (W_{t+h}- W_{t}) \qquad t\in\unboundedTimeGrid
\end{equation}
and set $\StoppDiscrete \defined \inf \{ t\in\pi \,|\, \X_t \notin \domain \}$. We consider the discrete-time approximation of the backward equation in \eqref{eqnFBSDE} (following the general setting \eqref{def:FixpointScheme}) as the unique solution (in the sense of Section~\ref{sec:Existence}) of the fixed-point scheme
\begin{equation}\label{def:FBSDE_Disretization}
	\Y_{t} \defined \1_{\{t \geq \StoppDiscrete\}} g(\X_{\StoppDiscrete}) + \1_{\{t < \StoppDiscrete \}} \big( \E_{t} [\Y_{t+h}] + h f (\X_{t}, \Y_{t}) \big) \quad t\in\unboundedTimeGrid 
\end{equation}
\begin{sde_assumption}\label{sde:assumption}
    The maps $\mu\colon \R^d \to \R^d$ and $\sigma\colon\R^d \to \R^{d,d}$ are Lipschitz continuous (with constants $\lipschitz{\mu}$ resp. $\lipschitz{\sigma}$) and have compact support.
    \close
\end{sde_assumption}
\begin{fbsde_BSDE_assumption}\label{fbsde_bsde:assumption}
    Let $f\colon\R^d\times\R\to\R$ be jointly Lipschitz with constant $\lipschitz{f}>0$ and assume 
    $\sup_{x\in\R^d} |f(x,0)| < \infty$.
    Moreover $g\colon\R^d\to\R$ is Lipschitz with constant $\lipschitz{g}>0$ and bounded. \close
\end{fbsde_BSDE_assumption}
Throughout this section we consider the Euler-Maruyama scheme \eqref{def:EulerMaruyamaSequence} for stepsizes $h\in (0,h_0]$ where we consistently assume $h_0$ satisfies \hyperref[stepsize:assumption]{(h)}.
Similarly to \hyperref[exp:assumption]{(R-q)} in Section~\ref{sec:BSDE_ErrorBoundMain} we postulate an assumption on the existence of positive exponential moments.
\begin{domain_ratio_assumption}\label{domain_ratio:assumption}
    Let $q_1 \geq 2$ and $q_2 \geq 2$ be given parameters. Assume there is $\rho>0$ such that
    \begin{equation}
        \rho > \max \big\{ q_1d(6\lipschitz{\mu}+3q_1\lipschitz{\sigma}^2) , 4q_2 \lipschitz{f} \big\} 
        \quad\text{and}\quad 
        \exp(\rho\tau) \in L^1
        \ \ \text{and}\ \
        \sup_{ h\in(0,h_0] } \exp ( \rho \StoppDiscrete ) \in L^1 \closeEqn
    \end{equation}
\end{domain_ratio_assumption}
Within the next result we collect well-know results from literature and furthermore connect the assumptions of the present section to Section~\ref{sec:BSDE_ErrorBoundMain}.
\begin{proposition}
    Assume \hyperref[sde:assumption]{(SDE)}, \hyperref[fbsde_bsde:assumption]{(BSDE)}, \hyperref[stepsize:assumption]{(h)} and that \hyperref[domain_ratio:assumption]{($\domain$-R)} holds with $q_1,q_2\geq 2$. Then the FBSDE has a unique solution $(X,Y,Z)$ in 
    $\mathcal{S}^2 (\tau) \times \mathcal{M}^2 (\tau,\R) \times \mathcal{M}^2 (\tau,\R^d)$.\footnote{
        With notation 
        $\mathcal{S}^p (\tau) \defined \{ X\ |\ \E[ \sup_{s\in [0,\tau]} |X_s|^p ] < \infty \}$ 
        and 
        $\mathcal{M}^2 (\tau,\R^d) \defined \{ X\ |\ \E [ \int_0^\tau \exp(\rho s) \|X\|^2 \de s ] < \infty \}$
    }
    Furthermore the Assumptions \hyperref[bsde:assumption]{(F)}, \hyperref[discrete_f:assumption]{($\unboundedTimeGrid$-F)}, \hyperref[discrete_xi:assumption]{($\unboundedTimeGrid$-T)} hold for all $p\geq 2$ and \hyperref[exp:assumption]{(R-q)} holds with $q=q_2$.
    \close
\end{proposition}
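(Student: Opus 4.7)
The plan is to verify the three assertions in sequence — (i) well-posedness of the FBSDE in the stated spaces, (ii) compatibility with the abstract BSDE assumptions \hyperref[bsde:assumption]{(F)}, \hyperref[discrete_f:assumption]{($\unboundedTimeGrid$-F)} and \hyperref[discrete_xi:assumption]{($\unboundedTimeGrid$-T)} for every $p\geq 2$, and (iii) the exponential moment assumption \hyperref[exp:assumption]{(R-q)} with $q=q_2$. Most of the work will be an unpacking of definitions once a standard existence theorem for BSDEs with unbounded random terminal time has been invoked.

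For (i): \hyperref[sde:assumption]{(SDE)} furnishes globally Lipschitz coefficients with compact support; in particular $\mu,\sigma$ are bounded. Classical SDE theory then produces a unique strong solution $X$, and the boundedness of the coefficients yields that $\sup_{s\geq 0}|X_s|$ possesses moments of every order, so a fortiori $X\in\mathcal{S}^2(\tau)$. For the backward equation the terminal condition $g(X_\tau)$ is bounded by $\|g\|_\infty$, the generator is $y$-Lipschitz with constant $\lipschitz{f}$, and \hyperref[domain_ratio:assumption]{($\domain$-R)} supplies $\rho>4q_2\lipschitz{f}\geq 8\lipschitz{f}$ together with $\exp(\rho\tau)\in L^1$. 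I will invoke the standard existence and uniqueness result for Lipschitz BSDEs with unbounded random terminal time (cf.\ \cite{PardouxRandomTimeBSDE, briand2003lp, royer2004bsdes}) to obtain the unique pair $(Y,Z)\in\mathcal{M}^2(\tau,\R)\times\mathcal{M}^2(\tau,\R^d)$; the weighted integrability follows from the usual It\^o-with-exponential-weight estimate applied to $\exp(\rho s)|Y_s|^2$, which converts the fixed-point problem into a contraction on the weighted $\mathcal{M}^2$ space.

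For (ii) I verify the three assumptions by direct substitution. Regarding \hyperref[bsde:assumption]{(F)}: $s\mapsto f(X_s,Y_s)$ is progressive as a composition of a continuous function with a progressive pair, is Lipschitz in $y$ with constant $\lipschitz{f}$ by \hyperref[fbsde_bsde:assumption]{(BSDE)}, and $\int_0^\tau f(X_s,0)^p\,\de s\leq (\sup_x|f(x,0)|)^p\,\tau\in L^1$, the latter because \hyperref[domain_ratio:assumption]{($\domain$-R)} forces $\tau$ to have polynomial moments of every order. Regarding \hyperref[discrete_f:assumption]{($\unboundedTimeGrid$-F)}: set $\fd{\unboundedTimeGrid}(t,y)\defined f(\X_t,y)$, whose adaptedness is immediate from \eqref{def:EulerMaruyamaSequence}, whose Lipschitz constant is $\lipschitz{f}$ by \hyperref[fbsde_bsde:assumption]{(BSDE)}, and whose uniform bound $\sup_{h\in(0,h_0]}\sup_{t\in\unboundedTimeGrid}|\fd{\unboundedTimeGrid}(t,0)|\leq\sup_{x\in\R^d}|f(x,0)|<\infty$ is deterministic, so the required $L^p$-control is trivial for every $p\geq 2$. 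Regarding \hyperref[discrete_xi:assumption]{($\unboundedTimeGrid$-T)}: $\bar{\xi}=g(\X_{\StoppDiscrete})$ is $\F_{\StoppDiscrete}$-measurable and uniformly bounded by $\|g\|_\infty$ over all admissible stepsizes.

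Finally, for (iii) the conclusion is immediate: by \hyperref[domain_ratio:assumption]{($\domain$-R)} we have $\rho>4q_2\lipschitz{f}$, $\exp(\rho\tau)\in L^1$, and $\sup_{h\in(0,h_0]}\exp(\rho\StoppDiscrete)\in L^1$, which is literally \hyperref[exp:assumption]{(R-q)} with $q=q_2$. The only genuinely nontrivial step will be the invocation of BSDE well-posedness under an unbounded random terminal time — this is where the specific form of the exponential moment from \hyperref[domain_ratio:assumption]{($\domain$-R)} plays its essential role — while all remaining verifications reduce to classical SDE theory, the a priori boundedness of $f(\cdot,0)$ and $g$, or to unwinding definitions.
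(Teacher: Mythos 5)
Your proposal is correct and follows essentially the same route as the paper: cite classical SDE theory and the Pardoux--Peng random-terminal-time existence theorem for part (i), then verify (F), ($\unboundedTimeGrid$-F) and ($\unboundedTimeGrid$-T) by identifying the generator with $(t,y)\mapsto f(X_t,y)$ resp.\ $f_{\unboundedTimeGrid}(t,\cdot)\defined f(\X_t,\cdot)$ and exploiting the boundedness of $f(\cdot,0)$ and $g$, with (R-q) holding by the very choice of $\rho$ in ($\domain$-R). The paper's proof is merely terser; your added detail on the weighted-$\mathcal{M}^2$ contraction and the polynomial moments of $\tau$ is just an unpacking of the cited results.
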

\begin{proof}
    Regarding the solution $X$ of the SDE we refer to e.g. \cite[Theorem 5.2.5]{KaratzasShreve} and for the solution of the BSDE see e.g. \cite[Theorem 3.4]{PardouxRandomTimeBSDE}.
    Notice since $x\mapsto f(x,0)$ is bounded \hyperref[bsde:assumption]{(F)} holds for all $p\geq 2$ if we identify the generator of Section~\ref{sec:BSDE_ErrorBoundMain} with the map $(t, Y_t) \mapsto f(X_t, Y_t)$. Similarly we identify $ \fd{\unboundedTimeGrid} (t, \cdot) \defined f(\X_t, \cdot) $  to conclude \hyperref[discrete_f:assumption]{($\unboundedTimeGrid$-F)} is satisfied, and \hyperref[discrete_xi:assumption]{($\unboundedTimeGrid$-T)} holds as desired since $g$ is bounded. Finally by choice \hyperref[exp:assumption]{(R-q)} holds with $q=q_2$.
\end{proof}
Our main result Theorem~\ref{thm:BSDE_MainResult} yields the following error bound.
\begin{theorem}\label{thm:FBSDE_ErrorBound}
    Assume \hyperref[sde:assumption]{(SDE)}, \hyperref[fbsde_bsde:assumption]{(BSDE)}, \hyperref[stepsize:assumption]{(h)} and \hyperref[domain_ratio:assumption]{($\domain$-R)} with $q_1 = 64 $ and $q_2 = 8$. 
    Assume \eqref{eqnFBSDE} has a solution $(Y,Z) \in \mathcal{S}^8\times\mathcal{H}^8$. If the process $Y$ satisfies Komogorov's tightness criterion with a constant rate $\tfrac{1}{2}$ for all $p\geq 2$ (see \eqref{def:KolmogorovCriterionRatio}).\footnote{
        E.g. this is the case if $\domain$ is bounded and its boundary $\partial\domain$ is of class $\HoelderSpace{2}{\gamma}$ for some $\gamma\in(0,1)$, i.e.\ $\partial\domain$ admits a representation via maps of class $\HoelderSpace{2}{\gamma}$.
        Furthermore $\mu\colon\R^d\rightarrow\R^d$ is of class $\C^1$, $\sigma\colon\R^d\rightarrow\R^{d\times d}$ is of class $\C^2$, and $\sigma\sigma^\top$ is uniformly elliptic on $\domain$, i.e. the matrix $\sigma\sigma^\top (x)$ is positive definite for each $x\in\domain$ and the eigenvalues are uniformly bounded away from zero; see \cite[p.31]{GilbargTrudinger}.
        Moreover $f\colon\R^d\times\R\rightarrow\R$ is of class $\C^{1,2}$ and there is a constant $C \geq 0$ such that
        $\sign (u) f(x,u) \leq C, (x,u)\in\domain\times\R.$
        Finally $g\colon\R^d\rightarrow\R^d$ is of class $\HoelderSpace{2}{\gamma}(\overline{\domain})$ for some $\gamma\in (0,1)$. Then the associated elliptic Dirichlet problem has a unique solution $u\in\HoelderSpace{2}{\gamma}$, see e.g. \cite{GilbargTrudinger} or \cite[Proposition 3.1]{Schlegel2024ProbabilisticShape}. Now set $Y \defined u(X)$ and $Z\defined \sigma(X)^\top \Diff u(X)$.
    }
    Then there is a constant $C>0$ such that
    \begin{equation}
        \E \Big[ \sup_{\ell\in\unboundedTimeGrid} \sup_{t\in[\ell,\ell+h]} \big| Y_t - \Y_{\ell} \big|^2 \Big]
        \leq C \Big( h^{\frac{1}{8}} 
        + \E \big[\ |\tau-\StoppDiscrete|^{8} \big]^\frac{1}{8} 
        + \E \big[\ |\tau-\StoppDiscrete|^{12} \big]^\frac{1}{8}
        + \E \big[\ |\tau-\StoppDiscrete|^{4} \big]^\frac{1}{8} \Big) \closeEqn
    \end{equation}
\end{theorem}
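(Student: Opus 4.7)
The plan is to invoke Theorem~\ref{thm:Kolmogorov_BSDE_Error_Bound} after identifying the BSDE generator and terminal condition as $(t,y)\mapsto f(X_t,y)$ and $\xi=g(X_\tau)$, and the discrete counterparts as $\fd{\unboundedTimeGrid}(t,y)=f(\X_t,y)$ and $\bar\xi=g(\X_{\StoppDiscrete})$. Then I would specialize each residual error term to the Markovian setting via the Euler--Maruyama strong convergence rate and the H\"older regularity of $X$ and $\X$ near the random exit.

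Assumption~\hyperref[domain_ratio:assumption]{($\domain$-R)} with $q_2=8$ implies \hyperref[exp:assumption]{(R-q)} with $q=8$, and the boundedness of $g$ and of $f(\cdot,0)$ furnishes \hyperref[discrete_f:assumption]{($\unboundedTimeGrid$-F)} and \hyperref[discrete_xi:assumption]{($\unboundedTimeGrid$-T)} for all $p\geq 2$. Kolmogorov's criterion with $\alpha=\tfrac12$ lets me choose $\varepsilon>0$ so that the Kolmogorov-derived term $h^{2(\alpha-\varepsilon)}$ is at most $h^{1/8}$. Theorem~\ref{thm:Kolmogorov_BSDE_Error_Bound} then reduces the claim to bounding
\[
\mathrm{(I)}=\E\bigl[e^{6\lipschitz{f}(\tau\vee\StoppDiscrete)}|\xi-\bar\xi|^4\bigr]^{1/2},\quad
\mathrm{(II)}=\E\bigl[e^{6\lipschitz{f}(\tau\vee\StoppDiscrete)}\theGronwallV^2|\tau-\StoppDiscrete|^2\bigr]^{1/2},
\]
\[
\mathrm{(III)}=\E\Bigl[e^{6\lipschitz{f}(\tau\vee\StoppDiscrete)}\bigl(\textstyle\int_0^\tau\Delta F_s^2\,ds\bigr)^2\Bigr]^{1/2}.
\]

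The key Markovian reduction proceeds via the triangle inequality routed through $\X_{\tau\vee\StoppDiscrete}$: namely $|X_\tau-\X_{\StoppDiscrete}|\leq|X_\tau-X_{\tau\vee\StoppDiscrete}|+|X_{\tau\vee\StoppDiscrete}-\X_{\tau\vee\StoppDiscrete}|+|\X_{\tau\vee\StoppDiscrete}-\X_{\StoppDiscrete}|$. The first and third summands are estimated by the random H\"older coefficients of $X$ and of the Euler--Maruyama interpolation $\X$ (Appendix~\ref{sec:HoelderCoeff}) multiplied by $|\tau-\StoppDiscrete|^{1/2-\varepsilon}$, while the middle summand is controlled by the classical Euler--Maruyama strong error (Appendix~\ref{sec:StrongEulerBounds_BGG}) of order $h^{1/2}$; the stopping horizon is absorbed because $\sup_{h}\exp(\rho(\tau\vee\StoppDiscrete))\in L^1$ with $\rho$ as in Assumption~\hyperref[domain_ratio:assumption]{($\domain$-R)}. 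A parallel decomposition $|X_s-\X_{\phi(s)}|\leq|X_s-X_{\phi(s)}|+|X_{\phi(s)}-\X_{\phi(s)}|$ together with $\Delta F_s\leq\lipschitz{f}|X_s-\X_{\phi(s)}|$ bounds the integrand of $\mathrm{(III)}$ analogously. Iterated Cauchy--Schwarz, separating (a) the exponential weight, (b) the $L^p$-norms of the H\"older coefficients and of the Euler error, and (c) the powers of $|\tau-\StoppDiscrete|$, then yields $\mathrm{(I)}\lesssim h^{1/8}+\E[|\tau-\StoppDiscrete|^{12}]^{1/8}$, $\mathrm{(II)}\lesssim\E[|\tau-\StoppDiscrete|^8]^{1/8}$ (using $\theGronwallV\in L^4$), and $\mathrm{(III)}\lesssim h^{1/8}+\E[|\tau-\StoppDiscrete|^4]^{1/8}$.

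The main obstacle is the bookkeeping in this last step: every Cauchy--Schwarz either doubles the exponent on the exponential weight, eating into the budget set by $\rho$ in Assumption~\hyperref[domain_ratio:assumption]{($\domain$-R)}, or halves the exponent on $|\tau-\StoppDiscrete|$. The precise assignment of exponents that keeps every factor integrable is exactly what forces $q_1$ to be as large as $64$ (in order to simultaneously accommodate the fourth powers in $\mathrm{(I)}$, the squared H\"older coefficients of $X$ and $\X$, and the exponential weight), and it produces the three distinct moment orders $8,12,4$ of $|\tau-\StoppDiscrete|$ appearing in the final bound.
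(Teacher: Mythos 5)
Your proposal follows essentially the same route as the paper: reduce to the general strong bound derived from Theorem~\ref{thm:BSDE_MainResult} (the paper passes through Corollary~\ref{corollary:MainResults_AfterHoelder} and the intermediate Theorem~\ref{thm:MarkovianFBSDE_StrongBound} with $\varepsilon=\tfrac14$ rather than directly through Theorem~\ref{thm:Kolmogorov_BSDE_Error_Bound}, but this only changes the stage at which the Cauchy--Schwarz on the exponential weight is performed), and then specialize the residual terms via the Euler--Maruyama strong error at a stopping time together with global H\"older-coefficient bounds between the two exit times, exactly as in Proposition~\ref{prop:MarkovianBoundinTheBounds}. Your triangle inequality through $X_{\tau\vee\StoppDiscrete}$ and $\Xc_{\tau\vee\StoppDiscrete}$ is a harmless variant of the paper's two-term split $\|X_\tau-\Xc_\tau\|+\|\Xc_\tau-\Xc_{\StoppDiscrete}\|$ in Proposition~\ref{prop:EM_Strong_DifferentStopp_Bound}.

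The one place where your accounting diverges from what the decomposition actually produces is the attribution of the moment orders $12$ and $4$. The $\Delta F$ term carries \emph{no} dependence on $|\tau-\StoppDiscrete|$ whatsoever: by Lemma~\ref{lemma:TheDelta_F_Bound} it is a pure $h$-rate term of order $h^{2(1/2-\varepsilon)}$, so your claim that $\mathrm{(III)}\lesssim h^{1/8}+\E[|\tau-\StoppDiscrete|^4]^{1/8}$ is true but the second summand is superfluous. Both the $12$- and the $4$-moment of $|\tau-\StoppDiscrete|$ in fact arise from the terminal-condition term $\mathrm{(I)}$: the difference $\Xc_\tau-\Xc_{\StoppDiscrete}$ splits into a drift integral (Kolmogorov ratio $1$, producing $|\tau-\StoppDiscrete|^{2q(1-\varepsilon)}$, i.e.\ the exponent $12$ at $q=2$, $\varepsilon=\tfrac14$) and a stochastic integral (ratio $\tfrac12$, producing $|\tau-\StoppDiscrete|^{2q(1/2-\varepsilon)}$, i.e.\ the exponent $4$). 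Your stated bound on $\mathrm{(I)}$ omits this second contribution, so as written the per-term estimates do not add up to the theorem's right-hand side even though all three moment orders happen to appear somewhere in your final list. Since you yourself flag the exponent bookkeeping as the main obstacle and do not carry it out, this is the step that would need to be redone carefully; the architecture of the argument is otherwise the paper's own.
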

This is a direct consequence of Theorem~\ref{thm:MarkovianFBSDE_StrongBound} for $\varepsilon = \frac{1}{4}$; the result is located at the end of this section. Beforehand are several remarks motivating our choice of presentation and to some extend outlining possible directions for future research.
\begin{remark}[Lipschitz Generator]
    In Literature see e.g. \cite{briand2003lp, BriandHuHomogenization, PardouxRandomTimeBSDE, Peng1991, royer2004bsdes} the Lipschitz assumption on the generator $f$ has been weakened to a one-sided monotonicity condition of the form
    \begin{equation}
        (y_1 - y_2) \big( f(x,y_1) - f(x,y_2) \big) \leq -c |y_1-y_2|^2 \quad\text{for all } y_1,y_2 \in \R, x\in\R^d
    \end{equation}
    for some $c\in\R$.
    To ensure the existence of a unique solution of the BSDE (among other conditions) there has to be $\rho > -2c$ satisfying $\E [ \exp(\rho \tau) g(X_\tau) ] < \infty $. With regards to possible applications the case $c\geq 0$ is obviously of greater interest.
    
    Nevertheless in our studies we encountered that in order to ensure existence of a solution of the discrete-time BSDE approximation \eqref{def:FixpointScheme} the Lipschitz property is unavoidable in Lemma~\ref{lemma:T_Contraction}. We further employ the Lipschitz continuity at several other steps in this manuscript where it is possible to weaken the requirement to a one-sided monotonicity condition. \close
\end{remark}
\begin{remark}[Temporal Cut-Off]\label{remark:Temporal_CutOff}
    Combining the observations from Section~\ref{sec.ExponentialMoments} about the existence of positive exponential moments with the overall framework, which is presented under the Assumptions \hyperref[exp:assumption]{(R-q)} resp. \hyperref[domain_ratio:assumption]{($\domain$-R)}, a sufficient condition is the following ratio between the Lipschitz constants and the radius of the ball for the discrete-time Brownian motion;
    \begin{equation}\label{def:LipschitzDomainRatio_}
        \max \big\{ q_1d(6\lipschitz{\mu}+3q_1\lipschitz{\sigma}^2) , 4q_2 \lipschitz{f} \big\} < \tfrac{1}{8} \tfrac{d}{D^2}
    \end{equation}
    For given parameters $D, \alpha$ see \eqref{def:StoppDiam} we set $\DiscreteTerminal \defined \StoppDiscrete \wedge \StoppDiam$ and assume \hyperref[sde:assumption]{(SDE)}, \hyperref[fbsde_bsde:assumption]{(BSDE)}, \hyperref[stepsize:assumption]{(h)} and \hyperref[stepsize_exp:assumption]{(h')}. Then \eqref{def:LipschitzDomainRatio_} is sufficient to ensure that the uniform discrete-time positive exponential moments as assumed in \hyperref[domain_ratio:assumption]{($\domain$-R)} exist with the same parameters $q_1, q_2$. Particularly we require $(q_1,q_2) = (64,8)$ for the classic error bound as presented in Theorem~\ref{thm:MarkovianFBSDE_ClassicBound} (see also Theorem~\ref{thm:BSDE_MainResult_no_SUPREMUM}) and $(q_1,q_2) = (32,4)$ for the strong error bound as presented in Theorem~\ref{thm:FBSDE_ErrorBound} (see also Theorem~\ref{thm:BSDE_MainResult} and Theorem~\ref{thm:MarkovianFBSDE_StrongBound}).

    In general (especially in the case $\mu\neq 0$) there is no finite diameter $D>0$ such that $\StoppDiscrete \leq \StoppDiam$ and thus this sufficient approach has no general assurance to provide a terminal condition $g(\X_{\DiscreteTerminal})$ where $\X_{\DiscreteTerminal} \notin \domain$. Nonetheless using global Hölder coefficient bounds we control the spatial deviation by the $L^p$-distances of the terminal conditions yielding the error bound as presented in Theorem~\ref{thm:FBSDE_ErrorBound} which is merely depending on a time difference. With the usual elementary bound we have
    \begin{equation}\label{eqn:SplittTheBoundIntoExitDistances}
        \E \big[\ |\tau-\DiscreteTerminal|^{p}\ \big] \leq 2^{p-1} \E \big[\ |\tau-\StoppDiscrete|^{p}\ \big] + 2^{p-1} \E \big[\ |\StoppDiscrete-\StoppDiam|^{p}\ \big] 
    \end{equation}
    where the rate of the first summand in the bound is addressed within \cite[Theorem 3.9]{BGG_forward_exit}.
    On the other hand to the best of our knowledge the second summand is no yet investigated in literature and given the substantial length of this article remains for future work. \close    
\end{remark}
\begin{remark}[Uniform First Moment Bound]
    A different possibility to obtain a sufficient condition for the Assumptions \hyperref[exp:assumption]{(R-q)} resp. \hyperref[domain_ratio:assumption]{($\domain$-R)} is the generalization of Lemma~\ref{lemma:FreidlinTypeBound} from the discrete-time Brownian motion to a general Euler-Maruyama scheme. The remaining results in Section~\ref{sec.ExponentialMoments} can then be obtained analogously. This analysis is outside of the scope of this article. We mention the pioneering work \cite{BGG_forward_exit} where such considerations were already done in a considerably more general framework. The paper requires \cite[Assumption (L)]{BGG_forward_exit} (see also \cite[(3.4) on p.1653]{BGG_forward_exit} for the specific formulation regarding the Euler-Maruyama scheme); the authors assume there is a constant $L\geq 0$ such that 
    \begin{equation}\label{eqn:BGG_Assumption_L}
        \E \big[ \theta^\unboundedTimeGrid (\Stopp) - \Stopp \ \big|\ \F_{\Stopp} \big] \leq L \quad\text{for all finite stopping times } \Stopp
    \end{equation}
    where $\theta^\unboundedTimeGrid (\Stopp)$ denotes the first discrete-time exit from $\domain$ after $\Stopp$. In that case \cite[Lemma 2.8]{BGG_forward_exit} provides existence of exponential moments for scalars $\rho \in [0,L^{-1})$, i.e. $\exp (c(\theta^\unboundedTimeGrid (\Stopp) - \Stopp)) \in L^1$. There are equivalent characterizations for \eqref{eqn:BGG_Assumption_L} see \cite[Proposition 2.2 and Lemma A.4]{BGG_forward_exit} or \cite[Lemma III.3.1]{Freidlin1985}.

    Within this framework one would change the upper bound of \eqref{def:LipschitzDomainRatio_} to $L^{-1}$. Thus in each application it does crucially depend to derive smallest possible $L\geq 0$. Concatenating this with Theorem~\ref{thm:FBSDE_ErrorBound} and \cite[Theorem 3.9]{BGG_forward_exit} yields then the rate
    \begin{equation}
        \E \Big[ \sup_{\ell\in\unboundedTimeGrid} \sup_{t\in[\ell,\ell+h]} \big| Y_t - \Y_{\ell} \big|^2 \Big] \leq C h^\frac{1}{16}
    \end{equation}
    One may expect the rate to be of order $\tfrac{1}{2}$. The decay to $\tfrac{1}{16}$ is due to the absence of a result about the behavior of optimal exponents $\alpha_p >0$ such that $\E [|\tau-\StoppDiscrete|^p] \leq c_p h^{\alpha_p}$. The seminal work \cite{BGG_forward_exit} regarding general exit times provides $\alpha_p = \tfrac{1}{2}$  for all $p\in\nat$. This exponent is optimal for $p=1$, see also \cite[Theorem 3.1]{Bouchard_strong_BSDE_on_domain}. For $p>1$ the problem remains unresolved. \close 
\end{remark}
\begin{remark}[Surrogate for Forward Scheme Convergence Analysis]
    We regard our proposed scheme for discrete-time approximation of a FBSDE as a theoretical construct which should not be numerically simulated. Instead it is supposedly supportive for the convergence analysis of numerical algorithms for Markovian FBSDEs with random (unbounded) terminal times. In view of Remark~\ref{remark:Temporal_CutOff} such algorithms e.g. may be conditioned on $\{ \StoppDiscrete < \StoppDiam \}$ and circumvent the additional terms in the error bound. \close    
\end{remark}

\paragraph*{Discussion of the Bound's Application}
Subsequently we further refine the error bounds as given in Corollary\ref{corollary:MainResults_AfterHoelder}, specifically exploiting the properties of the forward diffusion and its Euler-Maruyama approximation. The bounds in Theorem~\ref{thm:MarkovianFBSDE_StrongBound} and~\ref{thm:MarkovianFBSDE_ClassicBound} correspond to both errors defined in \eqref{def:BSDE_Errors} for the case of a decoupled FBSDE. As a direct consequence we obtain the illustrative result Theorem~\ref{thm:FBSDE_ErrorBound}.

We introduce notation for a general bounding term
\begin{multline}\label{def:MarkovianSettingGeneralBound}
    \mathcal{B} (q; X,Y ) \defined
    \E \Big[ \big| g(X_\tau) - g(\X_{\StoppDiscrete}) \big|^{4q} \Big]^\frac{1}{2q} + \E \big[\ |\tau-\StoppDiscrete|^{4q} \big]^\frac{1}{4q} \\
    + \E \bigg[ \Big(\int_{0}^{\tau} \big( f(X_s,Y_s) - f(\X_{\phi(s)}, Y_s) \big)^2 \de s\Big)^{2q} \bigg]^\frac{1}{2q} \quad\text{for } q\geq 1 
\end{multline}
\begin{proposition}[{Bounding the Bounds}]\label{prop:MarkovianBoundinTheBounds}
    Let $q\geq 1$. Assume \hyperref[sde:assumption]{(SDE)}, \hyperref[fbsde_bsde:assumption]{(BSDE)}, \hyperref[stepsize:assumption]{(h)} and \hyperref[domain_ratio:assumption]{($\domain$-R)} with $q_1 = 32 q$ and $q_2 \geq 0$. 
    Then for every $\varepsilon >0$ there is $C >0$ (not depending on $h$) such that
    \begin{equation}
        \mathcal{B} (q; X,Y ) 
        \leq C h^{2(\frac{1}{2} - \varepsilon)} 
        + C \E \big[\ |\tau-\StoppDiscrete|^{4q} \big]^\frac{1}{4q} 
        + C \E \big[\ |\tau-\StoppDiscrete|^{8q (1-\varepsilon) } \big]^\frac{1}{4q}
        + C \E \big[\ |\tau-\StoppDiscrete|^{8q (\frac{1}{2}-\varepsilon) } \big]^\frac{1}{4q} \closeEqn
    \end{equation}
\end{proposition}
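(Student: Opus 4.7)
The quantity $\mathcal{B}(q;X,Y)$ is a sum of three summands; the middle summand $\E[|\tau-\StoppDiscrete|^{4q}]^{1/(4q)}$ already appears verbatim on the right-hand side and requires no work, so the task reduces to estimating the terminal-value summand $\E[|g(X_\tau)-g(\X_{\StoppDiscrete})|^{4q}]^{1/(2q)}$ and the generator-integral summand. Both estimates rest on the Lipschitz continuity of $f$ and $g$ from Assumption~\hyperref[fbsde_bsde:assumption]{(BSDE)} combined with the two strong-convergence tools already in the paper: the classical Euler--Maruyama bound of Appendix~\ref{sec:StrongEulerBounds_BGG} and the stopping-time-displacement result of Appendix~\ref{sec:HoelderCoeff}, which rests on \cite[Corollary 1.3]{Seifried2024HoelderMoments} and controls $L^p$-distances of a process evaluated at two stopping times in terms of $L^p$-distances of those stopping times, at the price of losing an arbitrarily small $\varepsilon>0$ in the H\"older exponent.

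\textbf{The $g$-piece.} I would use Lipschitz continuity of $g$ followed by the triangle inequality $|X_\tau-\X_{\StoppDiscrete}|\leq|X_\tau-X_{\StoppDiscrete}|+|X_{\StoppDiscrete}-\X_{\StoppDiscrete}|$ and treat the two pieces separately. The Euler--Maruyama deviation at the stopping time $\StoppDiscrete$ is controlled by Appendix~\ref{sec:StrongEulerBounds_BGG} and contributes at worst a rate of order $h$, which is dominated by $h^{2(1/2-\varepsilon)}$. For the displacement $|X_\tau-X_{\StoppDiscrete}|$ I would split $X$ into its drift and diffusion integrals $\int_{\tau\wedge\StoppDiscrete}^{\tau\vee\StoppDiscrete}\mu(X_s)\,\de s$ and $\int_{\tau\wedge\StoppDiscrete}^{\tau\vee\StoppDiscrete}\sigma(X_s)\,\de W_s$ and apply the displacement lemma of Appendix~\ref{sec:HoelderCoeff} separately to each: the Lebesgue integral has Kolmogorov exponent $\alpha=1$ thanks to the boundedness of $\mu$, producing $\E[|\tau-\StoppDiscrete|^{8q(1-\varepsilon)}]^{1/(4q)}$ after the $\varepsilon$-loss of Seifried's criterion and one Cauchy--Schwarz step against the random coefficient; the stochastic integral has Kolmogorov exponent $\alpha=1/2$ and produces $\E[|\tau-\StoppDiscrete|^{8q(1/2-\varepsilon)}]^{1/(4q)}$ analogously. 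The required moments of the random H\"older coefficients and of $\sup_s|X_s|$ are finite thanks to the exponential-moment threshold $q_1=32q$ demanded by~\hyperref[domain_ratio:assumption]{($\domain$-R)}.

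\textbf{The $f$-integral piece.} Lipschitz continuity of $f$ in its first argument reduces this summand to $\E[(\int_0^\tau|X_s-\X_{\phi(s)}|^2\,\de s)^{2q}]^{1/(2q)}$. A Jensen step promotes $(\int_0^\tau\,\cdot\,\de s)^{2q}\leq\tau^{2q}\sup_{s\leq\tau}|X_s-\X_{\phi(s)}|^{4q}$; Cauchy--Schwarz then isolates the finite factor $\E[\tau^{4q}]^{1/2}$ (finite by~\hyperref[domain_ratio:assumption]{($\domain$-R)} together with Lemma~\ref{lemma:A_exponentialMoment_ForAll_PowerMoments_SUPREMUM_VERSION}) from $\E[\sup_{s\leq\tau}|X_s-\X_{\phi(s)}|^{8q}]^{1/2}$. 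Splitting $|X_s-\X_{\phi(s)}|\leq|X_s-X_{\phi(s)}|+|X_{\phi(s)}-\X_{\phi(s)}|$ then invites the H\"older bound $\E[\sup_{s\leq\tau}|X_s-X_{\phi(s)}|^{8q}]^{1/(8q)}\lesssim h^{1/2-\varepsilon}$ from Section~\ref{sec:Hoelder_Bounds_Euler} (again via \cite[Corollary 1.3]{Seifried2024HoelderMoments}) respectively the classical strong Euler--Maruyama bound $\E[\sup_{t\leq\tau}|X_{\phi(t)}-\X_{\phi(t)}|^{8q}]^{1/(8q)}\lesssim h^{1/2}$ of Appendix~\ref{sec:StrongEulerBounds_BGG}; taking the $1/(4q)$-root squares these rates and yields the $h^{2(1/2-\varepsilon)}$ contribution.

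\textbf{Principal obstacle.} The main bookkeeping challenge is to track how each cascaded Cauchy--Schwarz/Hölder split doubles the required moment index. Starting from the $4q$-moments of $\mathcal{B}$, the cascade through the sup, the random H\"older coefficient, the exponential of $\tau\vee\StoppDiscrete$, and the strong Euler--Maruyama constant collectively pushes the effective moment exponent up to $32q$, which matches exactly the threshold imposed by Assumption~\hyperref[domain_ratio:assumption]{($\domain$-R)} with $q_1=32q$. Verifying that this ceiling is not exceeded, and that each multiplicative constant remains uniformly bounded in $h$, is the delicate part of the argument.
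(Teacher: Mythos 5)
Your proposal is correct and follows essentially the same strategy as the paper: the paper disposes of the $g$-term by citing Proposition~\ref{prop:EM_Strong_DifferentStopp_Bound} (with $\Stopp_1=\tau$, $\Stopp_2=\StoppDiscrete$) and of the generator term by citing Lemma~\ref{lemma:TheDelta_F_Bound} (with $\Stopp=\tau$), and those two lemmas contain exactly the ingredients you describe — Lipschitz continuity of $g$ and $f$, the strong Euler--Maruyama bound at a stopping time from Appendix~\ref{sec:StrongEulerBounds_BGG}, and the stopping-time-displacement estimate of Corollary~\ref{corollary:HoelderMoments_TwoStopp} applied separately to the drift part (Kolmogorov exponent $1$) and the martingale part (exponent $\tfrac12$), with the same resulting exponents $8q(1-\varepsilon)$ and $8q(\tfrac12-\varepsilon)$.

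The one difference is the pivot in your triangle-inequality splits: you route through the true diffusion ($|X_\tau-X_{\StoppDiscrete}|+|X_{\StoppDiscrete}-\X_{\StoppDiscrete}|$ and $|X_s-X_{\phi(s)}|+|X_{\phi(s)}-\X_{\phi(s)}|$), whereas the paper routes through the interpolated scheme $\Xc$ ($\|X_\tau-\Xc_\tau\|+\|\Xc_\tau-\Xc_{\StoppDiscrete}\|$ and $\|X_s-\Xc_s\|+\|\Xc_s-\X_{\phi(s)}\|$). This is immaterial for the exponents, since $X$ and $\Xc$ obey the same Kolmogorov criterion with exponents $1$ and $\tfrac12$ for their drift and martingale parts. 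The only place your pivot costs something is the claim $\E[\sup_{t\leq\tau}\|X_{\phi(t)}-\X_{\phi(t)}\|^{8q}]^{1/(8q)}\lesssim h^{1/2}$: Appendix~\ref{sec:StrongEulerBounds_BGG} only states the sup over a fixed interval $[k,k+1]$ (Lemma~\ref{lemma:BGG_StrongEulerBound_LinearExponent}) and the value at a single stopping time (Lemma~\ref{lemmma:BGG_DiffusionEulerDeviation_ForStoppingTime}), not a sup over the random horizon $[0,\tau]$. You would need to repeat the summation-plus-Cauchy--Schwarz argument from the proof of Lemma~\ref{lemmma:BGG_DiffusionEulerDeviation_ForStoppingTime} (sum over $k$, weight by $\prob[\tau\geq k]^{1/2}$), which works under $q_1=32q$ but is an extra step; the paper avoids it by using the uniform-in-time bound of Proposition~\ref{prop:EM_Rate_Gronwall}, accepting the $\varepsilon$-loss there. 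Your moment bookkeeping and the final form of the bound are otherwise exactly as in the paper.
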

\begin{proof}
    Let $\varepsilon >0$. 
    With the Lipschitz continuity of $g$ and Proposition~\ref{prop:EM_Strong_DifferentStopp_Bound} (applied with $\Stopp_1 = \tau$, $\Stopp_2=\StoppDiscrete$) we obtain a constant $C_1>0$ such that
    \begin{equation}
        \E \Big[ \big|g(X_\tau) - g(\X_{\StoppDiscrete}) \big|^{4q} \Big]^{\frac{1}{2q}} 
        \leq C_1 \lipschitz{g}^{2} \Big( h^{2q} + \E \big[ |\tau - \StoppDiscrete|^{8q(1-\varepsilon)} \big]^\frac{1}{2} + \E \big[ |\tau - \StoppDiscrete|^{8q(\frac{1}{2}-\varepsilon)} \big]^\frac{1}{2} 
        \Big)^\frac{1}{2q}
    \end{equation}
    Furthermore we invoke Lemma~\ref{lemma:TheDelta_F_Bound} ($\Stopp = \tau$) and obtain a constant $C_2>0$ such that
    \[
        \E \bigg[ \Big(\int_{0}^{\tau} \big( f(X_s,Y_s) - f(\X_{\phi(s)}, Y_s) \big)^2 \de s\Big)^{2q} \bigg]^\frac{1}{2q} \leq C h^{2(\frac{1}{2} - \varepsilon)} \qedhere
    \]
\end{proof}
\begin{theorem}[Markovian Strong Error Bound]\label{thm:MarkovianFBSDE_StrongBound}
    Assume \hyperref[sde:assumption]{(SDE)}, \hyperref[fbsde_bsde:assumption]{(BSDE)}, \hyperref[stepsize:assumption]{(h)} and \hyperref[domain_ratio:assumption]{($\domain$-R)} with $q_1 = 64 $ and $q_2 = 8$. 
    Assume \eqref{eqnFBSDE} has a solution $(Y,Z) \in \mathcal{S}^8\times\mathcal{H}^8$ and furthermore for a specifically fixed $\varepsilon >0$ there are $\delta, \alpha > 1$ such that $\alpha > \tfrac{\delta}{\varepsilon} \vee 2$  and
    \begin{equation}
        \sup_{ \substack{ s,t\in[0,\infty) \\ 0 < |s-t| \leq 1} } \E \Big[ \int_{s}^{t} |f(X_r, Y_r)|^\alpha \de r \Big]
        + \sup_{ \substack{ s,t\in[0,\infty) \\ 0 < |s-t| \leq 1} } \E \Big[ \int_{s}^{t} \| Z_r \|^\alpha \de r \Big] < \infty
    \end{equation}
    then there is a constant $C>0$ (not depending on $h$) such that
    \begin{multline}
        \E \Big[ \sup_{\ell\in\unboundedTimeGrid} \sup_{t\in[\ell,\ell+h]} \big| Y_t - \Y_{\ell} \big|^2 \Big]
        \leq C h^{2 (\frac{1}{2} - \frac{1}{\alpha} - \varepsilon)} \\
        + C \E \big[\ |\tau-\StoppDiscrete|^{8} \big]^\frac{1}{8} 
        + C \E \big[\ |\tau-\StoppDiscrete|^{16 (1-\varepsilon) } \big]^\frac{1}{8}
        + C \E \big[\ |\tau-\StoppDiscrete|^{16 (\frac{1}{2}-\varepsilon) } \big]^\frac{1}{8} \closeEqn
    \end{multline}
\end{theorem}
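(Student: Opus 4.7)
The plan is to combine Theorem~\ref{thm:Kolmogorov_BSDE_Error_Bound} with Proposition~\ref{prop:MarkovianBoundinTheBounds}, after establishing Kolmogorov's tightness criterion for $Y$ from the BSDE representation. As in the proof of Theorem~\ref{thm:Kolmogorov_BSDE_Error_Bound}, I would first split
\begin{equation}
\mathcal{E}^2(Y) \leq 2 \E\Big[\sup_{\ell \in \unboundedTimeGrid} \sup_{t \in [\ell, \ell+h]} |Y_t - Y_\ell|^2\Big] + 2 \E\Big[\sup_{\ell \in \unboundedTimeGrid} |Y_\ell - \Y_\ell|^2\Big]
\end{equation}
and handle the two summands separately.

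To control the path-regularity term I verify Kolmogorov's criterion for $Y$ from the BSDE \eqref{eqnFBSDE}. Writing $Y_s - Y_t = \int_{s\wedge\tau}^{t\wedge\tau} f(X_r, Y_r)\de r - \int_{s\wedge\tau}^{t\wedge\tau} Z_r \de W_r$ and applying an elementary bound together with Jensen's inequality on the Lebesgue integral and the Burkholder-Davis-Gundy inequality combined with a Hölder step on the stochastic integral yields
\begin{equation}
\E\big[|Y_s-Y_t|^\alpha\big] \leq C_\alpha |s-t|^{\alpha-1}\E\Big[\int_s^t |f(X_r,Y_r)|^\alpha \de r\Big] + C_\alpha |s-t|^{\alpha/2-1}\E\Big[\int_s^t \|Z_r\|^\alpha \de r\Big]
\end{equation}
for $|s-t|\leq 1$. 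The hypothesised $L^\alpha$-integrability then bounds the right-hand side by $C|s-t|^{\alpha/2-1}$. Choosing $\delta > 1$ large enough that $\alpha > \delta/\varepsilon \vee 4$, the pair $(\alpha, \beta)$ with $\beta \defined \alpha/2 - 2 > 0$ satisfies the hypothesis of the generalised criterion in the remark following Theorem~\ref{thm:Kolmogorov_BSDE_Error_Bound}. Invoking \cite[Corollary~1.3]{Seifried2024HoelderMoments} as in that theorem then produces the path-regularity estimate $Ch^{2((1+\beta)/\alpha - \varepsilon)} = Ch^{2(1/2 - 1/\alpha - \varepsilon)}$.

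For the discrete-time summand I would invoke the strong form of Corollary~\ref{corollary:MainResults_AfterHoelder}. Identifying $\xi = g(X_\tau)$, $\bar\xi = g(\X_{\StoppDiscrete})$ and $\Delta F_s = f(X_s, Y_s) - f(\X_{\phi(s)}, Y_s)$, the right-hand side of that corollary equals $K_2\big(h + \mathcal{B}(2; X, Y)\big)$ in the notation of \eqref{def:MarkovianSettingGeneralBound}. Applying Proposition~\ref{prop:MarkovianBoundinTheBounds} with $q = 2$, which matches the parameter $q_1 = 64$ postulated in the theorem, then bounds $\mathcal{B}(2; X, Y)$ by $Ch^{2(1/2-\varepsilon)}$ plus the three $L^p$-distances of $\tau - \StoppDiscrete$ appearing in the claim.

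Assembling both estimates yields the stated inequality, since $h^{2(1/2-1/\alpha-\varepsilon)}$ dominates both $h$ and $h^{2(1/2-\varepsilon)}$ for sufficiently small $h > 0$ (the loss $1/\alpha > 0$ is decisive). The principal obstacle is the clean identification of the effective Hölder rate $1/2 - 1/\alpha$ from the single pair $(\alpha, \beta)$ via the relation $(1+\beta)/\alpha = 1/2 - 1/\alpha$; once this is in hand, the remainder is a bookkeeping exercise invoking previously established results.
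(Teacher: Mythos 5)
Your proposal is correct and takes essentially the same route as the paper: the same elementary split, the second bound of Corollary~\ref{corollary:MainResults_AfterHoelder} combined with Proposition~\ref{prop:MarkovianBoundinTheBounds} (with $q=2$, matching $q_1=64$) for the discrete-grid term, and \cite[Corollary 1.3]{Seifried2024HoelderMoments} for the path-regularity term with the identification $(1+\beta)/\alpha=\tfrac{1}{2}-\tfrac{1}{\alpha}$. The only difference is that you verify Kolmogorov's criterion for $Y$ by hand via Jensen and Burkholder--Davis--Gundy, where the paper simply cites \cite[Proposition 2.1]{Seifried2024HoelderMoments}; this is a harmless elaboration.
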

\begin{proof}
    From the second part of Corollary~\ref{corollary:MainResults_AfterHoelder} we obtain $K>0$ such that 
    \begin{equation}
        \E \Big[ \sup_{\ell\in\unboundedTimeGrid} \sup_{t\in[\ell,\ell+h]} \big| Y_t - \Y_{\ell} \big|^2 \Big] 
        \leq 2 \E \Big[ \sup_{\ell\in\unboundedTimeGrid} \sup_{t\in[\ell,\ell+h]} \big| Y_t - Y_{\ell} \big|^2 \Big] 
        + 2C_1 h + 2C_1 \mathcal{B} (2; X, Y)
    \end{equation}
    Then the claim is a consequence of Proposition~\ref{prop:MarkovianBoundinTheBounds} in combination with the fact that from 
    \cite[Corollary~1.3 and Proposition~2.1 ]{Seifried2024HoelderMoments} we know there is a constant $C_2 >0$ satisfying
    \[
        \E \Big[ \sup_{\ell\in\unboundedTimeGrid} \sup_{t\in[\ell,\ell+h]} \big| Y_t - Y_{\ell} \big|^2 \Big] 
        \leq \E \Big[ \sup_{\ell\in\unboundedTimeGrid} \sup_{ \substack{ s,t\in[\ell,\ell+h] \\ s\leq t\leq\tau } } \big| Y_{t} - Y_{S} \big|^2 \Big] \leq C_2 h^{2 (\frac{1}{2} - \frac{1}{\alpha} - \varepsilon)} \qedhere
    \]
\end{proof}
\begin{theorem}[Markovian Classic Error Bound]\label{thm:MarkovianFBSDE_ClassicBound}
    Assume \hyperref[sde:assumption]{(SDE)}, \hyperref[fbsde_bsde:assumption]{(BSDE)}, \hyperref[stepsize:assumption]{(h)} and \hyperref[domain_ratio:assumption]{($\domain$-R)} with $q_1 = 32 $ and $q_2 = 4$. 
    Assume \eqref{eqnFBSDE} has a solution $(Y,Z)\in\mathcal{S}^4\times\mathcal{H}^4$ with $Z$ satisfying the integrability condition \eqref{eqn:MainResult_AssumptionMartingaleIntegrand}.
    Then for each $\varepsilon > 0$ there is $C >0$ (not depending on $h$) such that
    \begin{multline}
        \sup_{\ell\in\unboundedTimeGrid} \E \Big[ \sup_{t\in[\ell,\ell+h]} \big| Y_t - \Y_{\ell} \big|^2 \Big] \leq C h^\frac{1}{2} \\
        + C \E \big[\ |\tau-\StoppDiscrete|^{4} \big]^\frac{1}{4} 
        + C \E \big[\ |\tau-\StoppDiscrete|^{8 (1-\varepsilon) } \big]^\frac{1}{4}
        + C \E \big[\ |\tau-\StoppDiscrete|^{8 (\frac{1}{2}-\varepsilon) } \big]^\frac{1}{4} \closeEqn
    \end{multline}
\end{theorem}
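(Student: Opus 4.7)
The plan is to mirror the proof of Theorem~\ref{thm:MarkovianFBSDE_StrongBound}, but substituting the pointwise-in-$\ell$ machinery from Corollaries~\ref{corollary:BSDE_Rate_Classic} and~\ref{corollary:MainResults_AfterHoelder} for the uniform-in-time version. This is exactly why the weaker moment set $(q_1,q_2)=(32,4)$ suffices and why no Kolmogorov-type regularity on $Y$ is needed. First I would identify the discrete-time data $\bar{\xi}=g(\X_{\StoppDiscrete})$ and $\fd{\unboundedTimeGrid}(t,y)=f(\X_t,y)$, and check that the boundedness of $g$ and of $x\mapsto f(x,0)$ from~\hyperref[fbsde_bsde:assumption]{(BSDE)} forces~\hyperref[bsde:assumption]{(F)},~\hyperref[discrete_f:assumption]{($\unboundedTimeGrid$-F)} and~\hyperref[discrete_xi:assumption]{($\unboundedTimeGrid$-T)} with every $p\geq 2$, while~\hyperref[domain_ratio:assumption]{($\domain$-R)} with $q_2=4$ delivers~\hyperref[exp:assumption]{(R-q)} with $q=4$. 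Thus both Corollary~\ref{corollary:BSDE_Rate_Classic} and the first assertion of Corollary~\ref{corollary:MainResults_AfterHoelder} are in force.

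The core estimate is the standard splitting
\begin{equation*}
\sup_{\ell\in\unboundedTimeGrid}\E\!\left[\sup_{t\in[\ell,\ell+h]}|Y_t-\Y_\ell|^2\right] \leq 2\sup_\ell\E\!\left[\sup_{t\in[\ell,\ell+h]}|Y_t-Y_\ell|^2\right] + 2\sup_\ell\E\!\left[|Y_\ell-\Y_\ell|^2\right].
\end{equation*}
The first summand is handled exactly as in the proof of Corollary~\ref{corollary:BSDE_Rate_Classic}: with $Y\in\mathcal{S}^4$, Lipschitz continuity of $f$ and the stronger integrability~\eqref{eqn:MainResult_AssumptionMartingaleIntegrand} on $Z$, Remark~\ref{remark:ItoJensenBound} produces a bound of order $h^{1/2}$. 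For the second summand the first assertion of Corollary~\ref{corollary:MainResults_AfterHoelder} applies directly, and after substituting the Markovian identifications its three non-trivial right-hand side terms coincide exactly with the summands of $\mathcal{B}(1;X,Y)$ defined in~\eqref{def:MarkovianSettingGeneralBound}.

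Because~\hyperref[domain_ratio:assumption]{($\domain$-R)} is assumed with $q_1 = 32 = 32\cdot 1$, Proposition~\ref{prop:MarkovianBoundinTheBounds} applies with $q=1$ and controls $\mathcal{B}(1;X,Y)$ by precisely the four-term expression in the statement, with the $h^{1-2\varepsilon}$ contribution either absorbed into $Ch^{1/2}$ (whenever $\varepsilon<1/4$) or kept at its own rate. Combining this with the $h^{1/2}$ from the path-regularity step and relabelling the constant completes the proof. I do not anticipate any substantive obstacle; the only delicate point is to verify that the additional hypothesis~\eqref{eqn:MainResult_AssumptionMartingaleIntegrand} enters solely through the BSDE path-regularity step, so that the overall moment budget $(q_1,q_2)=(32,4)$ genuinely suffices---distinguishing this classic bound from the strong bound Theorem~\ref{thm:MarkovianFBSDE_StrongBound}, which demanded the larger budget $(64,8)$ to accommodate Kolmogorov's criterion.
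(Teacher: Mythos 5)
Your proposal is correct and follows essentially the same route as the paper's own proof: the same splitting into a path-regularity term (bounded by $Ch^{1/2}$ via the argument of Corollary~\ref{corollary:BSDE_Rate_Classic} and Remark~\ref{remark:ItoJensenBound}) and a discrete-grid term (bounded by the first assertion of Corollary~\ref{corollary:MainResults_AfterHoelder}, yielding $\mathcal{B}(1;X,Y)$, which is then controlled by Proposition~\ref{prop:MarkovianBoundinTheBounds} with $q=1$). Your explicit remark on absorbing the $h^{2(1/2-\varepsilon)}$ term and on where \eqref{eqn:MainResult_AssumptionMartingaleIntegrand} enters is consistent with, and slightly more careful than, the paper's presentation.
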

\begin{proof}
    From the first bound of Corollary~\ref{corollary:MainResults_AfterHoelder} we obtain a constant $C_1 >0$ such that
    \begin{equation}
        \E \Big[ \sup_{t\in[\ell,\ell+h]} \big| Y_t - \Y_{\ell} \big|^2 \Big] 
        \leq 2 \E \Big[ \sup_{t\in[\ell,\ell+h]} \big| Y_t - Y_{\ell} \big|^2 \Big] 
        + 2C_1 h + 2 C_1 \mathcal{B} (1; X, Y)
    \end{equation}
    Analogously to the second part of the proof of Corollary~\ref{corollary:BSDE_Rate_Classic} (see \eqref{eqn:Yregularity} and Remark~\ref{remark:ItoJensenBound}) there is a constant $C_2>0$ (not depending on $h$) such that
    \begin{equation}
        \E \Big[ \sup_{t\in[\ell,\ell+h]} \big| Y_t - Y_{\ell} \big|^2 \Big] \leq C_1 h^\frac{1}{2} \quad\text{for all } \ell\in\unboundedTimeGrid
    \end{equation}
    Finally we use Proposition~\ref{prop:MarkovianBoundinTheBounds} to bound $\mathcal{B} (1; X, Y)$ and obtain the claim.
\end{proof}

\section{Hölder-Regularity Type Bounds for Euler-Maruyama}\label{sec:Hoelder_Bounds_Euler}

We introduce a continuous and adapted interpolation for the Euler-Maruyama sequence \eqref{def:EulerMaruyamaSequence} which is typically invoked for the analysis of the scheme, see e.g. \cite{KloedenPlaten};
\begin{equation}\label{def:EM_ContinuousInterpolation}
    \Xc_{t} \defined x + \int_{0}^{t} \mu \big( \X_{\phi(s)} \big) \de s + \int_{0}^{t} \sigma \big(\X_{\phi(s)}\big) \de W_s \quad t\geq 0
\end{equation}

\begin{proposition}\label{prop:EM_Rate_Gronwall}
    Assume \hyperref[sde:assumption]{(SDE)}.
    Let $p>1$ and $\Stopp$ be a stopping time.  
    If there is $\rho>0$ such that    
    $ \rho > 4p\ d (6\lipschitz{\mu} + 12p\lipschitz{\sigma}^2)$ and $\exp(\rho \Stopp) \in L^1 $,
    then for any $\varepsilon > 0$ there is $C > 0$ such that
    \begin{equation}
        \E \Big[ \sup_{\ell\in\unboundedTimeGrid} \sup_{t \in [\ell\wedge\Stopp, (\ell +h)\wedge\Stopp]} \big\| X_t - \Xc_t \big\|^p \Big] \leq C \big( h^\frac{p}{2} + h^{p(\frac{1}{2} - \varepsilon)} \big) 
        \quad\text{for all } h\in(0,1) \closeEqn
    \end{equation}
\end{proposition}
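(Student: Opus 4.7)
The plan is to combine a finite-horizon Gronwall estimate with a Hölder truncation argument that exploits the exponential moment of $\Stopp$, transferring the bound to the unbounded horizon at the cost of only an $\varepsilon$-loss in the rate.

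First I would expand
\begin{equation*}
X_t - \Xc_t = \int_0^t\bigl[\mu(X_s) - \mu(\X_{\phi(s)})\bigr]\de s + \int_0^t\bigl[\sigma(X_s) - \sigma(\X_{\phi(s)})\bigr]\de W_s,
\end{equation*}
apply the Burkholder-Davis-Gundy and Doob inequalities, use the Lipschitz property of $\mu,\sigma$, and split $\|X_s - \X_{\phi(s)}\| \leq \|X_s - \Xc_s\| + \|\Xc_s - \X_{\phi(s)}\|$ via the triangle inequality. The one-step increment $\Xc_s - \X_{\phi(s)}$ has $L^p$-norm of order $h^{1/2}$ (drift contribution $O(h)$, diffusion contribution $O(h^{1/2})$), so I would obtain an integral inequality
\begin{equation*}
\E\Bigl[\sup_{r \leq t \wedge \Stopp} \|X_r - \Xc_r\|^p\Bigr] \leq C_1 h^{p/2} + c_1 \int_0^t \E\Bigl[\sup_{r \leq s \wedge \Stopp} \|X_r - \Xc_r\|^p\Bigr]\de s,
\end{equation*}
with $c_1$ having the structure $p\,d(6\lipschitz{\mu}+12p\lipschitz{\sigma}^2)$ that the hypothesis on $\rho$ is tailored to dominate. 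Gronwall's lemma then gives $\E[\sup_{r\leq t\wedge\Stopp}\|X_r-\Xc_r\|^p] \leq C_1 h^{p/2}\exp(c_1 t)$.

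Next, to transfer this bound to the full horizon $[0,\Stopp]$, I would decompose over $\{\Stopp \leq T\}$ and $\{\Stopp > T\}$ for a threshold $T=T(h)$ to be chosen. The first event contributes $C_1 h^{p/2}\exp(c_1 T)$ from the Gronwall bound. For the second, a Hölder inequality with conjugate pair $(q',q'')$ combined with Markov's inequality $\prob[\Stopp>T] \leq \exp(-\rho T)\E[\exp(\rho\Stopp)]$ gives
\begin{equation*}
\E\bigl[\1_{\{\Stopp > T\}}\sup_{r\leq\Stopp}\|X_r-\Xc_r\|^p\bigr] \leq C_2\,\exp\bigl(-\rho T/q''\bigr),
\end{equation*}
where the constant $C_2$ incorporates an a priori bound on $\E[\sup_{r\leq\Stopp}\|X_r-\Xc_r\|^{pq'}]$; this higher moment is finite under the compact-support assumption on $\mu,\sigma$ (which keeps $X,\Xc$ bounded) combined with the exponential moment on $\Stopp$. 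Balancing the two contributions by choosing $T$ proportional to $\log(1/h)$ yields a rate of order $h^{p(1/2-\varepsilon)}$; the leading $h^{p/2}$ summand in the claimed bound covers the regime where the exponential blow-up is irrelevant.

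The main obstacle is tuning the Hölder exponent $q''$ so that $c_1 q'' < \rho$ while $q''$ is still large enough that $\rho/q'' - c_1$ drives the balance close to the ideal rate $h^{p/2}$. The hypothesis $\rho > 4p\,d(6\lipschitz{\mu}+12p\lipschitz{\sigma}^2)$ is precisely this margin: the factor $4$ leaves room to take $q''$ bounded away from one for every prescribed $\varepsilon>0$, and simultaneously ensures that the higher-moment Gronwall bound at power $pq'$ used in the a priori step remains finite. Finally, the outer supremum over $\ell\in\unboundedTimeGrid$ in the statement reduces to $\sup_{t\in[0,\Stopp]}$, since the intervals $[\ell\wedge\Stopp,(\ell+h)\wedge\Stopp]$ cover precisely $[0,\Stopp]$, with those $\ell\geq\Stopp$ degenerating to the single point $\{\Stopp\}$.
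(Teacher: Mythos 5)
Your overall strategy --- a forward Gronwall estimate on a finite horizon followed by truncating $\Stopp$ at a level $T(h)\propto\log(1/h)$ --- is a genuinely different route from the paper's, but the balancing step does not close under the stated hypothesis. Quantitatively: on $\{\Stopp\le T\}$ your Gronwall bound contributes $C_1h^{p/2}\exp(c_1T)=C_1h^{p/2-ac_1}$ for $T=a\log(1/h)$, which forces $a\le p\varepsilon/c_1$ if the target rate is $h^{p(1/2-\varepsilon)}$; on $\{\Stopp>T\}$ your H\"older--Markov bound contributes $C_2\exp(-\rho T/q'')=C_2h^{a\rho/q''}$, which forces $a\ge p(\tfrac12-\varepsilon)q''/\rho$. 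Compatibility requires $\rho\ge c_1q''(\tfrac12-\varepsilon)/\varepsilon$, which blows up as $\varepsilon\downarrow0$, whereas the hypothesis only guarantees that $\rho$ exceeds a \emph{fixed} multiple of $c_1$ (roughly $\rho>4c_1$ in your notation, with $q''>1$). So the argument as written establishes the claim only for $\varepsilon$ above a positive threshold of order $c_1q''/(2\rho)$, not for arbitrary $\varepsilon>0$; the sentence claiming that ``the factor $4$ leaves room to take $q''$ bounded away from one for every prescribed $\varepsilon>0$'' is exactly where this is hidden, since the binding constraint is on the product $\varepsilon\rho$ against $c_1q''$, not on $q''$ alone. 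A secondary issue: the Gronwall constant on $[0,t]$ carries the Jensen factors $t^{p-1}$ and $t^{p/2-1}$, so obtaining an exponent that is \emph{linear} in $t$ (which your balancing silently assumes) already requires the restart-per-unit-interval argument of Lemma~\ref{lemma:BGG_StrongEulerBound_LinearExponent}; the footnote there stresses that the linear exponent is essential.

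The repair inside your own framework is to abandon the single logarithmic threshold and decompose over unit intervals, writing $\E[\sup_{s\le\Stopp}\|X_s-\Xc_s\|^p]\le\sum_{k}\prob[\Stopp\ge k]^{1/2}\,\E[\sup_{s\le k+1}\|X_s-\Xc_s\|^{2p}]^{1/2}$, so that every term carries both the Markov decay $\exp(-\rho k/2)$ \emph{and} a factor $h^{p/2}$; the resulting geometric series converges under the stated hypothesis, and no $\varepsilon$ is lost in this step --- this is precisely the mechanism of Lemma~\ref{lemmma:BGG_DiffusionEulerDeviation_ForStoppingTime}. The paper itself proceeds differently again: it represents $X_{t\wedge\Stopp}-\Xc_{t\wedge\Stopp}$ as a conditional expectation (the $\sigma$-integral is a uniformly integrable martingale by compact support), applies a \emph{stochastic} Gronwall lemma directly over the random horizon to place $\exp(\lipschitz{\mu}\Stopp)$ inside $\E_t[\,\cdot\,]$, and then pays Cauchy--Schwarz against $\exp(2p\lipschitz{\mu}\Stopp)\in L^1$. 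In that route the $\varepsilon$-loss arises not from any truncation of $\Stopp$ but from the Kolmogorov/H\"older-coefficient bound on the one-step oscillation $\sup_{\ell}\sup_{r,s\in[\ell,\ell+h],\,s\le\Stopp}\|\Xc_s-\Xc_r\|$ over the entire unbounded grid (Lemma~\ref{lemma:StoppScaledEuler_PathRegularity}), a term your forward decomposition avoids but which reappears, in the paper, as the price for working conditionally.
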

\begin{proof}
    Let $t\geq 0$. From the dynamics of $X$ and $\Xc$ we have that
    \begin{equation}
        X_{t\wedge\Stopp} - \Xc_{t\wedge\Stopp} = X_{\Stopp} - \Xc_{\Stopp} - \int_{t\wedge\Stopp}^{\Stopp} \mu (X_s) - \mu (\X_{\phi(s)}) \de s - \int_{t\wedge\Stopp}^{\Stopp} \sigma (X_s) - \sigma (\X_{\phi(s)}) \de W_s
    \end{equation}
    Since $\sigma$ has compact support the stochastic integral is a uniform integrable martingale.\footnote{
	   E.g. due to a conditional version of a Wald lemma for Brownian motion, see e.g. \cite[Lemma B.4]{Schlegel2024ProbabilisticShape}.
	} 
    Thus
    \begin{equation}
        X_{t\wedge\Stopp} - \Xc_{t\wedge\Stopp} = \E_{t} \Big[ X_{\Stopp} - \Xc_{\Stopp} - \int_{t\wedge\Stopp}^{\Stopp} \mu (X_s) - \mu (\X_{\phi(s)}) \de s \Big]
    \end{equation}
    and with the Lipschitz continuity of $\mu$ we further estimate (observe that $\Xc_s$ is squeezed in)
	\begin{equation}
		\|X_{t\wedge\Stopp}-\Xc_{t\wedge\Stopp}\| \leq \E_t \Big[ \|X_{\Stopp} - \Xc_{\Stopp}\| + \lipschitz{\mu} \Stopp \sup_{\ell\in\unboundedTimeGrid}\sup_{ \substack{r,s\in[\ell,\ell+h]\\r<s\leq\Stopp} } \| \Xc_s-\Xc_r \| + \lipschitz{\mu} \int_{t\wedge\Stopp}^{\Stopp} \|X_s-\Xc_s\| \de s\Big]
	\end{equation}
    We use a stochastic Gronwall lemma, see e.g. \cite[Lemma A.1]{Schlegel2024ProbabilisticShape}, to obtain
	\begin{equation}
		\|X_{t\wedge\Stopp}-\Xc_{t\wedge\Stopp}\| \leq \E_t \bigg[ \exp(\lipschitz{\mu}\Stopp) \Big( \|\Xc_{\Stopp}-X_{\Stopp}\| + \lipschitz{\mu} \Stopp \sup_{ \ell \in \unboundedTimeGrid }\sup_{ \substack{r,s\in[\ell,\ell+h]\\r<s\leq\Stopp} } \| \Xc_s-\Xc_r \| \Big)\bigg].
	\end{equation}
    Use the preceding bound together with Doob's $L^p$-inequality\footnote{
        There is a constant $C_p>0$ such that for any non-negative $A\in L^p$ it holds that
        \begin{equation}
            \E \Big[ \sup_{u\in\unboundedTimeGrid} \sup_{t\in[u,u+h]} \E_t[A]^p \Big] = \E \Big[ \sup_{t\geq 0} \E_t [A]^p \Big] \leq C_p \E \big[A^p\big]
        \end{equation}
    } 
    and an elementary bound;
    \begin{multline}
		\E \Big[ \sup_{u\in\unboundedTimeGrid} \sup_{t\in[ u,u+h]} \| X_{t\wedge\Stopp} -\Xc_{t\wedge\Stopp} \|^p \Big] \\
		\leq 2^{p-1}C_p \E \bigg[ \exp(p\lipschitz{\mu}\Stopp) \Big( \|\Xc_{\Stopp}-X_{\Stopp}\|^p + \lipschitz{\mu}^p \sup_{\ell\in\unboundedTimeGrid } \sup_{ \substack{r,s\in[\ell,\ell+h]\\r<s\leq\Stopp} } {\| \Stopp\Xc_s-\Stopp\Xc_r \|}^p \Big) \bigg] 
	\end{multline}
    With the Cauchy-Schwarz inequality and subadditivity of the square-root we conclude
    \begin{multline}\label{eqn:proof_Sec9_EQ1}
        \E \big[ \sup_{u\in\unboundedTimeGrid} \sup_{t\in[ u,u+h]} \| X_{t\wedge\Stopp} -\Xc_{t\wedge\Stopp} \|^p \big] 
        \leq 2^{p-1} C_p \E \big[ \exp(2p\lipschitz{\mu}\Stopp)\big]^\frac{1}{2} \bigg( \E \big[ \|\Xc_{\Stopp}-X_{\Stopp}\|^{2p} \big]^\frac{1}{2} \\
        + \lipschitz{\mu}^p \E \Big[ \sup_{\ell\in\unboundedTimeGrid } \sup_{ \substack{r,s\in[\ell,\ell+h]\\r<s\leq\Stopp} } {\| \Stopp\Xc_s-\Stopp\Xc_r \|}^{2p} \Big]^\frac{1}{2} \bigg)
    \end{multline}
    The exponential moment of $\Stopp$ exists by assumption. 
    Moreover the strong bound of the Euler-Maruyama scheme (see Lemma~\ref{lemmma:BGG_DiffusionEulerDeviation_ForStoppingTime}) yields $C_1 > 0$ such that
    $ \E [ \|\Xc_{\Stopp}-X_{\Stopp}\|^{2p} ]^\frac{1}{2} \leq C_1 h^\frac{p}{2} $
    and for given $\varepsilon >0$ we invoke Lemma~\ref{lemma:StoppScaledEuler_PathRegularity} to obtain a constant $C_2>0$ satisfying
    \begin{equation}
        \E \Big[ \sup_{\ell\in\unboundedTimeGrid } \sup_{ \substack{r,s\in[\ell,\ell+h]\\r<s\leq\Stopp} } {\| \Stopp\Xc_s-\Stopp\Xc_r \|}^{2p} \Big]^\frac{1}{2} \leq C_2 h^{p (\frac{1}{2}-\varepsilon)}
    \end{equation}
    Combine everything to further estimate \eqref{eqn:proof_Sec9_EQ1} and obtain the claim for an increased $C>0$.
\end{proof}

\begin{lemma}\label{lemma:StoppScaledEuler_PathRegularity} 
    Assume \hyperref[sde:assumption]{(SDE)}. Let $\Stopp$ be a finite, non-negative random variable and assume there is $m>0$ such that $\exp (m\Stopp) \in L^1 $. Then for $\varepsilon >0$ and $q > 2$ there is $C>0$ such that
    \begin{equation}
        \E \Big[ \sup_{\ell\in\unboundedTimeGrid} \sup_{s,t\in [\ell\wedge\Stopp,(\ell+h)\wedge\Stopp]} \| \Stopp \Xc_s - \Stopp \Xc_t \|^q \Big] \leq C h^{q(\frac{1}{2} - \varepsilon) } \quad \text{for all } h \in (0,1) . \closeEqn
    \end{equation}
\end{lemma}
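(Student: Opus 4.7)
My plan is to reduce the bound to the random Hölder-coefficient result cited as \cite[Corollary 1.3]{Seifried2024HoelderMoments}, exploiting that the continuous Euler-Maruyama interpolation $\Xc$ satisfies a Kolmogorov-type tightness criterion with rate $\tfrac{1}{2}$ in all moments.

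First I would split off the prefactor $\Stopp^q$ by the Cauchy-Schwarz inequality, producing $\E[\Stopp^{2q}]^{1/2}$—which is finite thanks to the exponential moment assumption combined with Lemma~\ref{lemma:A_exponentialMoment_ForAll_PowerMoments_SUPREMUM_VERSION}—and a residual factor involving only the Euler interpolation. Next, from the defining equation \eqref{def:EM_ContinuousInterpolation} the difference $\Xc_t - \Xc_s$ for $s<t$ decomposes into a drift integral and a stochastic integral, both with bounded integrands due to the compact-support Assumption~\hyperref[sde:assumption]{(SDE)}. A combination of the triangle inequality with Burkholder-Davis-Gundy therefore yields
\begin{equation*}
    \E\big[\|\Xc_t - \Xc_s\|^p\big] \leq c_p |t-s|^{p/2} \qquad\text{for every } p \geq 2\text{ and } |t-s|\leq 1,
\end{equation*}
verifying Kolmogorov's criterion with a constant rate $\tfrac{1}{2}$ across all moments.

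With this regularity established, I would invoke \cite[Corollary 1.3]{Seifried2024HoelderMoments} (in the form already used for Theorem~\ref{thm:Kolmogorov_BSDE_Error_Bound} and summarized in Appendix~\ref{sec:HoelderCoeff}) applied to $\Xc$ on the random horizon $[0,\Stopp]$. Since $\Stopp$ admits a positive exponential moment, this yields a constant $C'>0$ such that the residual factor satisfies
\begin{equation*}
    \E \Big[ \sup_{\ell\in\unboundedTimeGrid} \sup_{s,t\in[\ell\wedge\Stopp,(\ell+h)\wedge\Stopp]} \|\Xc_s - \Xc_t\|^{2q} \Big] \leq C'\, h^{2q(\frac{1}{2} - \frac{\varepsilon}{2})},
\end{equation*}
and combining with $\E[\Stopp^{2q}]^{1/2}$ and taking square roots delivers the asserted bound for a suitably enlarged $C$.

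The main obstacle is controlling the supremum over the \emph{entire} discrete-time grid $\unboundedTimeGrid$: a naive union bound over $\ell$ would involve summing infinitely many BDG-type estimates, and Doob's inequality does not apply across distinct intervals $[\ell,\ell+h]$. The cut-off at $\Stopp$ effectively reduces the relevant indices to those $\ell \leq \Stopp$, and the exponential integrability of $\Stopp$ is exactly the input that renders the random Hölder coefficients in \cite{Seifried2024HoelderMoments} integrable with the moments required here; this is also the source of the infinitesimal loss $\varepsilon$ from the underlying Hölder exponent $\tfrac{1}{2}$.
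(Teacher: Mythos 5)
Your proof is correct and follows essentially the same route as the paper's: verify Kolmogorov's criterion for $\Xc$ with constant rate $\tfrac{1}{2}$ via Burkholder--Davis--Gundy and the compact supports of $\mu,\sigma$, absorb the factor $\Stopp$ by Cauchy--Schwarz using the exponential moment, and conclude with \cite[Corollary 1.3]{Seifried2024HoelderMoments}. The only (immaterial) difference is that you pull $\Stopp^q$ out of the supremum first and apply the H\"older-coefficient result to the unscaled $\Xc$ on $[0,\Stopp]$, whereas the paper applies Cauchy--Schwarz at the level of individual increments to show that the scaled, non-adapted process $\Stopp\Xc$ itself satisfies Kolmogorov's criterion and then invokes the corollary for $\Stopp\Xc$.
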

\begin{proof}
    Following arguments of \cite{Seifried2024HoelderMoments} we consider the (non-adapted) process $(\Stopp \Xc_t)_{t\geq 0}$ and use the integrability of a local Hölder coefficient.\footnote{
        Similar to \cite[Proposition 2.1 and Example 2.2]{Seifried2024HoelderMoments} we verify Kolmogorov's tightness criterion is satisfied for all $q>2$ with constant ratio $\tfrac{1}{2}$. With this at hand the claim is a consequence of \cite[Corollary 1.3.c]{Seifried2024HoelderMoments}.
    }
    
    Let $p>2$ be fixed for now. Let $s,t\geq 0$. Using the Burkholder-Davis-Gundy and Jensen inequalities together with an elementary bound we compute
    \begin{equation}
        \E \big[ \|\Xc_{s} - \Xc_{t}\|^p \big]\! \leq\! 2^{p-1}\! \bigg(\! |s-t|^{p-1} \E \Big[ \int_{s}^{t}\! \|\mu (\X_{\phi(r)})\|^p \de r \Big] + |s-t|^{\frac{p}{2} - 1} C_p\ \E \Big[ \int_{s}^{t}\! \|\sigma(\X_{\phi(r)})\|^p \de r \Big]\! \bigg)
    \end{equation}
    By Assumption \hyperref[sde:assumption]{(SDE)} $\mu$ and $\sigma$ are compactly supported, hence there are $\supremum{\mu}, \supremum{\sigma} > 0$ such that
    \begin{equation}
        \E \big[ \|\Xc_{s} - \Xc_{t}\|^p \big] \leq 2^{p-1} \big( \supremum{\mu} |s-t|^p + C_p \supremum{\sigma} |s-t|^\frac{p}{2} \big)
    \end{equation}
    Particularly for $|s-t| \leq 1$ we obtain a constant $K_p >0$ satisfying
    \begin{equation}
        \E \big[ \|\Xc_{s} - \Xc_{t}\|^p \big] \leq 2^{p-1} (\supremum{\mu} + C_p \supremum{\sigma} ) |s-t|^\frac{p}{2} \defined K_p |s-t|^\frac{p}{2}
    \end{equation}
    Moreover using the Cauchy-Schwarz inequality we have
    \begin{equation}
        \E \big[ \|\Stopp \Xc_{s} - \Stopp \Xc_{t}\|^p \big] = \E \big[ \Stopp^q \|\Xc_{s} - \Xc_{t}\|^p \big] \leq \E \big[ \Stopp^{2p} \big]^\frac{1}{2} \E \big[ \|\Xc_{s} - \Xc_{t}\|^{2p} \big]^\frac{1}{2} 
    \end{equation}
    Since $p>2$ was arbitrarily fixed above we combine both observations to conclude that for each $q >2$ there is a constant\footnote{
        From our calculations above we know that $C(q) \defined K_{2q}^\frac{1}{2} \E \big[ \Stopp^{2q} \big]^\frac{1}{2}$ is an admissible choice. Particularly the constant is finite for each $q>2$ since by assumption $\Stopp$ has a positive exponential moment (see Lemma~\ref{lemma:A_exponentialMoment_ForAll_PowerMoments_SUPREMUM_VERSION}).
    }
    $C(q) \in (0,\infty)$ such that
    \begin{equation}
        \E \big[ \|\Stopp \Xc_{s} - \Stopp \Xc_{t}\|^q \big] \leq C(q) |s-t|^\frac{q}{2} \qquad \text{for all } |s-t| \leq 1
    \end{equation}
    Thus we verified that $(\Stopp \Xc_t)_{t\geq 0}$ satisfies Kolmogorov's criterion for all $q>2$ with constant ratio $\tfrac{1}{2}$. Let $\varepsilon >0$ and fix $q>2$ with \cite[Corollary 1.3]{Seifried2024HoelderMoments} we obtain a constant $C > 0$ such that
    \[
        \E \Big[ \sup_{\ell\in\unboundedTimeGrid} \sup_{ \substack{s,t\in [\ell,\ell+h] \\ s<t\leq\Stopp } } \| \Stopp \Xc_s - \Stopp \Xc_t \|^q \Big] \leq C h^{q(\frac{1}{2} - \varepsilon) } \qquad \text{for all } h\in (0,1). \qedhere
    \]
\end{proof}

\begin{lemma}\label{lemma:TheDelta_F_Bound}
    Assume \hyperref[sde:assumption]{(SDE)} and \hyperref[fbsde_bsde:assumption]{(BSDE)}. 
    Let $q\geq 1$ and $\Stopp$ denote a stopping time.  
    If there is $\rho>0$ such that    
    $ \rho > 32q\ d (6\lipschitz{\mu} + 96q\lipschitz{\sigma}^2) $ and $\exp(\rho \Stopp) \in L^1 $,
    then for every $\varepsilon > 0$ there is $C>0$ with
    \begin{equation}
        \E \bigg[ \Big(\int_{0}^{\Stopp} \big( f(X_s,Y_s) - f(\X_{\phi(s)},Y_s) \big)^2 \de s\Big)^{2q} \bigg]^\frac{1}{2q} \leq C h^{2 (\frac{1}{2}-\varepsilon)} 
        \quad\text{for all } h\in(0,1) \closeEqn
    \end{equation}
\end{lemma}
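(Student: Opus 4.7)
The plan is to reduce the integrand via the continuous Euler interpolation $\Xc$ and then combine the strong $L^p$ bound of Proposition~\ref{prop:EM_Rate_Gronwall} with the pathwise oscillation estimate of Lemma~\ref{lemma:StoppScaledEuler_PathRegularity}. By joint Lipschitz continuity of $f$ (Assumption~\hyperref[fbsde_bsde:assumption]{(BSDE)}) together with the identity $\X_{\phi(s)} = \Xc_{\phi(s)}$,
\begin{equation*}
    \big(f(X_s,Y_s) - f(\X_{\phi(s)},Y_s)\big)^2 \leq 2\lipschitz{f}^2 \|X_s - \Xc_s\|^2 + 2\lipschitz{f}^2 \|\Xc_s - \Xc_{\phi(s)}\|^2.
\end{equation*}
Writing $I_\ell \defined [\ell\wedge\Stopp, (\ell+h)\wedge\Stopp]$, $A \defined \sup_\ell \sup_{t \in I_\ell}\|X_t - \Xc_t\|$ and $B \defined \sup_\ell \sup_{s,t \in I_\ell}\|\Xc_s - \Xc_t\|$, the time integral over $[0,\Stopp]$ is dominated by $\Stopp(A^2 + B^2)$, so that after raising to the $2q$-th power the task reduces to bounding $\E[\Stopp^{2q}A^{4q}]$ and $\E[\Stopp^{2q}B^{4q}]$ by $h^{2q(1-2\varepsilon)}$.

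For the $A$-term, Cauchy--Schwarz gives $\E[\Stopp^{2q}A^{4q}] \leq \E[\Stopp^{4q}]^{1/2}\E[A^{8q}]^{1/2}$; the first factor is finite by the exponential moment of $\Stopp$, and the second is supplied by Proposition~\ref{prop:EM_Rate_Gronwall} applied with $p = 8q$. This is precisely where the hypothesis on $\rho$ is used: substituting $p = 8q$ in the Proposition's condition $\rho > 4pd(6\lipschitz{\mu} + 12p\lipschitz{\sigma}^2)$ reproduces the assumption $\rho > 32qd(6\lipschitz{\mu} + 96q\lipschitz{\sigma}^2)$, and we obtain $\E[A^{8q}] \leq C(h^{4q} + h^{8q(1/2-\varepsilon)})$. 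The resulting contribution after the $(2q)^{-1}$-th root is of order $h^{2(1/2-\varepsilon)}$, as required.

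For the $B$-term I exploit the factorization $\Stopp^{2q}B^{4q} = (\Stopp B)^{2q}B^{2q}$ and apply Cauchy--Schwarz, reducing to $\E[(\Stopp B)^{4q}]^{1/2}\E[B^{4q}]^{1/2}$. Lemma~\ref{lemma:StoppScaledEuler_PathRegularity} (with its exponent $q$ replaced by $4q$) controls $\E[(\Stopp B)^{4q}] \leq C h^{4q(1/2-\varepsilon)}$. For the remaining $\E[B^{4q}]$ I replace $\Stopp$ by $\Stopp' \defined \Stopp + 1 \geq 1$, which still has a positive exponential moment; monotonicity of the grid-truncated oscillation in the truncation time gives $B \leq B'$ for the analogously defined $B'$, and $\Stopp' \geq 1$ yields $B \leq \Stopp' B'$, so a second application of Lemma~\ref{lemma:StoppScaledEuler_PathRegularity} (now with $\Stopp'$) gives $\E[B^{4q}] \leq \E[(\Stopp' B')^{4q}] \leq C h^{4q(1/2-\varepsilon)}$. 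The two factors combine to $h^{4q(1/2-\varepsilon)}$, matching the $A$-contribution after the final root.

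The main obstacle is the bare estimate $\E[B^{4q}]$: Lemma~\ref{lemma:StoppScaledEuler_PathRegularity} only controls the $\Stopp$-weighted product $\Stopp B$, so some care is needed to reinstate the missing $\Stopp$-factor after the Cauchy--Schwarz split. The shift $\Stopp \mapsto \Stopp+1$ outlined above is the cleanest route; one could alternatively redo the Kolmogorov-tightness argument from \cite[Corollary~1.3]{Seifried2024HoelderMoments} directly for $\Xc$, leveraging boundedness of $\mu,\sigma$ (Assumption~\hyperref[sde:assumption]{(SDE)}) to ensure the required integrability.
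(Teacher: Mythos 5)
Your proof is correct and follows essentially the same route as the paper: Lipschitz reduction of the integrand, splitting via the interpolation $\Xc$, Cauchy--Schwarz together with Proposition~\ref{prop:EM_Rate_Gronwall} (applied with $p=8q$, which is exactly where the stated condition on $\rho$ comes from) for the $\|X-\Xc\|$ part, and Lemma~\ref{lemma:StoppScaledEuler_PathRegularity} for the oscillation part. The only divergence is your treatment of the weight $\Stopp^{2q}$ on the oscillation term: the paper absorbs it directly via $\Stopp^{2q}B^{4q}\leq(\Stopp B)^{4q}$ (which tacitly uses $\Stopp\geq 1$), whereas your Cauchy--Schwarz factorization combined with the shift $\Stopp\mapsto\Stopp+1$ is a slightly more careful way of arriving at the same $h^{4q(\frac{1}{2}-\varepsilon)}$ bound.
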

\begin{proof}
    Let $q\geq 1$ be fixed. With the Lipschitz continuity of $f$ we compute
    \begin{align}
        \int_{0}^{\Stopp} \big( f(X_s,Y_s) - f(\X_{\phi(s)},Y_s) \big)^2 \de s 
        & \leq  \lipschitz{f}^2 \sum_{t\in\unboundedTimeGrid} \int_{t\wedge\Stopp}^{(t+h)\wedge\Stopp} \| X_s - \X_{\phi(s)} \|^2 \de s \\
        & \leq \lipschitz{f}^2\ \Stopp \sup_{\ell\in\unboundedTimeGrid} \sup_{s\in [\ell\wedge\Stopp, (\ell+h)\wedge\Stopp]} \| X_s - \X_{\phi(s)} \|^2  
    \end{align}
    Thus with an elementary bound we obtain
    \begin{multline}
        \Big(\int_{0}^{\Stopp} \big( f(X_s,Y_s) - f(\X_{\phi(s)},Y_s) \big)^2 \de s\Big)^{2q} \\
        \leq 2^{4q} \lipschitz{f}^{4q}\ \Stopp^{2q}  \Big( \sup_{\ell\in\unboundedTimeGrid} \sup_{s\in [\ell\wedge\Stopp, (\ell+h)\wedge\Stopp]} \| X_s - \Xc_{s} \|^{4q}    
        + \sup_{\ell\in\unboundedTimeGrid} \sup_{s\in [\ell\wedge\Stopp, (\ell+h)\wedge\Stopp]} \| \Xc_{s} - \X_{\phi(s)} \|^{4q} \Big)
    \end{multline}
    Now fix $\varepsilon >0$. 
    We use the Cauchy-Schwarz inequality, Proposition~\ref{prop:EM_Rate_Gronwall} (with $p=8q$) and the assumption on the existence of exponential moments of $\Stopp$ to obtain $C_1>0$ such that
    \begin{align}\label{eqn:ProofTheDeltaF_EQ1}
        \E \Big[ \Stopp^{2q}\ \sup_{\ell\in\unboundedTimeGrid} \sup_{s\in [\ell\wedge\Stopp, (\ell+h)\wedge\Stopp]} \| X_s - \Xc_{s} \|^{4q}
        \Big] 
         & \leq \E \big[ \Stopp^{4q} \big]^\frac{1}{2} \E \Big[ \sup_{\ell\in\unboundedTimeGrid} \sup_{s\in [\ell\wedge\Stopp, (\ell+h)\wedge\Stopp]} \| X_s - \Xc_{s} \|^{8q} \Big]^\frac{1}{2} \notag \\
         & \leq C_1 \big( h^{4q} + h^{8q(\frac{1}{2}-\varepsilon)} \big)^\frac{1}{2} \leq C_1 h^{4q (\frac{1}{2} - \varepsilon)}
    \end{align}
    Moreover from Lemma~\ref{lemma:StoppScaledEuler_PathRegularity} we know that there is $C_2 >0$ such that
    \begin{align}\label{eqn:ProofTheDeltaF_EQ2}
        \E \Big[ \Stopp^{2q}\ \sup_{\ell\in\unboundedTimeGrid} \sup_{s\in [\ell\wedge\Stopp, (\ell+h)\wedge\Stopp]} \| \Xc_{s} - \X_{\phi(s)} \|^{4q} \Big]
        & \leq \E \Big[ \sup_{\ell\in\unboundedTimeGrid} \sup_{s\in [\ell\wedge\Stopp, (\ell+h)\wedge\Stopp]} \| \Stopp \Xc_{s} - \Stopp \X_{\phi(s)} \|^{4q} \Big] \notag \\
        & \leq C_2 h^{4q(\frac{1}{2} - \varepsilon)}
    \end{align}
    Combine \eqref{eqn:ProofTheDeltaF_EQ1} and \eqref{eqn:ProofTheDeltaF_EQ2} with the subadditivity of the $2q$-root to conclude the claim. 
\end{proof}

\begin{proposition}\label{prop:EM_Strong_DifferentStopp_Bound}
    Assume \hyperref[sde:assumption]{(SDE)} and let $h\in (0,1)$. Let $q\geq 4$ and let $\Stopp_1, \Stopp_2$ denote stopping times. If there is $\rho > 0$ such that
    $ \rho > 2q\ d (6\lipschitz{\mu} + 6q \lipschitz{\sigma}^2)$ and $\exp \big(\rho (\Stopp_1\vee\Stopp_2)\big) \in L^1 $
    then for every $\varepsilon > 0$ there is a constant $C>0$ such that
    \begin{equation}
        \E \big[ \| X_{\Stopp_1} - \Xc_{\Stopp_2} \|^q \big] \leq C h^{\frac{q}{2}} + C \E \big[\ |\Stopp_1-\Stopp_2|^{2q(1-\varepsilon)}\ \big]^\frac{1}{2} + C \E \big[\ |\Stopp_1-\Stopp_2|^{2q(\frac{1}{2}-\varepsilon)} \ \big]^\frac{1}{2} \closeEqn
    \end{equation}
\end{proposition}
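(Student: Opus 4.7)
The approach is to split at the common stopping time $\Stopp_2$ via
\[
    X_{\Stopp_1} - \Xc_{\Stopp_2} = \big( X_{\Stopp_1} - X_{\Stopp_2} \big) + \big( X_{\Stopp_2} - \Xc_{\Stopp_2} \big),
\]
so that $\E \big[\|X_{\Stopp_1} - \Xc_{\Stopp_2}\|^q\big] \leq 2^{q-1}(A + B)$. The pure-Euler summand $B = \E\big[\|X_{\Stopp_2} - \Xc_{\Stopp_2}\|^q\big]$ is a strong Euler-Maruyama error at the stopping time $\Stopp_2$; since $\Stopp_2 \leq \Stopp_1 \vee \Stopp_2$ inherits the assumed positive exponential moment, Lemma~\ref{lemmma:BGG_DiffusionEulerDeviation_ForStoppingTime} of Appendix~\ref{sec:StrongEulerBounds_BGG} directly yields $B \leq C h^{q/2}$, which is the first term of the target bound.

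For the path-regularity summand $A = \E\big[\|X_{\Stopp_1} - X_{\Stopp_2}\|^q\big]$, I would assume WLOG $\Stopp_1 \leq \Stopp_2$ (the target is symmetric in $|\Stopp_1-\Stopp_2|$), and use the SDE dynamics to write
\[
    X_{\Stopp_2} - X_{\Stopp_1} = \int_{\Stopp_1}^{\Stopp_2} \mu(X_r)\,\de r + \int_{\Stopp_1}^{\Stopp_2} \sigma(X_r)\,\de W_r,
\]
then bound the drift and diffusion pieces separately. Compact support of $\mu$ gives the pointwise estimate $\big\|\int_{\Stopp_1}^{\Stopp_2} \mu(X_r)\,\de r\big\|^q \leq \supremum{\mu}^q |\Stopp_2-\Stopp_1|^q$ a.s., while the Burkholder-Davis-Gundy inequality applied to the stochastic integral between stopping times, together with $\|\sigma\| \leq \supremum{\sigma} < \infty$, yields $\E\big[\|\int_{\Stopp_1}^{\Stopp_2} \sigma(X_r)\,\de W_r\|^q\big] \leq C_q \supremum{\sigma}^q \E\big[|\Stopp_2-\Stopp_1|^{q/2}\big]$. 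To recover the precise $\varepsilon$-loss form of the claim, factor
\[
    |\Stopp_2-\Stopp_1|^q = |\Stopp_2-\Stopp_1|^{q(1-\varepsilon)} \cdot |\Stopp_2-\Stopp_1|^{q\varepsilon}
    \quad\text{and}\quad
    |\Stopp_2-\Stopp_1|^{q/2} = |\Stopp_2-\Stopp_1|^{q(1/2-\varepsilon)} \cdot |\Stopp_2-\Stopp_1|^{q\varepsilon},
\]
and apply the Cauchy-Schwarz inequality in each case. The residual factor $\E\big[|\Stopp_2-\Stopp_1|^{2q\varepsilon}\big]^{1/2}$ is finite by the assumed exponential moment of $\Stopp_1\vee\Stopp_2 \geq |\Stopp_2-\Stopp_1|$ via Lemma~\ref{lemma:A_exponentialMoment_ForAll_PowerMoments_SUPREMUM_VERSION}, and it gets absorbed into the final constant; the remaining two factors produce exactly the stopping-time distance terms in the claim.

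The $\varepsilon$-loss is manufactured entirely through the Cauchy-Schwarz split, not through any Kolmogorov-type Hölder argument, so no path-regularity machinery from \cite{Seifried2024HoelderMoments} is required at this step. The main obstacle is thus one of bookkeeping rather than real difficulty: one must check that the strong-Euler constant in Lemma~\ref{lemmma:BGG_DiffusionEulerDeviation_ForStoppingTime}, the BDG constant, and the exponential moments of $\Stopp_1$ and $\Stopp_2$ (both dominated by $\Stopp_1\vee\Stopp_2$) can all be chosen uniformly in $h\in(0,1)$, so the final $C$ is genuinely $h$-independent. The compact support of $\mu$ and $\sigma$ in Assumption~\hyperref[sde:assumption]{(SDE)} is what prevents any scheme-dependent blow-ups in the pathwise drift and diffusion estimates and keeps $\supremum{\mu},\supremum{\sigma}$ finite.
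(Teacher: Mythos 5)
Your argument is correct, but it departs from the paper's proof in the part that carries the stopping-time distance. The paper splits at $\Stopp_1$, writing $X_{\Stopp_1}-\Xc_{\Stopp_2}=(X_{\Stopp_1}-\Xc_{\Stopp_1})+(\Xc_{\Stopp_1}-\Xc_{\Stopp_2})$, handles the first summand with Lemma~\ref{lemmma:BGG_DiffusionEulerDeviation_ForStoppingTime} exactly as you do (only at $\Stopp_1$ rather than $\Stopp_2$; both are dominated by $\Stopp_1\vee\Stopp_2$, so either works), and then controls the increment of the \emph{interpolated scheme} $\Xc$ between the two stopping times by verifying Kolmogorov's criterion for its drift and martingale parts and invoking the global H\"older coefficient bound of Corollary~\ref{corollary:HoelderMoments_TwoStopp}; there the $\varepsilon$-loss is intrinsic, since the global H\"older coefficient only has finite moments for exponents strictly below the Kolmogorov ratio. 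You instead split at $\Stopp_2$ and bound the increment of the \emph{true diffusion} $X$ between the two stopping times directly from its dynamics: a pointwise bound on the drift and a Burkholder--Davis--Gundy estimate on the stochastic integral over the stochastic interval $(\!(\Stopp_1\wedge\Stopp_2,\Stopp_1\vee\Stopp_2]\!]$, which is predictable, so the integral is a genuine continuous local martingale with quadratic variation at most $\supremum{\sigma}^2|\Stopp_1-\Stopp_2|$. This yields the sharper bound $C\,\E[|\Stopp_1-\Stopp_2|^{q}]+C\,\E[|\Stopp_1-\Stopp_2|^{q/2}]$ with no $\varepsilon$-loss at all; your Cauchy--Schwarz factorization only serves to recast it in the stated form, and the residual moment $\E[|\Stopp_1-\Stopp_2|^{2q\varepsilon}]^{1/2}$ is indeed finite by Lemma~\ref{lemma:A_exponentialMoment_ForAll_PowerMoments_SUPREMUM_VERSION}. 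The one imprecision is the "WLOG $\Stopp_1\leq\Stopp_2$": two stopping times need not be pathwise ordered, so you should instead work with $\Stopp_1\wedge\Stopp_2$ and $\Stopp_1\vee\Stopp_2$ (both stopping times) and use $\|X_{\Stopp_1}-X_{\Stopp_2}\|=\|X_{\Stopp_1\vee\Stopp_2}-X_{\Stopp_1\wedge\Stopp_2}\|$; with that repair your route is a legitimate and more elementary alternative that bypasses the machinery of \cite{Seifried2024HoelderMoments} entirely for this step, at no cost in the hypotheses.
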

\begin{proof}
    With an elementary bound we have
    \begin{equation}
        \| X_{\Stopp_1} - \Xc_{\Stopp_2} \|^q 
        \leq 2^{q-1} \| X_{\Stopp_1} - \Xc_{\Stopp_1} \|^q + 2^{q-1} \| \Xc_{\Stopp_1} - \Xc_{\Stopp_2} \|^q 
    \end{equation}
    From Lemma~\ref{lemmma:BGG_DiffusionEulerDeviation_ForStoppingTime} we know there is a constant $C_1 > 0$ such that
    \begin{equation}\label{eqn:Proof_Strong_DifferentStopp_Bound}
        \E \big[ \| X_{\Stopp_1} - \Xc_{\Stopp_1} \|^q \big] \leq C_1 h^\frac{q}{2}
    \end{equation}
    Moreover by the dynamics of $\Xc$ (see \eqref{def:EM_ContinuousInterpolation}) we have
    \begin{equation}
        \| \Xc_{\Stopp_1} - \Xc_{\Stopp_2} \|^q 
        \leq 2^{q-1} \Big\| \int_{\Stopp_1}^{\Stopp_2} \mu \big( \Xc_{\phi(r)} \big) \de r \Big\|^q + 2^{q-1} \Big\| \int_{\Stopp_1}^{\Stopp_2} \sigma \big( \Xc_{\phi(r)} \big) \de W_r \Big\|^q
    \end{equation}
    Subsequently we apply the global Hölder coefficient bound (see Corollary~\ref{corollary:HoelderMoments_TwoStopp}) to each summand in the bound. Following \cite[Example 2.2]{Seifried2024HoelderMoments} we verify that the processes
    $ B \defined \int_{0}^{\cdot} \mu \big( \Xc_{\phi(r)} \big) \de r$ and $M \defined \int_{0}^{\cdot} \sigma \big( \Xc_{\phi(r)} \big) \de W_r $
    satisfy Kolmogorov's criterion for the constant ratios $1$ resp. $\tfrac{1}{2}$ (in the sense of \eqref{def:KolmogorovCriterionRatio}). 
    Let $p\geq 2$. For the process $B$ we use Jensens's inequality together with the fact that $\mu$ is compactly supported by Assumption \hyperref[sde:assumption]{(SDE)} to conclude
    \begin{equation}
        \E \big[ |B_s - B_t|^p \big] \leq \supremum{\mu}^p\ |s-t|^p \quad\text{for all } s,t \geq 0
    \end{equation}
    Similarly since $\sigma$ has compact support we invoke the Burkholder-Davis-Gundy (with constant $C_p > 0$) and Jensen inequalities to obtain
    \begin{equation}
        \E \big[ |M_s - M_t|^p \big] \leq C_p \supremum{\sigma}^p\ |s-t|^{\frac{p}{2}} \quad\text{for all } s,t \geq 0
    \end{equation}
    and thus Kolmogorov's criterion holds as desired. 
    By assumption there is $m>0$ such that $\exp(m(\Stopp_1\vee\Stopp_2))\in L^1$. Whence we apply Corollary~\ref{corollary:HoelderMoments_TwoStopp} and obtain $C_2,C_3> 0$ such that
    \begin{equation}
        \E \big[ \|B_{\Stopp_1}-B_{\Stopp_2}\|^q \big] + \E \big[ \|M_{\Stopp_1}-M_{\Stopp_2}\|^q \big] \leq C_2 \E \big[\ |\Stopp_1-\Stopp_2|^{2q(1-\varepsilon)}\ \big]^\frac{1}{2} + C_3 \E \big[\ |\Stopp_1-\Stopp_2|^{2q(\frac{1}{2}-\varepsilon)} \ \big]^\frac{1}{2}
    \end{equation}
    Combine this with \eqref{eqn:Proof_Strong_DifferentStopp_Bound} to conclude that there is a constant $C>0$ such that
    \[
        \E \big[ \| X_{\Stopp_1} - \Xc_{\Stopp_2} \|^q \big] 
        \leq C h^{\frac{q}{2}} + C \E \big[\ |\Stopp_1-\Stopp_2|^{2q(1-\varepsilon)}\ \big]^\frac{1}{2} + C \E \big[\ |\Stopp_1-\Stopp_2|^{2q(\frac{1}{2}-\varepsilon)} \ \big]^\frac{1}{2} \qedhere
    \]
\end{proof}
%
\bibliographystyle{plain}
\bibliography{references}
\appendix
\section{A Global Hölder Coefficient Bound}\label{sec:HoelderCoeff}

We present a corollary of \cite[Theorem 1.1]{Seifried2024HoelderMoments}: If a process satisfies Kolmogorov's tightness criterion for increments of arbitrary magnitude with constant ratio, then differently stopped versions of said process can be bounded by the difference of the stopping times.
\begin{corollary}\label{corollary:HoelderMoments_TwoStopp}
    Let $\Process$ denote a process satisfying Kolmogorov's criterion with constant ratio $\alpha>0$, i.e.
    for all $p \geq 2$ there is a constant $C_p>0$ such that 
    \begin{equation}\label{def:KolmogorovCriterionRatio}
        \E \big[ \|\Process_t - \Process_s \|^{p} \big] \leq C_p |t-s|^{p\alpha}\quad \text{for all } s,t\geq 0 .
    \end{equation}
    Moreover let $\Stopp_1, \Stopp_2$ denote stopping times. Assume there is $m>0$ such that $\exp(m(\Stopp_1\vee\Stopp_2)) \in L^1$. Then for each $\varepsilon > 0$ there is a constant $C>0$ such that
    \begin{equation}
        \E \Big[ \sup_{s\geq 0} \| \Process_{s\wedge\Stopp_1} - \Process_{s\wedge\Stopp_2} \|^p \Big] \leq C \E \big[ \ |\Stopp_1 - \Stopp_2|^{2p (\alpha - \varepsilon)} \ \big]^\frac{1}{2} \closeEqn
    \end{equation}
\end{corollary}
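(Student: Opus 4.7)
The plan is to reduce the supremum to an increment of $\Process$ over the random interval $[\Stopp_1 \wedge \Stopp_2, \Stopp_1 \vee \Stopp_2]$, invoke a global Hölder-coefficient bound with moment control from \cite{Seifried2024HoelderMoments}, and conclude via Cauchy-Schwarz. Write $U \defined \Stopp_1 \wedge \Stopp_2$ and $V \defined \Stopp_1 \vee \Stopp_2$, so that $V - U = |\Stopp_1 - \Stopp_2|$.

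First, by a direct case analysis on $s < U$, $s \in [U, V]$ and $s \geq V$ (noting that when, WLOG, $\Stopp_1 \leq \Stopp_2$, one side of the difference is frozen at $\Process_{\Stopp_1}$ while the other traces $\Process_s$), one verifies the pathwise identity
\begin{equation}
    \sup_{s \geq 0} \bigl\| \Process_{s \wedge \Stopp_1} - \Process_{s \wedge \Stopp_2} \bigr\| = \sup_{t \in [U, V]} \bigl\| \Process_t - \Process_U \bigr\|.
\end{equation}
Next, since $\Process$ satisfies Kolmogorov's tightness criterion for all $p \geq 2$ with constant ratio $\alpha$, and since $V$ admits a positive exponential moment (hence $V \in L^r$ for every $r \geq 1$ by Lemma~\ref{lemma:A_exponentialMoment_ForAll_PowerMoments_SUPREMUM_VERSION}), the global Hölder-coefficient bound of \cite[Theorem~1.1 / Corollary~1.3]{Seifried2024HoelderMoments} yields, for the given $\varepsilon > 0$, a random variable $H \in L^r$ for every $r \geq 1$ such that
\begin{equation}
    \bigl\| \Process_s - \Process_t \bigr\| \leq H \,|s-t|^{\alpha - \varepsilon} \quad \text{for all } s, t \in [0, V].
\end{equation}
Combining the previous displays yields the pathwise estimate $\sup_{s \geq 0} \| \Process_{s \wedge \Stopp_1} - \Process_{s \wedge \Stopp_2} \| \leq H\, |\Stopp_1 - \Stopp_2|^{\alpha - \varepsilon}$, and the Cauchy-Schwarz inequality gives
\begin{equation}
    \E \Bigl[ \sup_{s\geq 0} \bigl\| \Process_{s \wedge \Stopp_1} - \Process_{s \wedge \Stopp_2} \bigr\|^p \Bigr] \leq \E\bigl[ H^{2p} \bigr]^{\frac{1}{2}} \E\bigl[\, |\Stopp_1 - \Stopp_2|^{2p(\alpha - \varepsilon)}\, \bigr]^{\frac{1}{2}},
\end{equation}
so that setting $C \defined \E[H^{2p}]^{1/2} < \infty$ concludes the argument.

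The main obstacle is Step~2: the cited Hölder-coefficient result is typically stated locally (for $|s-t| \leq 1$), whereas $V - U$ may be arbitrarily large. If only a local version is available, I would insert an elementary chaining step using $|s-t|^{1 - \alpha + \varepsilon} \leq V^{1 - \alpha + \varepsilon}$ for $s,t \in [0,V]$, so that $\|\Process_s - \Process_t\| \leq H\,(1 + V^{1-\alpha+\varepsilon})\,|s-t|^{\alpha - \varepsilon}$; the replacement $\tilde H \defined H(1+V^{1-\alpha+\varepsilon})$ again has all moments by a further Cauchy-Schwarz, keeping the structure of the bound intact. Everything else is routine.
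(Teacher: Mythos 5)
Your proposal is correct and follows essentially the same route as the paper's proof: reduce the supremum to increments of $\Process$ over the random interval $[\Stopp_1\wedge\Stopp_2,\Stopp_1\vee\Stopp_2]$, factor out $|\Stopp_1-\Stopp_2|^{\alpha-\varepsilon}$ against the global Hölder coefficient on $[0,\Stopp_1\vee\Stopp_2]$ whose $L^{2p}$-moment is supplied by \cite[Theorem~1.1]{Seifried2024HoelderMoments} (which is indeed stated globally up to the random time, so your worried fallback chaining step is not needed), and finish with Cauchy--Schwarz. The only cosmetic difference is that the paper works with the two-parameter supremum $\sup_{s,t\in[U,V]}\|\Process_s-\Process_t\|$ as an upper bound rather than your exact pathwise identity, and it records the bookkeeping for choosing the integrability exponent $p_\varepsilon$ in the cited theorem.
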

\begin{proof}
    Observe 
    \begin{equation}
        \sup_{s\geq 0} \| \Process_{s\wedge\Stopp_1} - \Process_{s\wedge\Stopp_2} \| = \sup_{s\geq 0} \1_{ \{ s\geq \Stopp_1\wedge\Stopp_2 \} } \| \Process_{s\wedge\Stopp_1} - \Process_{s\wedge\Stopp_2} \|
        \leq \sup_{s,t \in [\Stopp_1\wedge\Stopp_2,\Stopp_1\vee\Stopp_2]} \| \Process_{s} - \Process_{t} \| 
    \end{equation}
    and thus for any $\gamma > 0$ we have
    \begin{equation}
        \sup_{s\geq 0} \| \Process_{s\wedge\Stopp_1} - \Process_{s\wedge\Stopp_2} \| \leq 
        |\Stopp_1 - \Stopp_2 |^\gamma \sup_{ \substack{s,t \in [\Stopp_1\wedge\Stopp_2,\Stopp_1\vee\Stopp_2] \\ s\neq t} } \frac{\| \Process_{s} - \Process_{t} \|}{|s-t|^\gamma} .
    \end{equation}
    Particularly for $p >1$ fixed we obtain 
    \begin{equation}
        \sup_{s\geq 0} \| \Process_{s\wedge\Stopp_1} - \Process_{s\wedge\Stopp_2} \|^p \leq |\Stopp_1 - \Stopp_2 |^{p\gamma} \ \bigg( \sup_{ \substack{s,t \in [0,\Stopp_1\vee\Stopp_2] \\ s\neq t} } \frac{\| \Process_{s} - \Process_{t} \|}{|s-t|^\gamma} \bigg)^p
    \end{equation}
    where the second factor contains the global Hölder coefficient (up to $\Stopp_1\vee\Stopp_2$). From \cite[Theorem 1.1]{Seifried2024HoelderMoments} we know a sufficient condition for the existence of the corresponding expectation. By assumption we know that Kolmogorov's criterion is satisfied for all $p\geq 2$ and from the existence of a exponential moment we particularly know that $\Stopp_1\vee\Stopp_2 \in L^q$ for all $q\geq 1$ (see Lemma~\ref{lemma:A_exponentialMoment_ForAll_PowerMoments_SUPREMUM_VERSION}). Whence (with the notation\footnote{
        See \cite[p.2]{Seifried2024HoelderMoments} for an explicit presentation for the dependency on $q$ of the constants $C_1(q), C_2(q)$ and $N(q)$. For the purpose in the present article we mention that $C_1(q),C_2(q)\to 1$ and $N(q)\to 0$ as $q\to\infty$.
    }
    from \cite{Seifried2024HoelderMoments}) for given $\varepsilon >0$ and $p \geq 2$ we choose $p_\varepsilon > 1$ such that
    \begin{equation}\label{eqn:HoelderMomentInterval_Eps}
        \tfrac{C_1(p_\varepsilon)}{p_\varepsilon} < \varepsilon < \tfrac{C_1(p_\varepsilon)+N(p_\varepsilon)}{p_\varepsilon} \qquad\text{and set } \gamma_\varepsilon \defined \alpha -\varepsilon
    \end{equation}
    and without restriction we further have $2p < C_2 (p_\varepsilon) p_\varepsilon$.\footnote{
    If this condition fails one may choose $p_\varepsilon < \Tilde{p}$ such that $2p < C_2 (\Tilde{p}) \Tilde{p}$. This might lead to $\varepsilon$ being greater than the interval \eqref{eqn:HoelderMomentInterval_Eps} of admissible Hölder exponents spanned by $\Tilde{p}$. In this case decrease $\varepsilon$. 
    }
    For this choice we invoke \cite[Theorem 1.1]{Seifried2024HoelderMoments} to obtain
    \begin{equation}
        \sup_{ \substack{s,t \in [0,\Stopp_1\vee\Stopp_2] \\ s\neq t} } \frac{\| \Process_{s} - \Process_{t} \|}{|s-t|^{\gamma_\varepsilon}} \in L^{2p}
    \end{equation}
    and now the claim follows from an application of the Cauchy-Schwarz inequality.
\end{proof}
\section{Strong Bounds of Euler-Maruyama Scheme}\label{sec:StrongEulerBounds_BGG}

We briefly recall well-known properties of the Euler-Maruyama scheme: Lemma~\ref{lemma:BGG_StrongEulerBound_LinearExponent} is an immediate corollary of \cite[Lemma A.2]{BGG_forward_exit} where we extract the dependency on $k$ in the bound\footnote{
    \cite{BGG_forward_exit} states the bound as product of a positive polynomial (in $k$) and the same (linear in $k$) exponential factor. See also \cite[Appendix A]{avikainen2007}. The proof uses a classical Gronwall argument; in contrast to the classic statement, see e.g. \cite[Theorem 10.2.2]{KloedenPlaten} (with quadratic $k$ in the exponent), the linear exponent is crucial for our analysis.
}
and Lemma~\ref{lemmma:BGG_DiffusionEulerDeviation_ForStoppingTime} is a ramification of \cite[Lemma 3.3]{BGG_forward_exit}.
\begin{lemma}\label{lemma:BGG_StrongEulerBound_LinearExponent}
    Let $p\geq 4$. For $h\in (0,1)$ let $\Xc = \Xc (h)$ denote the adapted interpolation \eqref{def:EM_ContinuousInterpolation} of the Euler-Maruyama sequence on the discrete-time grid $\unboundedTimeGrid$. Then it holds that
    \begin{equation}
        \E \Big[ \sup_{s\in[k,k+1]} \big| X_s - \Xc_s \big|^p \Big] \leq K (k+1)^\frac{p}{2} \exp \big( (k+1) pC_{\text{EM}}(p) \big)  h^\frac{p}{2} \quad\text{for all } k\in\nat_0
    \end{equation}
    where $C_{\text{EM}} (p) \defined d(6\lipschitz{\mu} + 3p \lipschitz{\sigma}^2)$ and $K>0$ is a constant (not depending on $k$ nor $h$). \close
\end{lemma}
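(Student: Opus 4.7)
The plan is to follow the standard \emph{Itô-$L^p$} route rather than applying BDG/Jensen to the full trajectory, since only the former produces an exponential factor that is linear in the horizon $k$. The starting point is the representation
\begin{equation*}
    X_t - \Xc_t = \int_0^t \big[\mu(X_r) - \mu(\X_{\phi(r)})\big]\de r + \int_0^t \big[\sigma(X_r) - \sigma(\X_{\phi(r)})\big]\de W_r
\end{equation*}
to which I would apply Itô's formula to $f(y)=|y|^p$ (regularizing via $(|y|^2+\varepsilon)^{p/2}$ and passing to the limit; since $p\geq 4\geq 2$ this is routine). Splitting $X_r - \X_{\phi(r)} = (X_r-\Xc_r) + (\Xc_r-\X_{\phi(r)})$, using Lipschitz continuity of $\mu,\sigma$, and absorbing cross terms with Young's inequality ($a^{p-1}b\leq\tfrac{p-1}{p}a^p+\tfrac{1}{p}b^p$ and $a^{p-2}b^2\leq\tfrac{p-2}{p}a^p+\tfrac{2}{p}b^p$) should yield an estimate of the form
\begin{equation*}
    \E\big[|X_t - \Xc_t|^p\big] \leq p C_{\text{EM}}(p)\!\int_0^t\!\E\big[|X_s - \Xc_s|^p\big]\de s + B_p\!\int_0^t\!\E\big[|\Xc_s-\X_{\phi(s)}|^p\big]\de s,
\end{equation*}
where the coefficient in front of the Gronwall integral is exactly $pC_{\text{EM}}(p)$ with $C_{\text{EM}}(p)=d(6\lipschitz{\mu}+3p\lipschitz{\sigma}^2)$; the factor $d$ arises from bounding the Frobenius norm of $\sigma$ through its columns.

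Next I would control the one-step error: on $s\in[\phi(s),\phi(s)+h]$ one has $\Xc_s-\X_{\phi(s)} = \mu(\X_{\phi(s)})(s-\phi(s))+\sigma(\X_{\phi(s)})(W_s - W_{\phi(s)})$, and since \hyperref[sde:assumption]{(SDE)} gives compactly supported (hence bounded) $\mu$ and $\sigma$, a direct moment computation of the Brownian increment produces $\E[|\Xc_s-\X_{\phi(s)}|^p]\leq Ch^{p/2}$ for $h\in(0,1)$. Plugging this into the Itô estimate and applying the deterministic Gronwall lemma gives the pointwise bound
\begin{equation*}
    \E\big[|X_t - \Xc_t|^p\big] \leq C\,t\,\exp\!\big(pC_{\text{EM}}(p)\,t\big)\,h^{p/2}\qquad t\geq 0.
\end{equation*}

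To upgrade to the supremum over $[k,k+1]$ I would use the decomposition $X_s-\Xc_s = (X_k-\Xc_k) + \int_k^s\Delta\mu_r\de r + \int_k^s\Delta\sigma_r\de W_r$ for $s\in[k,k+1]$, together with an elementary $3^{p-1}$-inequality. The first term is controlled directly by the pointwise bound at time $k$. The drift increment is handled by Jensen's inequality over the unit interval (so the Jensen prefactor equals $1$, independent of $k$), and the diffusion increment by Burkholder-Davis-Gundy over the unit interval. Both contributions reduce to $\int_k^{k+1}\E[|X_r-\X_{\phi(r)}|^p]\de r$, which using the triangle inequality and the one-step error bound becomes $\int_k^{k+1}\E[|X_r-\Xc_r|^p]\de r + Ch^{p/2}$; the pointwise estimate then provides the desired exponential factor $\exp(pC_{\text{EM}}(p)(k+1))$. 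Overestimating the resulting polynomial prefactor $(k+1)$ by $(k+1)^{p/2}$ (which is legitimate for $p\geq 2$) yields the stated bound with a constant $K$ that depends only on $p$, $\lipschitz{\mu}$, $\lipschitz{\sigma}$, the BDG constant, and the supremum bounds of $\mu,\sigma$.

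The main obstacle, and the reason for working with Itô on $|\cdot|^p$ rather than with the naive approach, is avoiding the polynomial-in-$T$ prefactor that arises from applying Jensen/BDG on the full interval $[0,T]$ with $T=k+1$. Such a prefactor would enter the Gronwall argument multiplicatively and produce an exponent proportional to $T^{p/2}$ or $T^{p-1}$ rather than the linear exponent $pC_{\text{EM}}(p)T$ required here. Restricting the BDG step to the unit interval $[k,k+1]$ and using the pointwise Itô-Gronwall estimate at time $k$ as the initial condition is precisely the device that keeps the exponent linear in $k$.
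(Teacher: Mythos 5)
Your proposal is correct in structure but takes a genuinely different route from the paper: the paper's own proof of this lemma is essentially a citation --- it quotes the bound $\E [ \sup_{s\in[k,k+1]} |X_s-\Xc_s|^p ] \leq C\, Q(k,p)\, h^{p/2}$ with $Q(k,p) = ((k+1)^{p/2}+(k+1)^{p/4})\exp(p(k+1)C_{\text{EM}}(p))$ directly from the proof of Lemma A.2 in the cited work of Bouchard--Geiss--Gobet, and the only work done is the elementary overestimate $Q(k,p)\leq 2(k+1)^{p/2}\exp((k+1)pC_{\text{EM}}(p))$. You instead re-derive the underlying estimate from scratch. Your strategic diagnosis is exactly right and matches the paper's own footnote: applying BDG/Jensen over the whole interval $[0,k+1]$ before Gronwall (as in the classical Kloeden--Platen statement) produces a quadratic-in-$k$ exponent, whereas the It\=o-on-$|y|^p$ route followed by a pointwise Gronwall bound and a localized (unit-interval) BDG upgrade keeps the exponent linear in $k$, which is what the downstream exponential-moment assumptions require. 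Your localization step and the final $(k+1)\leq(k+1)^{p/2}$ overestimate are both sound. The one place your sketch is thinner than a complete proof: you assert, without carrying out the It\=o computation, that the Gronwall coefficient comes out as exactly $pC_{\text{EM}}(p)$ with $C_{\text{EM}}(p)=d(6\lipschitz{\mu}+3p\lipschitz{\sigma}^2)$. The precise numerical factors $6$ and $3$ and the placement of $d$ depend on how the Young splittings and the Frobenius-norm estimates are arranged, and since this constant propagates into the paper's standing assumptions (e.g.\ the requirement $\rho>2pC_{\text{EM}}(2p)$ in Lemma~\ref{lemmma:BGG_DiffusionEulerDeviation_ForStoppingTime}), a complete write-up would need to track it explicitly or, as the paper does, defer to the source where it is computed. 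What your approach buys is self-containedness; what the paper's buys is brevity and a guaranteed match with the constant used elsewhere.
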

\begin{proof}
    From the proof of \cite[Lemma A.2]{BGG_forward_exit} we know that
    \begin{equation}
        \E \Big[ \sup_{s\in[k,k+1]} \big| X_s - \Xc_s \big|^p \Big] \leq C\ Q(k,p)\ h^\frac{p}{2}
    \end{equation}
    where $C>0$ is a constant (not depending on $k$ nor $h$) and
    \[
        Q (k,p) \defined \big((k+1)^{\frac{p}{2}} + (k+1)^\frac{p}{4} \big) \exp \big( p (k+1) C_{\text{EM}} (p) \big) \leq 2 (k+1)^\frac{p}{2} \exp \big(  (k+1) p C_{\text{EM}} (p) \big). \qedhere
    \]
\end{proof}
\begin{lemma}\label{lemmma:BGG_DiffusionEulerDeviation_ForStoppingTime}
    Let $p\geq 2$ and let $\Stopp$ be a stopping time. If there is $\rho > 0$ such that
    $ \rho > 2p C_{\text{EM}} (2p)$ and $\exp (\rho\Stopp) \in L^1 $
    with $C_{\text{EM}}(q) \defined d (6\lipschitz{\mu} + 3q\lipschitz{\sigma}^2)$.
    Then there is $C>0$ such that
    \begin{equation}
        \E \big[ \|X_{\Stopp} -\Xc_{\Stopp}\|^p \big] \leq Ch^\frac{p}{2} . \closeEqn
    \end{equation}
\end{lemma}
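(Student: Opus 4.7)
The plan is to decompose $[0,\infty)$ into the unit intervals $[k,k{+}1)$ and glue together the unit-interval strong bound from Lemma~\ref{lemma:BGG_StrongEulerBound_LinearExponent} against the exponential tail of $\Stopp$ supplied by the hypothesis. More concretely, I would write
\begin{equation}
    \E\bigl[\|X_\Stopp-\Xc_\Stopp\|^p\bigr]
    = \sum_{k=0}^{\infty} \E\Bigl[\1_{\{k\leq \Stopp < k+1\}}\|X_\Stopp-\Xc_\Stopp\|^p\Bigr],
\end{equation}
bound each summand by $\E\bigl[\1_{\{\Stopp\geq k\}}\sup_{s\in[k,k+1]}\|X_s-\Xc_s\|^p\bigr]$, and then apply Cauchy--Schwarz to split the indicator from the pathwise deviation.

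For the tail factor I would use Markov's inequality together with the assumed exponential moment:
$\prob[\Stopp\geq k] \leq e^{-\rho k}\,\E[\exp(\rho\Stopp)]$. For the deviation factor I would invoke Lemma~\ref{lemma:BGG_StrongEulerBound_LinearExponent} with exponent $2p$ (which is admissible since $2p\geq 4$); this yields a constant $K>0$ (not depending on $k$ or $h$) such that
\begin{equation}
    \E\Bigl[\sup_{s\in[k,k+1]}\|X_s-\Xc_s\|^{2p}\Bigr]^{1/2}
    \leq K^{1/2}(k+1)^{p/2}\exp\bigl((k+1)\,p\,C_{\text{EM}}(2p)\bigr)\, h^{p/2}.
\end{equation}

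Multiplying the two square roots and summing over $k$ produces a prefactor of $h^{p/2}$ times the series
\begin{equation}
    \E[\exp(\rho\Stopp)]^{1/2}\,e^{p C_{\text{EM}}(2p)}\sum_{k=0}^{\infty}(k+1)^{p/2}\exp\!\bigl(k\bigl(p\,C_{\text{EM}}(2p)-\tfrac{\rho}{2}\bigr)\bigr),
\end{equation}
which converges precisely because the hypothesis $\rho > 2p\,C_{\text{EM}}(2p)$ forces the exponential factor to decay geometrically while the polynomial factor grows only subexponentially. Setting $C$ equal to $K^{1/2}\E[\exp(\rho\Stopp)]^{1/2}e^{pC_{\text{EM}}(2p)}$ times the value of this (convergent) series yields the claimed bound.

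The only non-routine point is the bookkeeping in the exponents: one must make sure the exponent from Lemma~\ref{lemma:BGG_StrongEulerBound_LinearExponent} (which is $pC_{\text{EM}}(2p)$ after taking the square root) is strictly dominated by half the Markov rate $\rho/2$, and this is exactly the content of the integrability assumption. No new ideas are required beyond the interplay of these two earlier results; the work is simply to check the constants.
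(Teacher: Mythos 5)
Your proposal is correct and follows essentially the same route as the paper's proof: decompose over the unit intervals $[k,k+1)$, apply Cauchy--Schwarz to separate the tail $\prob[\Stopp\geq k]^{1/2}\leq e^{-\rho k/2}\E[\exp(\rho\Stopp)]^{1/2}$ from the pathwise deviation, invoke Lemma~\ref{lemma:BGG_StrongEulerBound_LinearExponent} with exponent $2p$ (admissible since $p\geq 2$), and sum the resulting geometric-type series, whose convergence is exactly the condition $\rho>2pC_{\text{EM}}(2p)$. The constant bookkeeping matches the paper's as well.
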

\begin{proof}
    Let $h\in (0,1)$ be fixed. With the Markov inequality we have
    \begin{equation}
        \prob \big[ \Stopp \in [k,k+1) \big] \leq \prob [ k \leq \Stopp ] \leq \E \big[ \exp (\rho\Stopp) \big] \exp (-\rho k) \quad \text{for all } k\in\nat_0
    \end{equation}
    and by assumption $\beta \defined \E [ \exp (\rho\Stopp) ] < \infty$. With monotone convergence, the Cauchy-Schwarz inequality and Lemma~\ref{lemma:BGG_StrongEulerBound_LinearExponent} with constant $K>0$ we compute
    \begin{align}
        \E \big[ \|X_{\Stopp} - \Xc_{\Stopp}\|^p \big] & \leq \sum_{k\in\nat_0} \E \Big[ \1_{[k,k+1)} (\Stopp)\ \sup_{s\in[k,k+1]} \| X_s - \Xc_s \|^p \Big] \\
        & \leq \sum_{k\in\nat_0} \prob \big[ \Stopp \in [k,k+1) \big]^\frac{1}{2}\
        \E \Big[ \sup_{s\in[k,k+1]} \| X_s - \Xc_s \|^{2p} \Big]^\frac{1}{2} \\
        & \leq \beta^\frac{1}{2} K^\frac{1}{2} h^\frac{p}{2} \sum_{k\in\nat_0} \exp \big( -\tfrac{1}{2}\rho k \big) (k+1)^p \exp  \big( \tfrac{1}{2} (k+1) 2pC_{\text{EM}}(2p) \big)  \\
        & = \beta^\frac{1}{2} K^\frac{1}{2} \exp \big(\tfrac{\rho}{2} \big) h^\frac{p}{2} \sum_{k\in\nat} k^p \exp \Big( \tfrac{1}{2} k \big(-\rho + 2pC_{\text{EM}}(2p) \big) \Big)
    \end{align}
    The assumption on $\rho$ ensures the exponent is negative and thus the series is convergent.\footnote{
        Convergence is due to Cauchy's radical convergence test: $k^\frac{p}{k} \to 1$ as $k\to \infty$ and $\exp ( \tfrac{1}{2} (-\rho + 2p C_{\text{EM}} (2p) ) ) < 1$. 
    }
    To complete the proof we set
    \[
        \beta^\frac{1}{2} K^\frac{1}{2} \exp \big( \tfrac{\rho}{2} \big) \sum_{k\in\nat_0} k^p \exp \Big( \tfrac{1}{2} k \big(-\rho + 2pC_{\text{EM}}(2p) \big) \Big) \defined C < \infty . \qedhere
    \]
\end{proof}
\section{Supplements}\label{sec:generalSupplementsAndPathwiseGronwall}

\begin{lemma}[Pathwise Discrete Gronwall]\label{lemma:DiscreteGronwall}
	Let $(A_t)_{t\in\unboundedTimeGrid}, (B_t)_{t\in\unboundedTimeGrid}$ non-negative, $(\F_t)_{t\in\unboundedTimeGrid}$ adapted and let $\Stopp$ denote a $\unboundedTimeGrid$-valued finite stopping time. Assume $\sum_{\ell\in\unboundedTimeGrid \cap [0, \Stopp)} h B_\ell \in L^1$.
    
    Assume $A_t \in L^1$ for each $t\in\unboundedTimeGrid$ and that there is a non-negative $\xi \in L^1(\F_{\Stopp}) $ such that
    \begin{equation}\label{lemma:pathwiseDiscreteGronwall_AssumptionConstantAfterStopp}
        \1_{ \{ t \geq \Stopp \} } A_t = \1_{ \{ t \geq \Stopp \} } \xi \quad\text{for all } t\in\unboundedTimeGrid
    \end{equation}
    and moreover the following conditional convergence is satisfied
    \begin{equation}\label{eqn:AssumptionASConditionalConvergenceTerminal}
         \lim_{I\to\infty} C^{\frac I h}\E_t \big[ \1_{ \{ t+I<\Stopp \} } A_{t+I} \big] = 0 \quad\text{for all } t\in\unboundedTimeGrid .
    \end{equation}
    If there are constants $C, K\geq 0$ such that
	\begin{equation}\label{lemma:pathwiseDiscreteGronwall_AssumptionConditionalBound}
		\1_{\{t < \Stopp\}} A_t \leq C \E_t \big[ A_{t+h} \big] + Kh B_t \quad\text{for all }t\in\unboundedTimeGrid 
	\end{equation}
	then
    \begin{equation}
        \1_{\{t<\Stopp\}} A_t  \leq 
        \E_t \Big[ C^{\frac{\Stopp-t}{h}} \xi + Kh \sum_{\ell\in\unboundedTimeGrid [t+h, \Stopp)} C^{ \frac{\ell-t}{h}} B_\ell \Big] \quad\text{for all }t\in\unboundedTimeGrid . \closeEqn
    \end{equation}
\end{lemma}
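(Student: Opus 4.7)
The plan is to iterate the conditional inequality \eqref{lemma:pathwiseDiscreteGronwall_AssumptionConditionalBound} forwards $n$ times and then let $n\to\infty$, using \eqref{lemma:pathwiseDiscreteGronwall_AssumptionConstantAfterStopp} at each stage to rewrite the $A$-terms that have crossed $\Stopp$ as $\xi$. The key algebraic observation is that, on the event $\{s<\Stopp\}$,
\begin{equation*}
    A_{s+h}=\1_{\{s+h<\Stopp\}}A_{s+h}+\1_{\{\Stopp=s+h\}}\xi,
\end{equation*}
because $\Stopp$ is $\unboundedTimeGrid$-valued so $\{s+h\geq\Stopp\}\cap\{s<\Stopp\}=\{\Stopp=s+h\}$. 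This decomposition lets us peel off exactly one ``$\xi$-contribution'' per iteration while keeping all remaining $A$-terms below $\Stopp$.

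First I would establish by induction on $n\geq 1$ that, on $\{t<\Stopp\}$,
\begin{multline*}
    A_t\leq C^n\,\E_t\big[\1_{\{t+nh<\Stopp\}}A_{t+nh}\big]+\sum_{k=1}^{n}C^k\,\E_t\big[\1_{\{\Stopp=t+kh\}}\xi\big]\\
    +Kh\sum_{k=0}^{n-1}C^k\,\E_t\big[\1_{\{t+kh<\Stopp\}}B_{t+kh}\big].
\end{multline*}
The base case ($n=1$) is \eqref{lemma:pathwiseDiscreteGronwall_AssumptionConditionalBound} together with the decomposition above applied at $s=t$. For the inductive step, apply \eqref{lemma:pathwiseDiscreteGronwall_AssumptionConditionalBound} at time $t+nh$ (valid after multiplication by $\1_{\{t+nh<\Stopp\}}$, which is $\F_{t+nh}$-measurable and therefore commutes through $\E_{t+nh}$), invoke the tower property, and split $A_{t+(n+1)h}$ once more via the same decomposition. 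The nestings $\{t+(n+1)h<\Stopp\}\subseteq\{t+nh<\Stopp\}$ and $\1_{\{\Stopp=t+(n+1)h\}}\leq\1_{\{t+nh<\Stopp\}}$ make the indicators reassemble cleanly, so that the three sums at level $n$ extend to the three sums at level $n+1$ without residual error.

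The limit $n\to\infty$ is then routine: the leading $A$-term vanishes by hypothesis \eqref{eqn:AssumptionASConditionalConvergenceTerminal}, and both sums are non-negative so monotone convergence passes inside $\E_t$. Using that $\Stopp$ is a.s.\ finite and $\unboundedTimeGrid$-valued, the events $\{\Stopp=t+kh\}$ partition $\{\Stopp>t\}$, with $C^k=C^{(\Stopp-t)/h}$ on $\{\Stopp=t+kh\}$, so the $\xi$-sum converges to $\E_t[C^{(\Stopp-t)/h}\xi]$ on $\{t<\Stopp\}$. Re-indexing $\ell=t+kh$ identifies the $B$-sum with $Kh\,\E_t\big[\sum_{\ell\in\unboundedTimeGrid\cap[t,\Stopp)}C^{(\ell-t)/h}B_\ell\big]$; finiteness follows from the hypothesis $\sum_{\ell<\Stopp}hB_\ell\in L^1$ paired with the uniform bound $C^{(\ell-t)/h}\leq C^{(\Stopp-t)/h}$. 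The sole technical obstacle is the meticulous indicator bookkeeping across the induction --- each step reduces to the tower property, monotone convergence, or the prescribed vanishing hypothesis, but one must verify at each stage that the partition $\{\Stopp=t+kh\}$ and the filtration $\{t+kh<\Stopp\}\in\F_{t+kh}$ align correctly to produce the clean telescoping displayed above.
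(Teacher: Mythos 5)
Your proof is correct and follows essentially the same route as the paper's: the quantity you obtain at stage $n$ of your induction is exactly $\E_t[X_t(nh)]$ for the auxiliary process $X_t(I)$ that the paper sets up, and the passage to the limit (the conditional-convergence hypothesis killing the leading $A$-term, conditional monotone convergence for the two non-negative sums, and the partition $\{\Stopp=t+kh\}$ collapsing the $\xi$-sum to $C^{(\Stopp-t)/h}\xi$) is identical. The only discrepancy is that your $B$-sum starts at $\ell=t$ rather than at $\ell=t+h$ as in the stated conclusion; this matches the paper's own proof and its application in Proposition~\ref{prop:Majorant_FixpointScheme}, so the index in the lemma's statement appears to be the typo rather than an error in your derivation.
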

\begin{proof}
    Throughout the proof let $t\in\unboundedTimeGrid$ be fixed.

    For $I\in\unboundedTimeGrid\cap[h,\infty)$ we consider the $\F_{t+I}$ measurable random variables 
    \begin{multline}\label{proof:DiscreteGronwall_EQ1}
        X_t (I) \defined \1_{ \{ t+I < \Stopp \} } C^{\frac{I}{h}} A_{t+I} \\ 
        + \sum_{\ell\in\unboundedTimeGrid\cap [t+h, t+I] } \1_{ \{ \ell = \Stopp \} } C^{\frac{\ell - t}{h}} \xi \ +\  Kh \sum_{\ell\in\unboundedTimeGrid\cap [t,t+I - h] } \1_{ \{ \ell < \Stopp \} } C^{\frac{\ell-t}{h}} B_{\ell} .
    \end{multline}
    
    \step{1} We show \eqref{lemma:pathwiseDiscreteGronwall_AssumptionConstantAfterStopp} and \eqref{lemma:pathwiseDiscreteGronwall_AssumptionConditionalBound} ensure that
    \begin{align}
        \1_{ \{ t < \Stopp \} } A_t \leq \E_t \big[ X_t (h) \big] 
        \qquad\text{and}\qquad
        X_t (J) \leq \E_{t+J} \big[ X_t (J+h) \big]\ \text{for all } J\in\unboundedTimeGrid\cap[h,\infty) .
    \end{align}
    Combine the assumptions \eqref{lemma:pathwiseDiscreteGronwall_AssumptionConstantAfterStopp} and \eqref{lemma:pathwiseDiscreteGronwall_AssumptionConditionalBound} to compute
    \begin{align}
        \1_{ \{ t < \Stopp \} } A_t & \leq \1_{ \{ t<\Stopp \} } \Big( C \E_{t} \big[ \1_{ \{ t+h < \Stopp \} } A_{t+h} \big] + C \E_{t} \big[ \1_{ \{ t+h = \Stopp \} } \xi \big] + Kh B_t \Big) \\
        & = \E_{t} \Big[ \1_{ \{ t+h < \Stopp \} } C A_{t+h} + \1_{ \{ t+h = \Stopp \} } C \xi + \1_{ \{ t<\Stopp \} } Kh B_t \Big] \\
        & = \E_t \big[ X_t (h) \big].
    \end{align}
    Observe that $ \1_{ \{\ell = \Stopp\} } \xi = \E [\1_{ \{\ell = \Stopp\} } \xi \ |\ \F_{\Stopp} ] = \E [\1_{ \{\ell = \Stopp\} } \xi \ |\ \F_{\ell} ] $ for any $\ell\in\unboundedTimeGrid$. Let $J\in\unboundedTimeGrid\cap[h,\infty)$ and use the preceding bound for $\ell = t+J$ to compute
    \begin{align}
        X_t (J) & = \1_{ \{ t+J < \Stopp \} } C^{\frac{J}{h}} A_{t+J}\ + \sum_{\ell\in\unboundedTimeGrid\cap [t+h, t+J] } \1_{ \{ \ell = \Stopp \} } C^{\frac{\ell - t}{h}} \xi \ +\  Kh \sum_{\ell\in\unboundedTimeGrid\cap [t,t+J - h] } C^{\frac{\ell-t}{h}} B_{\ell} \\
        & \leq \E_{t+J} \Big[ \1_{ \{ t+J+h < \Stopp \} } C^{\frac{J+h}{h}} A_{t+J+h} + \1_{ \{ t+J+h = \Stopp \} } C^{\frac{J+h}{h}} \xi + \1_{ \{ t+J<\Stopp \} } Kh C^{\frac{J}{h}} B_t \Big] \\
        & \hspace*{2.0cm} + \sum_{\ell\in\unboundedTimeGrid\cap [t+h, t+J] } \1_{ \{ \ell = \Stopp \} } C^{\frac{\ell - t}{h}} \xi \ +\  Kh \sum_{\ell\in\unboundedTimeGrid\cap [t,t+J - h] } C^{\frac{\ell-t}{h}} B_{\ell} \\
        & = \E_{t+J} \big[ X_t ( J+h) \big]. \closeEqn
    \end{align} 

    \step{2} We show 
    \begin{equation}
       \lim_{I\to\infty} \E_{t} [X_t (I)] = \E_t \Big[ C^{\frac{\Stopp-t}{h}} \xi + Kh \sum_{\ell\in\unboundedTimeGrid [t+h, \Stopp)} C^{ \frac{\ell-t}{h}} B_\ell \Big] .
    \end{equation}
    
    By definition (see \eqref{proof:DiscreteGronwall_EQ1}) we have
    \begin{align}
        \lim_{I\to\infty} X_t (I) & = \sum_{\ell\in\unboundedTimeGrid [t+h, \infty)} \1_{ \{ \ell=\Stopp \} } C^{\frac{\ell-t}{h}} \xi + Kh \sum_{\ell\in\unboundedTimeGrid [t+h, \infty)} \1_{ \{ \ell < \Stopp \} } C^{ \frac{\ell-t}{h}} B_\ell\\
        & = C^{\frac{\Stopp-t}{h}} \xi + Kh \sum_{\ell\in\unboundedTimeGrid [t+h, \Stopp)} C^{ \frac{\ell-t}{h}} B_\ell .
    \end{align}
    With monotone convergence we obtain 
    \begin{multline}
        \lim_{I\to\infty} \E_t \Big[ \sum_{\ell\in\unboundedTimeGrid\cap [t+h, t+I] } \1_{ \{ \ell = \Stopp \} } C^{\frac{\ell - t}{h}} \xi \ 
        +\  Kh \sum_{\ell\in\unboundedTimeGrid\cap [t,t+I - h] } \1_{ \{ \ell < \Stopp \} } C^{\frac{\ell-t}{h}} B_{\ell} \Big] \\
        = \E_t \Big[ C^{\frac{\Stopp-t}{h}} \xi + Kh \sum_{\ell\in\unboundedTimeGrid [t+h, \Stopp)} C^{ \frac{\ell-t}{h}} B_\ell \Big]
    \end{multline}
    and using \eqref{eqn:AssumptionASConditionalConvergenceTerminal} we have that the projection of the remainder of $X(I)$ vanishes, i.e. 
    \begin{equation}
        \E_t \big[\1_{ \{ t+I < \Stopp \} } C^{\frac{I}{h}} A_{t+I}\big] \to 0 \quad\text{as } I\to\infty. \closeEqn
    \end{equation}
    The overall statement now is an immediate consequence of the preceding tow steps;
    \begin{align}
        \1_{ \{ t<\Stopp \} } A_t & \leq \sup_{I\in\unboundedTimeGrid\cap[h,\infty)} \E_{t} \big[ X_t (I) \big] = \lim_{I\to\infty} \E_{t} \big[ X_t (I) \big]. \qedhere
    \end{align}
\end{proof}

\begin{lemma}\label{lemma:RepresentationConditionalExpectation}
    Let $A\in\mathcal{A}$ and $\mathcal{G}\subset\mathcal{A}$. Then for random variables $Y\geq X\geq 0$ it holds that
    \begin{equation}
        \int_{0}^{\infty} \prob \big[ A,\ Y > t \geq X \ \big|\ \mathcal{G} \big] \de t = \E \big[ \1_{A} (Y-X) \ \big|\ \mathcal{G} \big] = \int_{0}^{\infty} \prob \big[ A,\ Y-X\geq t\ \big|\ \mathcal{G} \big] \de t. \closeEqn 
    \end{equation}
\end{lemma}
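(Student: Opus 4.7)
Both identities are layer-cake representations combined with a conditional Tonelli argument. The plan is to establish, pathwise, two representations of $Y-X$ as Lebesgue integrals of indicator functions and then integrate out $t$ under the conditional expectation.

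First I would record the two pathwise identities. Since $Y\geq X\geq 0$, on the event $\{Y>X\}$ one has
\begin{equation}
    Y-X = \int_{0}^{\infty} \1_{\{t<Y\}}\,\1_{\{t\geq X\}}\,\de t = \int_{0}^{\infty} \1_{\{Y>t\geq X\}}\,\de t,
\end{equation}
and on $\{Y=X\}$ both sides vanish, so the identity holds on all of $\Omega$. Analogously, the standard layer-cake formula for non-negative random variables yields
\begin{equation}
    Y-X = \int_{0}^{\infty} \1_{\{Y-X\geq t\}}\,\de t.
\end{equation}
Multiplying each identity by $\1_A$ and taking conditional expectations given $\mathcal{G}$ gives
\begin{equation}
    \E\bigl[\1_A(Y-X)\,\big|\,\mathcal{G}\bigr] = \E\!\left[\int_{0}^{\infty} \1_A\,\1_{\{Y>t\geq X\}}\,\de t\,\bigg|\,\mathcal{G}\right] = \E\!\left[\int_{0}^{\infty} \1_A\,\1_{\{Y-X\geq t\}}\,\de t\,\bigg|\,\mathcal{G}\right].
\end{equation}

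Second, I would apply a conditional version of Tonelli's theorem to interchange the Lebesgue integral in $t$ with the conditional expectation. For each $t\geq 0$ the integrands $\1_A\,\1_{\{Y>t\geq X\}}$ and $\1_A\,\1_{\{Y-X\geq t\}}$ are non-negative and jointly measurable in $(t,\omega)$; conditional Tonelli (obtained by testing against an arbitrary bounded $\mathcal{G}$-measurable function and applying the classical Tonelli theorem on the product space) then yields
\begin{equation}
    \E\!\left[\int_{0}^{\infty} \1_A\,\1_{\{Y>t\geq X\}}\,\de t\,\bigg|\,\mathcal{G}\right] = \int_{0}^{\infty} \prob\bigl[A,\,Y>t\geq X\,\big|\,\mathcal{G}\bigr]\,\de t,
\end{equation}
and the analogous identity for the second representation. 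Chaining these equalities gives the claim.

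There is no real obstacle: the only point that warrants care is the verification of conditional Tonelli, which reduces via the defining property of the conditional expectation to the classical Tonelli theorem on $([0,\infty)\times\Omega, \mathcal{B}([0,\infty))\otimes\mathcal{A})$ equipped with the product of Lebesgue measure and $\prob$. Joint measurability of the integrands is immediate since $\{(t,\omega): Y(\omega)>t\geq X(\omega)\}$ and $\{(t,\omega): Y(\omega)-X(\omega)\geq t\}$ are product-measurable.
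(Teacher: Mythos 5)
The paper states this lemma in the Supplements without any proof, so there is nothing to compare against; your argument is correct and is the standard one the authors evidently have in mind. The two pathwise layer-cake identities $Y-X=\int_0^\infty \1_{\{Y>t\geq X\}}\,\de t=\int_0^\infty \1_{\{Y-X\geq t\}}\,\de t$ are verified correctly (including the degenerate case $Y=X$), and the interchange of the $t$-integral with the conditional expectation is exactly the conditional Fubini--Tonelli theorem the paper already invokes elsewhere (e.g.\ \cite[Proposition 2.4.6]{Ethier2009MarkovProcesses}), which also supplies the jointly measurable version of $t\mapsto\prob[\,\cdot\mid\mathcal{G}]$ needed for the outer integrals to be well defined.
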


\begin{lemma}\label{lemma:A_exponentialMoment_ForAll_PowerMoments_SUPREMUM_VERSION}
    Let $\ell>0$ and $(A_t)_{t\in(0,\ell]}$ a family of random variables. If there is $m>0$ such that
    \begin{equation}
        \sup_{h\in(0,\ell]} \exp(mA_h) \in L^1  
        \qquad\text{then}\qquad 
        \sup_{h\in(0,\ell]} A_h^p \in L^1 \quad\text{for all } p\geq 1. \closeEqn
    \end{equation}
\end{lemma}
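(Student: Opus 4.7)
The plan is to reduce the claim to a pointwise deterministic inequality and then take suprema and expectations in sequence. Specifically, I will use the elementary fact that for any $x \geq 0$, any $m > 0$, and any $p \geq 1$,
\begin{equation}
    x^p \leq \Big(\frac{p}{me}\Big)^{\!p}\, \exp(mx),
\end{equation}
which follows by maximizing $x \mapsto x^p e^{-mx}$ on $[0,\infty)$ (the unique critical point is $x = p/m$, with value $(p/(me))^p$). An equivalent alternative is the Taylor-series comparison $\exp(mx) \geq (mx)^{\lceil p \rceil}/\lceil p \rceil!$, which gives $x^p \leq 1 + \lceil p \rceil!\, m^{-\lceil p \rceil} \exp(mx)$ for $x \geq 0$.

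With such a pointwise bound in hand, applying it to $x = A_h(\omega)$ for each $\omega \in \Omega$ and each $h \in (0,\ell]$ (interpreting $A_h$ as non-negative, as is tacitly needed for $A_h^p$ to make sense for general $p \geq 1$ and as is the case in every invocation of this lemma in the paper, where $A_h$ is a stopping time), I obtain
\begin{equation}
    \sup_{h\in(0,\ell]} A_h^p \;\leq\; \Big(\frac{p}{me}\Big)^{\!p}\, \sup_{h\in(0,\ell]} \exp(m A_h)\quad\text{pointwise on }\Omega.
\end{equation}
Taking expectations and invoking the hypothesis $\sup_{h\in(0,\ell]} \exp(mA_h) \in L^1$ then yields $\sup_{h\in(0,\ell]} A_h^p \in L^1$.

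There is no real obstacle here; the only minor point worth flagging is the implicit non-negativity of $A_h$, which is consistent with the exponent $p \geq 1$ being allowed to be non-integer and with the use cases in the paper (e.g.\ applied to $\StoppDiscrete$ in Section~\ref{sec:Majorant_DiscreteTime} and Section~\ref{Sec:RegularityInterpolation}).
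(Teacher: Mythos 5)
Your proposal is correct and follows essentially the same route as the paper: a pointwise deterministic domination of $x^p$ by a constant multiple of $\exp(mx)$ on $[0,\infty)$, followed by taking the supremum over $h$ and then the expectation. The only (cosmetic) difference is that you optimize $x\mapsto x^p e^{-mx}$ to get the constant $(p/(me))^p$ valid for all real $p\geq 1$ at once, whereas the paper compares against the single Taylor term $\tfrac{m^q}{q!}x^q$ with $q=\lceil p\rceil$ and then passes from the $\lceil p\rceil$-th moment to the $p$-th; your explicit flagging of the tacit non-negativity of $A_h$ is apt, since the paper's series argument needs it as well.
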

\begin{proof}
    We show the claim for $p\in\nat$. For general $p>1$ one repeats the same argument for $\lceil p \rceil \in \nat$ and 
    concludes $\sup_{h\in(0,\ell]} A_h^p \in L^1$ due to inclusion $L^p (\prob) \subset L^{\lceil p \rceil} (\prob) $. 
    Let $p\in\nat$. By expansion of the exponential series we have 
    \[
        \tfrac{m^p}{p!} \E \big[ \sup_{h\in(0,\ell]} A_h^p \big] 
        \leq \E \Big[ \sup_{h\in(0,\ell]} \ \sum_{q\in\nat_0} \tfrac{m^q}{q!} A_h^q \Big] 
        = \E \Big[ \sup_{h\in(0,\ell]} \exp(mA_h) \Big] < \infty . \qedhere
    \]
\end{proof}

%
\end{document}